\newtheorem{thm}{Theorem}[section]
\newtheorem*{thmA}{Theorem A}
\newtheorem*{thmM}{Main Theorem}
\newtheorem{lem}[thm]{Lemma}
\newtheorem{cor}[thm]{Corollary}
\newtheorem{prop}[thm]{Proposition}
\theoremstyle{definition}
\newtheorem{defn}[thm]{Definition}
\newtheorem{example}[thm]{Example}
\newcommand{\ca}{caterpillar}
\newcommand{\R}{\mathbb{R}}
\newcommand{\iU}{U^\infty}
\newcommand{\phd}{\mathrm{PHD}}
\newcommand{\fg}{\mathrm{FG}}
\newcommand{\car}{\mathrm{CA}}
\newcommand{\orb}{\mathrm{orb}}
\newcommand{\cu}{\mathrm{CU}}
\newcommand{\F}{\mathcal{F}}
\newcommand{\oa}{{\ol{a}}}
\newcommand{\bj}{\bar j}
\newcommand{\bq}{\bar q}
\newcommand{\pr}{\mathrm{pr}}
\newcommand{\di}{\ol{\mathrm{Di}}}
\newcommand{\ol}{\overline}
\newcommand{\0}{\varnothing}
\newcommand{\sm}{\setminus}
\newcommand{\bd}{\mathrm{Bd}}
\newcommand{\ch}{\mathrm{Ch}}
\newcommand{\si}{\sigma}
\newcommand{\ta}{\theta}
\newcommand{\nin}{\not\in}
\newcommand{\hell}{\hat{\ell}}
\newcommand{\C}{\mathbb{C}}
\newcommand{\hc}{\mbox{$\mathbb{\widehat{C}}$}}
\newcommand{\bbd}{\mbox{$\mathbb{D}$}}
\newcommand{\disk}{\mathbb{D}}
\newcommand{\uc}{\mathbb{S}^1}
\newcommand{\lam}{\mathcal{L}}
\newcommand{\M}{\mathcal{M}}
\newcommand{\Z}{{\mathbb{Z}}}
\renewcommand\le{\leqslant}
\renewcommand\ge{\geqslant}
\title[Laminations from the Main Cubioid]
{Laminations from the Main Cubioid}
\author[Alexander Blokh, Lex Oversteegen, Ross Ptacek and Vladlen Timorin]{}
\subjclass{Primary 37F20; Secondary 37C25, 37F10, 37F50}
\keywords{Complex dynamics; Julia set; Mandelbrot set}
\email[Alexander~Blokh]{ablokh@math.uab.edu}
\email[Lex~Oversteegen]{overstee@math.uab.edu}
\email[Ross~Ptacek]{rptacek@uab.edu}
\email[Vladlen~Timorin]{vtimorin@hse.ru}
\thanks{The first and the third named authors were partially
supported by NSF grant DMS--1201450}
\thanks{The second named author was partially  supported
by NSF grant DMS-0906316}
\thanks{The fourth named author was partially supported by a subsidy granted to
the HSE by the  Government of the Russian Federation for the implementation
of the Global Competitiveness Program and by RFBR grant 16-01-00748à.
}
\thanks{$^*$ Corresponding author: Alexander Blokh}
\begin{document}

\maketitle

\centerline{\scshape Alexander Blokh and Lex Oversteegen$^*$}
\medskip
{\footnotesize
% please put the address of the first author
 \centerline{ Department of Mathematics}
   \centerline{University of Alabama at Birmingham}
   \centerline{Birmingham, AL 35294, USA}
} % Do not forget to end the {\footnotesize by the sign }

\medskip

\centerline{\scshape Ross Ptacek and Vladlen Timorin}
\medskip
{\footnotesize
 % please put the address of the second  and third author
 \centerline{ Faculty of Mathematics,
Laboratory of Algebraic Geometry and its Applications}
   \centerline{National Research University Higher School of Economics}
   \centerline{Vavilova St. 7, 112312 Moscow, Russia}
}

\bigskip

% The name of the associate editor will be entered by an editorial staff
% "Communicated by the associate editor name" is not needed for special issue.
 \centerline{(Communicated by the associate editor name)}

%The abstract of your paper
\begin{abstract}
Polynomials from the closure of the principal hyperbolic domain of
the cubic connectedness locus have some specific properties, which
were studied in a recent paper by the authors. The family of (affine
conjugacy classes of) all polynomials with these properties is
called the Main Cubioid. In this paper, we describe a combinatorial
counterpart of the Main Cubioid --- the set of invariant laminations
that can be associated to polynomials from the Main Cubioid.
\end{abstract}

%%%%%%%%%%%%%%%%%%%%%%%%%%%%%%%%%%%%%%%%%%%%%%%%%%%%%%%%%%%%%%%%%%%%%%%%%%

%\subjclass[2010]{Primary 37F20; Secondary 37C25, 37F10, 37F50}

%\definecolor{Red}{rgb}{1,0,0}

%\begin{abstract}

%\end{abstract}

\section{Introduction}\label{s:intro}

\subsection{Motivation}

The \emph{complex quadratic family} is the family of all polynomials
$P_c(z)=z^2+c$ (any quadratic polynomial is affinely conjugate to
some $P_c$). An important role in studying this family is played
by the \emph{connectedness locus $\M_2$} (also called the
\emph{Mandelbrot set}) consisting of all $c$ such that the
\emph{Julia set} $J_{P_c}$ is connected. The central
part of $\M_2$ is the Principal Hyperbolic Domain ${\rm PHD}_2$,
i.e., the set of numbers $c\in \C$ such that
$P_c$ has an attracting fixed point. Its closure $\car$ is called
the \emph{Main Cardioid \emph{(}of the Mandelbrot set\emph{)}}. A combinatorial
model $\M^c_2$ of $\M_2$, due to Thurston \cite{thu85}, implies a
combinatorial model $\car^c$ of $\car$; we call $\car^c$ the
\emph{Combinatorial Main Cardioid}.

Similarly, in degree $d$ one can consider the space of affine
conjugacy classes of degree $d$ polynomials (in the quadratic
family, we made an explicit choice of a representative polynomial).
The  \emph{degree $d$ connectedness locus} (also called the
\emph{degree $d$ Mandelbrot set}) $\M_d$ is the space of all degree
$d$ affine conjugacy classes, whose polynomials have connected Julia
sets (equivalently, all critical points have bounded orbits). In
what follows, by the \emph{class} of a polynomial we will always
mean the affine conjugacy class. Given a polynomial $P$, we denote
its class by $[P]$. The \emph{Principal Hyperbolic Domain} ${\rm
PHD}_d$ in $\M_d$ consists of all classes of hyperbolic polynomials
with Julia set homeomorphic to a circle. Equivalently, $[P]$ is in
${\rm PHD}_d$ if all critical points of $P$ are in the immediate
basin of attraction of some attracting fixed point. An important
question then is to describe the set of all classes of polynomials
that belong to the closure $\ol{\rm PHD}_d$ of ${\rm PHD}_d$. Here
we address this question for $d=3$ (i.e., in the \emph{cubic} case).

The key object in Thurston's combinatorial model is the notion of a
\emph{lamination} (full definitions are provided in
Section~\ref{s:prelim}, see Definitions~\ref{d:lam},
~\ref{d:si-inv-lam}, ~\ref{geolam}, and ~\ref{geolaminv}, while in
Subsection~\ref{ss:lami} we will only give loose descriptions).
Laminations provide combinatorial models for connected polynomial
Julia sets. Thurston's work \cite{thu85} can be seen as consisting
of two parts: he defined a set of laminations that are good
candidates to be models of quadratic Julia sets, and then he
described the set of such laminations, which led to a combinatorial
model for the Mandelbrot set.

In this paper, we will make a similar first step for $\ol{{\rm
PHD}}_3$ with the hope that our classification eventually yields a
suitable combinatorial model. Polynomials from $\ol{{\rm PHD}}_3$
satisfy certain dynamical properties.  These dynamical properties in
turn force the corresponding laminational models to have certain
properties. We will consider all laminations that satisfy these
properties and provide a simple classification of these laminations
which revolves around a reduction to laminations with model
polynomials from $\ol{{\rm PHD}}_2$.

The cubic Mandelbrot set $\M_3$ or its parts have been studied
before. P. Lavaurs in his thesis titled
\emph{Syst\`emes dynamiques holomorphes, Explosion de points
p\'eriodiques paraboliques} (Universit\'e
Paris-Sud, Orsay, 1989) proved that $\M_3$ is not locally
connected. Epstein and Yampolsky \cite{EY99} proved that the
bifurcation locus in the space of real cubic polynomials is not
locally connected either. This makes defining a combinatorial model
of $\M_3$ very delicate. Buff and Henriksen \cite{BH01} presented
copies of quadratic Julia sets, including Julia sets which are not
locally connected, in slices of $\M_3$; however, these copies are
disjoint from the closure of $\mathrm{PHD}_3$. Moreover, McMullen
\cite{mcm07} has shown that slices of $\M_3$ contain lots of copies
of $\M_2$. In addition, Gauthier \cite{gau14} has shown that $\M_3$
contains copies of $\M_2\times \M_2$. Observe also that in his
thesis titled \emph{Local connectivity in a family of cubic polynomials}
(Cornell University, 1992) D. Faught considered the slice $A$ of $\M_3$
consisting of polynomials with a fixed critical point and showed
that $A$ contains countably many homeomorphic copies of $\M_2$ and
is locally connected everywhere else. In particular, $A$ intersects
the boundary of $\mathrm{PHD}_3$ along a Jordan curve.

Roesch \cite{R06} generalized Faught's results to higher degrees.
Zakeri \cite{Z99} described some important Jordan curves in the
boundary of $\mathrm{PHD}_3$, whose points are represented by
polynomials with both critical points on the boundary of the same
Siegel disk. Milnor and Poirier \cite{MP92} gave a classification of hyperbolic
components in $\M_d$; in particular, he proved %a universality result
%that allows one to reduce the topology of many hyperbolic components
that the topology of many hyperbolic components can be reduced
to that of $\mathrm{PHD}_3$. In a recent paper \cite{PT09}, Petersen
and Tan Lei introduced an analytic coordinate system on
$\mathrm{PHD}_3$ that reflects dynamical properties of the
corresponding polynomials. The authors planned a sequel \cite{PRT}
to this paper, in which the boundary of $\mathrm{PHD}_3$ would be discussed.
After the main results of our paper had been obtained, we discovered
that our work may have some overlap with \cite{PRT}.

\subsection{Introduction to Laminations}\label{ss:lami}
Thurston \cite{thu85} gave a combinatorial model for the entire
Mandelbrot set. It has been conjectured that the Mandelbrot set is
homeomorphic to Thurston's model; in fact, this conjecture is
equivalent to local connectivity of $\M_2$. Although a global
homeomorphism is not known, some parts of the Mandelbrot set can be
shown to be homeomorphic to the corresponding parts of the model.
For example, $\car$ is homeomorphic to $\car^c$, and both sets are
homeomorphic to the closed disk (see, e.g., \cite{cg}).

For higher degree Mandelbrot sets $\M_d$ even conjectural
models are missing.
%Since the simplest polynomials are those from $\phd_d$,
To begin with, it is natural to model $\ol{\phd}_d$. As a first step
to solving this problem in the cubic case, we study individual
polynomials from $\ol{\phd}_3$. Similar to Thurston \cite{thu85}, we
use \emph{laminations}.

We will write $\C$ for the plane of complex numbers, $\widehat\C$ for
the Riemann sphere, and $\disk=\{z\in\C\,|\, |z|<1\}$ for the open unit disk.
A lamination is a closed equivalence relation $\sim$ on $\uc=\{z\in\C\,|\,|z|=1\}$,
whose classes are finite sets, such that the convex hulls of
different classes have disjoint relative interiors, see Definition \ref{d:lam}.
A lamination is ($\si_d$-)invariant if classes map to classes under
$\si_d:\uc\to\uc$, $z\mapsto z^d$ in a covering fashion. See
Definition \ref{d:si-inv-lam}, which makes this more precise.
If a polynomial $P$ has a locally connected Julia set $J$, then
there is a lamination $\sim_P$ identifying pairs of angles if the
corresponding external rays land at the same point.
The quotient
$J_{\sim_P}=\uc/{\sim_P}$ is homeomorphic to $J$, and the
self-mapping $f_{\sim_P}$ of $J_{\sim_P}$ induced by $\si_d$ is
conjugate to $P|_{J_P}$; the map $f_{\sim_P}$ and the set
$J_{\sim_P}$ are called a \emph{topological polynomial} and a
\emph{topological Julia set}, respectively. Laminations can play a
role for some polynomials whose Julia sets are not locally
connected. Then $P|_{J_P}$ and $f_{\sim_P}|_{J_{\sim_P}}$ are not
conjugate, however, they are semiconjugate by a monotone map (a continuous map,
whose fibers are continua). Topological Julia sets and polynomials
make sense for any $\si_d$-invariant lamination, not just those described
above.

It is very useful to associate to each lamination $\sim$ important
geometric objects defined below.
We will identify $\uc$ with $\R/\Z$.
For a pair of angles $a$, $b\in\R/\Z$, we will write $\ol{ab}$ for
the chord (a straight line segment in $\C$) connecting the points
of $\uc$ corresponding to angles $a$ and $b$.
If $G$ is the convex hull $\ch(G')$ of some closed set $G'\subset\uc$,
then we write $\si_d(G)$ for the set $\ch(\si_d(G'))$.
The set $G'=G\cap\uc$ is called the \emph{basis} of $G$.
The boundary of $G$ will be denoted by $\bd(G)$.

\begin{defn}[Leaves]\label{d:geola}
If $A$ is a $\sim$-class, call a chord $\ol{ab}$ in $\bd(\ch(A))$ a
\emph{leaf} of $\sim$. All points of $\uc$ are also called
(\emph{degenerate\emph{)} leaves}. The family $\lam_\sim$ of all
leaves of $\sim$ is called the \emph{geo-lamination {\rm (}geometric
lamination, or geodesic lamination{\rm )} generated by $\sim$}. The
union of all leaves of $\lam_\sim$ is denoted by $\lam^+_\sim$.
\end{defn}

In general, collections of leaves with properties similar to those
of collections $\lam_\sim$ are also called \emph{invariant
geo-laminations}, see Definition \ref{geolaminv} for a more precise
formulation. In fact, it is these collections that Thurston
introduced and studied in \cite{thu85}. Thus, laminations (and their
geo-laminations) on the one hand, and geo-laminations in general, on
the other hand, are studied in complex dynamics. The motivation of
this can be as follows. The direct association between polynomials
and laminations was described above. However, not every polynomial
$P$ can be directly associated to a suitable lamination. Hence it is
natural to consider polynomials close to $P$ for which such
association is possible and then associate to $P$ the appropriately
defined limit of their laminations. To define such a limit one has
to consider geo-laminations. However, limits of geo-laminations,
which were associated to laminations, are not easily associated to
laminations. This motivates the usage of ``abstract''
geo-laminations, i.e., geo-laminations not associated with any
lamination.

\begin{defn}[Gaps]\label{d:gaps-i}
Let $\lam$ be an invariant geo-lamination, e.g., we may have
$\lam=\lam_\sim$ for some invariant lamination $\sim$. The closure
in $\C$ of a non-empty component of $\disk\sm \lam^+$ is called a
\emph{gap} of $\lam$. \emph{Edges of a gap $G$} are defined as
leaves of $\lam$ on the boundary of $G$.
%If $G$ is a gap or a leaf,
%we call the set $G'=\uc\cap G$ the \emph{basis of $G$}.
A gap is said to be \emph{finite $($infinite$)$} if its basis is finite
(infinite). Infinite gaps of $\lam$ are also called \emph{Fatou
gaps}.

The map $\si_d:G'\to\si_d(G')$ extends to $\bd(G)$ as a composition
of a monotone map and a covering map of some degree $m$. Then $m$ is
called the \emph{degree} of $G$. A Fatou gap $G$ is \emph{periodic
$($of period $n)$} if the interiors of the sets $G$, $\si_d(G)$,
$\dots$, $\si_d^{n-1}(G)$ are disjoint while $\si_d^n(G)=G$. Such a
gap $G$ is said to be a periodic \emph{Siegel} gap if the degree of
$\si_d^n|_G$ is $1$.
%or a periodic \emph{Fatou gap of
%degree greater than $1$} if the degree of $\si_d^n|_G$ is greater
%than $1$;
If the degree of $G$ is $2$, then the gap $G$ is said to be \emph{quadratic}.
If the period of $G$ is $1$, then $G$ is said to be \emph{invariant}.
\end{defn}

\subsection{The cubioid}
Our studies are based on Theorem A.

\begin{thmA}[\cite{bopt13}]
If $[P]\in\ol{\phd}_d$, then $P$ has a fixed non-repelling point, no
repelling periodic cutpoints, and at most one non-repelling periodic
point with multiplier different from 1.
\end{thmA}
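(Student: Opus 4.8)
The plan is to realize $[P]$ as a limit of classes from $\phd_d$ and to transport each of the three assertions across the limit. Pick $[P_n]\to[P]$ with $[P_n]\in\phd_d$. Since the Julia sets $J_{P_n}$ are connected, the classes $[P_n]$ stay in a compact part of $\M_d$, so after normalizing --- say, taking $P_n$ monic of degree $d$ with its attracting fixed point at the origin --- and passing to a subsequence we may assume $P_n\to P$ uniformly on compact subsets of $\C$; set $\lambda_n=P_n'(0)$, so $|\lambda_n|<1$. Then $P(0)=\lim P_n(0)=0$ and $P'(0)=\lim\lambda_n$ has modulus $\le 1$, so $0$ is a non-repelling fixed point of $P$; if $0$ absorbs other fixed points of $P$ in the limit this merely raises its multiplicity as a root of $P(z)-z$ and forces $P'(0)=1$, which is still non-repelling. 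This yields the first conclusion.

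For the second conclusion, suppose toward a contradiction that some repelling periodic point $x$ of $P$, of period $m$, is a cutpoint of $J_P$. Since $J_P$ is connected, $x$ is the landing point of a nonempty finite $\sigma_d^m$-invariant collection of periodic external rays, and as $x$ separates $J_P$ this collection contains two rays of distinct angles $\theta_1\ne\theta_2$. Because $x$ is repelling, its cycle and this landing pattern persist under small perturbation within $\M_d$: for large $n$ the holomorphic continuation $x_{P_n}$ of $x$ is a repelling periodic point of $P_n$ at which the $\theta_1$- and $\theta_2$-rays of $P_n$ both land. But $[P_n]\in\phd_d$ forces $J_{P_n}$ to be a Jordan curve, so the landing map $\uc\to J_{P_n}$ is a homeomorphism and no point of $J_{P_n}$ receives two distinct rays --- a contradiction. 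Hence $P$ has no repelling periodic cutpoint.

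For the third conclusion, let $q$ be a non-repelling periodic point of $P$ of exact period $k$ with $\mu:=(P^k)'(q)\ne 1$; then $q$ is a simple root of $P^k(z)-z$, hence a limit of simple roots $q_n$ of $P_n^k(z)-z$, which are periodic points of $P_n$ of period dividing $k$. If $q\ne 0$ then for large $n$ the point $q_n$ is not on the unique non-repelling cycle of $P_n$ (the attracting fixed point near $0$), hence is repelling, so $|(P_n^k)'(q_n)|>1$; letting $n\to\infty$ gives $|\mu|\ge 1$, so $|\mu|=1$ and $q$ lies on a \emph{neutral} cycle of $P$. Thus every non-repelling periodic point of $P$ with multiplier $\ne 1$ is either the point $0$ or lies on a neutral cycle of $P$ distinct from $\{0\}$, and one is reduced to excluding such an extra neutral cycle $\mathcal O$. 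For this I would invoke the Fatou--Shishikura inequality: each non-repelling cycle of $P$ is associated to a critical point of its own --- in its immediate basin, in its parabolic petals, or as an accumulation point of a critical orbit --- so $\mathcal O$ would claim a critical point $c$ of $P$ whose orbit accumulates on $\mathcal O$ or falls into a parabolic basin of $\mathcal O$; a contradiction should then follow from the fact that all critical points of $P_n$ lie in a single Fatou component (the basin of the attracting fixed point) and converge to the critical points of $P$, so the ``critical budget'' of $P$ cannot simultaneously support $\{0\}$ and $\mathcal O$ while $P$ arises as such a limit.

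I expect this last step to be the main obstacle. The first two conclusions transfer almost formally, and the reduction in the third (non-repelling, multiplier $\ne 1$, and not on the distinguished fixed cycle $\Rightarrow$ neutral) is elementary; but a neutral cycle with multiplier $\ne 1$ need not remain non-repelling along the \emph{given} approximating sequence --- its multiplier can drift into and back out of the closed unit disk --- so it cannot simply be carried over to the $P_n$, and closing the gap genuinely uses the global constraint coming from $\phd_d$ together with control of critical orbits under perturbations near neutral cycles.
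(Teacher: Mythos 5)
The present paper does not prove Theorem~A; it cites it verbatim from \cite{bopt13}, so there is no proof here to compare against. Judging the proposal on its own merits:

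Your arguments for the first two conclusions are essentially correct. For the first, once you normalize each $P_n$ to be monic with its attracting fixed point at the origin, compactness gives a convergent subsequence whose limit realizes $[P]$ with $0$ a non-repelling fixed point. For the second, the key facts — that a repelling periodic cutpoint of a connected Julia set is the landing point of at least two external rays, and that this landing pattern is stable under perturbation — are standard, and combined with the fact that $J_{P_n}$ is a Jordan curve (so the Carath\'eodory loop is injective) they do give the contradiction. A fully careful writeup would cite the cutpoint--ray correspondence and the stability of landing at repelling points, but there is no gap in the idea.

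The third conclusion is where the proof genuinely stops short, and you acknowledge this. Your reduction is right: tracking a simple root $q_n$ of $P_n^k(z)-z$ shows any non-repelling $q\ne 0$ with multiplier $\ne 1$ must be neutral. But the Fatou--Shishikura ``critical budget'' sketch does not close, because the constraint that all critical points of $P_n$ lie in one attracting basin is a statement about $P_n$, and it does not pass to the limit: critical orbits of $P$ can redistribute in the limit, and nothing in the convergence $P_n\to P$ alone prevents $P$ from having two non-repelling cycles with multiplier $\ne 1$, each with its own associated critical point. What is missing is a mechanism that converts the coexistence of two such cycles (or a single such cycle of period $>1$ --- note the statement bounds the number of \emph{points}, not cycles, so non-fixed neutral cycles with multiplier $\ne 1$ must also be excluded) into a contradiction with one of the other two conclusions; for instance, producing a repelling periodic cutpoint by a perturbation that stays inside $\ol\phd_d$. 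That step is the substantive content of this part of the theorem and is absent from the proposal.
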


A polynomial $P$ with $[P]\in \ol{\phd}_3$ has at most two
non-repelling cycles. One of them must be a fixed point (as we
approximate $P$ with polynomials $g$, whose classes belong to
$\phd_3$, the attracting fixed points of $g$ converge to a
non-repelling fixed point of $P$). If there is a non-repelling cycle
of period greater than 1, then by Theorem A this cycle must have
multiplier 1. Moreover, all fixed neutral points of $f$ but one must
have multiplier 1. In this paper, we will consider all polynomials
which satisfy the conclusions of Theorem A.

\begin{defn}\label{d:cu}
The \emph{main cubioid} $\cu$ is defined as the set of all classes of cubic polynomials
that have a fixed non-repelling point, no repelling
periodic cutpoints and at most one non-repelling periodic point
with multiplier different from 1.
\end{defn}

If $[P]\in\cu$ and $J_P$ is locally connected, then
Definition~\ref{d:cu} forces the corresponding lamination $\sim_P$
to have certain properties (see Lemma~\ref{l:cu}). By an edge of a
$\sim_P$-class, we mean an edge of the convex hull of that class. If
an edge is periodic, its vertices may have a larger period than the
period of the edge (for instance a class with vertices
$\frac14,\frac34$ is fixed under $\si_3$ but its vertices are of
period two). Periodic gaps/leaves of a geo-lamination $\lam$ are
naturally associated with a rotation number (unless these are
periodic Fatou gaps of degree greater than 1). They are said to be
\emph{rotational} if the rotation number is not zero. A thorough
treatment is given in Section~\ref{s:prelim}.

\begin{lem}\label{l:cu}
If a polynomial $P$ with $[P]\in \cu$ has locally connected Julia set, then the lamination
$\sim_P$ has at most one rotational periodic set \emph{(}hence this set must be fixed\emph{)}.
Moreover, each periodic non-degenerate $\sim_P$-class $G$ has a
cycle of edges of vertex period $n$, at which a Fatou gap of period $n$ is
attached to $G$.
\end{lem}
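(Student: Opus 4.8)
The plan is to translate the purely dynamical hypotheses on $P$ (fixed non-repelling point, no repelling periodic cutpoints, at most one non-repelling periodic point with multiplier $\ne 1$) into statements about $\sim_P$ via the standard dictionary: periodic $\sim_P$-classes correspond to cutpoints of $J_P$ that are landing points of cycles of external rays, and the rotation number of such a class reflects the combinatorial rotation number of $P$ at the corresponding periodic point. First I would recall that, since $J_P$ is locally connected, every periodic cutpoint of $J_{\sim_P}$ is the image of a periodic point of $P$ under the conjugacy, and every periodic point of $P$ that is a cutpoint of $J_P$ gives a periodic $\sim_P$-class; moreover a rotational periodic $\sim_P$-class (one with nonzero rotation number and degree $1$ as a rotational set) corresponds to a periodic point that permutes its external rays nontrivially and is therefore non-repelling with multiplier $\ne 1$ — more precisely, a repelling periodic cutpoint with a nontrivial combinatorial rotation number cannot occur here because such rotation would force the point to be non-repelling (a repelling periodic point has locally "fixed" ray combinatorics in the sense relevant here), so a rotational periodic set must sit at a non-repelling periodic point.

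The first key step: show $\sim_P$ has at most one rotational periodic set. By the previous paragraph, distinct rotational periodic $\sim_P$-classes would yield distinct non-repelling periodic points of $P$ each with multiplier $\ne 1$ (rotation number nonzero forces multiplier $\ne 1$: a parabolic point with rotation number $p/q$, $p\ne 0$, or a Siegel/Cremer point). Theorem A (via Definition~\ref{d:cu}) allows at most one such point. Hence at most one rotational periodic set. To see it is fixed rather than of higher period: the fixed non-repelling point guaranteed by the hypothesis is one candidate; if the unique rotational set had period $n>1$, then applying $\si_3$ would produce $n$ distinct rotational periodic sets in the same cycle, contradicting uniqueness — so it is a single class, hence fixed. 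I would be careful here about the degenerate case: the fixed non-repelling point need not itself be a cutpoint (it could be in a Fatou domain, or have rotation number zero), so the statement is only that \emph{if} a rotational periodic set exists it is fixed, which is exactly what the lemma asserts.

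The second key step is the structural claim about periodic non-degenerate classes $G$. Let $G$ be such a class with period $m$ (as a set). Its edges form one or more cycles under $\si_3$; let $\ol{ab}$ be an edge of vertex period $n$ (a multiple of $m$). The strip between $\ol{ab}$ and the boundary arc it subtends is mapped by $\si_3^n$ onto a larger such strip unless the degree of $\si_3^n$ on that side is $1$, in which case the arc is $\si_3^n$-invariant and must "collapse" — i.e., there is a Fatou gap $U$ on the outside of $\ol{ab}$ with $\si_3^n(U)=U$. The reason a Fatou gap must be present rather than, say, the arc being filled with further leaves of a rotational sub-structure: any periodic point other than $P$'s unique exceptional one is repelling, and by the no-repelling-cutpoints hypothesis the periodic structure hanging off a repelling periodic edge cannot itself contain periodic cutpoints with nontrivial behavior; counting preimages of $\uc$ under $\si_3^n$ on the relevant arc shows the arc carries critical pointand must be occupied by a periodic Fatou gap of period $n$. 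Doing this edge by edge around the cycle of edges of $G$ gives the claimed cycle of period-$n$ Fatou gaps attached along a cycle of edges of vertex period $n$.

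The main obstacle I expect is the second step, specifically ruling out that the region "behind" a periodic edge of $G$ is something other than a single periodic Fatou gap — e.g., that it could be a rotational gap, a finite periodic gap with its own further structure, or an infinite gap of the wrong degree. This requires a careful degree/critical-point count on $\si_3^n$ restricted to the relevant boundary arc, combined with the "no repelling periodic cutpoints" hypothesis to exclude periodic cutpoints strictly inside that arc, and possibly the observation that in degree $3$ there is not enough critical "room" for more than the one Fatou gap per edge. The uniqueness part (first step) is comparatively routine once the correspondence between rotational sets and non-repelling periodic points with multiplier $\ne 1$ is set up correctly; the delicate point there is making the correspondence rigorous, i.e., checking that rotation number zero (or an edge being periodic while its endpoints are not) does not sneak in an extra rotational set, and that a repelling periodic cutpoint genuinely has rotation number zero in the precise sense defined in Section~\ref{s:prelim}.
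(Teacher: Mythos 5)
Your first step reproduces the paper's argument: a rotational periodic $\sim_P$-class maps under the conjugacy to a periodic cutpoint $x$ of $J_P$, which cannot be repelling by the no-repelling-cutpoints hypothesis and must have multiplier $\ne 1$ because the class is rotational (parabolic with rotation number $p/q$, $p\ne 0$, or Siegel if the class is an infinite Siegel gap); by the definition of $\cu$ at most one such non-repelling cycle with multiplier $\ne 1$ exists, so the rotational set is unique, and uniqueness forces it to be fixed. This is exactly the route the paper takes, and it is correct.

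For the second claim, however, you have departed from the paper's argument and your replacement does not close. The paper's proof is very short and purely dynamical: a periodic non-degenerate $\sim_P$-class $G$ maps under the conjugacy to a periodic cutpoint $x\in J_P$, which is non-repelling by Theorem A; since $J_P$ is locally connected, $x$ can be neither Cremer nor Siegel (Siegel periodic points are not cutpoints, and Cremer points obstruct local connectivity), so $x$ is parabolic, and a parabolic periodic point has a cycle of attached immediate parabolic basins whose boundaries correspond, via the Carath\'eodory semiconjugacy, to the required cycle of period-$n$ Fatou gaps attached to $G$ along a cycle of edges of vertex period $n$. You try instead to do a combinatorial degree count on arcs behind the edges of $G$, and along the way assert that ``any periodic point other than $P$'s unique exceptional one is repelling.'' That is false under the hypotheses: Theorem~A and Definition~\ref{d:cu} allow at most one non-repelling cycle with multiplier $\ne 1$, but they permit arbitrarily many parabolic cycles with multiplier $1$. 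Your degree-counting sketch (``the arc carries a critical point and must be occupied by a periodic Fatou gap'') also does not by itself force a Fatou gap of the asserted period $n$ to be attached along a whole cycle of edges; you flag this yourself as the weak point. The missing idea is precisely the dynamical step the paper uses --- identify $x$ as parabolic and invoke the existence of parabolic Fatou components attached at $x$ --- rather than a purely combinatorial argument on the circle.
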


\begin{proof}
Let $\bf g$ be a rotational periodic set of $\sim_P$, and let
$\psi_P$ denote the semi-conjugacy between $\si_3:\uc\to\uc$ and
$P:J_P\to J_P$ obtained as the composition of the quotient map from
$\uc$ to $\uc/\sim_P$ and the natural conjugacy between $f_{\sim_P}$
and $P|_{J_P}$. If $\bf g$ is finite, it maps under the
semiconjugacy $\psi_P$ to a periodic point $x\in J_P$, which must be
a cutpoint of $J_P$. By Theorem A, the point $x$ cannot be
repelling. Hence, and since $\bf g$ is rotational, the point $x$ is
parabolic with multiplier different from 1. If $\bf g$ is infinite,
then by definition $\bf g$ is a periodic Siegel gap which, since
$J_P$ is locally connected, corresponds to a periodic Siegel domain
of $P$ which contains a periodic Siegel point. Now from Theorem A it
follows that $\bf g$ is unique. Indeed, by the above each rotational
periodic set $\bf g$ of $\sim_P$ generates a periodic non-repelling
point of multiplier distinct from $1$. Yet by Theorem A the
polynomial $P$ has at most one periodic non-repelling point of
multiplier distinct from $1$. Hence there is at most one rotational
set $\bf g$. This implies the first claim of the lemma. Since each
periodic non-degenerate $\sim_P$-class $G$ corresponds to a
parabolic periodic point, the second claim follows.
\end{proof}

\begin{defn}[Combinatorial Main Cubioid]
\label{d:cubioid}
Define the \emph{Combinatorial Main Cubioid} $\cu^c$ as the family of all
cubic laminations $\sim$ with at most one rotational periodic set
(the set must be fixed because if its period were greater than $1$,
then there would be at least two such sets) and such that each
periodic non-degenerate $\sim$-class $G$ has a cycle of edges of
vertex period $n$ at which Fatou gaps of period $n$ are attached to
$G$.
\end{defn}

\subsection{Main results}\label{ss:mainre}

In this paper we classify all laminations in $\cu^c$. In doing so it
is important to keep in mind that any cubic lamination either has
two critical sets of degree 2 (so that either set maps two-to-one on
its image) or one critical set of degree 3 (which maps onto its
image three-to-one).

To state our results, we need to discuss invariant quadratic gaps of
cubic laminations. Let $U$ be such a gap. We show that the
gap $U$ has a unique \emph{major edge} (or simply \emph{major})
$M=\ol{ab}$ such that the arc $(a, b)$ is of length at least
$\frac13$ and contains no points of $\ol U$. It turns out that $M$
can be either periodic (then $U$ is said to be of \emph{periodic
type}) or critical (then $U$ is said to be of \emph{regular critical
type}).

In either case there is a specific \emph{canonical lamination
$\sim_U$} associated to $U$. In the regular critical case $\sim_U$
is defined as follows: $a\sim_U b$ if there is $N\ge 0$ such that
$\si_3^N(a)$ and $\si_3^N(b)$ are endpoints of the same edge of $U$,
and the set $\{\si_3^i(a),\si_3^i(b)\}$ is not separated by $U$ for
$i=0$, $\dots$, $N-1$. Basically, this means that there is a gap $V$
attached to $U$ along its major and folding on top of $U$ under
$\si_3$; the rest of $\sim_U$ consists of well-defined pullbacks of
$V$, which accumulate to points of $\uc$.

Now, let $U$ be of periodic type. Then one can define a periodic
quadratic gap $V$ attached to $U$ along its major $M_U=M$. The gap
$V$ has the same period $n$ as $M$ and can be defined as follows.
Take all points $a\in \uc$ such that, for any $i\ge 0$, the point
$\si_3^i(a)$ is separated from $U$ by $\si^i_3(M)$ or is an endpoint
of $\si_3^i(M)$. Then $V$ is the convex hull of the set of all such
points $a$. We call $V=V(U)$ the \emph{vassal} gap of $U$. Now,
define the lamination $\sim_U$ as follows: $a\sim_U b$ if there
exists $N\ge 0$ such that $\si_3^N(a)$ and $\si_3^N(b)$ are
endpoints of the same edge of $U$ or the same edge of $V$, and the
chord $\ol{\si_3^i(a)\si_3^i(b)}$ is disjoint from $U\cup V$ for
$i=0$, $\dots$, $N-1$. Basically, $\sim_U$ includes $U$, the orbit
of $V$, and the rest of $\sim_U$ consists of well-defined pullbacks
of $V$ which accumulate to points of $\uc$.

Let us now give a heuristic description of laminations from $\cu^c$.
Namely, a lamination $\sim$ from $\cu^c$ can be thought of as a
result of an at most two-step process. First, an invariant quadratic
Fatou gap $U$ is created, together with its canonical lamination
$\sim_U$. This could end the whole process. However it could also
happen that afterwards the gap $U$ is (\emph{weakly\emph{)} tuned}
(see Definition~\ref{d:tunegap-i} and Definition~\ref{d:wtunegap-i})
by a quadratic lamination from the Combinatorial Main Cardioid
(since $\si_3|_{U'}$ is semiconjugate to $\si_2$ by a map collapsing
all edges of $U$, it is easy to define tuning in this setting). We
show that basically this mechanism describes all laminations from
$\cu^c$.  Although the classification given in the Main Theorem
%of Theorem~\ref{t:cormin-spec}
is the main result, we do obtain as a corollary that
Definition~\ref{d:cubioid} implies an even stronger condition on
attached Fatou gaps.

\begin{cor}
\label{c:othercu-i}
A lamination $\sim$ belongs to $\cu^c$ if and only if it has at most
one rotational periodic $($hence fixed$)$ set and, for each leaf
$\ell$ of $\sim$ of vertex period $n$, there is a Fatou gap of
period $n$ attached to $\ell$.
\end{cor}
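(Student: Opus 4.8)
\medskip
\noindent\emph{Proof plan.}
The ``if'' direction is short. Assume $\sim$ has at most one rotational periodic set and that every leaf of $\sim$ of vertex period $n$ has a Fatou gap of period $n$ attached to it. Let $G$ be a periodic non-degenerate $\sim$-class, say $\si_3^m(G)=G$; since $G$ is finite, a power of $\si_3$ fixes each vertex of $G$, hence each edge of $\ch(G)$, so every edge of $G$ is a periodic leaf of $\sim$. Grouping these edges into $\si_3$-cycles, each cycle consists of leaves of one common vertex period $n$, to each of which a Fatou gap of period $n$ is attached by hypothesis. This is exactly the second clause of Definition~\ref{d:cubioid}, so $\sim\in\cu^c$.

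For the ``only if'' direction, the bound on rotational periodic sets is literally part of the definition of $\cu^c$, so only the statement about attached Fatou gaps needs proof, and for this I would invoke the classification in the Main Theorem. Let $\sim\in\cu^c$. By the Main Theorem, either (i) $\sim=\sim_U$ for an invariant quadratic gap $U$ of regular critical type, or (ii) $\sim=\sim_U$ for an invariant quadratic gap $U$ of periodic type, or (iii) $\sim$ is obtained from a lamination $\sim_U$ as in (i) or (ii) by weakly tuning $U$ (Definition~\ref{d:wtunegap-i}) with a quadratic lamination $\approx$ from $\car^c$. In each case the plan is to enumerate the periodic non-degenerate leaves of $\sim$ together with the gaps they bound and to match periods. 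In case (i) the major $M$ of $U$ is critical, hence pre-periodic, so every edge of $U$ and every pullback of the folding gap $V$ is pre-periodic; thus $\sim_U$ has no periodic non-degenerate leaf and there is nothing to prove. In case (ii) the monotone map collapsing the edges of $U$ carries $\si_3|_{\bd(U)}$ to $\si_2$, and from this one reads off that the periodic edges of $U$ are exactly those in the orbit of its major $M$; since $U$ is invariant and $\si_3$ is orientation preserving, $\si_3^{n}$ (with $n$ the leaf period of $M$) cannot interchange the endpoints of $M$, so $M$ has vertex period $n$, and the vassal gap $V$ and its orbit, which are Fatou gaps of period $n$, supply along each $\si_3^{i}(M)$ an attached Fatou gap of period exactly $n$. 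The periodic edges of the orbit of $V$ are again edges shared with the appropriate member of the orbit of $U$ or of $V$, of matching period, and are handled symmetrically; no further periodic non-degenerate leaves occur, so the claim holds in case (ii).

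Case (iii) is the substantive one and I expect it to be the main obstacle. Weak tuning inserts inside $U$ a transported copy of the geo-lamination of $\approx$ via the monotone map $\varphi:U'\to\uc$ collapsing the edges of $U$; because $U$ is invariant, $\varphi$ conjugates $\si_3|_{U'}$ to $\si_2$ \emph{without} rescaling periods. Consequently the periodic non-degenerate leaves of $\sim$ split into those lying outside $U$ --- precisely the leaves analysed in case (ii) --- and those lying inside $U$, each of which is the $\varphi$-preimage of a periodic non-degenerate leaf of $\approx$ of the same period, the Fatou gaps attached to it in $\sim$ being the $\varphi$-preimages of the Fatou gaps attached to the corresponding leaf in $\approx$. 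The remaining ingredient is then the degree-two analogue of the corollary: every periodic non-degenerate leaf of a lamination from $\car^c$ of vertex period $n$ has a Fatou gap of period $n$ attached. This is classical --- a lamination in $\car^c$ is either empty or carries a single periodic rotational polygon arising from a parabolic parameter of the Main Cardioid, whose edges are bounded on the outside by the periodic Fatou gaps modelling the parabolic petals, of the matching period --- and transporting this picture into $U$ by $\varphi^{-1}$ and combining with case (ii) completes the ``only if'' direction, and hence Corollary~\ref{c:othercu-i}. The points that require care in case (iii) are that weak tuning creates no periodic leaf straddling $\bd(U)$ with a mismatched period --- it does not, since edges of $U$ that were non-periodic remain non-periodic and $\varphi$ introduces no rescaling --- and that a periodic Fatou gap of the tuned lamination sitting inside $U$ has period equal to the one computed in $\approx$, again by the absence of rescaling.
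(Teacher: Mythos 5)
Your overall strategy matches the paper's: both the paper's proof of Corollary~\ref{c:othercu-i} and your proposal take the ``if'' direction to be essentially immediate from Definition~\ref{d:cubioid}, and prove the ``only if'' direction by running through the cases of the Main Theorem (Theorem~\ref{t:cormin-spec}), identifying the periodic non-degenerate leaves of $\sim$ in each case and exhibiting a Fatou gap attached to each one, with the period matched. Your cases (i) and (ii) handle the canonical laminations exactly as the paper does (vacuous for regular critical $U$; orbit of $M(U)$ with the vassal orbit attached for periodic type), and your case (iii) performs the reduction through $\psi_U$ to a statement about $\car^c$, which is implicit in the paper's terser ``the only periodic leaves of $\sim$ are edges of $G$.''

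A few things deserve to be made precise. First, your case (iii) conflates the Main Theorem's Cases (1) and (2): in Case~(1) $U$ is \emph{regular critical} and edges of $U$ are not leaves of $\sim$, so there are no periodic non-degenerate leaves of $\sim$ on $\bd(U)$ or in pullbacks of $U$ at all (a periodic leaf in a strict pullback $U_1$ of $U$ would lie in both $U_1$ and $U$, hence be a shared edge, contradicting that edges of $U$ are not leaves of $\sim$) --- this is not ``precisely the leaves analysed in case (ii),'' which concerned periodic-type $U$. In Case~(2) the edges of $U$ \emph{are} leaves of $\sim$, the periodic ones form the orbit of $M(U)$, and $V(U)$ is a gap of $\sim$, so the vassal orbit supplies the attached Fatou gaps; you should say which case you are in before appealing to the analysis of case (ii). Second, the ``classical'' fact you invoke about $\car^c$ --- that a lamination in $\car^c$ is the canonical lamination of a single $\si_2$-rotational set with a cycle of Fatou gaps attached to its edges --- is Proposition~\ref{p:coremin-qua} of this paper (together with the definition of the quadratic canonical lamination preceding it), and should be cited as such rather than asserted as folklore. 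With those clarifications your argument is sound and is in substance the same as the paper's, with the projection-to-$\asymp$ step spelled out more explicitly than the paper chooses to do.
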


To give the precise statement of our main result, we have to
introduce a few definitions. The first one relates different
laminations.

\begin{defn}\label{d:co-exist-i}
Let $G$ be a gap of some lamination. A lamination $\sim$ {\em
coexists with} $G$ if every leaf of $\sim$ intersecting an edge
$\ell$ of $G$ in $\disk$ coincides with $\ell$. A lamination $\sim$
{\em coexists} with a lamination $\simeq$ if no leaf of $\sim$
intersects a leaf of $\simeq$ in $\disk$ unless the two leaves
coincide.
\end{defn}

Now we can formalize some situations in which one lamination
modifies another one.

\begin{defn}\label{d:tuning-i}
Suppose that two laminations, $\sim$ and $\simeq$, are given.

\begin{enumerate}

\item Say that $\sim$ \emph{tunes} $\simeq$ if
$\lam_\sim\supset \lam_\simeq$.

\item Let $G$ be a Fatou gap of some lamination. If
all edges of $G$ are leaves of $\sim$ then we say that $\sim$
\emph{tunes} the gap $G$.

\end{enumerate}

\end{defn}

The terminology can be better understood if we think of $\lam_\sim$
as being obtained by adding new leaves to $\lam_\simeq$; since these
leaves can only be added \emph{inside} gaps of $\lam_\sim$, we can
think of this as \emph{tuning} of gaps of $\sim$ which explains the
terminology.

Given a periodic Fatou gap $G$ of some unspecified lamination,
consider a map $\psi_G:\bd(G)\to \uc$ that collapses all edges of
$G$ to points; $\psi_G$ maps $\bd(G)$ onto the unit circle $\uc$. If
$G$ is periodic of period $n$, then $\psi_G:\bd(G)\to \uc$
semiconjugates $\si_d^n|_{\bd(G)}$ to either irrational rotation (if
$G$ is a Siegel gap) or to $\si_k$ (if $G$ is of degree $k>1$). Now,
suppose that $\sim$ tunes a periodic quadratic gap $G$. Then leaves
of $\lam_\sim$ contained in $G$ map under $\psi_G$ to chords of
$\ol{\disk}$. In Section~\ref{s:qgaps}, we show that these chords
can be viewed as leaves of a geo-lamination generated by an
invariant lamination; this lamination is denoted by $\psi_G(\sim)$.

\begin{defn}\label{d:tunegap-i}
Suppose that $\sim$ tunes a periodic quadratic gap $G$. Then we say
that $\sim$ \emph{tunes $G$ according to the lamination
$\psi_G(\sim)$}. If $G$ is a gap of a lamination $\simeq$, and
$\sim$ tunes $\simeq$, then for brevity we also say that $\sim$
\emph{tunes $\simeq$ according to the lamination $\psi_G(\sim)$} (in
general, in this last case the behavior of $\sim$ outside $G$, even
though compatible with $\simeq$, is not completely defined by the
way $\sim$ tunes $G$). Clearly, distinct laminations can tune the
same quadratic invariant gap.
\end{defn}

A weaker (and finer) case of tuning is described in the next
definition. Let us emphasize that in it, $\sim$ only coexists with a
periodic quadratic gap $U$ (i.e., $U$ is not necessarily a gap of
$\sim$).

\begin{defn}\label{d:wtunegap-i}
If $\sim$ coexists with a periodic quadratic gap $U$ of some unspecified
lamination, and
$\psi_U(\sim)$ coincides with a quadratic lamination $\asymp$, then
we say that $\sim$ \emph{weakly tunes $U$ according to the
lamination $\asymp$}. If $U$ is a periodic gap of a lamination
$\simeq$ and $\sim$ coexists with $\simeq$, then for brevity we also
say that $\sim$ \emph{weakly tunes $\simeq$ $($on $U$$)$ according
to the lamination $\asymp$}.
\end{defn}

We are ready to state our main theorem. Recall the vassal gap $V(U)$
was defined in the second paragraph of Subsection~\ref{ss:mainre}.

\begin{thmM}
\label{t:cormin-spec-i} Let $\sim$ be a non-empty lamination from
$\cu^c$. Then there exists an invariant quadratic gap $U$ for which
$(1)$ or $(2)$ takes place.
\begin{enumerate}
\item The lamination $\sim$ coexists with the canonical lamination
$\sim_U$ and weakly tunes $\sim_U$ on $U$ according to a quadratic
lamination $\asymp$ from $\car^c$ so that edges of $U$ are not
leaves of $\sim$. Moreover, $U$ can be chosen to be of regular
critical type $($and, if $\sim$ is not canonical, then $U$ must be of
regular critical type$)$.
\item The lamination $\sim$ tunes the canonical lamination
$\sim_U$ according to a quadratic lamination $\asymp$ from $\car^c$
$($possibly empty$)$, and if $U$ is of periodic type, then the
vassal gap $V(U)$ is a gap of $\sim$.
\end{enumerate}
%If the $\psi_U$-image of the major $M$ of $U$ does not eventually
%map $($by $\si_2)$ to a periodic Fatou gap of $\asymp$, then case
%$(2)$ holds; also, this is the case when $U$ is of periodic type and
%the lamination $\sim$ is not the canonical lamination of a finite
%invariant rotational gap or leaf of type A or B.
\end{thmM}

The rest of the paper is structured as follows. In
Section~\ref{s:prelim}, we give precise definitions of laminations
and related terminology and state technical results, including
Theorem~\ref{t:fxpt} from \cite{bfmot10}, which provides a method for
finding fixed objects in laminations. Section~\ref{s:qgaps}
investigates the structure of quadratic invariant gaps and the
related notion of canonical laminations of quadratic invariant gaps,
upon which the classification of cubioidal laminations rests. Tuning
and weak tuning, processes which insert laminations into infinite
gaps of other laminations, are also developed there. Invariant
rotational sets play a prominent role in the definition of the
Combinatorial Main Cubioid.  In Section~\ref{s:rotgaps}, we define
and study various canonical laminations.  All of these notions are
combined in Section~\ref{s:descricu}, where the full classification
of $\cu^c$ in terms of (weak) tuning quadratic invariant gaps is
given.
%For convenience we repeat some of the definitions given in
%the Introduction, in the body of the paper.

%\subsubsection*{Acknowledgements}
\section*{Acknowledgements}
During the work on this project, the fourth named author (V.T.) has visited
Max Planck Institute for Mathematics (MPIM), Bonn. During the first
revision of the paper the first named author (A.B.) was also visiting  Max
Planck Institute for Mathematics (MPIM), Bonn. A.B. and V.T. are very
grateful to MPIM for inspiring working conditions. We all would also
like to thank the referee for useful remarks.

\section{Preliminaries}\label{s:prelim}

Let $a$, $b\in \uc$. By $[a, b]$, $(a, b)$, etc, we mean the closed,
open, etc \emph{positively oriented} circle arcs from $a$ to $b$,
and by $|I|$ the length of an arc $I$ in $\uc$ normalized so that
the length of $\uc$ is $1$. In this section, we will introduce
classic results due to Douady and Hubbard \cite{hubbdoua85a, hubbdoua85b} and
Thurston \cite{thu85}. These results allow one to study connected
filled-in Julia sets by means of studying their complements in the
complex plane.

\subsection{Laminations}\label{ss:lam}
For a compactum $X\subset\C$, let $\iU(X)$ be the unbounded component of
$\hc\sm X$ containing infinity. If $X$ is connected, there exists a
Riemann mapping $\Psi_X:\hc\sm\ol\bbd\to \iU(X)$; we always normalize it
so that $\Psi_X(\infty)=\infty$ and $\Psi'_X(z)$ tends to a positive
real limit as $z\to\infty$.

Consider a polynomial $P$ of degree $d\ge 2$ with Julia set $J_P$
and filled-in Julia set $K_P$. Extend $z^d: \C\to \C$ to a map
$\ta_d$ on $\hc$. If $J_P$ is connected, then
$\Psi_{J_P}=\Psi:\C\sm\ol\bbd\to \iU(K_P)$ is such that $\Psi\circ
\ta_d=P\circ \Psi$ on the complement of the closed unit disk
\cite{hubbdoua85a, hubbdoua85b, miln00}. If $J_P$ is locally connected, then $\Psi$
extends to a continuous function
$$
\ol{\Psi}: {\hc\sm\bbd}\to
\ol{\hc\setminus K_P},
$$
and $\ol{\Psi} \circ\,\ta_d=P\circ\ol{\Psi}$ on the complement of
the open unit disk; thus, we obtain a continuous surjection
$\ol\Psi\colon\bd(\bbd)\to J_P$ (the \emph{Carath\'eodory loop}, see
\cite{car913}).
%Identify $\uc=\bd(\bbd)$ with
%$\mathbb{R}/\mathbb{Z}$.

Let $J_P$ be locally connected, and set $\psi=\ol{\Psi}|_{\uc}$.
Following Thurston \cite{thu85} (see also \cite{hubbdoua85a, hubbdoua85b}), define
an equivalence relation $\sim_P$ on $\uc$ by $x \sim_P y$ if and
only if $\psi(x)=\psi(y)$, and call it the ($\si_d$-invariant) {\em
lamination of $P$} (since $\Psi$ defined above semiconjugates
$\ta_d$ and $P$, the map $\psi$ semiconjugates $\si_d$ and
$P|_{J(P)}$ which implies that $\sim_P$ is invariant). Equivalence
classes of $\sim_P$ are pairwise \emph{unlinked}: their Euclidian
convex hulls are disjoint. The topological Julia set
$\uc/\sim_P=J_{\sim_P}$ is homeomorphic to $J_P$, and the
topological polynomial $f_{\sim_P}:J_{\sim_P}\to J_{\sim_P}$ is
topologically conjugate to $P|_{J_P}$. One can extend the conjugacy
between $P|_{J_{P}}$ and $f_{\sim_P}:J_{\sim_P}\to J_{\sim_P}$ to a
conjugacy on the entire plane.

An equivalence relation $\sim$ on the unit circle, with similar
properties to those of $\sim_P$ above,  can be introduced abstractly
without any reference to the Julia set of a complex polynomial (see
\cite{blolev02a}).

\begin{defn}[Laminations]\label{d:lam}

An equivalence relation $\sim$ on the unit circle $\uc$ is called a
\emph{lamination} if it has the following properties:

\noindent (E1) the graph of $\sim$ is a closed subset in $\uc \times
\uc$;

\noindent (E2) if $t_1\sim t_2\in \uc$ and $t_3\sim t_4\in \uc$, but
$t_2\not \sim t_3$, then the open straight line segments in $\C$
with endpoints $t_1, t_2$ and $t_3, t_4$ are disjoint;

\noindent (E3) each equivalence class of $\sim$ is finite. %totally
%disconnected.
\end{defn}

Recall that, for a closed set $A\subset \uc$, we denote its convex hull by
$\ch(A)$. Then by an \emph{edge} of $\ch(A)$ we mean a closed
subsegment of the straight line connecting two points of the unit
circle which is contained in the boundary of $\ch(A)$. By an
\emph{edge} of a $\sim$-class we mean an edge of the convex hull of
that class.

\begin{defn}[Laminations and dynamics]\label{d:si-inv-lam}
A lamination $\sim$ is called ($\si_d$-){\em invariant} if:

\noindent (D1) $\sim$ is {\em forward invariant}: for a class $\mathbf{g}$,
the set $\si_d(\mathbf{g})$ is a class too;

\noindent (D2) for any $\sim$-class $\mathbf{g}$, the map
$\si_d:\mathbf{g}\to\si_d(\mathbf{g})$ extends to $\uc$ as an
orientation preserving covering map
such that $\mathbf{g}$ is the full preimage of $\si_d(\mathbf{g})$ under this
covering map.

\end{defn}

Definition~\ref{d:si-inv-lam} (D2) has an equivalent version. Given
a closed set $Q\subset \uc$, a (positively oriented) {\em hole $(a,
b)$ of $Q$ $($or of $\ch(Q))$} is a component of $\uc\sm Q$. Then
(D2) is equivalent to the fact that for a $\sim$-class $\mathbf{g}$
either $\si_d(\mathbf{g})$ is a point or, for each positively
oriented hole $(a, b)$ of $\mathbf{g}$, the positively oriented arc
$(\si_d(a), \si_d(b))$ is a hole of $\si_d(\mathbf{g})$. From now
on, we assume in this paper that, unless stated otherwise, $\sim$ is
a $\si_d$-invariant lamination.

Given $\sim$, consider the \emph{topological Julia set}
$\uc/\sim=J_\sim$ and the \emph{topological polynomial}
$f_\sim:J_\sim\to J_\sim$ induced by $\si_d$. Using Moore's Theorem,
embed $J_\sim$ into $\C$ and extend the quotient map
$\pr_\sim:\uc\to J_\sim$ to $\C$ with the only non-trivial fibers
being the convex hulls of non-degenerate $\sim$-classes. A
\emph{Fatou domain} of $J_\sim$ (or of $f_\sim$) is a bounded
component of $\C\sm J_\sim$. If $U$ is a periodic Fatou domain of
$f_\sim$ of period $n$, then $f^n_\sim|_{\bd(U)}$ is conjugate
either to an irrational rotation of $\uc$ or to $\si_k$ with some
$k>1$ \cite{blolev02a}. In the case of irrational rotation, $U$ is
called a \emph{Siegel domain}.
%and extend $f_\sim$ to a
%branched-covering map $f_\sim:\C\to \C$ of the same degree. Indeed,
The complement of the unbounded component of $\C\sm J_\sim$ is called
the \emph{filled-in topological Julia set} and is denoted by
$K_\sim$.
Equivalently, $K_\sim$ is the union of $J_\sim$ and its bounded Fatou domains.
If the lamination $\sim$ is fixed, we may omit $\sim$ from
the notation. By default, we consider $f_\sim$ as a self-mapping of
$J_\sim$. In what follows, for a collection $\mathcal R$
of sets, denote the union of all sets from $\mathcal R$ by $\mathcal R^+$.

In the Introduction (Definition \ref{d:geola}), we defined leaves of
a lamination $\sim$ and the geo-lamination $\lam_\sim$ associated
with $\sim$. Extend $\si_d$ (keeping the notation) linearly over all
\emph{individual chords} in $\ol{\disk}$, in particular, over leaves
of $\lam_\sim$. Note that even though the extended $\si_d$ is not
well defined on the entire disk, it is well defined on
$\lam^+_\sim$.
%(as well as on every individual chord in the disk).

Recall that for a gap/leaf $U$ we denote $U\cap \uc$ by $U'$. A
gap/leaf $U$ of $\lam_\sim$ is said to be
\emph{{\rm(}pre{\rm)}periodic} if $\si_d^{m+k}(U')=\si_d^m(U')$ for
some $m\ge 0$, $k>0$. If $m$ can be chosen to be $0$, then $U$ is
called \emph{periodic}, otherwise $U$ is called \emph{preperiodic}
(hence, preperiodic implies non-periodic). Also, by a (pre)periodic
gap/leaf we mean gap/leaf which is either periodic or preperiodic. A
\emph{Fatou gap} is the $\pr_\sim$-preimage of the closure of a
Fatou domain.  Similarly, a \emph{Siegel gap} is the
$\pr_\sim$-preimage of a Siegel domain. Equivalently, these are gaps
with infinite bases. By \cite{kiwi02}, a Fatou gap $G$ is
(pre)periodic under $\si_d$.

\begin{defn}[Critical leaves and gaps]\label{d:crit}
A leaf of a lamination $\sim$ is called \emph{critical} if its
endpoints have the same image under $\si_d$. A gap $G$ of
$\lam_\sim$ is said to be \emph{critical} if $\si_d|_{G\,'}$ is at
least $k$-to-$1$ for some $k>1$. We define \emph{precritical} and
\emph{{\rm(}pre{\rm)}critical} objects similarly to how
(pre)periodic and preperiodic objects are defined above.
\end{defn}

For example, a periodic Siegel gap is non-critical even though
the first return map is not one-to-one on its basis (because there must be
critical leaves in the boundaries of gaps from its orbit).

\subsection{Geometric laminations}\label{ss:geol}

Laminations, understood as equivalence relations, can be described
in a geometric fashion, as was done in the original approach by
Thur\-ston \cite{thu85}. Thurston studied collections of chords in
$\disk$ similar to $\lam_\sim$, for a given $\si_d$-invariant
lamination $\sim$, with no lamination given.

\begin{defn}[Geometric laminations, \rm{cf.} \cite{thu85}]\label{geolam}
A \emph{geometric pre-lamination} $\lam$ is a set of (possibly
degenerate) chords in $\ol{\disk}$ such that any two distinct chords
from $\lam$ meet at most in a common endpoint; $\lam$ is called a
\emph{geometric lamination} (\emph{geo-lamination}) if all points of
$\uc$ are elements of $\lam$, and $\lam^+$ is closed. Elements of
$\lam$ are called \emph{leaves} of $\lam$. By a \emph{degenerate}
leaf (chord) we mean a singleton in $\uc$.
\end{defn}

In the Introduction (Definition \ref{d:gaps-i}), we defined gaps of
geo-laminations. Now let us discuss geo-laminations in the dynamical
context. Recall that given a chord $\ell=\ol{ab}$ of the unit disk
we define $\si_d(\ell)$ as the chord $\ol{\si_d(a)\si_d(b)}$ and
extend $\si_d$ linearly over $\ol{ab}$.

\begin{defn}[Invariant geo-laminations, \rm{cf.} \cite{thu85}]\label{geolaminv}
A geometric lamination $\lam$ is said to be an
\emph{$\si_d$-invariant} geo-lamination if the following conditions
are satisfied: % (recall that $\si_d$ is linearly extended over all
%leaves of $\lam$):

\begin{enumerate}

\item (Leaf invariance) For each leaf $\ell\in \lam$, the set
    $\si_d(\ell)$ is a leaf in $\lam$ (if $\ell$ is critical, then $\si_d(\ell)$ is degenerate). For a
    non-degenerate leaf $\ell\in\lam$, there are $d$ pairwise disjoint
    leaves $\ell_1,\dots,\ell_d\in\lam$ with $\si_d(\ell_i)=\ell, 1\le i\le d$.

\item (Gap invariance) For a gap $G$ of $\lam$, the set
    $H=\ch(\si_d(G'))$ is a leaf, or a gap of
    $\lam$, in which case $\si_d:\bd(G)\to \bd(H)$
    is a positively oriented composition of a monotone map and a
    covering map (thus, if $G$ is a gap with finitely  many edges,
    all of which are critical, then its image  is a singleton).
    %(a {\em monotone map} is a map such that the full
    %preimage of any connected set is connected).
\end{enumerate}

\end{defn}

Some invariant geo-laminations are not generated by laminations
(see, e.g., \cite{thu85}, where Thurston considers geo-laminations
with countable concatenations of leaves forming the boundary of a
gap). We will use a special extension $\si^*_{d, \lam}=\si_d^*$ of
$\si_d$ to the closed unit disk associated with $\lam$. On $\uc$ and
all leaves of $\lam$, we set $\si^*_d=\si_d$.
Define $\si^*_d$ on the interiors of gaps using a
standard barycentric construction \cite{thu85}. For brevity,
we sometimes use $\si_d$ instead of $\si^*_d$. We will mostly use
the map $\si_d^*$ if $\lam=\lam_\sim$ for some invariant lamination
$\sim$.

\subsection{Laminational sets and their basic properties}\label{ss:st}

So far we have dealt with (geo-)laminations. However, we also
consider subsets of $\ol{\disk}$ that have the properties of leaves
and gaps of geo-laminations while no actual geo-lamination is
specified. A number of facts can be proven for such sets, and we
establish some of them in this subsection.

\begin{defn}\label{d:restuff}
Let $f:X\to X$ be a self-mapping of a set $X$. For a set $G\subset
X$, let the \emph{return time} (to $G$) of $x\in G$ be the least
positive integer $n_x$ with $f^{n_x}(x)\in G$, or infinity if there
is no such integer. Set $n=\min_{x\in G} n_x$, define the set
$D_G=\{x\in G:n_x=n\}$, and call the map $f^n:D_G\to G$ the
\emph{remap} (first return map of $G$). Also, we define
\emph{refixed} points in $G$ as points $x\in G$ such that
$f^n(x)=x$. Similarly, we talk about %\emph{reperiodic} points (of a
%certain \emph{reperiod}), and
\emph{reorbits} of points in $G$.
\end{defn}

For example, if $G$ is the boundary of a periodic Fatou domain of period
$n$ of a topological polynomial $f_\sim$, and the images $f^j_\sim(G),
j=0, 1, \dots,  n-1$ of $G$ are all pairwise disjoint until
$f^n_\sim(G)=G$, then $D_G=G$ and the remap on $D_G=G$ is
$f^n_\sim$.

By the \emph{relative interior} of a set in the plane, we mean the
interior of this set in its affine hull. Thus, the relative interior
of a gap of some lamination is its interior, while the relative
interior of a chord is the chord minus the endpoints.

\begin{defn}\label{d:lamset}
Throughout this definition we assume that $A\subset\uc$ is closed.
If all the sets $\ch(\si_d^i(A))$ are pairwise disjoint, then $A$ is
called \emph{wandering}. If there exists $n\ge 1$ such that all the
sets $\ch(\si_d^i(A)), i=0, \dots, n-1$ have pairwise disjoint
relative interiors while $\si_d^n(A)=A$, then $A$ is called
\emph{periodic} of period $n$. If there exists $m>0$ such that all
$\ch(\si_d^i(A)), 0\le i\le m+n-1$ have pairwise disjoint relative
interiors and $\si_d^m(A)$ is periodic of period $n$, then we call
$A$ \emph{preperiodic}. Observe that the above applies to sets $A$
regardless of whether they are a part of a (geo-)lamination or not.

If $A$ is wandering, periodic or preperiodic, and for every $i\ge 0$
and every hole $(a, b)$ of $\si_d^i(A)$ either $\si_d(a)=\si_d(b)$,
or the positively oriented arc $(\si_d(a), \si_d(b))$ is a hole of
$\si_d^{i+1}(A)$, then we call $A$ (and $\ch(A)$) a
\emph{{\rm(}$\si_d$-{\rm)}laminational set}; we call both $A$ and
$\ch(A)$ \emph{finite} if $A$ is finite. A {\em
{\rm(}$\si_d$-{\rm)}stand alone gap $G$} is defined as a
laminational set with non-empty interior (note that a gap of a
geo-lamination always has non-empty interior). In other words, a
stand alone $G$ is of the form $\ch(A)$ for some closed set
$A\subset \uc$ of more than two points such that the above listed
properties hold for $A$.
\end{defn}

In what follows, whenever we say that $G$ is a ``gap'' we mean that
at least $G$ is a stand alone gap (it will be clear from the context
if there is a lamination or a geo-lamination such that $G$ is a part
of it). Also, abusing the language, we will sometimes identify closed
sets $A\subset \uc$ and their convex hulls (again, it will be clear
from the context what kind of set we consider).

%Recall that the basis $G'=G\cap \uc$ of a gap $G$ was defined right
%above Definition~\ref{d:geola}.
The basis $G'=G\cap \uc$ of a gap $G$ coincides
with the union $A\cup B$ of two well-defined sets, where $A$ is a
maximal Cantor subset of $G'$ or an empty set and $B$ is countable.
Assume that $A\ne \0$, and define a map $\psi_G:\uc\to \uc$ that
collapses all holes of $A$ to points.
Suppose that $G$ is $m$-periodic.
It is well-known that $\psi_G$ can be chosen so that it semiconjugates $\si^m_d|_{G'}$ to an irrational rotation of the circle or to the map $\si_k$, where $k\ge 2$.
Indeed, by Definition~\ref{d:lamset}, the map of the circle, to which
$\si_d$ is semi-conjugate under $\psi_G$, is locally 1-to-1 and orientation
preserving. On the other hand, the fact that $\si_d$ is locally
expanding implies that the induced map of the circle does not have
wandering arcs (see, e.g., \cite{blolev02a} where similar arguments
were used in classifying different types of gaps of laminations).
This implies the above claim.

Accordingly, if $A\ne \0$, we call a stand alone periodic gap $G$ a
\emph{stand alone periodic Fatou gap of degree $k$} if in the above
construction the map $\psi_G$ semiconjugates $\si^m_d|_{G'}$ to
$\si_k, k\ge 2$. Also, we call a stand alone periodic gap $G$ a
\emph{stand alone periodic Siegel gap} if, in the above construction,
the map $\psi_G$ semiconjugates $\si^m_d|_{G'}$ to an irrational
rotation. Moreover, for periodic laminational sets $G$ with finite
basis $G'$ and for periodic stand alone Siegel gaps $G$ we can
define the \emph{rotation number} $\tau_G$. If the rotation number
is not equal to zero, the set $G$ is said to be \emph{rotational}.
If such $G$ is invariant, we call it an \emph{invariant rotational
set}.

\begin{lem}[Lemma 2.16 \cite{bopt11}]\label{l:maj}
Suppose that $\ell=\ol{xy}$ is a laminational chord such that there
exists a component $Q$ of the complement of its orbit in the disk
$\disk$ whose closure contains $\si_d^n(\ell)$ for all $n\ge 0$.
Then the chord $\ell$ is either {\rm(}pre{\rm)}critical or
{\rm(}pre{\rm)}periodic.
\end{lem}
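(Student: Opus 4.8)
The plan is to exploit the expansion of $\si_d$ on the circle together with the assumption that the whole forward orbit of $\ell=\ol{xy}$ stays inside the closure of a single complementary component $Q$ of $\bigcup_n \si_d^n(\ell)$. First I would observe that the closure $\ol Q$, being a component of $\disk$ minus a union of chords, is bounded by an arc $I$ of $\uc$ together with at most countably many boundary chords among the $\si_d^n(\ell)$; in particular $\ol Q\cap\uc$ is a closed arc $[a,b]$ (possibly degenerate, possibly all of $\uc$). The hypothesis says $\si_d^n(x),\si_d^n(y)\in[a,b]$ for every $n\ge 0$, so the entire forward orbit of the two-point set $\{x,y\}$ is confined to $[a,b]$. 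If $[a,b]=\uc$ there is nothing special, but then $Q$ cannot be a proper component unless $\ell$ is degenerate; so I may assume $[a,b]$ is a proper arc, hence $|[a,b]|<1$.

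Next I would run the standard shrinking argument. Consider the closed arc $K_0$ which is the shorter of the two arcs cut off by $\ell$ on $\uc$; without loss of generality $K_0\subset[a,b]$ (this is where one uses that $\ell$ bounds $Q$ on one side, so that the ``inside'' arc of $\ell$ is the one meeting $\ol Q$). Let $K_n$ be the analogous arc for $\si_d^n(\ell)$. As long as none of the chords $\ell,\si_d(\ell),\dots,\si_d^{n-1}(\ell)$ is critical, $\si_d$ maps $K_{j}$ homeomorphically onto $K_{j+1}$ and multiplies arc-length by exactly $d$; thus $|K_n|=d^n|K_0|$ until either some $\si_d^j(\ell)$ is critical or the arcs can no longer expand. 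But all the $K_n$ are contained in $[a,b]$, an arc of length $<1$, so $|K_n|\le |[a,b]|$ for all $n$, which is incompatible with unbounded geometric growth $d^n|K_0|$. Hence one of two things happens: (i) some $\si_d^j(\ell)$ is critical, in which case $\ell$ is (pre)critical and we are done; or (ii) the expansion stalls because the arcs $K_n$ stop being nested in the simple way, i.e. $\si_d^m(\ell)$ meets the orbit of $\ell$ and in fact $\si_d^m(\ell)=\si_d^{m'}(\ell)$ for some $m'<m$ (two chords of a laminational family either coincide or are disjoint/share an endpoint, so an image landing back on an earlier image forces equality), giving that $\ell$ is (pre)periodic.

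The main obstacle, and the step that needs care, is item (ii): making precise why the failure of the clean ``$|K_n|=d^n|K_0|$'' recursion must be caused by a genuine return of $\si_d^m(\ell)$ to the orbit rather than by some subtler degeneration. The point is that the only way $\si_d\colon K_j\to K_{j+1}$ fails to be an expanding homeomorphism onto the short side is if $\si_d^j(\ell)$ is critical (covered by case (i)) or if $K_{j+1}$ is the \emph{long} side of $\si_d^{j+1}(\ell)$, which by the laminational hole condition (Definition~\ref{d:lamset}, the condition that positively oriented holes map to holes) can only happen if an endpoint collision has occurred, forcing $\si_d^{j+1}(\ell)$ to be degenerate or to coincide with an earlier leaf of the orbit; either way we land in (i) or (ii). Since all iterates lie in the fixed arc $[a,b]$ of length strictly less than $1$, the purely expanding scenario cannot persist, so one of (i), (ii) must occur, and the lemma follows. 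One should also dispatch the trivial case where $\ell$ itself is degenerate (then it is a point of $\uc$, automatically (pre)periodic or (pre)critical only in degenerate senses, and the statement is vacuous or immediate).
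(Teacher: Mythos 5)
Your strategy --- force geometric expansion until it contradicts a bounded quantity --- is the right idea, but two steps have genuine gaps. First, the claim that $\ol{Q}\cap\uc$ is a closed arc $[a,b]$ is false in the case the lemma must rule out. If $\ell$ is wandering and not (pre)critical, its orbit is an infinite family of pairwise disjoint chords, each an edge of $Q$; behind each chord $\si_d^n(\ell)$ lies a hole $H_n$, a component of $\uc\sm(\ol Q\cap\uc)$, and these holes are pairwise disjoint. So $\ol Q\cap\uc$ has infinitely many complementary arcs and is not an arc, and the bound $|K_n|\le|[a,b]|<1$ is unavailable. (If $\ol Q\cap\uc$ actually were an arc, $Q$ would have a unique boundary chord and $\ell$ would be $\si_d$-fixed --- the trivial case.) The correct bounded quantity is the disjointness of the holes: $\sum_n|H_n|\le 1$, so $|H_n|\to 0$; it is the holes $H_n$ behind the chords, not a sub-arc of $\ol Q\cap\uc$, that one should track.

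Second, and more seriously, you assert that the recursion $|K_{n+1}|=d|K_n|$ can fail only when an iterate is critical or returns, ``by the laminational hole condition.'' But for a two-point set $\{x,y\}$ the hole-to-hole condition of Definition~\ref{d:lamset} is vacuous: $(\si_dx,\si_dy)$ and $(\si_dy,\si_dx)$ are automatically the two holes of $\{\si_dx,\si_dy\}$, so the condition places no constraint on which side the image arc lands on; ``laminational chord'' only means the iterates are unlinked. There is a genuine third possibility --- orientation reversal, where $\si_d$ maps $H_n$ onto the $Q$-side of $\si_d^{n+1}(\ell)$ --- and your argument does not address it. Ruling this out is exactly where a fixed-point count, the technique the paper itself uses in the proof of Lemma~\ref{l:lameqsemi}, must enter: if orientation reverses on $H_n$ with $|H_n|<1/d$, then, since $H_n$ and $H_{n+1}$ are disjoint, $\ol{H_n}\subset\si_d(H_n)$ while the endpoints of $\si_d(H_n)$ lie outside $H_n$, so $H_n$ contains a $\si_d$-fixed point. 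Since the $H_n$ are pairwise disjoint and $\si_d$ has only $d-1$ fixed points, reversal happens at most $d-1$ times; after that $|H_{n+1}|=d|H_n|>0$ for all large $n$, contradicting $|H_n|\to 0$. This fixed-point count is the heart of the lemma and is absent from your proposal.
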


Let $U$ be the convex hull of a closed subset $A$ of $\uc$. For
every edge $\ell$ of $U$, let $H_U(\ell)$ denote the hole of $U$
whose endpoints coincide with the endpoints of $\ell$. In this
situation, we define $|\ell|_U$ as $|H_U(\ell)|$. Notice that if $U$
is a chord, then it has two holes (on opposite sides of $U$).

Suppose in addition that $A\subset \uc$ is a laminational set.
Mostly, the holes of $A$ map increasingly onto the holes of
$\si_d(A)$. However, if the length of a hole is at least
$\frac1{d}$, then the map $\si_d$ wraps the hole around the circle
one or more times. Thus, holes $H$ of $U$ such that $|H|\ge \frac1d$
are the only holes which the map $\si_d$ does not take one-to-one
onto their images.

\begin{defn}\label{d:0major}
Let $G$ be the convex hull of a closed subset of $\uc$. If $\ell$ is
an edge of $G$ such that its hole $H_G(\ell)$ is not shorter than
$\frac 1d$, then $\ell$ is called a ($\si_d$-){\em major edge of
$G$} (or simply a ($\si_d$-){\em major of $G$}), and $H_G(\ell)$ is
called a ($\si_d$-){\em major hole of $G$}.
\end{defn}

It is useful to work with a wider class of sets, which we now
introduce.

\begin{defn}\label{d:semila}
A closed set $A\subset\uc$ (and its convex hull) is said to be
($\si_d$-){\em semi-laminational} if, for every hole $(x,y)$ of $A$,
we have $\si_d(x)=\si_d(y)$, or the open arc $(\si_d(x),\si_d(y))$
is a hole of $A$ (the set $A$ is not assumed to satisfy
$\si_d(A)=A$).
\end{defn}

A set is said to be \emph{invariant} if it maps into itself.
Clearly, a $\si_d$-invariant laminational set is
$\si_d$-semi-laminational.

For a chord $\ell$, let $\orb_{\si_d}(\ell)$ denote the union of all
chords in the forward orbit of $\ell$ under $\si_d$.

\begin{lem}\label{l:lameqsemi} If $\ell=\ol{xy}$ is a
$\si_2$-periodic chord of period $r$ with $\si_2^r$-fixed endpoints
and if there is a unique component $Z$ of $\ol{\disk}\sm
\orb_{\si_2}(\ell)$ such that all images of $\ell$ are edges of $Z$,
then $\ol{Z}$ is a finite $\si_2$-invariant stand alone gap.
\end{lem}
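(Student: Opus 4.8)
The plan is to show that $\ol Z$ is a stand alone gap by verifying the three requirements in Definition~\ref{d:lamset}: that $\ol Z$ is the convex hull of a closed subset $A\subset\uc$ with more than two points, that the sets $\ch(\si_2^i(A))$, $i=0,\dots,r-1$, have pairwise disjoint relative interiors with $\si_2^r(A)=A$ (periodicity, hence here invariance since we will see the full orbit is contained in $\ol Z$), and that the hole condition holds. First I would set $\ell=\ol{xy}$ with $\si_2^r(x)=x$, $\si_2^r(y)=y$, let $L=\orb_{\si_2}(\ell)=\{\ell,\si_2(\ell),\dots,\si_2^{r-1}(\ell)\}$ (a finite set of $r$ chords since $\ell$ is $r$-periodic), and let $Z$ be the unique component of $\ol\disk\sm L$ having all images of $\ell$ as edges. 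Put $A=\ol Z\cap\uc$ and $\ol Z=\ch(A)$; since each $\si_2^i(\ell)$ is a non-degenerate edge of $\ol Z$, the set $A$ contains at least the $2r\ge 2$ endpoints of these chords, and it is closed because $\ol Z$ is closed. (If $A$ had exactly two points then $\ell$ would be its own full orbit, forcing $r=1$ and $\ell$ a diameter; but a $\si_2$-fixed chord with two fixed endpoints would need both endpoints fixed by $\si_2$, i.e.\ the single point $0$, impossible — so in fact $r\ge 1$ still allows $|A|>2$, and I would just note the degenerate possibility is ruled out or handled separately.)

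The heart of the argument is invariance: I claim $\si_2(\ol Z)=\ol Z$. The map $\si_2$, extended linearly to chords, sends the edge $\si_2^i(\ell)$ of $\ol Z$ to $\si_2^{i+1}(\ell)$, and $\si_2^{r-1}(\ell)\mapsto\si_2^r(\ell)=\ell$ because the endpoints are $\si_2^r$-fixed; so $\si_2$ permutes the $r$ boundary chords $L$ cyclically. Hence $\si_2(L)=L$, so $\si_2$ maps $\ol\disk\sm L$ to itself, permuting its components. Now $\si_2(Z)$ is a component of $\ol\disk\sm L$ (or its image, using that $\si_2$ is an open map on $\disk$ away from the critical chord), and it still has every $\si_2^i(\ell)$ on its boundary — because $\si_2$ of an edge of $Z$ is again an edge, and running over all $i$ gives all of $L$ back. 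By the \emph{uniqueness} hypothesis on $Z$, this forces $\si_2(Z)=Z$, hence $\si_2(\ol Z)=\ol Z$ and $\si_2(A)=A$. In particular $A$ is invariant, so taking $n=r$ we trivially get $\si_2^r(A)=A$; but I still must check the relative interiors of $\ch(\si_2^i(A))=\ol Z$ are ``pairwise disjoint'' in the sense of Definition~\ref{d:lamset} — here all images coincide with $\ol Z$ itself, which is the degenerate (invariant) case $n=1$ of periodicity, so the pairwise-disjointness condition is vacuous. Thus $A$ is periodic of period $1$, i.e.\ invariant.

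It remains to check the hole condition making $A$ (and $\ol Z$) laminational: for every hole $(a,b)$ of $A$, either $\si_2(a)=\si_2(b)$ or $(\si_2(a),\si_2(b))$ is again a hole of $A$. Each hole of $\ol Z$ is cut off by an edge; the edges of $\ol Z$ are exactly the chords of $L$ together with possibly degenerate leaves (points of $A$ not lying on any chord of $L$). For a hole cut off by $\si_2^i(\ell)$ with $i<r-1$, its image under $\si_2$ is the hole cut off by $\si_2^{i+1}(\ell)$, unless $\si_2^i(\ell)$ is critical (length $\ge\frac12$), in which case $\si_2(a)=\si_2(b)$; I would invoke Lemma~\ref{l:maj} / the discussion of major holes preceding Definition~\ref{d:0major} to see that at most one edge of $\ol Z$ is a major and handle it via the degenerate-image case. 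For holes bounded by degenerate edges (single points of $A$), invariance $\si_2(A)=A$ from the previous paragraph shows the image arc is again a complementary arc of $A$, hence a hole, provided it is not swallowed — again the only obstruction is a major hole, giving $\si_2(a)=\si_2(b)$. Assembling these cases gives Definition~\ref{d:semila}-type behaviour plus invariance, which is precisely the definition of a $\si_2$-laminational set with non-empty interior, i.e.\ a stand alone gap; finiteness of $A$ would follow if $L$ has no critical chord (so all holes map homeomorphically and $A$ cannot have a Cantor part) — and I expect the statement intends $A$ finite because $\ell$ periodic with fixed endpoints of period $r$ forces only finitely many boundary chords and no room for a Cantor set of vertices.

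\textbf{Main obstacle.} The delicate point is the invariance claim $\si_2(Z)=Z$: one must argue carefully that $\si_2$ genuinely sends the \emph{component} $Z$ onto a component of $\ol\disk\sm L$ (the extended map $\si_2^*$ is not globally injective on $\disk$, and a critical chord in $L$ could in principle fold one side onto another), and then that the image component inherits \emph{all} of $L$ on its boundary so that the uniqueness hypothesis applies. I would address this by first establishing that $\si_2$ maps the set of edges of $Z$ onto $L$ surjectively (cyclic permutation of $\ell$'s orbit) and that $Z$ lies on a definite side of each chord of $L$; combined with the hypothesis that $Z$ is the \emph{unique} such component, this pins down $\si_2(Z)=Z$. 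Everything else is then bookkeeping with holes and the major-edge dichotomy.
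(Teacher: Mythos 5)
Your plan has a genuine gap, centred on the step you yourself flag as the main obstacle, and the suggested repair does not close it. You want to argue that the linearly extended $\si_2$ carries $Z$ to a component of $\ol\disk\sm \orb_{\si_2}(\ell)$, and that uniqueness then forces $\si_2(Z)=Z$; but $\si_2$ is not a homeomorphism of $\ol\disk$ --- it is only a degree-two self-covering of $\uc$, extended linearly over chords --- so it does not induce a well-defined map on components. The actual difficulty is precisely what you skip over when checking the hole condition: you take for granted that the image of the hole behind $\si_2^i(\ell)$ is the hole behind $\si_2^{i+1}(\ell)$, but a priori $\si_2$ could change orientation on that hole, i.e.\ the hole behind $\si_2^{i+1}(\ell)$ could be the \emph{complementary} arc, and then the semi-laminational structure (Definition~\ref{d:semila}) that you need would fail. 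Deciding this orientation question requires a separate idea, which is the heart of the paper's proof.

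The paper's argument runs as follows. If $\si_2$ changes orientation on a hole of $\ol Z$, that hole must contain a $\si_2$-fixed point; since $\si_2$ has only one fixed point, orientation can change on at most one hole. But because $\ell$ is periodic of period $r$ with $\si_2^r$-fixed endpoints, the number of orientation changes along the cycle of holes must be even, hence zero, and $\ol Z$ is semi-laminational. A doubling argument then produces a hole $H$ of $\ol Z$ of length at least $\tfrac12$, so the entire orbit of $\ell$ lies in a half-circle, and Thurston's classification of $\si_2$-periodic orbits contained in a half-circle shows that the endpoints of $\ell$ are the vertices of a \emph{finite} invariant $\si_2$-gap; that gap is $\ol Z$, since $\ell$ must be an edge of it rather than a diagonal. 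Your proposed finiteness argument is also not correct: you say $A$ cannot have a Cantor part because every hole maps homeomorphically onto its image, but there is necessarily a hole of length at least $\tfrac12$ on which $\si_2$ is not injective. Finiteness here is a consequence of Thurston's classification, not of an expansion argument on holes. I would replace the invariance-from-uniqueness route with the orientation argument and the half-circle/Thurston reduction.
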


\begin{proof}
We begin by making an observation which applies to all maps $\si_d$
(the situation of the lemma can be described for $\si_d$ instead of
$\si_2$). If $I=(\si_d^k(x), \si_d^k(y))$ is a hole of $\ol Z$ while
$(\si_d^{k+1}(y), \si_d^{k+1}(x))$ is a hole of $\ol Z$, then we say
that $\si_d$ \emph{changes orientation} on $I$. Let us show that
if $I=(\si_d^k(x), \si_d^k(y))$ is a hole of $\ol Z$ such that
$\si_d$ changes orientation on $I$, then $I$ contains a
$\si_d$-fixed point. Indeed, the image of $I$ is an arc which
connects $\si_d^{k+1}(x)$ to $\si_d^{k+1}(y)$ and potentially wraps
around the circle a few (zero or more) number of times. If
$(\si_d^{k+1}(y), \si_d^{k+1}(x))$ is a hole of $\ol Z$, it follows that $I\subset \si_d(I)$ and that the endpoints of $\si_d(I)$ do not
belong to $I$. Hence $I$ contains a $\si_d$-fixed point.

Let us now prove the lemma. By the above, $\si_2$ changes
orientation on at most one hole. Since $\ell$ is periodic and its
endpoints are refixed, $\si_2$ changes orientation an even number of
times. Hence $\si_2$ never changes orientation on holes of $\ol Z$.
This implies that $Z$ is semi-laminational. Now, if a hole of $\ol Z$ is shorter than $\frac12$, then it doubles in length under $\si_2$
while still being mapped onto its image one-to-one. Hence there
exists a hole $H$ of $Z$ whose length is at least $\frac12$. If we
draw a critical leaf $c$ with endpoints in $H$, we see that the entire
orbit of $\ell$ consists of leaves with endpoints in the complement of $H$.

Now, it is well known (see, e.g., \cite{thu85}) that a
periodic orbit of $\si_2$ contained in a given half-circle  (in our
case, this is any half-circle containing $\uc\sm H$)
is the point $0$, or the pair $\{1/3, 2/3\}$, or the set of
vertices of a finite invariant stand alone $\si_2$-gap.
Moreover, a given half-circle contains exactly one periodic $\si_2$-orbit.
It follows that the endpoints of $\ell$ are vertices of an invariant $\si_2$-gap $\ol Z$. Clearly, $\ell$ then
has to be an edge of $\ol Z$ (if $\ell$ is a diagonal of $\ol Z$,
then distinct images of $\ell$ intersect inside
$\disk$).
\end{proof}

The class of semi-laminational sets is wider than the
class of invariant laminational sets because
Definition~\ref{d:semila} allows for circle arcs to be parts of
semi-laminational sets.
For example, take a $\si_d$-invariant stand alone Fatou gap $G$
of degree $k>1$ such that there is a periodic orbit $Q$ of edges of
$G$. Let $H_1, \dots, H_n$ be the holes of $G$ behind edges from
$Q$. Then $A=\uc\sm\bigcup^n_{i=1}H_i$ is a semi-laminational set.
The assumption that a Fatou gap like $G$ exists means that $d$ must
be greater than $2$. In Lemma~\ref{l:posholes} we study
semi-laminational sets for cubic laminations.

Majors of semi-laminational sets
%can be easily located and
play an important role because, as we see below in Lemma~\ref{l:fx-maj}, all
edges of semi-laminational sets have majors of these semi-laminational sets in their
forward orbits.

\begin{lem}
\label{l:fx-maj} An edge of a semi-laminational set $G=\ch(A)$
$($where $A\subset \uc$ is closed$)$ is a major
if and only if the closure of its hole contains a fixed point. Any
edge of $G$ eventually maps to a major of $G$.
\end{lem}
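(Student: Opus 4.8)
Throughout, I would reduce the statement to bookkeeping about the lengths of the holes of $A$, keeping in mind (Definition~\ref{d:0major}) that an edge $\ell=\ol{xy}$ of $G$, with hole $H=H_G(\ell)$, is a major exactly when $|H|\ge\frac1d$. The single fact I would use repeatedly is that semi-laminationality (Definition~\ref{d:semila}) propagates holes forward: if $(a,b)$ is a hole of $A$ with $\si_d(a)\ne\si_d(b)$, then the positively oriented arc $(\si_d(a),\si_d(b))$ is again a hole of $A$; moreover if $|(a,b)|<\frac1d$ then $\si_d$ is injective on $\ol{(a,b)}$, so $|(\si_d(a),\si_d(b))|=d\,|(a,b)|$, while for $|(a,b)|\ge\frac1d$ the new length is $d\,|(a,b)|\bmod 1$. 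I would dispose of the last assertion first. If an edge $\ell$ of $G$ is not a major, then $|H_G(\ell)|<\frac1d$, so $\si_d$ is injective on $\ol{H_G(\ell)}$ and in particular does not collapse $\ell$; hence $\si_d(\ell)$ is again an edge of $G$ with $|H_G(\si_d(\ell))|=d\,|H_G(\ell)|$. Iterating multiplies the hole-length by $d$ each step, so after finitely many steps the edge either becomes a major or is collapsed to a degenerate leaf — whose hole is all of $\uc$, hence trivially a major; similarly any critical edge is a major, its hole having length a positive multiple of $\frac1d$. So every edge eventually maps to a major of $G$.

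For the ``if'' direction, suppose $\bar p\in\ol{H_G(\ell)}$ is $\si_d$-fixed but $|H_G(\ell)|<\frac1d$. Then $\si_d$ is injective on $\ol{H_G(\ell)}$ and does not collapse $\ell$, so $H':=H_G(\si_d(\ell))$ is a hole of $A$ with $\si_d(\ol{H_G(\ell)})=\ol{H'}$ and $|H'|=d\,|H_G(\ell)|\ne|H_G(\ell)|$; since $\bar p$ is fixed, $\bar p\in\ol{H'}$ as well. If $\bar p\notin A$, then $\bar p$ lies in the interior of a unique hole of $A$, forcing $H_G(\ell)=H'$, impossible. If $\bar p\in A$, then $\bar p$ is an endpoint of $H_G(\ell)$, say $\bar p=x$ (the other case being symmetric); then $H_G(\ell)$ and $H'$ are both holes of $A$ issuing from the fixed point $x$ in the positive direction, and there is only one such hole, again forcing $H_G(\ell)=H'$. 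Either way we contradict $|H'|\ne|H_G(\ell)|$, so $\ell$ must be a major.

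For the ``only if'' direction — which I expect to be the crux — let $\ell=\ol{xy}$ be a major, $H_0:=H_G(\ell)=(x,y)$, and suppose for contradiction that $\ol{H_0}$ contains no fixed point. Since $\mathrm{Fix}(\si_d)=\{j/(d-1)\}$ consists of $d-1$ equally spaced points, if $|H_0|\ge\frac1{d-1}$ then $\ol{H_0}$ is too long to avoid it; so $\frac1d\le|H_0|<\frac1{d-1}$ and $\ol{H_0}$ lies strictly inside a single complementary arc $I=(p^-,p^+)$ of $\mathrm{Fix}(\si_d)$, with $p^\pm$ consecutive fixed points and $|I|=\frac1{d-1}$. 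Writing $x=p^-+\alpha$, $y=p^-+\beta$ and using that $p^-$ is fixed, one gets $\si_d^k(x)=p^-+d^k\alpha$ and $\si_d^k(y)=p^-+d^k\beta$ (mod $1$); provided no image of $\ell$ is collapsed, semi-laminationality makes each positively oriented arc $H_k:=(\si_d^k(x),\si_d^k(y))$ a hole of $A$, with $|H_k|$ the $k$-th $\si_d$-iterate of $|H_0|$ in $\R/\Z$. The key claim, proved by induction on $k$, is that $\ol{H_k}$ again avoids $\mathrm{Fix}(\si_d)$: the only way the arc from $\si_d^{k+1}(x)$ to $\si_d^{k+1}(y)$ could meet a fixed point is for $\si_d$ to carry $\ol{H_k}$ past an endpoint of the gap containing it, and a short length computation (using $|H_k|<\frac1{d-1}$, itself part of the inductive hypothesis) rules this out. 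Granting the claim, all $H_k$ are holes of $A$ of length $<\frac1{d-1}$, i.e. the $\si_d$-orbit of $|H_0|$ in $\R/\Z$ stays in $[0,\frac1{d-1})$; unless that orbit eventually reaches $0$ — i.e. $\ell$ is (pre)critical — this is impossible, since a $\si_d$-orbit in $[0,\frac1{d-1})$ that never hits $0$ must leave this arc after finitely many steps. In the (pre)critical case one invokes Lemma~\ref{l:maj}: the orbit of $\ell$ is (pre)periodic or (pre)critical, and the periodic case is settled by the ``at most one hole longer than $\frac12$'' counting used in the proof of Lemma~\ref{l:lameqsemi} (two holes $H_k=H_{k'}$ of unequal length would collide, a contradiction, or else some $\ol{H_k}$ meets $\mathrm{Fix}(\si_d)$), while the genuinely critical images reduce to checking collapsed edges and critical edges whose holes have length $\ge\frac1{d-1}$ directly.

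The hard part is thus entirely in the ``only if'' direction: making rigorous the inductive ``no overshoot'' step, concluding that the length-orbit must escape $[0,\frac1{d-1})$, and giving a clean separate treatment of the (pre)critical edges. The ``if'' direction and the final assertion, by contrast, follow cleanly from the uniqueness of the hole of $A$ through a given point and from iterated $d$-fold expansion of short holes.
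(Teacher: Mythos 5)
Your ``if'' direction and your proof of the last assertion are correct and essentially the paper's own arguments (distinct holes of $A$ are disjoint, and hole lengths get multiplied by $d$ until they reach $\frac1d$). The genuine gap is in the ``only if'' direction, which you yourself leave unfinished and whose decisive step is in fact false. You want a contradiction from assuming that a major hole $H_0$ with $\frac1d\le|H_0|<\frac1{d-1}$ has closure free of fixed points, by arguing that the lengths $|H_k|$ of the iterated holes stay in $[0,\frac1{d-1})$ and then asserting that a $\si_d$-orbit in $[0,\frac1{d-1})$ which never hits $0$ must leave that interval. That assertion is not true: for $d=3$ the orbit $\frac38\mapsto\frac18\mapsto\frac38\mapsto\cdots$ stays in $[0,\frac12)$ forever, and this is not a marginal example --- by Lemma~\ref{bndcrit} the major hole of an invariant quadratic gap of periodic type of period $k$ has length $3^{k-1}(3^k-1)^{-1}$ (so $\frac38$ for $k=2$), i.e.\ precisely the objects to which this lemma is applied have length-orbits that never escape $[0,\frac1{d-1})$ and never hit $0$. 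Consequently no contradiction can be extracted along these lines, and the inductive ``no overshoot'' claim, which you leave to ``a short length computation,'' cannot rescue the scheme: nothing forces the closures of the later holes $H_k$ to avoid fixed points just because $\ol{H_0}$ does.

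The step you predicted to be the crux is in the paper a one-line local argument, with no tracking of the forward orbit at all: if $|H_G(\ell)|\ge\frac1d$, then $\si_d$ maps $\ol{H_G(\ell)}$ over the entire circle, while the images of the two endpoints of the hole lie outside the open hole (for a non-critical edge they are the endpoints of the image hole, hence points of $A$); an intermediate-value argument on a lift then yields a fixed point in $\ol{H_G(\ell)}$ itself. In particular there is no need for Lemma~\ref{l:maj}, for the counting used in Lemma~\ref{l:lameqsemi}, or for a separate discussion of (pre)periodic versus (pre)critical edges. I would replace your orbit argument by this expansion argument; the only place where extra care is warranted is a critical edge, where the two endpoints of the hole have a common image and one needs to know, as in the paper's applications, that this common image does not land inside the hole.
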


\begin{proof}
Let $\ell$ be an edge of $G$. The case when $\ell$ is invariant (i.e. such that $\si_d(\ell)=\ell$) is
left to the reader. Otherwise if $|H_G(\ell)|<1/d$, then $H_G(\ell)$
maps onto the hole $\si_d(H_G(\ell))$ one-to-one. The fact that
$\ell$ is not invariant implies that $\si_d(H_G(\ell))$ is disjoint
from $H_G(\ell)$. Hence $H_G(\ell)$ contains no fixed points. On the
other hand, suppose that $|H_G(\ell)|\ge 1/d$. Then
$\si_d(H_G(\ell))$ covers the entire $\uc$ while the images of the
endpoints of $H_G(\ell)$ are outside $H_G(\ell)$. This implies that
there exists a fixed point $a\in \ol{H_G(\ell)}$. To prove the
second claim, choose an edge $\ell$ of $G$. For any $i$ set
$T_i=H_G(\si^i_d(\ell))$. As long as $|T_i|<\frac{1}d$, we have
$|T_{i+1}|=|\si_d(T_i)|=d|T_i|$. Hence there exists the least $n$
such that $|T_n|\ge \frac{1}d$. Then the leaf $\si_d^n(\ell)$ is a
major of $G$, as desired.
\end{proof}

%\subsection{Topological preliminaries}\label{ss:topfacts}

\subsection{Fixed points and invariant sets}

Theorem~\ref{t:fxpt} allows one to find specific invariant sets in
some parts of the disk. We state it in the language of laminations.

\begin{thm}[\cite{bfmot10}]\label{t:fxpt}
Let $\sim$ be a $\si_d$-invariant lamination. Suppose that $e_1,
\dots, e_m$ are some leaves of $\sim$ and $X$ is a component of $\disk\sm
\bigcup^m_{i=1} e_i$ such that for each $i$
\begin{enumerate}
\item the leaf $e_i$ lies on the
boundary of $X$,
\item there exists no finite gap of $\sim$ inside $X$ with
an edge $e_i$, and
\item either $\si_d$ fixes each endpoint of
$e_i$, or $\si_d(e_i)$ is contained in the component of
$\ol{\disk}\sm e_i$ that contains $X$.
\end{enumerate}
Then at least one of the
following claims holds:

\begin{enumerate}

\item $X$ contains an invariant gap of $\sim$ of degree $k>1$;

\item $X$ contains an invariant rotational set.

\end{enumerate}

\end{thm}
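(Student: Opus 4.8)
The goal is to produce a $\si_d$-invariant object inside $X$, and the natural route is to first find a \emph{fixed point of the induced dynamics} in the region cut out by $X$ and then identify the invariant laminational set carrying it. After a preliminary reduction --- replacing $X$, if necessary, by a suitable admissible subregion so that $A=\uc\cap\ol X$ becomes $\si_d$-semi-laminational (each bounding leaf $e_i$ being fixed or pointing into $X$ forces the holes behind the $e_i$ to map onto holes of $A$ or to points, apart from holes of length $\ge\frac1d$, which are treated separately) --- one collapses the holes of $A$ and semiconjugates $\si_d|_A$ to a circle endomorphism $g$ of some degree $1\le e\le d$. If $e\ge 2$, or if $A$ has a hole of length $\ge\frac1d$ behind a non-fixed bounding leaf, then $X$ contains a $\si_d$-invariant gap of degree $>1$ --- conclusion (1) --- the gap being located via its major using Lemma~\ref{l:fx-maj} and the orbit-trapping dichotomy of Lemma~\ref{l:maj}. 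So we may assume $e=1$, i.e.\ $g$ is an orientation-preserving degree-one circle map with a well-defined rotation number $\rho$.

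If $\rho\ne 0$ then $g$ has a periodic orbit of nonzero combinatorial rotation number; pulling it back through the collapse yields an invariant rotational set of $\sim$ inside $X$ (or, if $\rho$ is irrational, an invariant Siegel gap in $X$) --- conclusion (2). There remains the case $\rho=0$, and here the engine is the index/variation theory for boundaries of plane continua from \cite{bfmot10}. Passing to the topological polynomial $f_\sim$ (embed $J_\sim$ in $\C$, extend $\pr_\sim$ over the plane with non-trivial fibers the convex hulls of $\sim$-classes, extend $f_\sim$ continuously), the region $X$ corresponds to a plane region $\Omega$ whose boundary consists of the points $z_i=\pr_\sim(e_i)$, each either $f_\sim$-fixed or mapped into $\ol\Omega$, together with finitely many arcs of $J_\sim$; since $\si_d$ strictly expands arcs of $\uc$, the variation of $f_\sim$ along $\bd(\Omega)$ is carried by these arcs and is positive, so the index theorem forces a fixed point $z\in\ol\Omega$. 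Then $\Gamma=\pr_\sim^{-1}(z)\cap\ol X$ is a $\si_d$-invariant laminational set with relative interior in $X$ (invariance is automatic, as $\pr_\sim$ semiconjugates $\si_d$ to $f_\sim$ and a leaf or gap meeting its own $\si_d$-image in an interior point must equal it).

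It remains to identify $\Gamma$. If $\Gamma$ is a gap, its first return map is either $\si_k$ with $k>1$ (conclusion (1)) or an irrational rotation, whence $\bd(\Gamma)$ carries an invariant rotational set in $X$ (conclusion (2)); if $\Gamma$ is finite with nonzero rotation number we are again in case (2). The delicate subcase is $\Gamma$ finite with rotation number $0$ --- a fixed leaf, or a fixed finite gap with trivial rotation --- since then $\Gamma$ alone contributes nothing new. Here I would invoke hypothesis (2) (no finite gap of $\sim$ with edge $e_i$ lies inside $X$) together with Lemma~\ref{l:fx-maj}: adjoin the edges of $\Gamma$ and their images to the bounding family, check that hypotheses (1)--(3) persist for the resulting strictly smaller admissible subregion, and reapply the argument, inducting on the combinatorial complexity of $X$ (number of bounding leaves plus total length of its boundary arcs) until one reaches a gap of degree $>1$ or a genuinely rotational set.

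The main obstacle is precisely this last step: the index count by itself only produces \emph{some} fixed object, and a fixed leaf of zero rotation number carries no useful information, so the real content is the inductive reduction. Making it work --- in particular verifying that the refined region still satisfies (1)--(3) and that the complexity strictly decreases --- relies on the structure of majors of (semi-)laminational sets (Lemmas~\ref{l:fx-maj} and~\ref{l:maj}) and on the kind of classification of periodic $\si_d$-orbits in a sub-arc of $\uc$ used in the proof of Lemma~\ref{l:lameqsemi}. That, rather than the index computation, is the heart of the matter.
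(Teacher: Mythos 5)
The paper itself does not prove Theorem~\ref{t:fxpt}: it is imported by citation from \cite{bfmot10} (Blokh--Fokkink--Mayer--Oversteegen--Tymchatyn, \emph{Fixed point theorems in plane continua with applications}, Mem.\ AMS), and the present article contains no proof to compare your attempt against. So the most one can say is whether your sketch is consistent with the machinery that reference supplies, and whether it stands on its own.

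Your two-layer outline is reasonable and does reflect the flavor of \cite{bfmot10}: a soft layer (collapse $A=\ol X\cap\uc$ to a degree-$e$ circle map, read off the degree and, when $e=1$, the rotation number) and a hard layer (a variation/index fixed-point theorem for the induced map of the plane region cut out by the $e_i$). The problem is exactly the one you flag as ``the heart of the matter,'' and as written it is a genuine gap, not just a detail. When the index theorem produces a fixed leaf or a fixed finite gap $\Gamma$ of trivial rotation, you propose to adjoin the edges of $\Gamma$ (and their images) to the bounding family and induct. Two things break. First, hypothesis~(3) does not survive the adjoining: if $X'$ is a component of $X\sm\Gamma$ and $e$ is the edge of $\Gamma$ bounding $X'$, then $\si_d(e)$ is another edge of $\Gamma$, hence lies on the $\Gamma$-side of $e$, which is the side \emph{opposite} to $X'$; so $e$ satisfies (3) only when both its endpoints are $\si_d$-fixed, which need not hold. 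Second, the quantity you induct on (number of bounding leaves plus total boundary arc length) \emph{increases} when you adjoin new leaves, so the induction is not well-founded as stated. Appealing vaguely to Lemmas~\ref{l:fx-maj}, \ref{l:maj}, and~\ref{l:lameqsemi} does not repair either of these, since those lemmas locate majors and classify periodic orbits in sub-arcs but do not by themselves propagate hypotheses (1)--(3) to the refined region or supply a decreasing complexity. Until that reduction is made to terminate --- which in \cite{bfmot10} is done with the full channel/variation apparatus rather than a naive induction on boundary data --- the proposal is a plausible outline of how the cited theorem is proved, but not an independent proof. If you only need the statement, cite it; if you intend to reprove it, this is the step that needs real work.
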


\section{Invariant quadratic gaps and their canonical laminations}\label{s:qgaps}

By cubic laminations we mean $\si_3$-invariant laminations. By a
\emph{quadratic} gap we mean a stand alone periodic Fatou gap $U$ of
degree $2$. In this section, we assume that $U$ is a
$\si_3$-invariant quadratic gap and study its properties. We then
define \emph{canonical} laminations, which correspond to these gaps
and describe other laminations that \emph{refine} the canonical
ones. Throughout the rest of the paper we will often write $\si$
instead of $\si_3$.

\subsection{Invariant quadratic gaps}\label{s:invquagap}

Recall that, given a gap $U$ with an edge $\ell$, we write
$|\ell|_U$ for $|H_U(\ell)|$. If $U$ is given, we may drop the
subscript $U$ from the notation.

\begin{lem}
\label{bndcrit}
Let $U$ be a $\si$-invariant stand alone quadratic gap.
Then there exists a unique major edge $\ell$ of $U$, on all holes $\widetilde H\ne H(\ell)$ of $U$ the map $\si$ is a homeomorphism onto its image, $|\si(\widetilde H)|=3|\widetilde H|$, and the following cases are possible:

\begin{enumerate}
\item we have $|\ell|_U=\frac13$, the leaf $\ell$ is not periodic, and all holes
$\widetilde H\ne H(\ell)$ of $U$ are of length at most $\frac19$;

\item the leaf $\ell$ is periodic of some period $k$, we have $\frac13<|\ell|_U\le \frac12$, and $|\ell|_U=\frac12$ only if $\ell=\ol{0\frac12}$.
\end{enumerate}
\end{lem}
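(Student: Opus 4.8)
The plan is to exploit the fact that $U$ is a stand alone periodic Fatou gap of degree $2$, so the first-return map $\si^n|_{U'}$ (which here is $\si^1$, since $U$ is $\si_3$-invariant, i.e.\ period $1$) is, up to the collapsing semiconjugacy $\psi_U$, the map $\si_2$. First I would set up the length bookkeeping: the basis $U'$ has finitely or infinitely many holes, and on every hole $H$ with $|H|<\frac13$ the map $\si=\si_3$ is a homeomorphism onto its image with $|\si(H)|=3|H|$, while holes of length $\ge\frac13$ are the only ones wrapped non-injectively. Since $\si_3|_{U'}$ is $2$-to-$1$ in the covering sense (degree $2$), a counting/total-length argument shows there is exactly one hole whose image is not taken injectively, equivalently exactly one major hole $H(\ell)$; this is essentially Definition~\ref{d:0major} together with the degree hypothesis. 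I would make this precise by noting that the sum of the "expansion defects" over holes is controlled by the degree: a degree-$2$ gap inside a degree-$3$ map has exactly one extra fold, giving uniqueness of $\ell$.

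Next I would bound $|\ell|_U$. The upper bound $|\ell|_U<\frac23$ follows because if $|H(\ell)|\ge\frac23$, then $\si_3(H(\ell))$ would cover $\uc$ more than once in a way incompatible with $U$ having degree exactly $2$ (degree would be $\ge 3$, or $U'$ would be forced into an arc of length $<\frac13$ which cannot carry a degree-$2$ return map with the semi-laminational hole condition); I would phrase this via the semi-laminational property of $U$ (Definition~\ref{d:semila}, which $U$ satisfies since $\si_3(U)=U$) and the fact that the complement $\uc\setminus H(\ell)$ must map onto $\uc$ with the right degree. For the dichotomy, I would invoke Lemma~\ref{l:maj}: the major $\ell$ is an edge of $U$ all of whose forward images are edges of $U$, so the component $Q$ of the complement of $\orb_{\si_3}(\ell)$ whose closure contains the whole forward orbit of $\ell$ is essentially $U$ itself; hence $\ell$ is (pre)critical or (pre)periodic. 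Since $\ell$ is an edge of the invariant gap $U$ and $\si_3(\ell)$ is again an edge of $U$, one checks $\ell$ is in fact either critical (the $|\ell|_U=\frac13$ case, the "regular critical type") or periodic.

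Then I would split into the two cases. In case (1), $|\ell|_U=\frac13$: I would argue that a hole of length exactly $\frac13$ maps to a point under... no — rather, if $|H(\ell)|=\frac13$ then the two endpoints of $\ell$ collapse under $\si_3$ (the arc of length $\frac13$ maps onto all of $\uc$ with the endpoints identified), so $\ell$ is critical and cannot be periodic since a critical leaf has a degenerate image. In case (2), $\ell$ is periodic of period $k$; then $\frac13<|\ell|_U$ strictly (else we are in case (1)), and $|\ell|_U\le\frac12$ because if $|H(\ell)|>\frac12$ its image under $\si_3$ has length $3|H(\ell)|-1$ or wraps, and iterating the expansion on the complementary side forces the orbit of $\ell$ to eventually produce a hole of length $\ge\frac13$ that is not $H(\ell)$, contradicting uniqueness of the major — unless periodicity of $\ell$ pins $|\ell|_U=\frac12$, in which case the periodic pair of $\si_2^{\text{(semiconj)}}$-fixed type forces $\ell=\ol{0\,\frac12}$ (the unique $\si_3$-fixed diameter). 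I expect the main obstacle to be case (2): cleanly ruling out $\frac12<|\ell|_U<\frac23$ for \emph{periodic} $\ell$ and identifying the boundary case $|\ell|_U=\frac12$ with the specific leaf $\ol{0\,\frac12}$. I would handle this by tracking lengths of the holes $T_i=H_U(\si_3^i(\ell))$ as in the proof of Lemma~\ref{l:fx-maj}: periodicity means the sequence $|T_i|$ returns to $|T_0|$, and combining the one-major constraint with the multiplicative expansion $|T_{i+1}|=3|T_i|$ off the major, plus the fact (from Lemma~\ref{l:fx-maj}) that $\overline{H(\ell)}$ must contain a $\si_3$-fixed point, narrows the possibilities to exactly those listed.
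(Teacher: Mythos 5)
Most of your outline tracks the paper's own argument: existence of a major via Lemma~\ref{l:fx-maj}; uniqueness together with $|\ell|_U<\frac23$ from the degree-$2$ bookkeeping (the paper phrases this as: $\si$ is injective on any set avoiding two disjoint closed arcs of length at least $\frac13$, which is the same ``one extra fold'' count); and, when $|\ell|_U>\frac13$, periodicity of $\ell$ because $\si(\ell)$ is a non-degenerate edge whose forward orbit must reach the unique major, namely $\ell$ itself (your detour through Lemma~\ref{l:maj} is harmless but not needed). Your route to $|\ell|_U\le\frac12$ --- if $|H(\ell)|>\frac12$ then already $|H(\si(\ell))|=3|\ell|_U-1>\frac13$ produces a second major unless $\si(\ell)=\ell$ --- is sound and replaces the paper's exact computation $3^{k-1}(3|\ell|_U-1)=|\ell|_U$, i.e.\ $|\ell|_U=3^{k-1}(3^k-1)^{-1}$. (Minor slip: for $|H(\ell)|\ge\frac23$ the degree of $U$ would drop to $1$, not rise to $3$; your second alternative, that $U'$ would lie in an arc of length at most $\frac13$ on which $\si$ is essentially injective, is the correct reason.)

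The genuine gap is the boundary case $|\ell|_U=\frac12$. You settle it by calling $\ol{0\frac12}$ ``the unique $\si_3$-fixed diameter,'' but that is false: the leaf $\ol{\frac14\frac34}$ is also invariant (its endpoints form a period-two orbit swapped by $\si$), a fact the paper itself points out in the Introduction, and both of its holes have length exactly $\frac12$. So your argument as written does not exclude $\ell=\ol{\frac14\frac34}$, and this is precisely the delicate point; the paper's proof handles it by observing that under the hypotheses each endpoint of $\ell$ is fixed by $\si^k$, which for $k=1$ forces $\ell=\ol{0\frac12}$. To close the gap you need such an argument, e.g.: if $\si$ interchanged the endpoints $a,b$ of an invariant major with $|H(\ell)|=\frac12$, then the arc $[b,a]\supset U'$ would map over the circle so that the doubly covered part is exactly $\ol{H(\ell)}$; hence every point of $U'$ off $\{a,b\}$ would have a unique $\si$-preimage in $[b,a]$, making $\si|_{U'}$ injective and $U$ of degree $1$, contradicting that $U$ is quadratic. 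With the endpoints fixed, $\{a,b\}\subset\{0,\frac12\}$ and the conclusion follows.
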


\begin{proof}
The existence of a major $\ell$ follows from Lemma \ref{l:fx-maj}.
Observe that, if a set $A\subset\uc$ lies
in the complement of two disjoint closed arcs in $\uc$ of length $\ge 1/3$
each, then the restriction of $\si$ to $A$ is injective. This
implies that all holes $\widetilde H\ne H(\ell)$ of $U$ are shorter than
$\frac13$ and that $|\ell|_U<\frac23$ (here we use the fact that
$\si|_{U'}$ is two-to-one).

Clearly, $|\ell|_U$ can be equal to $\frac13$ (just take
$\ell=\ol{\frac13 \frac23}$ and assume that $U$ is the convex hull of
the set of all points $x\in \uc$ with orbits outside the arc $(\frac13,
\frac23)$). This situation corresponds to case (1) of the lemma. Since $\si$ expands
the length by the factor of $3$, it follows that all holes $\widetilde H\ne H(\ell)$ of $U$ are shorter than
$\frac19$.

Suppose that $|\ell|_U>\frac13$. Then $\si(\ell)$ is eventually
mapped to $\ell$ by Lemma \ref{l:fx-maj}, hence $\ell$ is periodic.
Since $\frac13<|\ell|_U=x<\frac23$, then $|\si(\ell)|_U=3x-1$. If
the period of $\ell$ is $k$, then $3^{k-1}(3x-1)=x$ and so
$x=|\ell|_U=3^{k-1}(3^k-1)^{-1}\le \frac12$ with equality possible
only if $k=1$ in which case clearly $\ell=\ol{0\frac12}$
(observe that under the assumptions of the lemma the leaf $\ell$
maps onto itself by $\si_3^k$ so that each endpoint of $\ell$ maps
to itself). This corresponds to case (2) of the lemma.
\end{proof}

\begin{lem}
  \label{l:bndcrit1}
In case $(2)$ of Lemma \ref{bndcrit}, there exists a unique leaf $\ell^*$ disjoint from $\ell$ with endpoints in $H(\ell)$ such that $\si(\ell^*)=\si(\ell)$ and such that one of the following holds:
\begin{enumerate}
\item we have $\ell=\ol{0\frac12}$, the only possible holes of $U\cup \ell^*$ of length $\frac16$ are
$$
\left(0, \frac16\right),
\left(\frac16, \frac13\right),
\left(\frac13, \frac12\right),
\left(\frac12, \frac23\right),
\left(\frac23, \frac56\right),
\left(\frac56, 1\right),
$$
and all other holes of $U\cup \ell^*$ are of length at most $\frac1{18}$;
\item we have $\ell\ne \ol{0\frac12}$, every hole of $U\cup \ell^*$ is shorter than $\frac16$ except for the hole of $\ell^*$ that is disjoint from $U$ and has length greater than $\frac5{18}$.
\end{enumerate}
\end{lem}

\begin{proof}
Let $\ell=\ol{ab}$ and observe first that $\ell^*=\ol{b^*a^*}$, where $b^*=b-\frac13, a^*=a+\frac13$.
Consider now two cases.
First, assume that $\ell=\ol{0\frac12}$.
Then it is easy to see that the claims of the lemma hold.
Since $\ol{0\frac12}$ is a unique major edge $\ell$ of an invariant quadratic gap with $|\ell|=\frac12$, from now we may assume that $\ell<\frac12$.

Consider a hole $\widetilde H$ of $U\cup \ell^*$.
If $\widetilde H$ is a hole of $U$, then it maps forward monotonically (and hence expanding by the factor of $3$) a few times before it maps onto $H(\ell)$.
Since, by the above, $|H(\ell)|\le \frac12$, we have $|\widetilde H|<\frac16$.
Now, $|(a, b^*)|=|H(\ell)| - |\frac13<\frac12-\frac13=\frac16$, and, similarly, $|(a^*, b)|<\frac16$.
Finally, consider the arc $(b^*,a^*)$.
Its image is the arc complementary to the arc $(\si(a),\si(b))$.
Since we assume that $\ell\ne \ol{0\frac12}$, it follows that $(\si(a),\si(b))$ is a hole of $U$ distinct from $H(\ell)$.
Hence the length of $(\si(a), \si(b))$ is less than $\frac16$, which implies that
the length of $(\si(b), \si(a))$ is more than $\frac56$, and hence the length of
$(b^*, a^*)$ is more than $\frac5{18}$.
\end{proof}

From now on, the major $\ell$ of $U$ is denoted by $M(U)$.
Lemma~\ref{bndcrit} implies a simple description of the basis $U'$ of $U$ if $M(U)$ is given.

\begin{lem}\label{descru}
The basis $U'$ of $U$ is the set of all points $x\in \uc$, whose
orbits are disjoint from $H(M(U))$. All edges of $U$ are preimages
of $M(U)$.
\end{lem}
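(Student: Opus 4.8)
The plan is to prove the two assertions separately; the second is essentially a restatement of results already in hand.

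First I would settle the inclusion ``$U'\subseteq\{x:\si^i(x)\notin H(M(U))\text{ for all }i\ge 0\}$''. Since $U$ is $\si$-invariant we have $\si(U')=U'$, so the forward orbit of any $x\in U'$ stays inside $U'$. By definition $H(M(U))$ is a hole of $U$, that is, a component of $\uc\sm U'$, hence disjoint from $U'$ and therefore from the orbit of $x$. For the reverse inclusion I would argue by contraposition. Suppose $x\in\uc\sm U'$, so that $x$ lies in the interior of some hole $\widetilde H$ of $U$. If $\widetilde H=H(M(U))$ we are done immediately. Otherwise, $\widetilde H\ne H(M(U))$, so by Lemma~\ref{bndcrit} we have $|\widetilde H|<\tfrac13$ and the edge $\widetilde\ell$ of $U$ behind $\widetilde H$ is not the major of $U$. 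By the last assertion of Lemma~\ref{l:fx-maj}, $\widetilde\ell$ is eventually mapped to a major of $U$, and by Lemma~\ref{bndcrit} that major is unique and equals $M(U)$; let $n\ge 1$ be least with $\si^n(\widetilde\ell)=M(U)$. For $0\le j<n$ the leaf $\si^j(\widetilde\ell)$ is an edge of $U$ (because $U$, being an invariant stand alone gap, is a laminational set in the sense of Definition~\ref{d:lamset}, so non-collapsing holes map to holes) which is not the major; hence by Lemma~\ref{bndcrit} the hole $\si^j(\widetilde H)$ behind it has length $<\tfrac13$ and $\si$ restricted to it is a homeomorphism onto the hole $\si^{j+1}(\widetilde H)$ of $U$. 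Composing these homeomorphisms, $\si^n$ carries the closed arc $\ol{\widetilde H}$ homeomorphically onto $\ol{H(M(U))}$, hence the interior of $\widetilde H$ onto $H(M(U))$. Since $x$ lies in the interior of $\widetilde H$, we get $\si^n(x)\in H(M(U))$, so the orbit of $x$ meets $H(M(U))$. Taking the contrapositive completes the proof of the first assertion.

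The second assertion follows at once: by the second assertion of Lemma~\ref{l:fx-maj} every edge $\ell$ of $U$ satisfies $\si^n(\ell)=$ (a major of $U$) for some $n\ge 0$, and by Lemma~\ref{bndcrit} the major is $M(U)$; thus $\si^n(\ell)=M(U)$, i.e.\ $\ell$ lies in the backward orbit of $M(U)$.

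The only point requiring care is the bookkeeping in the converse direction of the first assertion: one must verify that all intermediate images $\si^j(\widetilde H)$ with $0\le j<n$ are genuine (non-degenerate) holes of $U$ of length $<\tfrac13$ — which is exactly what the uniqueness of the major and the length bounds in Lemma~\ref{bndcrit} guarantee — so that $\si$ stays injective on each of them and the interior of $\widetilde H$ is mapped into the open arc $H(M(U))$ rather than onto one of its endpoints. Everything else is routine.
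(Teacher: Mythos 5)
Your proof is correct and follows essentially the same route as the paper's: the easy inclusion from invariance of $U'$, and for the converse, the fact that the edge behind the hole containing $x$ eventually maps (by Lemma~\ref{l:fx-maj} plus uniqueness of the major from Lemma~\ref{bndcrit}) to $M(U)$, so the hole maps onto $H(M(U))$ and the orbit of $x$ enters it. Your extra bookkeeping about the intermediate holes mapping homeomorphically is just a more explicit version of the paper's one-line statement $\si^n(H(\ell))=H(M(U))$.
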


\begin{proof}
Orbits of all points of $U'$ are disjoint from $H(M(U))$. Also, if
$x\in \uc\sm U'$, then $x$ lies in a hole of $U$ behind a leaf
$\ell$. By Lemmas~\ref{l:fx-maj} and~\ref{bndcrit}, the orbit of
$\ell$ contains $M(U)$. Hence for some $n$ we have
$\si^n(\ell)=M(U)$, $\si^n(H(\ell))=H(M(U))$, and $\si^n(x)\in
H(M(U))$.
\end{proof}

It is natural to consider the two cases from Lemma~\ref{bndcrit} separately.
We begin with the case, when an invariant quadratic gap $U$ has a periodic major $M(U)=\ol{ab}$ of period $k$ and $H(M(U))=(a, b)$.
Set $a^*=a+\frac13$ and $b^*=b-\frac13$.
Set $M^*(U)=\ol{b^*a^*}$.
Consider the set $N(U)$ of all points of $\uc$ with $\si^k$-orbits contained in $[a, b^*]\cup [a^*, b]$ and its convex hull $V(U)=\ch(N(U))$ (this notation is used in several
lemmas below).
We call $V(U)$ the \emph{vassal} (gap) of $U$ (see Figure 1).

\begin{lem}\label{vassal}
Assume that $M(U)$ is periodic of period $k$.
Then $N(U)$ is a Cantor set; $V(U)$ is a periodic quadratic gap of period $k$.
\end{lem}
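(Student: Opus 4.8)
The plan is to reduce the statement to a standard two--branch ``horseshoe'' picture for the map $g:=\si^k$ acting on two short arcs of $\uc$. Write $M(U)=\ol{ab}$. By Lemma~\ref{bndcrit}(2) we have $|H(M(U))|=|\ol{ab}|_U=x$ with $x=\tfrac{3^{k-1}}{3^k-1}\in(\tfrac13,\tfrac12]$, and, as recorded in the proof of Lemma~\ref{bndcrit}, $\si^k$ fixes each endpoint of $M(U)$: $\si^k(a)=a$, $\si^k(b)=b$. Put $I_1=[a,b'']$ and $I_2=[a'',b]$. A short computation gives $|I_1|=|I_2|=x-\tfrac13=\tfrac1{3(3^k-1)}<3^{-k}$, so $\si^k$ is injective on each of $I_1,I_2$, and also $3^k(x-\tfrac13)=x$. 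The key identity, valid because $3^{k-1}\in\Z$, is
\[
\si^k\bigl(t\pm\tfrac13\bigr)=3^k t\pm 3^{k-1}\equiv 3^k t=\si^k(t)\pmod 1,
\]
which yields $\si^k(b'')=\si^k(b)=b$ and $\si^k(a'')=\si^k(a)=a$. Combining this with the length normalization and the fact that $\si^k$ is injective and orientation preserving on arcs shorter than $3^{-k}$, we get that $g=\si^k$ maps each of $I_1$, $I_2$ homeomorphically and orientation preservingly onto $[a,b]$. Finally, $x>\tfrac13$ forces $I_1\cap I_2=\0$ and $I_1\cup I_2\subset[a,b]$, in cyclic order $a\le I_1\le b''<a''\le I_2\le b$.

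Granting this, the Cantor--set assertion is the usual itinerary argument. Since $I_1\cup I_2\subset[a,b]$ and $g$ maps each $I_j$ bijectively onto $[a,b]$, the set
\[
N(U)=\{y\in\uc:\ g^n(y)\in I_1\cup I_2\ \text{for all }n\ge0\}=\bigcap_{n\ge0}g^{-n}(I_1\cup I_2)
\]
is the attractor of the iterated function system consisting of the two contractions $(g|_{I_1})^{-1},(g|_{I_2})^{-1}\colon[a,b]\to[a,b]$ (each of slope $3^{-k}<1$); the itinerary map $N(U)\to\{1,2\}^{\mathbb N}$ is thus a homeomorphism, so $N(U)$ is a nonempty compact perfect totally disconnected set, i.e. a Cantor set. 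Moreover $g(N(U))=N(U)$, the map $g|_{N(U)}$ is exactly two--to--one, and $\{a,b,b'',a''\}\subset N(U)$ (with itineraries $\ol1$, $\ol2$, $1\ol2$, $2\ol1$). In particular $N(U)$ has at least four points in convex position, so $V(U)=\ch(N(U))$ has nonempty interior, with $M(U)=\ol{ab}$ and $M''(U)=\ol{b''a''}$ among its edges.

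It remains to identify $V(U)$ as a stand alone periodic quadratic gap of period $k$. Since $N(U)\subset[a,b]=\ol{H(M(U))}$, the gap $V(U)$ lies in the closed hole of $U$ behind $M(U)$ and meets $U$ exactly along the edge $M(U)$, so $V(U)$ is ``attached to $U$ along $M(U)$''. For the period, set $M_i=\si_3^i(M(U))$, $0\le i\le k-1$; these are $k$ distinct edges of $U$, with $M_0=M(U)$ the major and $M_1,\dots,M_{k-1}$ having holes of length $<\tfrac13$. From $\si_3(b'')=\si_3(b)$, $\si_3(a'')=\si_3(a)$ and the fact that $\si_3$ triples the length of every hole of $U$ shorter than $\tfrac13$ (together with $3(x-\tfrac13)=3x-1=|H_U(M_1)|$), one gets $\si_3(I_1\cup I_2)=\ol{H_U(M_1)}$, hence $\si_3^i(N(U))\subset\ol{H_U(M_i)}$ for $0\le i\le k-1$ by induction. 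As distinct edges of the gap $U$ subtend lenses with pairwise disjoint interiors, the sets $\si_3^i(V(U))$, $0\le i<k$, have pairwise disjoint interiors, while $\si_3^k(V(U))=\ch(g(N(U)))=V(U)$; so $V(U)$ is periodic of period exactly $k$. The degree $2$ assertion is then immediate: collapsing the edges of $V(U)$ by the standard map $\psi_{V(U)}$ turns $\si_3^k|_{\bd V(U)}$ into a degree $2$ circle map (because $g|_{N(U)}$ is two--to--one), i.e. $\si_2$ up to conjugacy. One still has to check the hole--mapping axiom of Definition~\ref{d:lamset} for $V(U)$; by periodicity this is a finite check, and for the two outermost holes $(b,a)$ and $(b'',a'')$ of $N(U)$ it follows from the same length identities, the remaining holes being handled by pulling back along $g$.

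The point that I expect to need the most care is precisely this last one --- verifying that the hole and orientation conditions of Definition~\ref{d:lamset} that make $V(U)$ an honest stand alone laminational set propagate consistently around the length--$k$ cycle $V(U),\si_3(V(U)),\dots,\si_3^{k-1}(V(U))$. Once the identity $\si^k(t\pm\tfrac13)\equiv\si^k(t)$ and the normalization $3^k(x-\tfrac13)=x$ furnished by Lemma~\ref{bndcrit} are in hand, the Cantor--set statement and the two--to--one (degree $2$) structure of $\si^k$ on $N(U)$ come out essentially formally.
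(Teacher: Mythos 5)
Your proposal is correct and takes essentially the same route as the paper: both reduce the statement to the two--branch horseshoe of $\si^k$ on $[a,b'']$ and $[a'',b]$ and obtain $N(U)$ as the resulting Cantor set, then identify $V(U)$ as a period-$k$ quadratic gap. The only cosmetic difference is in verifying the horseshoe: the paper tracks $[\si(a),\si(b)]$ through the successive short holes of $U$ until it returns homeomorphically to $[a,b]$, whereas you deduce the same fact directly from the length normalization $3^k(x-\tfrac13)=x$ together with $\si^k(a)=a$, $\si^k(b)=b$; you also write out the period and degree-$2$ checks which the paper dismisses with ``the remaining claim easily follows.''
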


\begin{proof}
Under the action of $\si$ on $[a, b]$ the arc $[a,a^*]$ wraps around the circle once, and the arc $[a^*, b]$ maps onto the arc $[\si(a),\si(b)]$ homeomorphically.
Similarly, we can think of $\si|_{[a, b]}$ as homeomorphically mapping $[a, b^*]$ onto the arc $[\si(a),\si(b)]$ and wrapping $[b^*,b]$ around the circle once.
Thus, first the arcs $[a,b^*]$ and $[a^*, b]$ map homeomorphically to the arc $[\si(a), \si(b)]$ (which, by definition, is the closure of a hole of $U$).
Then, under further iterations of $\si$, the arc $[\si(a),\si(b)]$ maps homeomorphically onto closures of distinct holes of $U$ until $\si^{k-1}$ sends it, homeomorphically, onto $[a,b]$ (all this follows from Lemma~\ref{bndcrit}).
This generates the quadratic gap $V(U)$ contained in the strip between $\ol{ab}$ and
$\ol{a^*b^*}$.
In the language of one-dimensional dynamics (see \cite{mis79, ms80, mt88, you81} and the book \cite{alm00}) one can say that closed intervals $[a,b^*]$ and $[a^*, b]$ form a
2-horseshoe of period $k$.
A standard argument shows that $N(U)$ is a Cantor set.
The remaining claim easily follows.
\end{proof}

\begin{figure}[htp]
\begin{center}
\includegraphics[width=6cm]{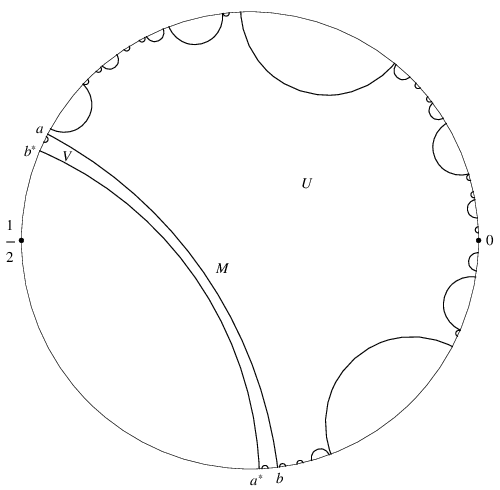}
{\caption{A quadratic invariant gap $U$ of periodic type and its
vassal $V$. We draw geodesics in the Poincar\'e metric instead of
straight chords to make the pictures look better.}}
\end{center}
\end{figure}

We also define another type of gap called a \emph{caterpillar}
(gap). This is a periodic gap $Q$ with the following properties:
\begin{itemize}
 \item  The boundary of $Q$ consists of a periodic (possibly degenerate)
 leaf $\ell_0=\ol{xy}$ of period $k$ called a \emph{head of the caterpillar gap $Q$} ,
a $\si^k$-critical leaf $\ell_{-1}=\ol{yz}$ concatenated to it, and
a countable concatenation of leaves $\ell_{-n}$ accumulating at $x$
(the leaf $\ell_{-r-1}$ is concatenated to the leaf $\ell_{-r}$, for
every $r=1$, 2, $\dots$).
\item
We have $\si^k(x)=x$, $\si^k(\{y, z\})=\{y\}$, and $\si^k$ maps
each $\ell_{-r-1}$ to $\ell_{-r}$ (all leaves are shifted by one
towards $\ell_0$ except for $\ell_0$, which maps to itself, and
$\ell_{-1}$, which collapses to the point $y$).
\end{itemize}
% The leaf $\ell_0$ is called the \emph{head} of $Q$.
Similar gaps are already useful for quadratic laminations (see
\cite{thu85} where the invariant gap with edges $\ol{0\frac 12},
\ol{\frac 12\frac 14}, \dots, \ol{\frac 1{2^n}\frac 1{2^{n+1}}},
\dots$ is considered). Lemma~\ref{catpil} is left to the reader.

%\begin{figure}
%\includegraphics[width=6cm]{qua-per.eps}
%\includegraphics[width=6cm]{caterpillar-final.eps}
%{\caption{An invariant caterpillar gap and its preimage for
%$\si_2$.}}
%\end{figure}

\begin{lem}\label{catpil}
Suppose that $M(U)=\ol{ab}$ is periodic of period $k$. Then one can construct
two caterpillar gaps with head $M(U)$ such that their bases are contained
in $\ol{H(M(U))}$. In the first of them, the critical leaf is
$\ol{aa^*}$, and in the second one the critical leaf is $\ol{bb^*}$.
\end{lem}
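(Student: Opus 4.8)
The plan is to construct the two caterpillar gaps explicitly as convex hulls of appropriate forward-invariant sets of points in $\ol{H(M(U))}$, verify that their boundaries have the required combinatorial structure (one periodic leaf, one critical leaf, and a one-sided countable concatenation of leaves accumulating at an endpoint of $M(U)$), and check the stated dynamics on these leaves. Throughout, write $M(U)=\ol{ab}$ with $H(M(U))=(a,b)$, and recall from Lemma~\ref{bndcrit} that $M(U)$ is periodic of period $k$ with $\si^k$ fixing both $a$ and $b$, and $\frac13<|M(U)|_U\le\frac12$. Set $a''=a+\frac13$, $b''=b-\frac13$ as in the discussion preceding Lemma~\ref{vassal}, so both $a''$ and $b''$ lie in $(a,b)$.

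For the first caterpillar gap, whose critical leaf is to be $\ol{aa''}$, consider the arc $I=[a'',b]$ (the ``right'' piece of $H(M(U))$ cut off by the critical chord $\ol{aa''}$). Under $\si$, the arc $[a,a'']$ wraps once around $\uc$ and $[a'',b]$ maps homeomorphically onto $[\si(a),\si(b)]=\ol{\si(H(M(U)))}$, exactly as in the proof of Lemma~\ref{vassal}. I would then take $A_1$ to be the set of all points of $\uc$ whose entire $\si^k$-forward orbit stays in $[a'',b]$, together with the point $a$; more precisely one wants the set whose orbit under $\si$ (not just $\si^k$) never enters the open arc $(a,a'')$. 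I claim $Q_1=\ch(A_1)$ is a caterpillar gap with head $\ell_0=\ol{ab}=M(U)$ and critical leaf $\ell_{-1}=\ol{aa''}$: the leaf $\ell_0$ is periodic of period $k$ with refixed endpoints; $\ell_{-1}$ is critical since $\si(a)=\si(a'')$; and the remaining edges $\ell_{-n}$ are the $\si^k$-pullbacks of $\ell_{-1}$ through $[a'',b]$, which form a nested sequence of chords concatenated at successive points accumulating to $a$ (because the preimage branch of $\si^k$ fixing $a$ is a contraction toward $a$ on $[a'',b]$ after the first step, by the expansion of $\si$). The dynamics $\si^k(x)=x$, $\si^k(\{a,a''\})=\{a\}$ — note here the roles of $y,z$ in the definition are played by $a$ and $a''$ — and $\si^k(\ell_{-r-1})=\ell_{-r}$ then follow from the horseshoe structure. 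The second caterpillar gap is obtained symmetrically: replace the critical chord $\ol{aa''}$ by $\ol{bb''}$, use the arc $[a,b'']$ in place of $[a'',b]$, and take $A_2$ to be the set of points whose $\si$-orbit avoids the open arc $(b'',b)$; then $Q_2=\ch(A_2)$ is a caterpillar gap with head $M(U)$ and critical leaf $\ol{bb''}$, with the concatenation accumulating at $b$.

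The two points that need genuine care — and which I expect to be the main obstacle — are first, verifying that the pullback chords $\ell_{-n}$ are pairwise non-crossing and genuinely accumulate to the single point $a$ (resp.\ $b$) rather than to a nondegenerate arc, and second, checking that $Q_i$ is really a \emph{gap}, i.e.\ that $A_i$ has the semi-laminational property of Definition~\ref{d:lamset} so that $\ch(A_i)$ is a legitimate stand alone laminational set and nothing from its interior is missing. Both follow from the expansion of $\si$: the branch of $\si^{-k}$ fixing $a$ and mapping into $[a'',b]$ strictly contracts the arc length (since $\si^k$ expands by a factor bounded below once we avoid a neighborhood of the critical point, and the relevant arcs avoid $(a,a'')$), so the lengths $|\ell_{-n}|_{Q_1}\to 0$ geometrically and the concatenation points converge to $a$; the holes behind the $\ell_{-n}$ map onto holes of $Q_1$ under $\si$ in the appropriate oriented fashion, and the hole behind $\ell_0$ together with the critical hole behind $\ell_{-1}$ account for the non-injective behavior, so the semi-laminational conditions hold. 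Since, as the statement notes, this lemma is routine and left to the reader, I would present these verifications compactly, referencing the horseshoe picture in the proof of Lemma~\ref{vassal} and the one-dimensional dynamics literature already cited there (\cite{mis79, ms80, mt88, you81, alm00}), and explicitly invoke the model quadratic example $\ol{0\frac12},\ol{\frac12\frac14},\dots,\ol{\frac1{2^n}\frac1{2^{n+1}}},\dots$ from \cite{thu85} as the prototype that the construction generalizes.
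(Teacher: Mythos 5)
The paper leaves this lemma to the reader, so there is no proof to compare against; but your proposal contains a concrete error in identifying the accumulation point of the caterpillar chain, which would need to be fixed.

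Recall the structure of a caterpillar gap: $\ell_0=\ol{xy}$ is periodic, the critical leaf $\ell_{-1}=\ol{yz}$ shares the endpoint $y$ with $\ell_0$, and the concatenation accumulates at $x$, the \emph{other} endpoint of $\ell_0$. For the first caterpillar you have $\ell_0=\ol{ab}=M(U)$ and $\ell_{-1}=\ol{aa''}$; the shared endpoint is $a$, so $y=a$, $z=a''$, and hence $x=b$. The concatenation must therefore accumulate at $b$, not at $a$ as you assert. This matches the dynamics: $\si^k$ maps $[a'',b]$ homeomorphically onto $[a,b]$ (sending $a''\mapsto a$ and fixing $b$), so the inverse branch on $[a'',b]$ is a contraction fixing $b$, and the pullbacks $v_2,v_3,\dots$ of $a''$ along this branch converge to $b$. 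Your parenthetical remark that "the preimage branch of $\si^k$ fixing $a$ is a contraction toward $a$ on $[a'',b]$" cannot be right, since $a\notin[a'',b]$ and the branch of $\si^{-k}$ landing in $[a'',b]$ fixes $b$. Symmetrically, for the second caterpillar (critical leaf $\ol{bb''}$, pullbacks living in $[a,b'']$) the concatenation accumulates at $a$; you have the two accumulation points swapped.

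A secondary issue is your description of the vertex set $A_1$. The set of points whose entire $\si^k$-orbit stays in $[a'',b]$ is just $\{b\}$ (by the uniform expansion of $\si^k$, the nested preimages $[w_n,b]$ of $[a'',b]$ shrink to $b$), so with $a$ adjoined you get only $\{a,b\}$. Your alternative description, the set of points whose full $\si$-orbit avoids $(a,a'')$, is exactly $\Pi(\ol{aa''})$, which by Lemma~\ref{picper}(2) is the basis of the \emph{extended} caterpillar gap and contains all of $U'$ and its first pullbacks, far more than the caterpillar's basis. The correct description is the intersection: $C'(\ol{aa''})=\Pi(\ol{aa''})\cap\ol{H(M(U))}=\{a,a''\}\cup\{v_2,v_3,\dots\}\cup\{b\}$. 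With those two corrections (accumulation at $b$, resp.\ $a$; and intersecting with $\ol{H(M(U))}$), your horseshoe/pullback argument does establish the lemma.
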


\begin{proof}
Let us describe one of the caterpillar gaps in question.
Since the the hole $(a, b)$ of $U$ behind $M(U)$ has length between $\frac13$ and $\frac23$, there exists a unique point $a^*\in (a, b)$ such that $\si_3(a^*)=\si_3(a)$ and there exists a unique point $b^*\in (a,b)$ such that $\si_3(b^*)=\si_3(b)$.
Let $\ell_0=M(U)$ and $\ell_{-1}=\ol{aa^*}$ (clearly, $\ol{aa^*}$ is a $\si_3$-critical
leaf).
Choose the unique $\si_3^k$-preimage $a_{-2}$ of $a_{-1}=a^*$ such
that $a<a_{-1}<a_{-2}<b$. Put $\ell_{-2}=\ol{a_{-1}a_{-2}}$.
Consider further pullbacks $\ell_{-n}$ of $\ell_0$ so that they form a concatenation
of leaves converging to the point $b$.
Then the required $\si_3^k$-fixed caterpillar gap is the convex hull of the union of
leaves $\bigcup^\infty_{i=0}\ell_{-i}$.
The other desired caterpillar gap with critical leaf $\ol{bb^*}$ can be constructed
similarly.
\end{proof}

We call the caterpillar gaps from Lemma~\ref{catpil} \emph{canonical caterpillar gaps of $U$}.
A critical edge $c$ of a canonical caterpillar gap defines it, so this caterpillar gap is denoted by $C(c)$.
We denote its basis by $C'(c)$. To study related invariant quadratic gaps we first
prove the following general lemma, in which we adopt a different
point of view. Namely, any leaf $\ell$ that is not a diameter
defines an open arc $L(\ell)$ (a component of $\uc\sm \{\ell\}$) of
length greater than $\frac12$ (in particular, a critical leaf $c$
defines an arc $L(c)$ of length $\frac23$). Let $\Pi(\ell)$ be the
set of all points with orbits in $\ol{L(\ell)}$.

\begin{lem}\label{pic}
Suppose that $c$ is a critical leaf. The set $\Pi(c)$ is non-empty,
closed and forward invariant. A point $x\in \Pi(c)$ has two
preimages in $\Pi(c)$ if $x\ne \si(c)$, and three preimages in
$\Pi(c)$ if $x=\si(c)$. The convex hull $G(c)$ of $\Pi(c)$ is a
stand alone invariant quadratic gap. If $\si(c)\in \Pi(c)$, we
have the situation of case {\rm(1)} of Lemma~\ref{bndcrit} and $c$
is the major of $G(c)$.
\end{lem}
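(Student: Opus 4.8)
The plan is to analyze the map $\si$ restricted to the arc $\ol{L(c)}$ of length $\frac23$ and show it behaves like a degree-two self-cover, from which all claims follow by a standard horseshoe-type argument. First I would observe that $\Pi(c)$ is non-empty: since $|\ol{L(c)}|=\frac23>\frac13$, the complementary arc $\uc\sm\ol{L(c)}$ has length $\frac13$, so $\si$ maps $\ol{L(c)}$ onto all of $\uc$, and in particular $\ol{L(c)}\cap\si^{-1}(\ol{L(c)})$ is non-empty; iterating, the nested intersections $\bigcap_{i=0}^{n}\si^{-i}(\ol{L(c)})$ form a decreasing sequence of non-empty compacta, so $\Pi(c)=\bigcap_{i\ge 0}\si^{-i}(\ol{L(c)})$ is non-empty. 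Closedness is immediate (intersection of closed sets, and preimages of closed sets under the continuous map $\si$ are closed), and forward invariance $\si(\Pi(c))\subseteq\Pi(c)$ is built into the definition: if every forward iterate of $x$ lies in $\ol{L(c)}$, the same holds for $\si(x)$.

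Next I would establish the preimage count. Write $c=\ol{uv}$ with $\si(u)=\si(v)$; the arc $L(c)$ of length $\frac23$ has endpoints $u,v$, and $\si(c)$ is the common image point, which lies in the complementary (closed) arc of length $\frac13$, i.e. $\si(c)\in\uc\sm L(c)$ but $\si(c)\in\ol{L(c)}$ only as an endpoint --- actually $\si(c)=\si(u)=\si(v)$ corresponds to the two endpoints $u,v$ of $\ol{L(c)}$ being glued, so $\si|_{\ol{L(c)}}$ is a degree-$2$ ``tent-like'' map that is two-to-one onto $\uc$ except that the fiber over $\si(c)$ consists of the two endpoints $u,v$, which $\si$ identifies. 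Hence a point $x\in\uc$ with $x\ne\si(c)$ has exactly two preimages in $\ol{L(c)}$, and $x=\si(c)$ has exactly... here I must be careful: the point $\si(c)$ itself, viewed inside $\ol{L(c)}$, has the two endpoints as preimages, but $\si(c)$ may also equal the ``turning value'' lifted once more --- the three-preimage count for $x=\si(c)$ comes from the fact that in $\ol{L(c)}$, besides $u$ and $v$, there is an interior point mapping to $\si(c)$ (the preimage of $\si(c)$ sitting in the interior of the arc, which exists because $\si|_{\ol{L(c)}}$ is onto and $\si(c)$ is attained at an interior point of the ``second half'' of the tent). Restricting to $\Pi(c)$: since $\Pi(c)$ is forward invariant and $\si^{-1}(\Pi(c))\cap\ol{L(c)}=\Pi(c)$ (any preimage in $\ol{L(c)}$ of a point of $\Pi(c)$ automatically has all forward iterates in $\ol{L(c)}$), the preimage counts in $\Pi(c)$ match those in $\ol{L(c)}$: two preimages if $x\ne\si(c)$, three if $x=\si(c)$.

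Then I would verify $G(c)=\ch(\Pi(c))$ is a stand alone invariant quadratic gap. Because $\si|_{\ol{L(c)}}$ is monotone on each of the two sub-arcs into which $u,v$ and the interior turning point divide $\ol{L(c)}$, every hole $(x,y)$ of $\Pi(c)$ either has $|(x,y)|<\frac13$ and maps homeomorphically onto the hole $(\si(x),\si(y))$ of $\si(\Pi(c))=\Pi(c)$, or is the hole behind $c$ (or the hole behind the critical leaf), giving the semi-laminational condition of Definition \ref{d:semila}; combined with $\si(\Pi(c))=\Pi(c)$ this makes $\Pi(c)$ a $\si_3$-invariant laminational set, hence $\ch(\Pi(c))$ a stand alone gap, and the fact that $\psi_{G(c)}$ semiconjugates $\si|_{G(c)'}$ to $\si_2$ (degree two, from the preimage count) makes it quadratic of period $1$, i.e. invariant quadratic. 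Finally, for the last assertion: if $\si(c)\in\Pi(c)$ then $c$ itself is an edge of $G(c)$ (its endpoints $u,v\in\Pi(c)$ and nothing of $\Pi(c)$ lies in $L(c)^{\mathrm c}$'s... ), its hole $H_{G(c)}(c)$ has length $\ge\frac12\cdot\frac43$... more precisely $|L(c)^{\mathrm{c}}|=\frac13$ so $|H_{G(c)}(c)|\ge\frac13$, and by Lemma \ref{l:fx-maj} or directly by Lemma \ref{bndcrit} a hole of length $\ge\frac13$ is the major hole; since $c$ is critical and the major is unique (Lemma \ref{bndcrit}), $c=M(G(c))$, and $|M(G(c))|_{G(c)}=\frac13$ puts us in case (1) of Lemma \ref{bndcrit}.

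The main obstacle I anticipate is the precise preimage bookkeeping --- correctly pinning down why $x=\si(c)$ has \emph{three} preimages in $\Pi(c)$ while generic points have two, and making sure this is compatible with $G(c)$ being genuinely degree two (not degree three) under $\psi_{G(c)}$; this requires care about which preimages actually persist under the ``all forward iterates stay in $\ol{L(c)}$'' condition and about the role of the critical leaf that shows up in $\bd(G(c))$. The rest is a routine adaptation of the one-dimensional horseshoe/kneading arguments already invoked in the proof of Lemma \ref{vassal}.
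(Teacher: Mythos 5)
Your overall strategy mirrors the paper's: count preimages in $\ol{L(c)}$, then verify that holes of $\Pi(c)$ map to holes so that $G(c)$ is an invariant laminational gap, and finally read off the major. However, there are two concrete problems.

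First, the assertion that ``$\si(c)\in\uc\sm L(c)$'' (or equivalently that $\si(c)$ meets $\ol{L(c)}$ only as an endpoint) is false. Take $c=\ol{uv}$ with $u=\frac15$, $v=\frac8{15}$; then $\si(c)=\frac35$, which lies in the interior of $L(c)=(\frac8{15},\frac15)$. The point $\si(c)$ can be anywhere on the circle, and the ``tent-like'' picture you build on this claim is not what is happening: $\si|_{\ol{L(c)}}$ is a local homeomorphism whose image wraps twice around $\uc$, and the fiber over $\si(c)$ has three points $u$, $v$, $u+\frac23$ regardless of where $\si(c)$ sits. Fortunately the final counts you state (two preimages if $x\ne\si(c)$, three if $x=\si(c)$) are correct and the false claim is not needed to obtain them. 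The paper's route is cleaner: for $x\ne\si(c)$ one of the three $\si_3$-preimages lies in the open arc $\uc\sm\ol{L(c)}$ and the other two lie in $\ol{L(c)}$, while for $x=\si(c)$ all three preimages lie in $\ol{L(c)}$. Your observation that $\si^{-1}(\Pi(c))\cap\ol{L(c)}=\Pi(c)$ then transports these counts to $\Pi(c)$, and that step is correct.

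Second, and more seriously, you assert that $\Pi(c)$ is a laminational set (holes map to holes) but do not actually prove it, and in particular you never address the hole $T$ whose closure contains the endpoints of $c$. Your parenthetical ``the hole behind $c$ (or the hole behind the critical leaf)'' is a tautology since $c$ is the critical leaf, and you say nothing about what happens to this hole under $\si$. This is exactly the nontrivial case the paper handles: if $T=\uc\sm\ol{L(c)}$, then $c$ is a critical edge of $G(c)$ and maps to a point; if $T\supsetneq\uc\sm\ol{L(c)}$, the paper shows by contradiction that $(\si(a),\si(b))$ is again a hole of $\Pi(c)$ using that any $t\in T\cap L(c)$ with $\si(t)\in\Pi(c)$ would itself lie in $\Pi(c)$. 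Without this argument the claim that $G(c)$ is a stand-alone invariant gap is unsupported. The remaining holes $I\subset L(c)$ also require the one-line argument that if some $y\in I$ had $\si(y)\in\Pi(c)$ then $y\in\Pi(c)$, contradicting $y\in I$; you gesture at homeomorphic mapping of short holes but omit this. The final paragraph (identifying $c$ as the major with $|H_{G(c)}(c)|=\frac13$ exactly, giving case (1) of Lemma~\ref{bndcrit}) is essentially correct.
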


Observe that, since $c$ is a critical leaf, its $\si$-image is a point.

\begin{proof}
It is easy to see that the set $\Pi(c)$ is closed and forward invariant;
it is non-empty because it contains at least one fixed point (indeed, as was noticed before
the lemma, the circle arc $L(c)$  in the case at hand is of length $\frac23$, and each circle arc of length
$\frac23$ contains at least one $\si_3$-fixed point). Let $x\in
\Pi(c)$. If $x\ne \si(c)$, then, of its three preimages, one belongs to
$\uc\sm \ol{L(c)}$ while two others are in $\ol{L(c)}$, and hence, by definition,
also in $\Pi(c)$. Suppose that $x=\si(c)$ (and so, since by the
assumption $x\in \Pi(c)$, the orbit of $c$ is contained in $\ol{L(c)}$).
Then the entire triple $\si^{-1}(\si(c))$ is contained in $\ol{L(c)}$ and,
again by definition, in $\Pi(c)$.

To prove the next claim of the lemma, we prove that any hole $I$ of
$\Pi(c)$ except for the hole $T$, whose closure contains the
endpoints of $c$, maps to a hole of $\Pi(c)$. Indeed, otherwise
there is a point $y\in I$ such that $\si(y)$ is a point of $\Pi(c)$.
Since $I\subset \ol{L(c)}$, we have $y\in \Pi(c)$, a contradiction.
Consider the hole $T=(a,b)$ defined above. If $T=\uc\sm \ol{L(c)}$,
there is nothing to prove as in this case the leaf $c$ is a critical
edge of $G(c)$ that maps to a point of $\Pi(c)$. Suppose now that
$T\ne \uc\sm \ol{L(c)}$ and that there is a point $z\in (\si(a),
\si(b))\cap \Pi(c)$. Then there is a point $t\in T\cap L(c)$ with
$\si(t)=z$ and hence $t\in \Pi(c)$, a contradiction. Thus, $(\si(a),
\si(b))$ is a hole of $\Pi(c)$, and $\ol{\si(a)\si(b)}$ is an edge
of $G(c)$. All this implies that $G(c)$ is an invariant gap, and it
follows from the definition that it is quadratic. The last claim
easily follows.
\end{proof}

Below we will use the notation $G(c)=\ch(\Pi(c))$. Let us relate
invariant quadratic gaps defined in terms of periodic majors (see
Lemma~\ref{vassal}) and caterpillar gaps $C(c)$ (see
Lemma~\ref{catpil}) to the gaps $G(c)$. To state Lemma~\ref{picper},
we need the following concept. By Lemmas~\ref{bndcrit} and
\ref{descru}, all holes of $U$ map onto $H(M(U))$ after finitely many
steps in a monotone fashion. Suppose that $I$ is a hole of $U$ and
$n\ge 0$ is a positive integer such that $\si^n(I)=H(M(U))$ and
$\si^n|_I$ is monotone. Then for any point $x\in H(M(U))$ we call
a unique point $\widetilde x\in I$ with $\si^n(\widetilde x)=x$ the \emph{first
pullback of $x$ to $I$}.

\begin{lem}\label{picper}
Let $M(U)=\ol{ab}$ be periodic of period $k$ and $c=\ol{xy}$ be a
critical chord with endpoints in $\ol{H(M(U))}$. Then:
\begin{enumerate}
\item if $x$, $y\in H(M(U))$, then $U'=\Pi(c)$,
\item otherwise $\Pi(c)$ consists of $U'$, $C'(c)$, and all first pullbacks of
points of $C'(c)$ to holes of $U$.
\end{enumerate}
\end{lem}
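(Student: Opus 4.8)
The plan is to analyze the set $\Pi(c)$ of points whose orbits stay in $\ol{L(c)}$ by comparing it carefully with the three relevant building blocks: $U'$, the basis $C'(c)$ of the canonical caterpillar gap, and the ``copies'' of $C'(c)$ sitting in the other holes of $U$. First I would set up notation: $M(U)=\ol{ab}$ is periodic of period $k$, $H(M(U))=(a,b)$, and $c=\ol{xy}$ is critical with $x,y\in\ol{H(M(U))}$; recall $a''=a+\frac13$, $b''=b-\frac13$ are the two points of $H(M(U))$ mapping under $\si$ to $\si(a)$ and $\si(b)=b$ respectively (using case (2) of Lemma~\ref{bndcrit}, where $\si_3^k$ fixes $a$ and $b$). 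The critical chord $c$ with both endpoints in $\ol{H(M(U))}$ must then be one of $\ol{aa''}$ or $\ol{a''b}$ (after matching $\si(x)=\si(y)$): if $x=a$ then $y=a''$, if $y=b$ then $x=b''$, while if both endpoints lie strictly inside $H(M(U))$ the only possibility is $\{x,y\}=\{a'',b''\}$ viewed as a critical chord... but $\ol{a''b''}=M''(U)$ is exactly the non-major critical chord, which bounds $V(U)$; so part (1) is precisely the case $c=M''(U)$, and part (2) is the case where $c$ is one of the two canonical caterpillar critical edges $\ol{aa''}$ or $\ol{bb''}$ from Lemma~\ref{catpil}.

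For part (1), with $c=\ol{a''b''}$, I would observe that $\ol{L(c)}$ is the arc of length $\frac23$ bounded by $a''$ and $b''$ that contains neither $a$ nor $b$ in its interior — more precisely the arc $[a'',b]\cup[a,b'']$ glued at... no: $L(c)$ is the component of $\uc\sm\{a'',b''\}$ of length $\frac23$; one checks it equals $[b'',a]\cup[\text{through }a\text{, }b]\cup[b,a'']$, i.e. the arc containing the full hole-closure $\ol{H(M(U))}$'s complement together with $[a,b'']$ and $[a'',b]$. Then $\Pi(c)$ is the set of points with all iterates in $\ol{L(c)}$. I claim this is exactly $U'$. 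One inclusion is immediate from Lemma~\ref{descru}: points of $U'$ have orbits avoiding $H(M(U))$, hence avoiding the short arc $(b'',a'')$ which is the complement of $\ol{L(c)}$ — wait, I must double-check which arc is the complement; the complement of $\ol{L(c)}$ is the open arc of length $\frac13$ between $b''$ and $a''$, which lies inside $H(M(U))=(a,b)$. Points of $U'$ avoid all of $H(M(U))$, a fortiori avoid $(b'',a'')$, so $U'\subset\Pi(c)$. Conversely, if $z\in\Pi(c)\sm U'$, then $z$ lies in some hole of $U$ behind an edge $\ell$, and by Lemmas~\ref{l:fx-maj},~\ref{bndcrit},~\ref{descru} some iterate $\si^n(z)\in H(M(U))$; but the relevant iterate then lands in the ``free'' part of $H(M(U))$ — I need to show it actually enters $(b'',a'')$. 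This uses that $V(U)$ is the maximal set with $\si^k$-orbit in $[a,b'']\cup[a'',b]$, so any point whose $\si$-orbit stays in $\ol{H(M(U))}\cap\ol{L(c)}=[a,b'']\cup[a'',b]$ and also avoids entering $(b'',a'')$ forever would have to be in $U'$ already (since its full orbit then avoids $H(M(U))$'s interior gap, i.e. it never ``descends'' into the hole); chasing the horseshoe dynamics of Lemma~\ref{vassal} pins this down. This comparison is the main obstacle, and it is essentially the content of the careful orbit-tracking in the proof of Lemma~\ref{vassal} reapplied here.

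For part (2), with $c=\ol{aa''}$ (the case $c=\ol{bb''}$ being symmetric), the arc $L(c)$ has length $\frac23$ and contains $\ol{H(M(U))}$ minus the sub-arc $(a'',\cdot)$... concretely $L(c)=(a'',a)$ read the long way around, i.e. it contains all of $\uc\sm \ol{H(M(U))}$ together with the sub-arc $[a'',b]$ of $\ol{H(M(U))}$ — so a point stays in $\ol{L(c)}$ iff it never enters the open arc $(a,a'')$. I would argue that $\Pi(c)$ decomposes as claimed by tracking where orbits can go: (i) points of $U'$ avoid all of $H(M(U))=(a,b)\supset(a,a'')$, so $U'\subset\Pi(c)$; (ii) the caterpillar gap $C(c)$ of Lemma~\ref{catpil} has critical leaf $\ol{aa''}$ and basis $C'(c)\subset\ol{H(M(U))}$ with all leaves sitting behind $M(U)$ and $\si^k$ shifting them towards $M(U)$ while $\ell_{-1}=\ol{aa''}$ collapses — one checks directly from the defining dynamics that every point of $C'(c)$ has full orbit in $\ol{L(c)}$ (the only ``dangerous'' arc $(a,a'')$ is hit only by points behind $\ell_{-1}$, which are not in the basis, while the refixed endpoint $a$ itself is in $\ol{L(c)}$); (iii) for a point $w$ in a hole $I\ne H(M(U))$ of $U$ with $\si^n(I)=H(M(U))$ monotonically, the first pullback to $I$ of a point of $C'(c)$ has its first $n$ iterates inside holes of $U$ (hence outside $H(M(U))\supset(a,a'')$, so in $\ol{L(c)}$) and its subsequent orbit coincides with that of a point of $C'(c)$, hence stays in $\ol{L(c)}$; so all three families lie in $\Pi(c)$. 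For the reverse inclusion, given $z\in\Pi(c)$ either its orbit avoids $H(M(U))$ entirely, giving $z\in U'$; or some $\si^n(z)\in H(M(U))$, and then since $\si^n(z)$ must lie in $\ol{L(c)}$ it lies in $[a'',b]$, so $\si^n(z)$ is a point of $\ol{H(M(U))}$ whose forward orbit stays in $\ol{L(c)}\cap\ol{H(M(U))}$; I would show such points are exactly $C'(c)$ by recognizing that $C(c)$ is the maximal laminational set with this property (the caterpillar structure is forced by the horseshoe dynamics on $[a,b'']\cup[a'',b]$ together with the collapsing of $\ol{aa''}$), and then $z$ itself, living in the hole $I$ and mapping monotonically onto such a point, is a first pullback of a point of $C'(c)$ to $I$. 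Again the crux is identifying $C'(c)$ as the maximal invariant-in-$\ol{L(c)}$ subset of $\ol{H(M(U))}$, which I expect to follow from the same one-dimensional horseshoe analysis used for Lemma~\ref{vassal} combined with Lemma~\ref{pic}'s description of $G(c)=\ch(\Pi(c))$ as a stand-alone invariant quadratic gap whose major, in case (2), is $M(U)$ itself rather than $c$.
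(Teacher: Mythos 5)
Your reduction of case (1) to a single chord is wrong, and this sinks the case-(1) argument. You claim that when $x,y$ lie strictly inside $H(M(U))$, "the only possibility is $\{x,y\}=\{a'',b''\}$." In fact $\ol{a''b''}$ has length $|a''-b''|=\frac23-|H(M(U))|$, which equals $\frac13$ only when $|H(M(U))|=\frac13$; but in the periodic case Lemma~\ref{bndcrit}(2) gives $|H(M(U))|>\frac13$, so $\ol{a''b''}$ is \emph{not} a critical chord at all (it is the leaf $M''(U)$, a sibling of $M(U)$, with nondegenerate image). More seriously, there is a whole interval of genuine critical chords with both endpoints in $H(M(U))$: any $c=\ol{t\,(t+\frac13)}$ with $t\in(a,b'')$ qualifies (then $t+\frac13\in(a'',b)$). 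Case (1) must be proved for all of these simultaneously, and your computation of $L(c)$ as a specific length-$\frac23$ arc bounded by $a''$ and $b''$ applies to none of them.

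The fix, which is also the paper's route, is to note that for every such $c=\ol{xy}$ with $a<x<b''<a''<y<b$, the arc $J=(x,y)$ \emph{contains} $I=(b'',a'')=H_{V(U)}(\ol{b''a''})$, and $J$ is exactly the complement of $\ol{L(c)}$ so $\ol{I}\cap\Pi(c)=\varnothing$. Then one observes that $\si^k$ restricted to $H(M(U))\sm\ol J$ is a one-to-one expanding map onto $H(M(U))$; hence every point of $H(M(U))$ eventually lands in $\ol J$ and so escapes $\ol{L(c)}$. Combined with Lemma~\ref{descru} (any point not in $U'$ eventually enters $H(M(U))$), this gives $\Pi(c)\subset U'$, and with the easy inclusion $U'\subset\Pi(c)$ you get equality. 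Your appeal to "chasing the horseshoe dynamics of Lemma~\ref{vassal}" gestures at this but is built on the wrong chord. Your part (2), where you correctly identify $c$ as $\ol{aa''}$ or $\ol{b''b}$ and decompose $\Pi(c)$ into $U'$, $C'(c)$, and first pullbacks, is structurally on the right track (the paper simply leaves this case to the reader), though the key assertion that $C'(c)$ is the maximal forward-invariant subset of $\ol{H(M(U))}\cap\ol{L(c)}$ would need the same expansion argument to be made precise.
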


\begin{proof}
Clearly $U'\subset \Pi(c)$.
We may assume that the circle arc $[a,b]$ is ordered in the positive direction from $a$ to $b$.
Let $\ell=\ol{b^*a^*}$ be the non-periodic edge of $V(U)$ such that $\si(a^*)=\si(a)$, $\si(b^*)=\si(b)$; suppose that $\ell\cap c=\0$.
The order of points on the circle arc $[a, b]$ is as follows: $a<b^*<a^*<b$.
By construction of $V(U)$, we have that $I=H_{V(U)}(\ell)$ coincides with $(b^*,a^*)$,
which in particular implies that the length of $I$ is less than $\frac13$ (because the length of $(a,a^*)$ is $\frac13$).
Moreover, $\ol I$ contains no points of $\Pi(c)$.
Since $c$ is disjoint from $\ell$ and has endpoints in $\ol{H(M(U))}$, we may assume that $x\in (a, b^*)$ and $y\in (a^*,b)$.
Thus, we have $J=(x, y)\supset I$.
Note that the restriction of $\si^k$ to $H(M(U))\sm \ol{J}$ is a one-to-one expanding map to $H(M(U))$.
It follows that all points of $H(M(U))$ eventually map to $\overline{J}$, therefore, they do not belong to $\Pi(c)$ either.
By Lemma~\ref{descru}, any point $x\nin U'$ eventually maps to $H(M(U))$.
Thus, $\Pi(c)=U'$.
The case, where $c=\ol{aa^*}$ or $c=\ol{b^*b}$, is left to the reader.
\end{proof}

Lemma~\ref{l:perpic} complements Lemma~\ref{picper}.

\begin{lem}\label{l:perpic}
If $c=\ol{xy}$ is a critical leaf with a periodic endpoint, say, $y$
such that $x$, $y\in \Pi(c)$ then there exists a quadratic invariant
gap $W$ with a periodic major $M(W)=\ol{zy}$ of period $k$, the point $z$ is
the closest to $x$ in $L(c)=(y, x)$ point that is $\si^k$-fixed, and
$z$ is of period $k$.
\end{lem}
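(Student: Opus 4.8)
The plan is to analyze the set $\Pi(c)$ for the critical leaf $c = \ol{xy}$ with periodic endpoint $y$ of period $k$, and to extract from it the invariant quadratic gap $W$ together with its major. First I would recall from Lemma~\ref{pic} that $G(c) = \ch(\Pi(c))$ is a stand alone invariant quadratic gap and that $\Pi(c)$ is closed and forward invariant; since by hypothesis both endpoints of $c$ lie in $\Pi(c)$, the leaf $c$ is a critical \emph{edge} of $G(c)$ (rather than a diagonal), so $c = \ol{xy}$ bounds the hole $T = \uc \sm \ol{L(c)}$ of $G(c)$, where $L(c) = (y, x)$ is the arc of length $\frac23$. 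The gap $W$ we are looking for, however, is \emph{not} $G(c)$ — it is the invariant quadratic gap with periodic major $M(W) = \ol{zy}$; the point is that $G(c)$ is of regular critical type (case (1) of Lemma~\ref{bndcrit}) while $W$ is of periodic type (case (2)), and they share the periodic endpoint $y$.

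Next I would locate the point $z$. Since $y$ is $\si^k$-fixed and lies in $\ol{L(c)}$, consider the $\si^k$-fixed points inside the open arc $L(c) = (y, x)$; because $|L(c)| = \frac23 > \frac13$, the first-return map dynamics on this arc forces the existence of such fixed points (the argument is parallel to the one in Lemma~\ref{bndcrit} and in the proof of Lemma~\ref{pic} locating fixed points in long arcs). Let $z$ be the one closest to $x$. I would then show that the chord $\ol{zy}$ is periodic of period exactly $k$: both endpoints are $\si^k$-fixed, and one checks that no smaller period is possible using that $y$ has period $k$ (here one should be careful to rule out that $z$ could accidentally be $\si^j$-fixed for a proper divisor $j$ of $k$; if the period of $\ol{zy}$ were a proper divisor, minimality of $k$ for $y$ would be contradicted, unless $z$ and $y$ have genuinely different periods with the chord nonetheless periodic — this needs a short separate check, e.g.\ noting that a periodic chord's period divides the period of each endpoint so the chord period is $k$).

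Then I would build $W$ itself. Set $W = \ch(\Pi(\ol{zy}))$ in the sense of Lemma~\ref{bndcrit}/Lemma~\ref{descru}: let $W'$ be the set of points whose $\si$-orbits avoid the hole $H(\ol{zy}) = (z, y)$ (the arc from $z$ to $y$ not containing $x$, which has length strictly between $\frac13$ and $\frac23$). I would verify using Lemma~\ref{l:fx-maj} and Lemma~\ref{bndcrit} that $\ol{zy}$ is a major of $W$: since $z$ is $\si^k$-fixed, $\ol{H(\ol{zy})}$ contains a fixed point, so by Lemma~\ref{l:fx-maj} the edge $\ol{zy}$ is a major; and since $\ol{zy}$ is periodic we are in case (2) of Lemma~\ref{bndcrit}, so the period of the major is $k$, giving $\frac13 < |M(W)|_W \le \frac12$ and $W$ quadratic of period $k$ (alternatively, invoke Lemma~\ref{l:perpic}-style reasoning or directly Lemma~\ref{vassal} with $M(U)$ replaced by $\ol{zy}$). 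Uniqueness of the major inside $W$ is again Lemma~\ref{bndcrit}. Finally I would record that $z$ is indeed of period $k$ and is the closest $\si^k$-fixed point to $x$ in $L(c)$, which is how it was chosen.

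The main obstacle I anticipate is the bookkeeping around the point $z$: proving that the arc $(z,y)$ (the major hole of $W$) genuinely contains a fixed point and has the right length, that $z$ has period exactly $k$ rather than a proper divisor, and that the orbit of the chord $\ol{zy}$ together with the folding behavior of $\si^k$ on $L(c)$ is consistent with $W$ being a legitimate invariant quadratic gap. This amounts to carefully running the horseshoe/first-return analysis on the arc $L(c)$ of length $\frac23$ — essentially a localized repeat of the arguments already used in Lemmas~\ref{bndcrit}, \ref{vassal}, \ref{pic}, and \ref{picper} — and making sure the choice ``closest to $x$'' pins down $z$ uniquely and with the claimed period.
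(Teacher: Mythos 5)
Your approach diverges from the paper's in a way that leaves a genuine gap. The paper does not define $z$ abstractly as ``the closest $\si^k$-fixed point to $x$ in $L(c)$'' and then try to build $W$ from scratch. Instead, it works concretely inside the already-known gap $G(c)$: using the invariance of $G(c)$ and the fact that $\si^k(x)=y$, it finds a countable concatenation of edges $c,c_{-1},c_{-2},\dots$ of $G(c)$ attached at $x$, with $\si^k(c_{-m})=c_{-m+1}$ and $|c_{-m}|_{G(c)}=3^{-mk-1}$, whose accumulation point is a $\si^k$-fixed point $z$ (and is, by the explicit geometry, the closest such point to $x$). Then $W$ is obtained simply by replacing this concatenation $\ol A$, and all of its preimages on $\bd(G(c))$, by the chord $M=\ol{zy}$ and the corresponding preimages of $M$. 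Since $W$ is just $G(c)$ with a countable set of isolated ``caterpillar'' vertices removed, it inherits the structure of a quadratic invariant stand alone gap from Lemma~\ref{pic} essentially for free.

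Your plan to set $W'=\{$points whose $\si$-orbits avoid $(z,y)\}$ and then ``verify using Lemma~\ref{l:fx-maj} and Lemma~\ref{bndcrit}'' does not close the argument. Lemmas~\ref{l:fx-maj} and~\ref{bndcrit} apply to a set already known to be a semi-laminational set or a stand alone quadratic invariant gap; they give no help in proving that this abstractly defined $W'$ \emph{is} the basis of such a gap — that holes map to holes, that the degree is two, that $\ol{zy}$ is an edge rather than a chord cutting through the interior, or even that the orbit of $y$ itself avoids $(z,y)$ (this last fact does hold, via the minimality in the choice of $z$, but you never argue it). Moreover, the specific appeal to Lemma~\ref{l:fx-maj} is a misapplication: that lemma characterizes majors via a $\si$-fixed point in the closure of the hole, not a $\si^k$-fixed point, so ``since $z$ is $\si^k$-fixed, $\ol{H(\ol{zy})}$ contains a fixed point'' is not what the lemma gives. (You also state $(z,y)$ does not contain $x$; with the paper's positive orientation $z\in(y,x)$ forces $x\in(z,y)$, so the hole $H_W(\ol{zy})=(z,y)$ does contain $x$ — a slip, though not a fatal one.) The surrounding remarks about period-$k$ bookkeeping are reasonable and match what the paper relegates to ``easily follows,'' but the central gap is the unproved existence of the gap $W$ with $\ol{zy}$ as an edge, which the paper's caterpillar-tail construction handles directly and your abstract choice of $z$ does not.
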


\begin{proof}
Assume that the period of $y$ is $k$. Consider the gap $G(c)$. By
%Gap Invariance (see Definition~\ref{geolaminv}(2) )
the invariance of $G(c)$,
there is an edge $c_{-1}$ of
$G(c)$ attached to $c$ at $x$ which maps to $c$ under $\si^k$.
Moreover, $|c_{-1}|_{G(c)}=3^{-k-1}$. This can be continued
infinitely many times so that the $m$-th edge of $G(c)$, which maps
to $c$ under $\si^{mk}$, is a leaf $c_{-m}$ such that its hole is of
length $3^{-mk-1}$. Clearly, the concatenation $A$ of leaves $c$,
$c_{-1}$, $\dots$ converges to a point $z\in \uc$ which is
$\si^k$-fixed. Set $\ol{zy}=M$.

Since, by Lemma~\ref{pic}, the gap $G(c)$ is quadratic, there are
many preimages of $\ol A$ on the boundary of $G(c)$. Replace them all by
the corresponding preimages of $M$ (e.g., replace $\ol{A}$ by $M$).
It follows that the newly constructed gap $W$ is  a quadratic invariant stand alone Fatou gap
with  major $M=M(W)$
%. Clearly, this is
%precisely the situation described in Lemma~\ref{picper}
as desired. The last claim of the lemma easily follows.
\end{proof}

Let us summarize the above results. Let $c$ be a critical leaf. We
will now define an invariant stand alone quadratic Fatou gap $U(c)$
determined by $c$. Even though in the beginning of this section we
announced that we will consider a given  invariant stand alone
quadratic Fatou gap $U$, we choose the notation $U(c)$ for the gap
determined by $c$  to reflect the fact that $U(c)$ has the same
properties (i.e., that it is an invariant stand alone quadratic
Fatou gap). In what follows we will use the notation $L(c), \Pi(c)$
and $G(c)$ introduced above.

The orbit of a critical leaf $c$ can be of three types. First, the
orbit of $c$ can be contained in $L(c)$ so that no endpoint of $c$
is periodic. Then we set $U(c)=G(c)$ and call $U(c)$ and the leaf
$c$ \emph{regular critical}. Second, an endpoint of $c$ can be
periodic with the orbit of $c$ contained in $\ol{L(c)}$. By
Lemmas~\ref{picper} and ~\ref{l:perpic}, then $\Pi(c)$ consists of
$U'$, $C'(c)$, and first pullbacks of $C'(c)$ to holes of $U$ for
some invariant quadratic gap $U=U(c)$ with a periodic major $M(U)$
(the gap $U$ can be defined as the convex hull of all non-isolated
points in $\Pi(c)$). This defines the gap $U(c)$ in that case. Then
we call the gap $G(c)$ an \emph{extended \ca\, gap}, and the
critical leaf $c$ a \emph{\ca\,} critical leaf. Third, there can be
$n>0$ with $\si^n(c)\nin \ol{L(c)}$. Denote by $n_c$ the smallest
such $n$. Then $G(c)$ has a periodic major of period $n_c$ and we
set $U(c)=G(c)$ and call this gap a \emph{gap of periodic type}.
Only regular critical gaps or gaps of periodic type can be invariant
quadratic gaps of laminations (a critical leaf with a periodic
endpoint would imply that the corresponding infinite canonical
caterpillar is contained in one class, a contradiction with
condition (E3) of Definition~\ref{d:lam}).

We show below in Lemma~\ref{l:posholes} that properties listed in item (2) of
Lemma~\ref{bndcrit} are basically sufficient for a periodic leaf
$\ell$ to be the major of a quadratic invariant gap of periodic
type. The arguments in Lemma~\ref{l:posholes} are related to those
in Lemma~\ref{l:lameqsemi}.

\begin{lem}\label{l:posholes}
Let $\ell=\ol{xy}$ be a $\si$-periodic leaf of period $r$ with
$\si^r$-fixed endpoints for which there is a unique component $Z$ of
$\ol{\disk}\sm \orb_\si(\ell)$ such that any two iterated images of $\ell$ are
disjoint or coincident edges of $Z$. Then a hole of $Z$ with length
greater than $\frac13$ exists if and only if $Z$ is a
semi-laminational set. In that case an eventual $\si$-image
of $\ell$ that corresponds to the major hole of $Z$
is a major of a quadratic invariant gap of periodic type.
\end{lem}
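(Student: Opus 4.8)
The plan is to follow the template of Lemma~\ref{l:lameqsemi} -- the ``orientation change'' bookkeeping -- and then to feed the resulting data into Lemmas~\ref{pic} and~\ref{l:perpic}. Throughout I write $M^{(i)}$ for the iterated images of an edge of $Z$ and $H^{(i)}$ for the hole of $Z$ behind $M^{(i)}$; since $\ell$ is non-degenerate (which we may assume, the statement being trivial otherwise), no iterate of $\ell$ is critical, so no hole of $Z$ has length exactly $\frac13$ or $\frac23$. The easy implication is that a semi-laminational $Z$ has a hole of length $>\frac13$: if every hole had length $<\frac13$, semi-laminationality would make $\si$ carry each hole homeomorphically onto a hole of three times the length, and iterating this with only finitely many holes available produces a hole of length $\ge1$, which is absurd.

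For the converse I would reproduce, following the proof of Lemma~\ref{l:lameqsemi}, two facts valid for $\si=\si_3$: $\si$ can change orientation on a hole $H^{(i)}$ only if $\ol{H^{(i)}}$ contains a $\si$-fixed point, and around the orbit of an edge of $Z$ (which is periodic with refixed endpoints) the number of orientation changes is even. Since $\si$ has exactly two fixed points on $\uc$ and distinct holes of $Z$ are disjoint, the number of orientation changes is $0$ or $2$, and it is $0$ precisely when $Z$ is semi-laminational. The crucial -- and, I expect, hardest -- step is then to exclude two orientation changes in the presence of a hole of length $>\frac13$: along the cycle of an edge the hole lengths transform by $t\mapsto 3t\bmod 1$ at an orientation-preserving step and by $t\mapsto -3t\bmod 1$ at an orientation-changing step, the two changed holes contain the two fixed points $0$ and $\frac12$, and -- because the iterated images of $\ell$ are pairwise disjoint leaves, so the basis of $Z$ has positive length and hence $\sum_i|H^{(i)}|<1$ -- tracing the recursion from a hypothetical hole of length $>\frac13$ around the cycle leads to a contradiction with $\sum_i|H^{(i)}|<1$. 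This gives the equivalence.

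Now suppose these equivalent conditions hold; pick a hole $H$ of $Z$ with $|H|>\frac13$, write $H=(a,b)$ (positively oriented), and let $M=\ol{ab}$ be the edge of $Z$ behind $H$. Then $M$ lies in the orbit of $\ell$, hence is periodic of some period $k$, necessarily $k\ge2$ (if $k=1$ the orbit of $\ell$ is a single chord and $Z$ fails to be unique) and with $\si^k$-fixed endpoints. Using semi-laminationality, the hole lengths $|H^{(0)}|,\dots,|H^{(k-1)}|$ form a $\si$-periodic orbit in $[0,1)$; summing $|H^{(i+1)}|=3|H^{(i)}|-d_i$, with $d_i=\lfloor 3|H^{(i)}|\rfloor$, around the cycle gives $\sum_i|H^{(i)}|=\frac12\sum_i d_i$, and combining this with $\sum_i|H^{(i)}|<1$ and $|H|>\frac13$ forces $d_0=1$ and $d_i=0$ for $i\ge1$, hence $|H|=\frac{3^{k-1}}{3^k-1}$. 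In particular $H$ is the unique hole of length $>\frac13$, so ``the major hole of $Z$'' is well defined and equals $H$.

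Finally I would produce the quadratic gap. Put $p=b-\frac13$; as $|H|>\frac13$ the point $p$ lies in $H$, so $c=\ol{pb}$ is a critical leaf with periodic endpoint $b$, and $L(c)$ is the circle arc of length $\frac23$ cut off by $c$. An elementary arc computation gives $\ol{\uc\sm H}\subseteq\ol{L(c)}$, hence $Z\cap\uc\subseteq\ol{L(c)}$; since all edges of $Z$, in particular all iterates of $\ell$, and the entire forward orbit of $c$ -- which is just the orbit of the point $b$ -- then remain in $\ol{L(c)}$, we get $p,b\in\Pi(c)$. Lemma~\ref{l:perpic} now yields a quadratic invariant gap $W$ of periodic type with periodic major $M(W)=\ol{zb}$ of period $k$, where $z$ is the $\si^k$-fixed point of $L(c)$ closest to $p$. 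But $a$ is $\si^k$-fixed and lies in $L(c)$, and by the value of $|H|$ just computed the arc $(a,p)$ has length $\frac{1}{3(3^k-1)}$, which is less than the spacing $\frac{1}{3^k-1}$ of the $\si^k$-fixed points on $\uc$; since this arc abuts the $\si^k$-fixed point $a$, it contains no other $\si^k$-fixed point, so $z=a$ and $M(W)=\ol{ab}=M$. Thus the iterate $M$ of $\ell$ lying behind the major hole of $Z$ is the major of the periodic-type quadratic invariant gap $W$, as required. The remaining loose ends are the elementary arc-arithmetic verifications $p\in H$, $a\in L(c)$, $\ol{\uc\sm H}\subseteq\ol{L(c)}$, and that the forward orbit of $c$ coincides with that of $b$ and so lies in $Z\cap\uc$.
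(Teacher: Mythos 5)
Your proposal takes a genuinely different route from the paper's for both halves of the lemma, and the first half has a real gap.

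For the direction ``a hole of length $>\frac13$ exists $\Rightarrow Z$ is semi-laminational'', you run the orientation-change bookkeeping of Lemma~\ref{l:lameqsemi} directly in the cubic setting: since $\si_3$ has two fixed points, the number of orientation changes is $0$ or $2$, and you assert that the hole-length recursion together with $\sum_i |H^{(i)}|<1$ rules out two changes. This is not true at the level of lengths alone. For instance, the period-$2$ cycle $t_0=\frac58$, $t_1=\frac18$, with an orientation change at each step ($t_1 = 1-(3t_0\bmod 1)=\frac18$ and $t_0 = 1-3t_1=\frac58$), has a hole of length $>\frac13$ and $\sum t_i=\frac34<1$, so no contradiction appears. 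Ruling out two orientation changes requires using where $0$ and $\frac12$ actually sit relative to the orbit of $\ell$, and the periodicity equation $\si^k(\ell)=\ell$ -- not just the hole-length arithmetic -- and you have not supplied that argument; you yourself flag it as ``the hardest step'' and then wave it through. The paper sidesteps this entirely: it puts a critical chord $c$ with \emph{non-periodic} endpoints in the big hole, transports the orbit of $\ell$ via $\psi_{G(c)}$ to a $\si_2$-orbit, and invokes Lemma~\ref{l:lameqsemi} there, where the one fixed point of $\si_2$ immediately forces zero orientation changes. That reduction does not require semi-laminationality as input, which is what lets the paper cover the converse direction in the same breath.

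For the ``in that case'' claim your route is different from the paper's and, taken on its own, correct. Where the paper argues by contradiction through $\psi_{G(c)}$ and Lemma~\ref{l:lameqsemi} with a non-periodic critical chord, you instead take $c=\ol{pb}$ with the \emph{periodic} endpoint $b$ (and $p=b-\frac13$), verify $p,b\in\Pi(c)$, invoke Lemma~\ref{l:perpic} to produce a periodic-type gap $W$ with major $\ol{zb}$, use semi-laminationality to get $|H|=\frac{3^{k-1}}{3^k-1}$, and then compare $|H|-\frac13=\frac1{3(3^k-1)}$ with the spacing $\frac1{3^k-1}$ of $\si^k$-fixed points to conclude $z=a$, hence $M(W)=\ol{ab}$. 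The arc arithmetic here checks out and this is a more explicit construction than the paper's. But note that it leans on the exact value of $|H|$, which you only derive from semi-laminationality -- so the correctness of this half rests on the half that is incomplete.
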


Observe that $Z$ has $r$ holes each of which is located behind an
image of $\ell$ and has length equal to neither $\frac13$ nor
$\frac23$. Recall that if a Fatou gap $G$ is invariant, then the
quotient map $\psi_G:\bd(G)\to\uc$ is defined as a map collapsing
all edges of $G$ to points and mapping $G$ to the unit circle.

\begin{proof}
First assume that $Z$ is semi-laminational.
Then, by the above remark and Lemma~\ref{l:fx-maj}, at least one hole $H$ of $Z$ must be longer than $\frac13$.
We may assume that $H=H_Z(\ell)$. Choose a critical leaf $c$ whose non-periodic endpoints are in $(x, y)$. Let us show that $G(c)$ is of periodic type and $\ell$ coincides with the major $M$ of the gap $G(c)$. Indeed, suppose otherwise. By definition the endpoints of $\ell$ belong to $G'(c)$ and $\psi_{G(c)}$ maps $\ell$ to a leaf
$\psi_{G(c)}(\ell)$ such that the leaf $\psi_{G(c)}(\ell)$ and its $\si_2$-images satisfy conditions of Lemma~\ref{l:lameqsemi}. Hence
$\psi_{G(c)}(\ell)$ and its $\si_2$-images are the edges of a finite
$\si_2$-invariant gap. In particular, they are not pairwise disjoint.

Now, $\psi_{G(c)}$ collapses only preimages of $M$. If $G(c)$ is of
regular critical type (i.e., $M=c$ has no periodic endpoints), it
will follow that $\ell$ and its $\si$-images are not pairwise
disjoint, a contradiction. If $G(c)$ is of periodic type (i.e., $M$
is a periodic leaf) then, if $\ell$ and its $\si$-images miss
endpoints of $M$, we have that $\psi_{G(c)}$ is one-to-one on the
endpoints of $\ell$ and its $\si$-images. Hence $\ell$ and its
$\si$-images are not pairwise disjoint, a contradiction.

Suppose finally that, say, $\ell\ne M$ shares an endpoint $x$ with
$M=\ol{xz}, z\ne y$. Since $\ell$ and its $\si$-images are pairwise
disjoint, $y$ does not belong to the same periodic orbit as $x$. On
the other hand, $\psi_{G(c)}$-images of leaves from the $\si$-orbit
of $\ell$ are the edges of a finite $\si_2$-invariant gap. Thus, $y$
belongs to some $\si$-image of $M$ and so the orbit of $y$ coincides
with the orbit of $z$. However this is impossible as by the
construction $z\in H$ while $H$ cannot contain points of
$\si$-images of $\ell$. The rest of the lemma follows from the above
and the already obtained description of quadratic invariant gaps of
$\si$.
\end{proof}

By the above proven lemmas, each gap $W=G(c)$ of periodic type has a
periodic major $M(W)=\ol{xy}$ of period $n_c$ with endpoints in
$L(c)$; moreover, $x$ and $y$ are the closest in $L(c)$ points to
the endpoints of $c$ that are $\si^{n_c}$-fixed (in fact, they are
periodic of period $n_c$).

\begin{lem}\label{cantor}
If an invariant quadratic gap $W$ is either of regular critical or of
periodic type, then $W'$ is a Cantor set. If $W=G(c)$ is an extended
caterpillar gap, then $W'$ is the union of a Cantor set and a
countable set of isolated points, all of which are preimages of the
endpoints of $c$.
\end{lem}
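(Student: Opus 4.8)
The plan is to handle the three cases separately, in each case exhibiting $W'$ as (essentially) the set of points of $\uc$ whose forward orbit avoids a fixed open arc, and then invoking a standard symbolic-dynamics / horseshoe argument. First, suppose $W=G(c)$ is of \emph{regular critical type}. Then by Lemma~\ref{pic} we have $W'=\Pi(c)$, the set of all points with orbit in $\ol{L(c)}$, where $|L(c)|=\frac23$. The complementary arc $\uc\sm\ol{L(c)}$ has length $\frac13$, so $\si$ restricted to $\ol{L(c)}$ (an arc of length $\frac23$) is, up to the identification at the endpoints of $c$, a full \emph{doubling-type} branched cover: indeed, by the three-preimages property proven in Lemma~\ref{pic}, every point of $\Pi(c)$ has exactly two preimages in $\Pi(c)$ (three if the point is $\si(c)$). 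The standard argument then shows $\Pi(c)$ is a Cantor set: it is closed (Lemma~\ref{pic}), forward invariant, perfect (no isolated points, since near any point of $\Pi(c)$ one finds more preimages of nearby points under backward iteration), and nowhere dense in $\uc$ (because after each iterate of $\si$ a definite proportion of each surviving arc, of relative length bounded away from $0$, is removed — this is exactly the fact used in the proof of Lemma~\ref{vassal} that $N(U)$ is a Cantor set). One should note that the argument is cleanest when $c$ has no periodic endpoint, which is part of the definition of regular critical type, so no countable ``caterpillar'' tail appears.

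For the case when $W=U(c)$ is of \emph{periodic type}, the gap $W$ has a periodic major $M(W)=\ol{ab}$ of period $k$, and by Lemma~\ref{descru} the basis $U'=W'$ is exactly the set of points whose orbit is disjoint from the major hole $H(M(U))$. By Lemma~\ref{bndcrit}, $\frac13<|H(M(U))|\le\frac12$; removing this open arc and iterating, one sees again by the horseshoe picture developed in the proof of Lemma~\ref{vassal} (the intervals $[a,b'']$ and $[a'',b]$ form a $2$-horseshoe of period $k$) that the surviving set is a Cantor set. The same perfectness / nowhere-density verification as above applies: each return map is uniformly expanding, so there are no wandering arcs and no isolated points; the details are identical to the argument already given for $N(U)$ in Lemma~\ref{vassal}, so I would simply cite it.

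Finally, suppose $W=G(c)$ is an \emph{extended caterpillar gap}, so $c=\ol{xy}$ is a caterpillar critical leaf with, say, $y$ periodic of period $k$, and by Lemmas~\ref{picper} and~\ref{l:perpic} we have $\Pi(c)=U'\cup C'(c)\cup(\text{first pullbacks of }C'(c)\text{ to holes of }U)$, where $U=U(c)$ is the associated invariant quadratic gap of periodic type. By the periodic-type case just handled, $U'$ is a Cantor set. The set $C'(c)$ is the basis of a caterpillar gap: its non-accumulation points are the points $\si^{jk}(x)$, $\si^{jk}(y)$, $\si^{jk}(z)$ on the shifted leaves $\ell_{-n}$ (all eventual preimages of the endpoints of $c$), and these are isolated in $W'$, accumulating only at the periodic point to which the caterpillar tail converges — which already lies in the Cantor set $U'$. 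The countably many ``first pullback'' points are likewise isolated preimages of endpoints of $c$, and they accumulate onto $U'$ from inside the holes of $U$. Hence $W'$ is the union of the Cantor set $U'$ and a countable set of isolated points, all of which are preimages of the endpoints of $c$, as claimed. The main obstacle I anticipate is purely bookkeeping: making precise, in the extended-caterpillar case, that every point of $C'(c)$ other than the limit point is genuinely isolated in $W'$ and that the accumulation set of $W'\sm U'$ is contained in $U'$ — this requires carefully tracking where the shifted caterpillar leaves and their pullbacks sit relative to the holes of $U$, but it follows directly from the explicit description in Lemma~\ref{picper} together with the fact (Lemma~\ref{descru}) that holes of $U$ map monotonically onto $H(M(U))$.
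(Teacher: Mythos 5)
Your proof is correct and reaches the same conclusions, but by a genuinely different route. The paper reduces ``$W'$ is a Cantor set'' to ``$W'$ has no isolated points'': since by Lemma~\ref{l:fx-maj} an isolated point of $W'$ must eventually map to an endpoint of $M(W)$, the paper only needs to argue that the endpoints of $M(W)$ are not isolated, which it does by observing that these endpoints are refixed under the remap (in the periodic case) and are approached by suitably chosen pullbacks of other points of $W'$. You instead re-derive the Cantor structure directly from the expanding dynamics, citing the nested-interval construction of Lemma~\ref{vassal}. This is a valid and more self-contained route, and it has the advantage of covering the regular critical case uniformly, where the paper's stated reason (``the endpoints of $M(W)$ are periodic'') does not literally apply, as $M(W)=c$ then has non-periodic endpoints. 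That said, your appeal to the $2$-horseshoe of Lemma~\ref{vassal} is loose in the regular critical case: there $W'=\Pi(c)=\bigcap_{n\ge 0}\si^{-n}\bigl(\ol{L(c)}\bigr)$ lives in a single closed arc of length $\frac23$, not in the pair $[a,b'']\cup[a'',b]$, and the argument you actually need is that the compacta $K_n=\bigcap_{j\le n}\si^{-j}\bigl(\ol{L(c)}\bigr)$ consist of arcs whose lengths shrink geometrically and which branch (since, by Lemma~\ref{pic}, every point of $\Pi(c)$ other than $\si(c)$ has exactly two $\si$-preimages in $\Pi(c)$), yielding perfectness and total disconnectedness simultaneously; this would be worth stating explicitly. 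Your extended-caterpillar analysis is consistent with the paper's one-line invocation of Lemma~\ref{picper}; note one small slip there: the isolated points of $C'(c)$ are \emph{pullbacks} of the endpoints of $c$ under iterates of $\si^k$, not forward images $\si^{jk}(x)$, $\si^{jk}(y)$, $\si^{jk}(z)$ as you wrote --- your parenthetical ``all eventual preimages of the endpoints of $c$'' is the correct description.
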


\begin{proof}
In the regular critical and periodic cases, it suffices to prove
that the set $W'$ has no isolated points.
Indeed, by Lemma~\ref{l:fx-maj}, an isolated point in $W'$ must
eventually map to an endpoint of $M(W)$. Thus it remains to show
that the endpoints of $M(W)$ are not isolated. This follows because
the endpoints of $M(W)$ are periodic, and suitably chosen pullbacks
of points in $W'$ to $W'$ under the iterates of the remap of $W'$
will converge to the endpoints of $M(W)$. The case of an extended
caterpillar gap follows from Lemma~\ref{picper}.
\end{proof}

\subsection{Canonical laminations of invariant quadratic gaps}
\label{s:canlamqua}

Let us associate a specific lamination with each invariant quadratic gap.
We do this in the spirit of \cite{thu85}, where pullback laminations are defined for maximal collections of critical leaves.
Since $\sim$-classes are finite, an invariant lamination cannot
contain any \ca{} gaps. Hence we consider only regular critical gaps
and gaps of periodic type.

Let $U$ be a stand alone quadratic invariant gap of regular critical
type with critical major $M(U)$. Edges of $U$ have uniquely defined
iterated pullbacks disjoint from $U$ which define an invariant
lamination. More precisely, we define a lamination
$\sim_U$ as follows: $a\sim_U b$ if %$a, b$ are endpoints of an edge
%of $U$, or
there is $N\ge 0$ such that $\si^N(a)$ and $\si^N(b)$ are endpoints
of the same edge of $U$, and the set $\{\si^i(a),\si^i(b)\}$ is not
separated by $U$ for $i=0$, $\dots$, $N-1$. Loosely, one can say that
points $a, b$ ``travel'' together visiting the same holes of $U$
until at some moment they simultaneously map to the endpoints of
an edge of $U$. Clearly, all
$\sim_U$-classes are either points or leaves and $\sim_U$ is an
invariant lamination. Now, let $U$ be of periodic type and $V$ be
its vassal. Define a lamination $\sim_U$ as follows: $a\sim_U b$ if
%$a$ and $b$ are endpoints of an edge of $U$ or $V$, or
there exists $N\ge 0$ such that $\si^N(a)$ and $\si^N(b)$ are
endpoints of the same edge of $U$ or the same edge of $V$, and the
chord $\ol{\si^i(a)\si^i(b)}$ is disjoint from $U\cup V$ for $i=0$,
$\dots$, $N-1$. Note that $V$ is a gap of $\sim_U$. It is easy to
check that the canonical lamination of a quadratic periodic gap $U$
does not have periodic non-degenerate classes that are not edges of
$U$.

\begin{lem}\label{l:canlam1}
If $U$ is a stand alone invariant quadratic gap of regular critical type, then
$\sim_U$ is the unique invariant lamination such that $U$ is
one of its gaps.
\end{lem}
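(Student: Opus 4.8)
The statement claims two things about a stand-alone invariant quadratic gap $U$ of regular critical type: (a) $U$ is a gap of the lamination $\sim_U$, and (b) $\sim_U$ is the \emph{only} invariant lamination having $U$ as a gap. Part (a) is essentially built into the construction: by definition of $\sim_U$, a class is either a point or an edge $\ol{ab}$ of $U$ together with its iterated ``parallel'' pullbacks, so no $\sim_U$-leaf enters the interior of $U$, and $U$ is a union of $\sim_U$-classes; hence $U$ is a gap of $\lam_{\sim_U}$. So I would spend one short paragraph verifying that $\sim_U$ as defined is genuinely an invariant lamination with $U$ as a gap (closedness of the graph, finiteness of classes — each point has only finitely many pullback preimages inside a given hole before landing on an edge of $U$ since lengths expand by $3$ — and conditions (D1), (D2), which follow from Lemma~\ref{descru} and the structure of the pullback procedure). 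Part (a) is then immediate.

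The real content is uniqueness. Suppose $\simeq$ is \emph{any} invariant lamination with $U$ as a gap. Since $U$ is a gap of $\simeq$, all edges of $U$ are leaves of $\simeq$; in particular the critical major $M(U)$ is a leaf of $\simeq$. I would then argue that the presence of the critical leaf $M(U)$ in $\simeq$, together with $\simeq$-invariance, forces every non-degenerate $\simeq$-class to be exactly a pullback of an edge of $U$ in the sense of the definition of $\sim_U$, so that $\lam_\simeq \subseteq \lam_{\sim_U}$, and symmetrically $\lam_{\sim_U}\subseteq \lam_\simeq$. The key mechanism: let $\ol{ab}$ be any non-degenerate $\simeq$-class edge not already an edge of $U$. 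Because $U$ is a gap of $\simeq$, this leaf lies in a hole of $U$, hence (by Lemma~\ref{descru}) its forward orbit must avoid $H(M(U))$, and its orbit eventually maps under some $\si^N$ onto $M(U)$ — since $\simeq$-classes are finite, the orbit cannot wander forever, and Lemma~\ref{l:maj}/Lemma~\ref{l:fx-maj} pin the image down to the major (which here is the critical leaf). Along the way $\{\si^i(a),\si^i(b)\}$ is never separated by $U$ (otherwise some $\si^i$ of the leaf would cross an edge of $U$, contradicting that $U$ is a gap of $\simeq$ with the edge-invariance (E2)). This is precisely the defining condition for $a\sim_U b$. Conversely, each $\sim_U$-leaf is a pullback of an edge of $U$, and any lamination containing all edges of $U$ (which $\simeq$ does, as $U$ is a gap of $\simeq$) must, by taking the same pullbacks — uniqueness of pullbacks follows because at each step the chord over the hole not containing $M(U)$ is mapped homeomorphically, Lemma~\ref{bndcrit} — contain those leaves as well. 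Hence $\lam_\simeq=\lam_{\sim_U}$, so $\simeq\ =\ \sim_U$.

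I would organize this as: (1) $\sim_U$ is an invariant lamination with gap $U$ (quick, referencing the construction and Lemma~\ref{descru}); (2) $\lam_{\sim_U}\subseteq\lam_\simeq$ for any $\simeq$ with gap $U$ (pullback existence and uniqueness of edges of $U$, using Lemma~\ref{bndcrit} that off the major hole $\si$ is a homeomorphism, so pullbacks are forced and unlinked); (3) $\lam_\simeq\subseteq\lam_{\sim_U}$ (a non-degenerate $\simeq$-class, being disjoint from the interior of $U$ and having finite orbit, must be a parallel pullback of an edge of $U$ — this is where Lemma~\ref{l:maj} and the finiteness of $\simeq$-classes do the work, together with the observation that the orbit can never be separated by $U$).

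The main obstacle is step (3): ruling out the possibility that $\simeq$ contains \emph{extra} non-degenerate classes beyond the forced pullbacks — for instance, classes whose orbits stay forever in a single hole of $U$ without ever mapping to an edge of $U$, or classes hiding inside a complementary Fatou region. The finiteness axiom (E3) is the lever: an orbit that never reaches $M(U)$ would, by Lemma~\ref{l:maj}, have to be (pre)periodic or (pre)critical, and I would need to check case by case (using that $M(U)$ is the critical leaf and $U$ is regular critical, so there is no periodic rotational structure inside $U$) that each such possibility either contradicts (E3), contradicts that $U$ is a gap of $\simeq$, or is in fact already one of the $\sim_U$-pullbacks. Getting this exhaustion clean — especially the interaction with the possibility of another critical leaf of $\simeq$ somewhere in $\diskbar$ — is the delicate part; everything else is bookkeeping with the lemmas already proved.
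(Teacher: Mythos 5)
Your overall decomposition is right, and the soft parts (that $\sim_U$ is an invariant lamination with $U$ as a gap; the inclusion $\lam_{\sim_U}\subseteq\lam_\simeq$ once $\lam_\simeq\subseteq\lam_{\sim_U}$ is known) are fine and match the paper. But you have correctly identified, and not closed, the genuine gap: step (3), the inclusion $\lam_\simeq\subseteq\lam_{\sim_U}$. The lemmas you lean on there do not do the job you want. Lemma~\ref{l:fx-maj} is about \emph{edges} of a semi-laminational set eventually reaching a major; it says nothing about an arbitrary chord of $\simeq$ whose endpoints merely avoid the interior of $U$. Lemma~\ref{l:maj} has the hypothesis that the whole forward orbit of the chord stays in the closure of a single component of the complement of its orbit, which you have not established and which in general fails. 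So your case analysis --- ruling out classes that live forever in holes of $U$, or that interact with a second critical set of $\simeq$ --- is not ``bookkeeping''; it is the entire content of the lemma, and you stop short of doing it.

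The paper closes this gap with a different tool that you did not invoke: Lemma~\ref{l:canlam1s}, which shows that any cubic invariant lamination $\sim$ coexisting with the stand-alone gap $U$ automatically coexists with the whole canonical lamination $\sim_U$. The proof of that lemma is a one-line orbit argument (two crossing leaves push forward to crossing leaves, but every leaf of $\sim_U$ eventually lands on an edge of $U$). Once you have coexistence with $\sim_U$, any leaf $\ell$ of $\simeq$ that is not a leaf of $\sim_U$ must sit strictly inside a gap of $\sim_U$; since $U$ is a gap of $\simeq$, that gap cannot be $U$, so it is a pullback gap, which by construction eventually maps into $U$. Then $\si^N(\ell)$ is a leaf of $\simeq$ inside $U$, and since $U$ is a gap of $\simeq$ this forces $\si^N(\ell)$ to be an edge of $U$ with no separation by $U$ along the way --- i.e., $\ell$ is a leaf of $\sim_U$ after all, a contradiction. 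This is exactly the localization you needed: instead of tracking orbits of arbitrary chords through holes of $U$ and arguing case by case via Lemma~\ref{l:maj}, the coexistence lemma pins every stray leaf inside a gap of $\sim_U$, and the pullback structure of $\sim_U$ immediately forces eventual arrival at $U$. I would rework step (3) to prove (or cite) the coexistence statement first, and then run the paper's two-line argument; the case analysis you were dreading disappears.
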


% Before we prove Lemma~\ref{l:canlam1}
% we need one general result.
In Definition \ref{d:co-exist-i}, we defined coexistence of a gap
(of some unspecified lamination) and a lamination. The definition
holds verbatim if a gap of some lamination is replaced with a stand
alone gap. We also defined coexistence of two laminations.

\begin{lem}
  \label{l:canlam1s}
  Suppose that a cubic invariant lamination $\sim$ coexists with
  a stand alone invariant quadratic gap $U$ of regular critical type.
  Then $\sim$ also coexists with the canonical lamination $\sim_U$ of $U$.
\end{lem}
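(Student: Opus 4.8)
The plan is to show that the canonical lamination $\sim_U$ is, in the regular critical case, essentially forced on us once we know a lamination coexists with $U$: every leaf of $\sim_U$ is obtained by a uniquely determined pullback procedure from the edges of $U$, and the hypothesis of coexistence with $U$ already pins down enough leaves of $\sim$ to reconstruct all of $\sim_U$. So the first step is to recall the explicit description of $\sim_U$ from just before the statement: a leaf of $\sim_U$ is either an edge of $U$, or a chord $\ol{ab}$ such that for some $N\ge 1$ the pair $\{\sigma^i(a),\sigma^i(b)\}$ is not separated by $U$ for $i=0,\dots,N-1$ and $\sigma^N(\ol{ab})$ is an edge of $U$. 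I would argue by induction on this ``pullback level'' $N$ that every leaf of $\lam_{\sim_U}$ is a leaf of $\lam_\sim$.

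For the base case $N=0$: the edges of $U$ are leaves of $\sim_U$, and since $\sim$ coexists with $U$, by Definition~\ref{d:co-exist-i} any leaf of $\sim$ meeting an edge of $U$ inside $\disk$ coincides with that edge; I need to know that each edge of $U$ actually \emph{is} a leaf of $\sim$ (not just that nothing crosses it). This is where I would lean on the structure of $U$: since $U$ is of regular critical type, $U'$ is a Cantor set (Lemma~\ref{cantor}), each edge is a preimage of the critical major $M(U)$, and the endpoints of each edge are identified under $\sigma$ composed sufficiently many times... but in fact the cleaner route is Lemma~\ref{l:canlam1}, which says $\sim_U$ is the \emph{unique} invariant lamination having $U$ as a gap — so I first want to show $U$ is a gap of $\sim$, and then uniqueness gives $\sim_U\subset\sim$ and more. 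To see $U$ is a gap of $\sim$: coexistence means no leaf of $\sim$ enters the interior of $U$, so $\inte(U)$ lies in a single complementary component of $\lam_\sim^+$, i.e.\ $U$ is contained in a gap $G$ of $\sim$; then one checks $G=U$ because the edges of $U$ that are not leaves of $\sim$ would have to be ``filled in'' by leaves of $\sim$, but any such leaf inside $G$ with an endpoint on an edge of $U$ is ruled out by the definition of coexistence together with a short argument that a proper subgap of $U$ of the right invariance type cannot occur (here the fact that $\sigma|_{U'}$ is exactly two-to-one and the major is critical is the key constraint).

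Once $U$ is a gap of $\sim$, Lemma~\ref{l:canlam1} applies: $\sim_U$ is the unique invariant lamination with $U$ as a gap, but $\sim$ need not equal $\sim_U$ — rather, $\sim$ refines $\sim_U$, i.e.\ every leaf of $\sim_U$ is a leaf of $\sim$. To get this I would run the pullback induction directly: given a leaf $\ell$ of $\sim_U$ at pullback level $N\ge 1$, its image $\sigma(\ell)$ is a leaf of $\sim_U$ at level $N-1$, hence (inductively) a leaf of $\sim$; since $\sim$ is an invariant lamination and $\{\ell\text{'s endpoints}\}$ is a legitimate $\sigma$-preimage configuration not separated by $U$ (so lying in a single gap or on leaves of $\sim$), leaf pullback for $\sim$ forces a leaf of $\sim$ with endpoints among $\sigma^{-1}$ of $\sigma(\ell)$'s endpoints; one then checks this pulled-back leaf must be exactly $\ell$ using that $\ell$ is not separated from $U$, i.e.\ the two candidate endpoints lie in the correct hole of $U$. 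This establishes $\lam_{\sim_U}\subseteq\lam_\sim$. Coexistence of $\sim$ with $\sim_U$ then follows: a leaf of $\sim$ crossing a leaf of $\sim_U$ in $\disk$ would cross a leaf of $\sim$ in $\disk$, violating (E2)/the definition of a geo-lamination, so no such crossing exists, which is exactly the assertion that $\sim$ coexists with $\sim_U$.

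\textbf{The main obstacle} I anticipate is the step showing that coexistence of $\sim$ with the stand-alone gap $U$ actually forces $U$ to be a gap of $\sim$ (rather than merely contained in one) — equivalently, that $\sim$ cannot ``subdivide'' $U$. This needs the precise dynamical structure of regular critical quadratic gaps established in Lemmas~\ref{pic}--\ref{cantor}: that all edges are iterated preimages of the critical major, that the basis is a Cantor set with no periodic non-edge classes, and that there is no room for an invariant sub-configuration. I would isolate this as the heart of the argument; the pullback induction afterward is bookkeeping that mirrors the very definition of $\sim_U$.
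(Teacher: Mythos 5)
Your proposal has a serious flaw right at its foundation, and the route you chose is both circular and proves something false.

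\textbf{The core error: misreading coexistence.} You assert that ``coexistence means no leaf of $\sim$ enters the interior of $U$,'' and from this you conclude that $U$ must be a gap of $\sim$. But Definition~\ref{d:co-exist-i} only forbids a leaf of $\sim$ from \emph{crossing an edge} of $U$ in $\disk$. A leaf of $\sim$ with both endpoints in the Cantor basis $U'$ lies entirely inside $U$ and crosses no edge of $U$; such leaves are perfectly compatible with coexistence, and in general they do exist. Consequently, $U$ need not be a gap of $\sim$, and your base case collapses. This is not a corner case: Case~(1) of the Main Theorem is precisely the situation where $\sim$ coexists with $\sim_U$ while \emph{no edge of $U$ is a leaf of $\sim$}. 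Because of this, the containment $\lam_{\sim_U}\subseteq\lam_\sim$ that you set out to prove is simply false in general — it would amount to $\sim$ tuning $\sim_U$, which is the stronger conclusion (2) of the Main Theorem, not coexistence. The lemma you are proving is deliberately the weaker statement: no leaf of $\sim$ \emph{crosses} a leaf of $\sim_U$.

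\textbf{Circularity.} Even setting aside the false step, invoking Lemma~\ref{l:canlam1} here is not available: in the paper, the proof of Lemma~\ref{l:canlam1} explicitly \emph{uses} Lemma~\ref{l:canlam1s} (the lemma you are asked to prove). So your plan would create a cycle.

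\textbf{What the correct argument looks like.} The paper's proof is a short forward-propagation argument that never needs $U$ to be a gap of $\sim$. Suppose a leaf $\ell$ of $\sim$ crosses a leaf $\ell_U$ of $\sim_U$ in $\disk$. Since $\sim$ coexists with $U$, the leaf $\ell$ cannot cross an edge of $U$, so both $\ell$ and $\ell_U$ have all their endpoints in the closure of a single hole of $U$. By Lemma~\ref{bndcrit}, $\si$ is injective on each hole of $U$ (in the regular critical case the major hole has length exactly $\frac13$, and $\si_3$ is still injective there), so the crossing is preserved: $\si(\ell)$ crosses $\si(\ell_U)$. Iterating, $\si^n(\ell)$ crosses $\si^n(\ell_U)$ for all $n$. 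But by the construction of $\sim_U$, some $\si^N(\ell_U)$ is an edge of $U$; then $\si^N(\ell)$, a leaf of $\sim$, crosses an edge of $U$ in $\disk$, contradicting coexistence with $U$. No claim about $U$ being a gap of $\sim$, and no claim that leaves of $\sim_U$ are leaves of $\sim$, is needed or true.
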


\begin{proof}
  Suppose that a leaf $\ell$ of $\sim$ crosses a leaf $\ell_U$ of
  $\sim_U$ in $\disk$. By the assumption of the lemma, both $\ell$
  and $\ell_U$ must have their endpoints in the closure of some hole
  of $U$. Every hole of $U$ maps one-to-one onto its image. It
  follows that $\si(\ell)$ and $\si(\ell_U)$ also intersect in
  $\disk$. However, any leaf of $\sim_U$ eventually maps to an edge
  of $U$, a contradiction.
\end{proof}

\begin{proof}[Proof of Lemma \ref{l:canlam1}]
Suppose that $\sim$ is a cubic invariant lamination and $U$ is a gap
of $\sim$; then $\sim$ coexists with $U$, hence, by Lemma
\ref{l:canlam1s}, the lamination $\sim$ coexists with the canonical lamination
$\sim_U$. If a leaf $\ell$ of $\sim$ is not a leaf of $\sim_U$, then
$\ell$ is in some pullback of $U$ with respect to $\sim_U$. Hence the leaf $\ell$ eventually
maps to $U$. Since $U$ is a gap of $\sim$, the leaf $\ell$
eventually maps to an edge of $U$. By definition it follows that
$\ell$ is a leaf of $\sim_U$. The definition of a lamination now
implies that all leaves of $\sim_U$ are leaves of $\sim$.
\end{proof}

The proof of Lemma~\ref{l:canlam2} is similar to that of Lemmas
~\ref{l:canlam1} and \ref{l:canlam1s}.

\begin{lem} \label{l:canlam2}
Let $U$ be an invariant quadratic gap of periodic type. Then
$\sim_U$ is the unique invariant lamination such that $U$ and the
vassal $V(U)$ are its gaps. If a cubic invariant lamination $\sim$
coexists with $U$ and $V(U)$, then $\sim$ coexists with the canonical
lamination $\sim_U$ of $U$.
\end{lem}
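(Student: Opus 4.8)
The plan is to imitate the proofs of Lemmas~\ref{l:canlam1s} and~\ref{l:canlam1}, carrying the vassal $V=V(U)$ alongside $U$ at every step. Recall that $V$ is attached to $U$ along the major $M(U)$, so each component of $\disk\setminus(U\cup V)$ sits behind a single hole, which is either a hole of $U$ other than $H(M(U))$ or a hole of $V$ other than $H_V(M(U))$. I would prove the coexistence assertion first and then deduce uniqueness from it, exactly as Lemma~\ref{l:canlam1} is deduced from Lemma~\ref{l:canlam1s}.

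For coexistence, suppose a leaf $\ell$ of $\sim$ crosses a leaf $\ell_U$ of $\sim_U$ in $\disk$. Since $\sim$ coexists with $U$ and with $V$, the leaf $\ell_U$ is not an edge of $U$ or of $V$, so by the definition of $\sim_U$ there is a least $N\ge 1$ with $\si^N(\ell_U)$ an edge of $U$ or of $V$, while $\si^i(\ell_U)$ is disjoint from $U\cup V$ for $i<N$. As in the proof of Lemma~\ref{l:canlam1s}, $\ell$ and $\ell_U$ have their endpoints in the closure of a common hole $H$, which is necessarily a hole of $U$ different from $H(M(U))$ or a hole of $V$ different from $H_V(M(U))$; on any such hole $\si$ is a homeomorphism onto its image. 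For holes of $U$ this is part of Lemma~\ref{bndcrit}; for holes of $V$ it follows because, by the construction of $V$ in Lemma~\ref{vassal} together with the bound $|M(U)|_U\le\frac12$ of Lemma~\ref{bndcrit}, every hole of $V$ other than $H_V(M(U))$ is shorter than $\frac13$. Hence $\si(\ell)$ and $\si(\ell_U)$ again cross in $\disk$, and by induction $\si^i(\ell)$ crosses $\si^i(\ell_U)$ for all $i\le N$. Taking $i=N$ gives a leaf $\si^N(\ell)$ of $\sim$ crossing an edge of $U$ or of $V$ in $\disk$, contradicting coexistence with $U$ and $V$. Thus $\sim$ coexists with $\sim_U$.

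For uniqueness, let $\sim$ be a cubic invariant lamination with $U$ and $V$ among its gaps. Then $\sim$ coexists with $U$ and $V$, so by the previous step it coexists with $\sim_U$, and hence every leaf of $\sim$ that is not a leaf of $\sim_U$ lies, without being an edge of it, inside some gap $G$ of $\sim_U$. The gaps of $\sim_U$ are $U$, the gaps in the orbit of $V$, and the iterated pullbacks of $V$. Since $U$, and $V$ together with all its $\si$-images, are gaps of $\sim$ (by invariance), $G$ must be a pullback of $V$; then some $\si^m(G)$ equals $U$ or a gap in the orbit of $V$, and $\si^m(\ell)$ is an edge of that gap. Following the chords $\si^i(\ell)$, which remain disjoint from $U\cup V$ until, after at most one more pass around the orbit of $V$, they become an edge of $U$ or of $V$, the definition of $\sim_U$ forces $\ell$ to be a leaf of $\sim_U$, a contradiction. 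So $\lam_\sim\subset\lam_{\sim_U}$, and then, just as at the end of the proof of Lemma~\ref{l:canlam1}, the fact that an invariant lamination with $U$ and $V$ among its gaps must contain all pullback leaves of those gaps gives $\lam_\sim=\lam_{\sim_U}$, so that $\sim=\sim_U$.

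I expect the only step needing real care to be the verification that $\si$ restricts to a homeomorphism on every hole of the vassal $V$ other than $H_V(M(U))$; once this length estimate is established, the rest is a direct transcription of Lemmas~\ref{l:canlam1s} and~\ref{l:canlam1}, with $U$ replaced throughout by the pair $U$, $V(U)$.
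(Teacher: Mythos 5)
Your proof is correct and matches the paper's intent exactly: the paper gives no separate argument for Lemma~\ref{l:canlam2}, stating only that the proof is similar to Lemmas~\ref{l:canlam1} and~\ref{l:canlam1s}, and you have supplied precisely that adaptation, correctly identifying the one new point to check (that every hole of $V(U)$ other than $H_V(M(U))$ has length under $\frac13$, so that $\si$ is injective on its closure). For the record, an even quicker way to see that hole estimate is to note that the basis of $V$ lies in $[a,b'']\cup[a'',b]$, two arcs each of length $|M(U)|_U-\frac13\le\frac16$, so every hole of $V$ other than $H_V(M(U))$ and $H_V(M''(U))$ is shorter than $\frac16$, while $|H_V(M''(U))|=\frac23-|M(U)|_U<\frac13$ by Lemma~\ref{bndcrit}(2).
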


\begin{figure}[htp]
\includegraphics[width=6cm]{qua-per1.eps}
\includegraphics[width=6cm]{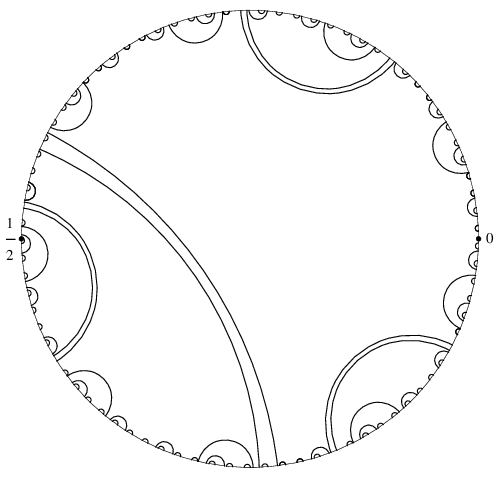}
{\caption{Left: a quadratic invariant gap $U$ of periodic type and
its vassal $V$. Right: its canonical lamination.}}
\end{figure}

\subsection{Tuning}\label{ss:coetun}

In this subsection we discuss the notion of coexistence of two
laminations and make it more explicit. We will define the notion of
\emph{tuning} which is stronger than coexistence of laminations.
Right after Definition~\ref{d:lamset} we introduced a monotone map $\psi_G:\bd(G)\to\uc$; for an invariant stand alone gap $G$ of degree $k>1$, we have that $\si_d|_{\bd(G)}$ is semiconjugate to $\si_k$ by means of $\psi_G$.
If a lamination $\sim$ coexists with
$G$, then we want to obtain an induced lamination $\psi_G(\sim)$
which is invariant under $\si_k$. We need the following notions.

\begin{defn}[\cite{bmov11}]\label{d:sibling}
Let $\lam$ be a geo-la\-mi\-na\-tion such that the $\si_d$-images of
its leaves are again leaves of the same geo-lamination. Any two
disjoint leaves of $\lam$ with the same $\si_d$-image are called
\emph{sibling leaves}, or \emph{siblings}. Suppose that any leaf of
$\lam$ has at least one $\si_d$-preimage, and, for any leaf $\ell_1$
of $\lam$ with non-degenerate $\si_d$-image, there are $d-1$ leaves
$\ell_2$, $\dots$, $\ell_d$ of $\lam$ such that the leaves $\ell_i$,
$i=1$, $\dots$, $d$ are pairwise disjoint and map to
$\si_d(\ell_1)$. This property is called the \emph{Sibling Property}
and $\lam$ is then called a \emph{sibling invariant lamination}.
\end{defn}

Let us explain the difference between Definition~\ref{d:sibling} and
the so-called \emph{leaf invariance} (see
Definition~\ref{geolaminv}(1)). In Definition~\ref{geolaminv}(1), one
begins with a non-degenerate leaf $\ell$ and postulates the
existence of $d$ pairwise disjoint preimage-leaves of $\ell$. In
Definition~\ref{d:sibling} we begin with any leaf $\ell_1$ whose
image is a  non-degenerate leaf and postulate the existence of $d-1$
pairwise disjoint (and disjoint from $\ell_1$) leaves with the same
image $\si_d(\ell_1)$. This is a little stronger than
Definition~\ref{geolaminv}(1) as it does not follow from
Definition~\ref{geolaminv}(1) that we will be able to find siblings
of \emph{any} leaf mapped to $\ell$. A surprising fact  is that this
subtle difference proves to be sufficient to imply all other
properties of invariant geo-laminations.

More precisely, the main result of \cite{bmov11} is that \emph{sibling invariant
laminations are invariant geo-laminations}. An advantage of using
sibling invariant laminations is that checking if a geo-lamination
is sibling invariant requires considering only leaves of the
lamination. Also, geo-laminations generated by laminations are
sibling invariant \cite{bmov11}, and it is proved in \cite{bmov11}
that if sibling invariant laminations $\lam_i$ are such that continua $\lam^+_i$
converge (in the
Hausdorff metric) to a continuum $K$, then in fact there exists a
sibling invariant geo-lamination $\lam$ such that $\lam^+=K$.

Now we consider the union of all leaves of two coexisting
laminations $\sim$ and $\simeq$. By the above, this gives rise to a
sibling invariant lamination $\lam_\sim\cup \lam_\simeq$. To show
that this generates a lamination, we need more tools.

\begin{defn}[Proper lamination \cite{bmov11}]\label{d:proper-lam}
Two leaves with a common endpoint $v$ and the same non-degenerate
image are said to form a \emph{critical wedge $($with vertex $v$}). A
lamination $\lam$ is \emph{proper} if it contains no critical leaf
with a periodic endpoint and no critical wedge with a periodic
vertex.
\end{defn}

A geo-lamination $\lam$ has \emph{period matching property} if any
leaf of $\lam$ with a periodic endpoint of period $n$ is such that
its other endpoint is also of period $n$. Lemma~\ref{l:periodic}
follows from the definitions.

\begin{lem}\label{l:periodic} Suppose that $\sim$ is a lamination
and $\lam_\sim$ is the geo-lamination generated by $\sim$. Then
$\lam_\sim$ has period matching property. Also, any geo-lamination
with period matching property is proper.
\end{lem}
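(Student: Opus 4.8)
The plan is to handle the two assertions separately, in both cases leaning on the reformulation of condition (D2) stated just after Definition~\ref{d:si-inv-lam}: for a $\sim$-class $\g$, either $\si_d(\g)$ is a point, or each positively oriented hole $(a,b)$ of $\g$ is carried by $\si_d$ onto the positively oriented hole $(\si_d(a),\si_d(b))$ of $\si_d(\g)$.

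For the first assertion, let $\ell=\ol{ab}$ be a non-degenerate leaf of $\lam_\sim$ with $a$ of exact period $n$ under $\si_d$ (degenerate leaves are single points and match trivially). Then $\ell$ is an edge of $\ch(A)$ for the $\sim$-class $A$ containing $a$ and $b$, so $a$ and $b$ are consecutive points of $A$ on $\uc$, and we may assume $(a,b)$ is a positively oriented hole of $A$. Since $\si_d^n(a)=a$ lies both in $A$ and in the $\sim$-class $\si_d^n(A)$, and $\sim$-classes partition $\uc$, we get $\si_d^n(A)=A$; as $A$ contains the two distinct points $a,b$, none of $A,\si_d(A),\dots,\si_d^{n-1}(A)$ is a singleton, so the hole-mapping property may be applied $n$ times in succession. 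It carries $(a,b)$ onto the positively oriented hole $(\si_d^n(a),\si_d^n(b))=(a,\si_d^n(b))$ of $A$. Since $(a,b)$ is the unique positively oriented hole of the finite set $A$ with initial endpoint $a$, we conclude $\si_d^n(b)=b$, so $b$ has some period $m$ dividing $n$. Running the same argument with $\si_d^m$ in place of $\si_d^n$ and with $b$ (now $(a,b)$ is the unique positively oriented hole of $A$ with terminal endpoint $b$) gives $\si_d^m(a)=a$, i.e. $n\mid m$; hence $m=n$, and $\lam_\sim$ has the period matching property.

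For the second assertion, suppose $\lam$ has the period matching property. If $\lam$ contained a critical leaf $\ol{ab}$ (necessarily non-degenerate, so $a\neq b$ and $\si_d(a)=\si_d(b)$) with $a$ periodic of period $n$, then period matching would make $b$ periodic of period $n$ as well, whence $a=\si_d^n(a)=\si_d^{n-1}(\si_d(a))=\si_d^{n-1}(\si_d(b))=\si_d^n(b)=b$, a contradiction. If instead $\lam$ contained a critical wedge with periodic vertex $v$, i.e. two distinct leaves $\ol{va}$, $\ol{vb}$ with the same non-degenerate image (so $a\neq b$ and $\si_d(a)=\si_d(b)$), then applying period matching to each of $\ol{va}$ and $\ol{vb}$ makes both $a$ and $b$ periodic of the same period $n$ as $v$, and the identical one-line computation again forces $a=b$, contradicting that the two leaves are distinct. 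Hence $\lam$ is proper.

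The only point requiring care is the bookkeeping in iterating the hole-mapping property: at each stage one must check that the current image class is not a singleton so that the (D2) reformulation applies (this follows at once from $\si_d^n(A)=A$ having at least two points), and one must invoke the elementary uniqueness of the positively oriented hole of a finite subset of $\uc$ anchored at a prescribed endpoint. Beyond this there is no real obstacle; the argument is purely combinatorial.
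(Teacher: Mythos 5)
Your proof is correct. The paper gives no argument here — it says only that the lemma ``follows from the definitions'' — and what you have written is a careful and complete verification of exactly that: you deduce the period matching property from the hole-mapping reformulation of~(D2) (iterated $n$ times, using that $\si_d^n(A)=A$ forces every intermediate image class to be non-degenerate), and you derive properness from period matching by the evident one-line computation in both the critical-leaf and critical-wedge cases.
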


By Lemma~\ref{l:periodic}, given two coexisting laminations $\sim$
and $\simeq$, their geo-laminations $\lam_\sim$ and $\lam_\simeq$
have period matching property. Then $\lam_\sim\cup \lam_\simeq$ also
has period matching property and, hence, is a proper geo-lamination.
Conversely, suppose that $\lam$ is an invariant
geo-lamination. Define an equivalence relation $\approx_\lam$ by
declaring that $x{\approx_\lam}y$ if and only if there exists a
\emph{finite} concatenation of leaves of $\lam$ connecting $x$ and
$y$. Theorem~\ref{t:nowander} specifies properties of
$\approx_\lam$.

\begin{thm}[\cite{bmov11}]\label{t:nowander}
Let $\lam$ be a proper invariant geo-lamination. Then $\approx_\lam$
is an invariant lamination.
\end{thm}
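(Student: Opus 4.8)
\textbf{Proof plan for Theorem~\ref{t:nowander}.}
The plan is to verify directly that $\approx_\lam$ satisfies the three axioms (E1), (E2), (E3) of a lamination (Definition~\ref{d:lam}) together with the invariance conditions (D1), (D2) of Definition~\ref{d:si-inv-lam}. The definition of $\approx_\lam$ via finite concatenations of leaves immediately gives reflexivity, symmetry and transitivity, so the content is entirely in the four nontrivial properties (E1), (E2), (E3), and invariance; I will treat (E2) as essentially built in, since two leaves of the geo-lamination $\lam$ meet at most in a common endpoint and hence the convex hulls of two distinct $\approx_\lam$-classes can share at most a boundary leaf, which forces disjoint relative interiors of their convex hulls once (E3) is known.

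First I would establish finiteness of classes, property (E3). This is where properness is crucial: if some $\approx_\lam$-class $A$ were infinite, then the concatenation of leaves realizing it would accumulate somewhere on $\uc$, and by a standard argument (cf.\ the caterpillar-gap analysis in Subsection~\ref{s:canlamqua} and Lemma~\ref{l:maj}) one produces a point of $\uc$ lying in $A$ that is an accumulation point of $A$, around which infinitely many leaves of $\lam$ are concatenated. Pushing this configuration forward by $\si_d$ and using that $\si_d$ is locally expanding, one finds that such an accumulation structure must be (pre)periodic, and the limit point together with a leaf emanating from it is forced to be either a critical leaf with periodic endpoint or a critical wedge with periodic vertex — exactly what properness (Definition~\ref{d:proper-lam}) forbids. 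Hence every class is finite.

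Next I would prove (E1), that the graph of $\approx_\lam$ is closed in $\uc\times\uc$. Suppose $x_n\approx_\lam y_n$ with $x_n\to x$, $y_n\to y$. Each pair is joined by a finite chain of leaves of $\lam$; using (E3) one can bound the number of leaves in a chain joining two points by the maximal class size (which a uniform-expansion/no-wandering argument, as in the references \cite{blolev02a}, shows is finite), so after passing to a subsequence the chains have a fixed length and their leaves converge, by closedness of $\lam^+$, to leaves of $\lam$ forming a chain from $x$ to $y$; thus $x\approx_\lam y$. Finally, invariance (D1)--(D2) follows from leaf invariance and gap invariance of $\lam$ (Definition~\ref{geolaminv}): the image under $\si_d$ of a concatenation of leaves of $\lam$ is a concatenation of leaves of $\lam$, so $\si_d(A)$ is again an $\approx_\lam$-class, and the covering/full-preimage condition (D2) is read off from the local degree of $\si_d$ on $\uc$ together with the fact (again properness) that no critical identification interferes with the pullback structure on the relevant arcs.

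The main obstacle, I expect, is the finiteness step (E3): controlling the combinatorics of infinite concatenations of leaves and extracting a (pre)periodic configuration that collides with properness. The closedness step (E1) is a compactness argument once one has the uniform bound on class size, and invariance is essentially bookkeeping with Definition~\ref{geolaminv}; but turning "an infinite chain accumulates" into "there is a critical leaf or critical wedge with periodic endpoint/vertex" requires the same kind of careful length-and-pullback analysis used throughout Section~\ref{s:qgaps}, and that is the heart of the proof.
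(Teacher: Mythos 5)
Theorem~\ref{t:nowander} is stated in this paper with the citation \cite{bmov11}; the paper does not prove it here, so there is no in-paper proof to compare your attempt against. What follows is therefore an assessment of your outline on its own terms.

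Your checklist of axioms (E1)--(E3), (D1)--(D2) is the right frame, and you have correctly located the crux in (E3): the properness hypothesis is exactly what excludes caterpillar-type boundary concatenations, which are the canonical way an $\approx_\lam$-class could be infinite. The treatment of (E2) via the unlinkedness of leaves of $\lam$ is also sound, modulo a short argument: if a chord of class $B$ separated two points of class $A$, then a path of $A$-leaves would have to cross it, forcing a path of $B$-leaves to cross a leaf of $A$, which is impossible since leaves of $\lam$ are pairwise unlinked.

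However, the proposal has a genuine gap precisely at its declared centre. For (E3) you write that ``a standard argument'' turns an infinite class into a critical leaf with periodic endpoint or a critical wedge with periodic vertex, but this deduction \emph{is} the theorem; nothing about it is standard. You would need to (i) extract from the infinite graph on $A$ an infinite non-crossing path of leaves, (ii) analyse the possible accumulation geometries (one-sided spiral vs.\ two-sided ``snake''), (iii) push forward under $\si_d$ and use expansion to force (pre)periodicity of the accumulation point, and (iv) show that in either geometry the leaf adjacent to the periodic accumulation point is critical (or two such leaves form a critical wedge), invoking properness only at the very end. None of this is carried out, and step (iv) in particular is delicate — one must rule out the case where the accumulation point fails to lie in $A$ at all, so that no leaf of $\lam$ emanates from it. A second, smaller gap: your (E1) argument relies on ``the maximal class size'' being finite. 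Axiom (E3) gives each class finite, not a uniform bound; a uniform bound does hold for $\si_d$-invariant laminations, but establishing it is itself a nontrivial no-wandering-type result, and deferring it to a citation leaves (E1) hanging on a claim you have not verified fits the hypotheses at hand. In short, the outline correctly identifies where the difficulty lives, but it does not yet contain the argument that resolves it.
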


We defined tuning in Definition \ref{d:tuning-i}. Take an invariant
lamination $\simeq$ that coexists with an invariant stand alone
quadratic gap $U$; e.g., it may be that $\simeq$ tunes $U$.
% even though this is stronger
% than our requirement that $\simeq$ coexist with $U$.
We want to show
that then $\psi_U$ transports $\simeq$ to a quadratic invariant
lamination, which we will denote $\psi_U(\simeq)$. Take a
non-critical leaf $\ell$ of $\simeq$ inside $U$. It has two sibling
leaves which are disjoint. Clearly, one of them, say, $\oa$, is
contained in $U$. If $\psi_U(\oa)$ and $\psi_U(\ell)$ are
non-disjoint, then either $\psi_U(\ell)=\psi_U(\oa)$ is a critical
leaf or the two chords $\psi_U(\ell)$, $\psi_U(\oa)$ form a critical
wedge, a wedge subtended by a diameter. We need to show that the
latter case is impossible. In this case, $\ell$ and $\oa$ share
endpoints with an edge $\ol e$ of $U$. There are no leaves of
$\simeq$ separating $\ell\sm\uc$ from $\oa\sm\uc$ in $\disk$, since
any such leaf would have to cross $\ol e$. Therefore, $\ell$ and
$\oa$ are edges of the same gap $G$ of $\simeq$.
Since some edges of $G$ are sibling leaves, the gap $G$ must be quadratic.
The sibling $\ol e^*$ of $\ol e$ in $G$ is clearly an edge of $U$ connecting the endpoints of $\ell$ and $\ol a$ different from the endpoints of $\ol e$.

But then we have $\psi_U(\ell)=\psi_U(\ol a)$. Thus,
$\psi_U$-images of leaves of $\simeq$ inside $U$ form a sibling
$\si_2$-invariant geo-lamination, cf. \cite[Section 6]{bmov11}.

As $\lam_{\simeq}$ is proper, it follows that $\psi_U(\lam_\simeq)$
is proper. Indeed, by Lemma~\ref{l:periodic}, if an endpoint of a
leaf $\ell$ of $\lam_{\simeq}$ is periodic, then $\ell$ is periodic.
Consider a leaf $\widetilde\ell$ of $\lam_\simeq$ and the leaf $\psi_U(\widetilde\ell)$.
If an endpoint of $\psi_U(\widetilde\ell)$ is periodic, then
$\widetilde\ell$ has a $\si_3$-periodic endpoint, hence $\widetilde\ell$ is periodic, hence $\psi_U(\widetilde\ell)$ is periodic.
Thus the lamination $\psi_U(\lam_\simeq)=\lam$ is proper and, by Theorem~\ref{t:nowander},
one can construct the lamination $\approx_\lam$ which generates a geo-lamination $\lam_{\approx_\lam}$.

Let us compare $\lam$ with $\lam_{\approx_\lam}$ and show that they almost coincide.

\begin{lem}
 Let $\lam$ be a proper quadratic geo-lamination.
Then $\lam\supset \lam_{\approx_\lam}$.
Moreover, $\lam\sm\lam_{\approx_\lam}$ consists of the grand orbit of
a critical leaf and/or the grand orbit of a critical quadrilateral
of $\lam$ that is strictly inside a finite critical gap of $\lam_{\approx_\lam}$.
\end{lem}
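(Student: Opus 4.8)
The plan is to prove the two assertions separately.

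\emph{The inclusion $\lam\supset\lam_{\approx_\lam}$.} The first step is the remark that \emph{no leaf of $\lam$ crosses a leaf of $\lam_{\approx_\lam}$ inside $\disk$}: if a leaf $\mu$ of $\lam$ crossed $\ell=\ol{ab}\in\lam_{\approx_\lam}$, then $a$ and $b$ would lie in distinct components of $\ol{\disk}\sm\mu$, yet $a\approx_\lam b$ provides a finite concatenation of leaves of $\lam$ joining $a$ to $b$, and none of those leaves may meet $\mu$ except at an endpoint -- a contradiction. Consequently, if a leaf $\ell=\ol{ab}$ of $\lam_{\approx_\lam}$ (an edge of $\ch(A)$ for an $\approx_\lam$-class $A$) failed to lie in $\lam$, its relative interior would avoid $\lam^+$, so it would lie in a single gap $W$ of $\lam$ and be a diagonal of $W$. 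If $W$ is finite, all vertices of $W$ form one $\approx_\lam$-class, necessarily $A$; but then $A$ has vertices on both sides of $\ell$, so $\ell$ is a diagonal, not an edge, of $\ch(A)$, contrary to $\ell\in\lam_{\approx_\lam}$. Hence $W$ is infinite; in particular both $W$ (by \cite{kiwi02}) and the finite set $A$ (there are no wandering finite gaps for $\si_2$, \cite{thu85}) are (pre)periodic.

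Iterating $\si_2$ then reduces matters to the case where $\ell$ is an \emph{edge} of a \emph{periodic} finite gap $P$ of $\lam_{\approx_\lam}$. Such a $P$ is either a rotational polygon or one of the few exceptional periodic $\si_2$-polygons, which have no diagonals. When $P$ is rotational, $\lam$ contains no diagonal of $P$ (distinct iterated images of such a diagonal cross), so a minimal concatenation inside $\lam$ joining the endpoints of $\ell$ runs along edges of $P$ only, and invariance of $\lam$ together with the rotation on $P$ forces the single possibly missing edge, namely $\ell$ itself, into $\lam$; the exceptional cases are checked by hand in the same spirit. One then pulls back: an edge of $\lam_{\approx_\lam}$ whose hole has length less than $\frac12$ maps injectively onto its image hole, so it is a sibling preimage of its image and lies in $\lam$ once its image does, while the unique edge of a class whose hole is a major is periodic or critical and is covered by the periodic case. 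Descending through the finitely many images gives $\ell\in\lam$.

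\emph{The difference $\lam\sm\lam_{\approx_\lam}$.} Let $\ell\in\lam\sm\lam_{\approx_\lam}$. Its endpoints lie in one $\approx_\lam$-class $A$, and $\ell$ is not an edge of $\ch(A)$; since a two-point class would give $\ch(A)=\ell\in\lam_{\approx_\lam}$, we have $|A|\ge 3$, so $G:=\ch(A)$ is a finite gap of $\lam_{\approx_\lam}$ and $\ell$ is a diagonal of $G$. By the first part every edge of $G$ lies in $\lam$, so the leaves of $\lam$ inside $\ol{G}$ form a noncrossing family of diagonals of $G$. Now $G$ is (pre)periodic; were it periodic it would be rotational, and then $\lam$ could contain no diagonal of $G$ -- a contradiction. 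Hence the $\lam_{\approx_\lam}$-orbit of $G$ reaches a periodic polygon only after passing through a gap whose boundary is covered with degree $>1$; tracing $\ell$ along this orbit shows that some forward image of $\ell$ is a critical chord lying strictly inside a finite \emph{critical} gap $G_0$ of $\lam_{\approx_\lam}$.

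The final and, I expect, hardest step is to identify \emph{all} of $\lam\sm\lam_{\approx_\lam}$: since a quadratic invariant geo-lamination has a unique critical set, the diagonals of $G_0$ belonging to $\lam$ must collapse the two ``halves'' of $G_0$ onto each other under $\si_2$, and because $\lam$ is proper this is realized by exactly one critical leaf, or by the two diagonals of exactly one critical quadrilateral that is then a gap of $\lam$ sitting strictly inside $G_0$; every remaining leaf of $\lam\sm\lam_{\approx_\lam}$ is a $\si_2$-preimage of this object inside the corresponding pullback of $G_0$, which is the grand orbit named in the statement. The persistent difficulty is the bookkeeping: controlling how the finitely many ``extra'' leaves that $\lam$ inserts into the finite critical gaps of $\lam_{\approx_\lam}$ interact with that coarser gap structure, and verifying there is room for only one such critical object.
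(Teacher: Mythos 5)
Your proposal follows the same broad strategy as the paper (Thurston's No Wandering Triangle theorem, the fact that a chord strictly inside a periodic polygon of a quadratic lamination must eventually cross an iterated image of itself, and a separate analysis at the unique critical gap), but there are several places where the details go wrong or are not actually supplied.

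\emph{On the inclusion $\lam\supset\lam_{\approx_\lam}$.} Your opening observation (no leaf of $\lam$ crosses a leaf of $\lam_{\approx_\lam}$) and the reduction ``a missing edge $\ell$ is a diagonal of an infinite gap $W$ of $\lam$'' are correct and useful; this is a more leaf-by-leaf route than the paper takes, which argues instead at the level of gaps of $\approx_\lam$. However, the reduction to ``$\ell$ is an edge of a \emph{periodic} finite gap of $\lam_{\approx_\lam}$'' is not justified when $\ch(A)$ is \emph{precritical}: iterating forward then passes through the critical gap, where the dynamics is $2$-to-$1$, and your pullback step (which, incidentally, does not need the hole-length caveat --- for $\si_2$ the unique pair of \emph{disjoint} preimages of a non-degenerate leaf always contains the original chord unless that chord is a diameter) breaks down for a critical edge of $\ch(A)$, whose image is degenerate. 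This case needs its own treatment, e.g. by first establishing the non-precritical case, noting that the edges of $\si_2(\ch(A))$ are then leaves of $\lam$, and pulling back through the critical gap using the $2$-to-$1$ structure of its boundary map.

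\emph{On the description of $\lam\sm\lam_{\approx_\lam}$.} The claim that ``some forward image of $\ell$ is a critical chord'' is not correct as stated. What the argument actually gives (and what the paper uses) is that some forward image $\si_2^k(\ell)$ is a diagonal of the finite critical gap $G_0$ whose \emph{image} is either a point or an \emph{edge} of $\si_2(G_0)$ (it cannot be a diagonal of $\si_2(G_0)$, since $\si_2(G_0)$ is a non-precritical gap of $\approx_\lam$, hence a gap of $\lam$). This is strictly weaker than being a diameter. Relatedly, ``the two diagonals of exactly one critical quadrilateral'' is a misdescription: the two diagonals of a critical quadrilateral are crossing diameters and cannot both belong to a geo-lamination; what the lemma means is a critical quadrilateral \emph{gap} $Q$ of $\lam$ inside $G_0$ (collapsing under $\si_2$ to a leaf), whose \emph{edges} are the extra leaves. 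Finally, the verification that there is room for only one such critical object --- the paper's argument that if $\lam$ had anything beyond a critical leaf or a critical quadrilateral inside $G_0$ then $\si_2(G_0)$ would contain a chord of $\lam$, contradicting the non-precritical case --- is exactly the step you defer; so the proposal stops short of the part of the argument the lemma is really about.
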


\begin{proof}
To prove that $\lam\supset
\lam_{\approx_\lam}$, it suffices to show that the equivalence
relation $\approx_\lam$ produces leaves which the geo-lamination
$\lam$ already contains. By Thurston's No Wandering Triangle theorem
\cite{thu85}, any finite non-precritical gap of a quadratic
lamination is pre-periodic or periodic. Moreover, by \cite{thu85}, the
vertices of a periodic gap must form one cycle. Hence any chord
inside such a gap will cross itself and cannot be a leaf of any
lamination. Therefore, if a gap $G$ of $\approx_\lam$ is not a gap of
$\lam$, then it must be critical or pre-critical. Suppose that $G$ is
critical. If $\lam$ has more than a critical leaf or/and a critical
quadrilateral inside $G$,  then the image of $G$ is still a gap and
has at least one chord inside. However, $\si_2(G)$ cannot be
pre-critical, and, by the previous case, this is impossible.
\end{proof}

Hence, any quadratic proper sibling lamination $\lam$ can be cleaned
(if necessary) by means of removing critical leaves/quadrilaterals
of it contained inside appropriate finite gaps of
$\lam_{\approx_\lam}$ (as described above) as well as removing their
pullbacks. This results into the geo-lamination generated by
$\approx_\lam$. In particular, we can clean the geo-lamination
$\psi_U(\lam_\simeq)$ constructed above and in this way relate it to
a certain quadratic lamination $\asymp$. Strictly speaking, the
lamination $\asymp$ coincides with $\approx_{\psi_U(\lam_\simeq)}$
(first the lamination $\simeq$ generates the geo-lamination
$\lam_\simeq$, then the geo-lamination $\lam_\simeq$ is transported
to the circle by the map $\psi_U$, and then the geo-lamination
$\psi_U(\lam_\simeq)$ generates the lamination
$\approx_{\psi_U(\lam_\simeq)}$), however for brevity in what
follows we will simply denote $\asymp$ by $\psi_U(\simeq)$.

The above arguments allowed us to define the lamination
$\psi_U(\simeq)$. They were based on the fact that $U$ is an
invariant \emph{quadratic} stand alone gap. Literally the same
arguments apply if $U$ is a stand alone periodic quadratic gap
(i.e., a periodic Fatou gap of degree two). Hence, in the periodic
case, given a lamination $\simeq$ that coexists with $U$, we can
also define the lamination $\psi_U(\simeq)$.

Lemma~\ref{l:canlamtun} proves in the case of tuning the claims
established in Lemmas~\ref{l:canlam1}, \ref{l:canlam1s} and \ref{l:canlam2}.
The proof is analogous to the proofs of
Lemma~\ref{l:canlam1} and ~\ref{l:canlam1s} and is left to the
reader.

\begin{lem}\label{l:canlamtun}
Suppose that $\approx$ is a lamination which tunes an invariant
quadratic gap $U$. Then the following holds.

\begin{enumerate}

\item If $U$ is of regular critical type, then $\approx$ tunes the canonical lamination $\sim_U$.

\item If $U$ is of periodic type and $\approx$ tunes $V(U)$ as well,
then in fact $\approx$ tunes the canonical lamination $\sim_U$.

\end{enumerate}

\end{lem}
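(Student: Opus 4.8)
The plan is to follow the strategy already used in Lemmas~\ref{l:canlam1}, \ref{l:canlam1s} and \ref{l:canlam2}, adapting the "travel together" description of $\sim_U$ to the tuning setting. Recall that if $\approx$ tunes $U$ (or $U$ and $V(U)$), then in particular $\approx$ \emph{coexists} with $U$ (and $V(U)$), since all edges of $U$ being leaves of $\approx$ means no leaf of $\approx$ can cross an edge of $U$ in $\disk$. So by Lemma~\ref{l:canlam1s} (respectively Lemma~\ref{l:canlam2}) the lamination $\approx$ already coexists with the canonical lamination $\sim_U$; thus no leaf of $\sim_U$ crosses a leaf of $\approx$ in $\disk$. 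It remains to upgrade coexistence to tuning, i.e.\ to show $\lam_{\sim_U}\subset\lam_\approx$.

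First I would prove (1). Let $\ell$ be a leaf of $\sim_U$; I want $\ell\in\lam_\approx$. If $\ell$ is an edge of $U$, this is immediate from the hypothesis that $\approx$ tunes $U$. Otherwise, by the definition of $\sim_U$ in the regular critical case, there is a least $N\ge 1$ such that $\si^N(\ell)$ is an edge of $U$, and for $i=0,\dots,N-1$ the chord $\si^i(\ell)$ is not separated from $U$ by an edge of $U$, i.e.\ its endpoints lie in the closure of a single hole of $U$. I would argue by downward induction on this visiting time. Since $\si^N(\ell)$ is an edge of $U$ it is a leaf of $\approx$. Now suppose $\si^{i+1}(\ell)\in\lam_\approx$; the chord $\si^i(\ell)$ has its endpoints in the closure of one hole $H$ of $U$, and by Lemma~\ref{bndcrit} $\si$ maps $H$ homeomorphically onto its image (the only non-injective hole is the critical-major hole $H(M(U))$, and $\si^i(\ell)$ avoids it because its image is a genuine leaf, not degenerate). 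Since $\approx$ is an invariant lamination, $\si^{i+1}(\ell)\in\lam_\approx$ has $d-1$ pairwise disjoint sibling preimages in $\lam_\approx$ together covering $\uc$ in the usual "sibling" configuration; exactly one of these lies with both endpoints in $\overline H$, and that preimage must be $\si^i(\ell)$ — any other $\approx$-leaf through a point of the relative interior of $\si^i(\ell)$ would either cross $\si^i(\ell)$ (impossible, $\approx$ is a lamination) or cross an edge of $U$ (impossible by coexistence with $U$). Hence $\si^i(\ell)\in\lam_\approx$, and descending to $i=0$ gives $\ell\in\lam_\approx$. This shows $\lam_{\sim_U}\subset\lam_\approx$, i.e.\ $\approx$ tunes $\sim_U$.

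For (2) the argument is the same, but now in the "travel together" description of $\sim_U$ for a gap of periodic type, the target at time $N$ is an edge of $U$ \emph{or} an edge of $V(U)$, and the intermediate chords $\si^i(\ell)$ have endpoints in the closure of a single component of $\uc\sm(U'\cup V(U)')$. Edges of $U$ are leaves of $\approx$ by hypothesis, edges of $V(U)$ are leaves of $\approx$ because we assume $\approx$ tunes $V(U)$, and the pullback step works as before: on each hole of $U\cup V(U)$ the map $\si$ is a homeomorphism onto its image (this uses Lemma~\ref{bndcrit} for $U$ and the analogous statement, Lemma~\ref{vassal} together with the horseshoe description, for $V(U)$), and coexistence of $\approx$ with both $U$ and $V(U)$ forces the unique $\approx$-sibling preimage lying in that hole to coincide with $\si^i(\ell)$. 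Downward induction again yields $\ell\in\lam_\approx$, hence $\approx$ tunes $\sim_U$.

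The main obstacle is the pullback step: verifying that the particular sibling preimage of $\si^{i+1}(\ell)$ that lands in the relevant hole of $U$ (resp.\ of $U\cup V(U)$) is \emph{forced} to be $\si^i(\ell)$ and not some other $\approx$-leaf. This is where coexistence with $U$ (and $V(U)$) is used essentially — without it one could imagine an $\approx$-leaf inside a hole of $U$ that is not a pullback of an edge of $U$ — and where one has to be careful that $\si^i(\ell)$ is never critical (so that its image is a non-degenerate leaf with genuine siblings), which is guaranteed because $\sim_U$-classes are finite and $U$ (resp.\ $V(U)$) is of regular critical or periodic type rather than a caterpillar gap, as noted at the start of Subsection~\ref{s:canlamqua}.
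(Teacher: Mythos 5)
Your plan---establish coexistence via Lemmas~\ref{l:canlam1s} and~\ref{l:canlam2} and then run a downward induction on the visiting time $N$, using the Sibling Property to pull a leaf of $\approx$ back one step at a time---is a sensible strategy and is in the spirit of the paper's ``analogous'' hint. However, the inductive step fails as written, and it fails at the very first application. You assert that $\si^i(\ell)$ avoids the major hole ``because its image is a genuine leaf, not degenerate.'' This confuses the major \emph{leaf} $c=M(U)$ (whose image is degenerate) with non-critical chords lying \emph{in} the major hole, which have non-degenerate images. In fact, if $\ell$ has visiting time $N\ge 1$, then $\si^{N-1}(\ell)$ always lies in $\ol{H(M(U))}$: if $H_{N-1}$ were a hole of length less than $\frac13$ then $\si|_{\ol{H_{N-1}}}$ would be a homeomorphism sending $\bd(H_{N-1})$ to $\bd(\si(H_{N-1}))$, and since $\si^N(\ell)$ is the edge of $U$ across $\si(H_{N-1})$ this would force $\si^{N-1}(\ell)$ to be the edge of $U$ across $H_{N-1}$, contradicting visiting time $N$. (Equivalently, the only gap of $\sim_U$ other than $U$ that maps onto $U$ in one step is the pullback gap attached at $c$, which sits in $\ol{H(c)}$.) So the penultimate step of your induction always lands in the major hole, precisely where the ``$\si$ is a homeomorphism on $\ol H$'' argument is unavailable.

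The step can be repaired, but it needs a different argument. Writing $e=\si^N(\ell)=\ol{pq}$ for the edge of $U$, one first checks $p,q\ne\si(c)$ (otherwise $\si^{N-1}(\ell)$ would share an endpoint with $c$, giving a $\sim_U$-class of size $\ge 3$); then $p$ and $q$ each have exactly two preimages in $U'$ and one in the open major hole, the two edges of $U$ over $e$ are already in $\lam_\approx$, and the Sibling Property applied to one of them forces the third sibling to be the chord of the two major-hole preimages, i.e.\ $\si^{N-1}(\ell)$, because the alternative completion of the sibling triple would pair a $U'$-preimage with a major-hole preimage and hence cross $c$, contradicting coexistence. For $i<N-1$ the shorter-hole setting does hold, but even there your claim that ``exactly one of [the siblings] lies with both endpoints in $\ol H$'' is asserted with a justification that gives uniqueness rather than existence; existence needs a count (six sibling endpoints but only four preimage-points of $p,q$ outside $\ol H$) together with coexistence with $U$ and the observation that for $i<N-1$ no preimage of $p$ or $q$ can be a vertex of $e_H$, so a sibling with exactly one endpoint in $H$ would cross $e_H$. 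In short, you have correctly identified the pullback step as the crux, but the argument you give for it is not correct as written.
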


This provides a more explicit description of how a lamination can
tune
%with ???
an invariant quadratic gap.

%\bigskip

\subsection{Cubic laminations with no rotational gaps or leaves}

We now describe all cubic laminations with no periodic rotational
gaps or leaves (a {\em rotational leaf} is a periodic leaf with
non-refixed endpoints). Recall that for an invariant quadratic gap
of periodic type $U$ we let $M^*(U)$ denote the sibling leaf of $M(U)$ in $V(U)$.
%the edge of $V(U)$ with
%the same image as the major $M(U)$.
Recall from \cite{bmov11} that,
for any invariant lamination $\sim$, the corresponding
geo-lamination $\lam_\sim$ is sibling invariant.

\begin{lem}\label{l:spec-GvsL}
Let $U$ be a stand alone invariant quadratic gap of periodic type
with major $M=M(U)$ of period $k$. Suppose that $M$ is a leaf of a
lamination $\sim$.
Then the leaf $M^*(U)=M^*$ is a leaf of $\sim$ too.
Moreover, if both $M$ and $M^*$ are edges of a single gap of $\sim$, then this gap coincides with $V(U)$, and if $M$ and $M^*$ are not on the boundary of a single gap of $\sim$, then $\sim$ has a rotational gap or leaf in $V(U)$.
\end{lem}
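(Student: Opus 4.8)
The plan is to work with the canonical lamination $\sim_U$ of $U$ and the sibling invariance of $\lam_\sim$. First I would recall that, since $U$ is of periodic type with major $M=M(U)=\ol{ab}$ of period $k$, the vassal $V=V(U)$ is a periodic quadratic gap of period $k$ (Lemma~\ref{vassal}), its non-periodic edge is $M''=M''(U)=\ol{b''a''}$ where $a''=a+\frac13$, $b''=b-\frac13$, and $\si^k(M'')=M$ (this is the ``horseshoe'' picture in the proof of Lemma~\ref{vassal}). To see that $M''$ is a leaf of $\sim$: the leaf $M$ is non-degenerate, so by the Sibling Property of $\lam_\sim$ there are three pairwise disjoint leaves of $\lam_\sim$ mapping onto $\si(M)$; the hole $H(M)$ has length in $(\frac13,\frac12]$, its image $\si(H(M))$ has length $3|M|_U-1\le\frac12$, and the full $\si$-preimage of $\si(H(M))$ consists of three arcs, two of which are the two ``halves'' of $H(M)$ cut by the diameter-like configuration and one of which lies outside $\ol{H(M)}$. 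Chasing this forward under $\si^{k-1}$ and using that $M$ is $\si^k$-fixed with fixed endpoints, I would identify the sibling of $M$ lying inside $H(M)$ with exactly the chord $M''$; the two leaves $M$ and $M''$ together with their siblings behind them then force $M''\in\lam_\sim$, i.e.\ $M''$ is a leaf of $\sim$.

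Next, for the dichotomy: suppose $M$ and $M''$ are edges of a single gap $G$ of $\sim$. Then $G$ is a gap whose boundary contains the periodic leaf $M$ and the leaf $M''$ with $\si^k(M'')=M$; since $\sim$ coexists with (indeed contains) these leaves and $V$ is by construction the convex hull of all points whose $\si^k$-orbit avoids the interior $(b'',a'')=H_V(M'')$, I would argue that $G$ cannot be larger than $V$ (any vertex of $G$ strictly inside a hole of $V$ would have to connect, across edges of $\sim$, to a point of $G\cap\uc$, and pulling back by the remap of $V$ one produces a leaf of $\sim$ crossing an edge of $V$, contradicting that $\sim$ contains all edges of $V$ — these being preimages of $M$ and $M''$ under the remap, hence leaves of $\sim$ by the argument of the previous paragraph applied along the horseshoe). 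Conversely $G\supseteq V$ because $M,M''$ bound the ``inner'' complementary region which is exactly $V$. Hence $G=V(U)$.

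Finally, suppose $M$ and $M''$ are leaves of $\sim$ but are not on the boundary of a common gap. Then there is a leaf of $\sim$ separating $M$ from $M''$ inside the region bounded by $M$ and $M''$, i.e.\ some leaf or gap of $\sim$ sits strictly between them inside $V$; equivalently $V$ is not a gap of $\sim$, so $\sim$ refines $V$. I would then apply Theorem~\ref{t:fxpt} (in the form transported to $V$ by $\psi_V$, which semiconjugates $\si^k|_{\bd(V)}$ to $\si_2$) to the configuration of leaves of $\sim$ inside $V$: $\psi_V$ sends them to a $\si_2$-invariant collection, and since $V$ is not a gap of $\sim$ there is a nontrivial component of $\disk$ minus (the $\psi_V$-images of) these leaves, bounded by leaves each of which either is $\si_2$-fixed or maps off itself, with no finite gap of $\sim$ inside; Theorem~\ref{t:fxpt} then produces either an invariant gap of degree $>1$ (impossible, as $\psi_V(\sim)$ is quadratic and $V$ already accounts for the degree-two behavior, so a further degree-$\ge 2$ invariant gap inside $V$ would overcount critical behavior) or an invariant rotational set of $\psi_V(\sim)$. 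Pulling this rotational set back by $\psi_V$ gives a periodic rotational gap or leaf of $\sim$ contained in $V$, which is the desired conclusion.

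\textbf{Main obstacle.} The delicate step is showing $G=V(U)$ precisely, and dually that a leaf strictly between $M$ and $M''$ really forces a \emph{rotational} (not merely periodic) object: one must rule out that the intervening leaf is itself critical or precritical and exploit that $\psi_V$ collapses only preimages of the two edges $M,M''$ of $V$, so that any surviving leaf of $\psi_V(\sim)$ inside the image circle together with the hypothesis ``no common gap'' yields a genuinely nontrivial invariant sub-configuration to which Theorem~\ref{t:fxpt} applies; the bookkeeping that the resulting invariant set cannot be of degree $>1$ (using that $U$ together with $V$ already realizes all the available critical multiplicity of $\si_3$) is where the cubic hypothesis is essential.
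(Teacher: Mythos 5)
Your overall plan (Sibling Property for $M''$, an analysis of the strip between $M$ and $M''$ for the remaining dichotomy) matches the paper's, and your first step is essentially right, if more laborious than necessary: a single application of the Sibling Property to $M$ already forces $M''\in\lam_\sim$, with no need to ``chase forward under $\si^{k-1}$''. But the two remaining steps have genuine gaps.

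For $G=V(U)$, you rely on the claim that \emph{all} edges of $V(U)$ are already leaves of $\sim$, justified by ``the argument of the previous paragraph applied along the horseshoe.'' This does not follow: the Sibling Property gives the \emph{existence} of pairwise disjoint pullbacks of a given leaf, not that a prescribed chord (the boundary pullback of $M$ inside $V$) is one of them. Establishing that the edges of $V(U)$ are leaves of $\sim$ is equivalent to showing $V(U)$ is a gap of $\sim$, which is what you are trying to prove — the argument is circular. The paper instead observes that, since $M$ and $M''$ are edges of the gap $G$ of $\sim$ and both map to $M$ under $\si^k$, the gap $G$ is $\si^k$-invariant and, by the defining property of $V(U)$ (its basis is exactly the set of points whose $\si^k$-orbit stays in $[a,b'']\cup[a'',b]$), $G\subset V(U)$; one then projects by $\psi_{V(U)}$ and uses the structure of $\si_2$-invariant gaps containing $0$ to force $G=V(U)$. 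No prior knowledge about the other edges of $V(U)$ is needed.

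For the rotational conclusion, you want to push leaves of $\sim$ through $\psi_V$ and apply Theorem~\ref{t:fxpt} downstairs. But you have not established that $\sim$ coexists with $V(U)$ (again, you only know $M$ and $M''$ are leaves of $\sim$; other edges of $V(U)$ could a priori be crossed by $\sim$-leaves), so $\psi_V(\sim)$ is not yet a quadratic lamination to which Theorem~\ref{t:fxpt} applies. You also skip the delicate disjointness check needed for hypothesis (3) of Theorem~\ref{t:fxpt}. The paper works entirely upstairs in the $\si_3$-disk: it first shows that any leaf $\ell$ of $\sim$ separating $M$ from $M''$ can be taken disjoint from $M\cup M''$ (using that $\lam_\sim$ is proper and that there are no $k$-periodic points strictly between $M$ and $M''$), takes pullbacks $\bq$ close to $M$ and the sibling $\bq''$, and applies Theorem~\ref{t:fxpt} directly to the closed strip $\ol{S(\bq,\bq'')}$; only at the very end is $\psi_{V(U)}$ used, merely to read off rotationality of the invariant set $Q$ produced. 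You should rework both steps along these lines.
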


Observe that we do not assume $U$ to be a gap of $\sim$.

\begin{proof}
Since $M$ is a leaf of $\sim$, then by the Sibling Property so is $M^*$.
Now, if there is a gap $G$ of $\sim$ such that $M$ and $M^*$ are edges of $G$, then, by definition of $V(U)$, we see that $G\subset V(U)$ is a $\si^k_3$-invariant gap.
Applying the map $\psi_{V(U)}$, we get a $\si_2$-invariant gap $\psi_{V(U)}(G)$ that contains the angle $0$.
Clearly, then $\psi_{V(U)}(\bd(G))=\uc$ and hence $G=V(U)$.

Suppose that $M$ and $M^*$ are not on the boundary of a single gap of $\sim$.
Then there must be a leaf $\ell$ of $\lam_\sim$ that separates $M\sm \uc$ from $M^*\sm \uc$ in $\disk$. Note that one of its siblings $\ell^*$ is also contained in the strip between $M$ and $M^*$.
We claim that $(\ell\cup\ell^*)\cap (M\cup M^*)=\0$.
To see this, note that if $\ell$ and $M$ share an endpoint, then $\ell$ is not critical because $\lam_\sim$ is proper.
Hence $\si^k(\ell)\cup\ell\cup M$ is a tripod, which is impossible.
Indeed, $\si^k(\ell)$ cannot coincide with $\ell$, since there are no $k$-periodic points between $M$ and $M^*$.
Hence $M$ is approached by leaves of $\sim$ separating $M$ from $M^*$ (as such we can choose pullbacks of $\ell$ or $\ell^*$).
Choose one such leaf $\bq$ of $\sim$ close enough to $M$ and then choose the leaf $\bq^*$ with the same $\si$-image as $\bq$, separating $\bq$ from $M^*$ ($\bq^*$ is a
leaf of $\sim$ by the Sibling Property).
By Theorem~\ref{t:fxpt}, there exists a $\si^k$-fixed gap or leaf $Q$ in the closed strip
$\ol{S}$ where $S=S(\bq,\bq^*)$ is the open strip between $\bq$ and $\bq^*$.
It follows that the entire orbit of $Q$ is located in the same parts of the circle, where the orbit of $V(U)$ is located, which implies that $Q\subset V(U)$.
Applying $\psi_{V(U)}$ to $Q$ and using well-known facts about quadratic laminations and their invariant sets, we see that $Q$ is rotational, as desired.
\end{proof}

Let us characterize laminations with no rotational sets.

\begin{lem}\label{l:pc0}
Suppose that a cubic invariant lamination $\sim$ has no periodic
rotational gaps or leaves. Then either $\sim$ is empty, or it
coincides with the canonical lamination of an invariant quadratic
gap.
\end{lem}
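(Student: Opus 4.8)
The plan is to use Theorem~\ref{t:fxpt} to locate a distinguished invariant object inside $\disk$, show it is a quadratic gap, and then invoke the uniqueness results of Subsection~\ref{s:canlamqua}. First I would dispose of the trivial case: if $\sim$ is empty there is nothing to prove, so assume $\sim$ has at least one non-degenerate class. The key observation is that $\si_3$ has exactly two fixed points on $\uc$, namely $0$ and nothing else of period one besides $0$ and $\frac12$ — more precisely the fixed points $0$ and the two additional points $\frac12$ is not fixed; the $\si_3$-fixed points on $\uc$ are $0$, $\frac12$... let me restate: I would first pin down the fixed leaves/gaps of $\sim$. By Theorem~\ref{t:fxpt} applied with the empty collection of leaves (so $X=\disk$), $\sim$ contains either an invariant gap of degree $k>1$ or an invariant rotational set. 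The latter is excluded by hypothesis. Hence $\sim$ has an invariant gap $U$ of degree $k>1$; since we are in the cubic case, $k=2$ or $k=3$. If $k=3$ then $U$ is an invariant critical gap whose basis is all of $\uc$ after collapsing, which forces $\sim$ to be empty (every leaf of $\sim$ would be an edge of $U$ or inside it, but an invariant Fatou gap of degree $3$ occupying everything leaves no room, or more carefully $\si_3|_{\bd(U)}$ conjugate to $\si_3$ gives $\sim$ empty) — so I would argue $k=2$, i.e. $U$ is a quadratic invariant gap.

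Next I would identify which type $U$ is. By the trichotomy established after Lemma~\ref{cantor} (regular critical type, periodic type, or caterpillar/extended caterpillar type), and since $\sim$ is a genuine lamination whose classes are finite, $U$ cannot be a caterpillar-type gap — as noted in the text, a caterpillar critical leaf has a periodic endpoint, producing a countable concatenation forcing an infinite equivalence class. So $U$ is of regular critical type or of periodic type. In the periodic-type case, the major $M(U)$ is a periodic leaf with $\si^k$-fixed endpoints; I would observe that $M(U)$ is then a leaf of $\sim$ (it is an edge of the gap $U$ of $\sim$), and by Lemma~\ref{l:spec-GvsL} the sibling leaf $M''(U)$ is also a leaf of $\sim$. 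Either $M(U)$ and $M''(U)$ bound a common gap of $\sim$, which by Lemma~\ref{l:spec-GvsL} must equal $V(U)$, so $V(U)$ is a gap of $\sim$; or they do not, in which case Lemma~\ref{l:spec-GvsL} produces a rotational gap or leaf of $\sim$ inside $V(U)$, contradicting the hypothesis. Hence in the periodic-type case $V(U)$ is a gap of $\sim$.

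Finally I would invoke uniqueness. In the regular critical case, $U$ is a gap of $\sim$, so by Lemma~\ref{l:canlam1} $\sim$ is the unique invariant lamination with $U$ as a gap, i.e. $\sim=\sim_U$, the canonical lamination of $U$. In the periodic-type case, both $U$ and $V(U)$ are gaps of $\sim$, so by Lemma~\ref{l:canlam2} $\sim=\sim_U$ again. In either case $\sim$ coincides with the canonical lamination of an invariant quadratic gap, as claimed.

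I expect the main obstacle to be the clean extraction of the gap $U$ from Theorem~\ref{t:fxpt} and ruling out the degree-$3$ possibility: one must check carefully that the only invariant degree-$>1$ gap available, given the absence of rotational sets, is a quadratic one of an admissible (non-caterpillar) type, and that its major behaves as required so that Lemmas~\ref{l:canlam1}, \ref{l:canlam2}, and \ref{l:spec-GvsL} apply verbatim. The periodic-type bookkeeping — ensuring $M''(U)\in\sim$ and that no rotational object hides in $V(U)$ — is where the hypothesis on rotational sets does its real work, and is the step I would write out most carefully.
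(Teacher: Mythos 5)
Your proposal is correct and follows essentially the same route as the paper's proof: invoke Theorem~\ref{t:fxpt} to obtain an invariant gap of degree $k>1$, dispose of $k=3$, split into regular critical and periodic type, and close with Lemmas~\ref{l:canlam1}, \ref{l:spec-GvsL}, and \ref{l:canlam2}. The brief digression about $\si_3$-fixed points is harmless noise, and your explicit remark ruling out caterpillar types (already noted earlier in the paper) is a reasonable inclusion but not a departure in method.
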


\begin{proof}
Suppose that a non-empty lamination $\sim$ has no rotational sets.
Then by Theorem \ref{t:fxpt} there is an invariant  gap $U$ of
degree $1<k\le 3$. If $k=3$, then $\sim$ is empty, hence $k=2$. If
$U$ is of regular critical type, then, by Lemma~\ref{l:canlam1},
the lamination $\sim$
is the canonical lamination $\sim_U$. Let $U$ be of periodic type.
Since $M(U)$ is a leaf of $\lam_\sim$, then by
Lemma~\ref{l:spec-GvsL}, so is $M^*(U)$. Now, if there is a gap $G$
of $\sim$ such that $M(U)$ and $M^*(U)$ are edges of $G$, then, by
Lemma~\ref{l:spec-GvsL}, we have $G=V(U)$, and, by Lemma~\ref{l:canlam2},
the lamination $\sim$ coincides with the canonical lamination $\sim_U$. Suppose
that $M(U)$ and $M^*(U)$ are not contained in the same gap of
$\sim$. Then, by Lemma~\ref{l:spec-GvsL}, there exists a rotational
gap or leaf of $\sim$, a contradiction.
\end{proof}

\subsection{Coexistence of quadratic invariant gaps and other laminational sets}\label{ss:afew}

Here we show how invariant quadratic gaps can coexist with each
other as well as how they can coexist with other laminational sets
(we consider gaps of laminations, i.e., invariant quadratic gaps
which are regular critical or of periodic type). We are motivated
here by the desire to provide a model for specific families of
laminations and laminational sets (such as the family of all
quadratic invariant gaps) which should be helpful in the description
of the entire  cubic Mandelbrot set $\M_3$. Some of these lemmas are
used in \cite{bopt13}.

Let us discuss two special quadratic invariant gaps which often play
the role of exceptions to the claims proven below. Let
$\ol{0\frac12}=\di$ be the unique chord in $\disk$ with
$\si$-invariant endpoints. Let $\fg_a$ be the convex hull of all
points with orbits \textbf{a}bove $\di$ and $\fg_b$ be the convex
hull of all points with orbits \textbf{b}elow $\di$. Then $\di$ is
the major of both gaps. However as a major of $\fg_b$ it should be
viewed so that the positive direction on $\di$ is from $0$ to
$\frac12$, and if $\di$ is considered as the major of $\fg_a$, then
%$\di$ should be viewed so that
the positive direction on $\di$ is from $\frac12$ to $0$. Recall
that when talking about a Jordan curve $K$ which encloses a simply
connected domain $W$  on the plane, by the \emph{positive direction}
on $K$ one means the counterclockwise direction with respect to $W$,
i.e., the direction of a particle moving along $K$ so that $W$ remains on
its left.

%Now we study coexistence of
%majors and related strips of
%invariant quadratic gaps.
%Given a family $\mathcal Q$ of pairwise unlinked
%chords we call two of them (say, $A$ and $B$) \emph{non-strictly
%adjacent} if there exists no chord $C\in \mathcal Q$ such that
%$C\cap \disk$ separates $A\cap \disk$ from $B\cap \disk$. In other
%words, if $A$ and $B$ are unlinked and there are no chords from
%$\mathcal Q$ in the open ``strip'' between $A$ and $B$ (if $A$ and
%$B$ have a common endpoint then the strip in question is the
%interior of the set enclosed by $A, B$ and the circle arc connecting
%two appropriate endpoints of $A$ and $B$) then we say that $A$ and
%$B$ are \emph{non-strictly adjacent (in $\mathcal Q$)}. Similarly,
%we say that a chord $\ell$ of $\disk$ \emph{non-strictly separates}
%sets $A\subset \ol{\disk}, B\subset \ol{\disk}$ if $A$ is contained
%in the closure of one component of $\disk\sm \ell$ and $B$ is
%contained in the closure of the other component of $\disk\sm \ell$.

Let $U$ be an invariant quadratic gap.
If $U$ is of regular critical type, then we set $M^*(U)=M(U)$;
if $U$ is of periodic type, then we set $M^*(U)$ to be the leaf that is not an edge of $U$ and that has the property $\si(M^*(U))=\si(M(U))$.
We summarize a few simple facts

If a lamination has the gap $U$, it must have the leaf $M^*(U)$.
Let $S_U$ be the closed strip in $\disk$ between $M(U)$ and $M^*(U)$, and set $H(M(U))=H(U)$.
In the regular critical case, we have $\frac13=|H(U)|$; in the periodic case, by
Lemma~\ref{bndcrit}, we have $\frac13<|H(U)|\le \frac12$ with $|H(U)|=\frac12$ only if $M(U)=\di$ (and hence only if $U=\fg_a$ or $U=\fg_b$).
For $M(U)\ne \di$, the arc $H(U)$ has the same endpoints as $M(U)$ and is shorter than $\frac12$; the basis of the gap $U$ is contained in $\uc\sm H(U)$.

%The \emph{positive direction} on $M(U)$
%is from the initial endpoint
%of $H(U)$ to the terminal point of $H(U)$
Denoting $M(U)$ by $\ol{ab}$, we \emph{always} mean that the direction from $a$ to $b$ along $H(U)$ is positive.
Denote the closed circle arcs from $\bd(S_U)$ by
$L_U,$ $R_U$ with positive direction on $H(U)$ being from $R_U$ to
$L_U$. If $M(U)$ is critical, $S_U=M(U)$ is a chord. Clearly, $M(U)$
determines $R_U$, and  $R_U$ determines $M(U)$:
if $R_U=[a,b^*]$, where $b^*=b-\frac 13$, then $M(U)=\ol{a b}$.
By Lemma~\ref{descru}, the orbit of an endpoint of $M(U)$ avoids $H(U)$; thus, the orbit of $a$ cannot enter $(a, b]\supset (a,a^*]$ (as before, we write $a^*$ for $a+\frac 13$).
Observe that $a$ is periodic and $b^*$ is pre-periodic.
Clearly, $|R_U|=|L_U|=|H(U)|-\frac13$.
Hence $|R_U|\le \frac16$ with $|R_U|=\frac16$ only if $M(U)=\di$.

\begin{lem}\label{l:cofc}
Let $U\ne W$ be invariant quadratic gaps of two different laminations. Then $R_U\cap R_W=L_U\cap L_W=\0$.
\end{lem}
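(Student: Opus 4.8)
The plan is to assume $R_U\cap R_W\ne\0$ and deduce $U=W$, contradicting $U\ne W$; the equality $L_U\cap L_W=\0$ then follows from the mirror-symmetry of the whole setup, which exchanges $a$ with $b$, $R$ with $L$, and the $\si^k$-fixed point $a_U$ with $b_U$. Write $M(U)=\ol{a_Ub_U}$, so that $H(U)=(a_U,b_U)$, $b_U''=b_U-\frac13$, $R_U=[a_U,b_U'']$, and $|R_U|=|H(U)|-\frac13\le\frac16$; similarly for $W$. I would first reduce to a statement about the initial endpoints $a_U,a_W$: if $R_U\cap R_W\ne\0$ then $R_U\cup R_W$ is a connected arc of length $\le\frac13$ whose initial endpoint is $a_U$ or $a_W$, and after interchanging $U$ and $W$ we may assume it is $a_U$; then $a_W\in R_U\cup R_W$, and if $a_W$ were not in $R_U=[a_U,b_U'']$ it would lie strictly past $b_U''$, whence $R_W=[a_W,b_W'']$ would be an arc of length $\le\frac16$ starting past $b_U''$, too short to re-enter $R_U$, contradicting $R_U\cap R_W\ne\0$. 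So $a_W\in R_U$.

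Case $a_W=a_U=:a$. If one of the two majors is critical (its gap being of regular critical type), then $a$ is non-periodic, so the other major is critical too, both have endpoints $a$ and $a+\frac13$, and by Lemma~\ref{pic} both gaps equal $G$ of that critical leaf, so $U=W$. Otherwise both majors are periodic; by Lemma~\ref{bndcrit} the endpoint $a$ then has period equal to the period of each major, so the two majors share a period $k$, and (again by Lemma~\ref{bndcrit}) $|H(U)|=\frac{3^{k-1}}{3^k-1}=|H(W)|$. Hence $b_U=a+|H(U)|=a+|H(W)|=b_W$, so $H(U)=H(W)$, and by Lemma~\ref{descru} the bases $U'$ and $W'$ coincide (both equal the set of points whose orbit misses the common major hole); thus $U=W$.

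Case $a_W\ne a_U$. Then $a_W\in R_U\setminus\{a_U\}\subset H(U)$ (note $a_W\le b_U''<b_U$), and $a_W\ne b_U''$: for $W$ of periodic type this is because $a_W$ is periodic while $b_U''$ is pre-periodic, and for $W$ of regular critical type because $a_W=b_U''$ would make $M(W)=\ol{b_U''b_U}$ a critical leaf with the periodic endpoint $b_U$, forcing $W$ to be an extended caterpillar gap rather than a gap of a lamination. So $a_W$ lies strictly inside $R_U$, hence strictly inside $H(U)$. Let $k$ be the period of $M(U)$. By Lemmas~\ref{bndcrit} and~\ref{vassal}, $\si^k$ restricts on $R_U=[a_U,b_U'']$ to an orientation-preserving expanding homeomorphism onto $\ol{H(U)}=[a_U,b_U]$ fixing $a_U$ (with $b_U''\mapsto b_U$), and on $L_U=[a_U'',b_U]$ to an orientation-preserving expanding homeomorphism onto $[a_U,b_U]$ fixing $b_U$ (with $a_U''\mapsto a_U$). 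Now I would chase the forward $\si^k$-orbit of $a_W$, using at each step that it misses $H(W)=(a_W,b_W)$ by Lemma~\ref{descru} applied to $W$: expansion away from $a_U$ forces the orbit out of $R_U$ to the right, necessarily into $(b_U'',b_U)$, and then avoidance of $H(W)$ puts that point into $[b_W,b_U)\subset L_U$; expansion away from $b_U$ then forces the orbit out of $L_U$ to the left, necessarily into $[a_U,a_U'')$, and avoidance of $H(W)$ puts that point into $[a_U,a_W]\subset R_U$; moreover no iterate can land in the middle arc $(b_U'',a_U'')$ or on $b_U'',a_U'',b_U$ (each of these lies inside $H(W)$, or is pre-periodic while the relevant iterates of $a_W$ are periodic). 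Pursuing this bouncing between $[a_U,a_W]$ and $[b_W,b_U)$ should force some iterate of $a_W$ across $a_W$ or $b_W$ into the interior of $H(W)$, contradicting Lemma~\ref{descru}. An equivalent route: show the whole $\si^k$-orbit of $a_W$ (and of $b_W$) stays in $R_U\cup L_U$, so $a_W,b_W\in N(U)$ and $M(W)$ is a diagonal of the vassal $V(U)$ strictly separating its co-major $M''(U)=\ol{b_U''a_U''}$ from its major $M(U)$; then the collapsing map $\psi_{V(U)}$ carries $M(W)$ and its orbit to a $\si_2$-periodic leaf with periodic endpoints that separates the $\si_2$-fixed point $\psi_{V(U)}(M(U))$ from $\psi_{V(U)}(M''(U))$, against the known description of periodic objects in quadratic laminations.

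The reduction and the case $a_W=a_U$ are routine given Lemmas~\ref{bndcrit}, \ref{descru}, \ref{pic} and~\ref{vassal}. The main obstacle I anticipate is the end of the last case: pinning down the itinerary of the $\si^k$-orbit of $a_W$ among the collar arcs $R_U$, $L_U$ and the middle arc, and then extracting the contradiction cleanly, in particular handling the subcase in which $W$ is of regular critical type and $a_W$ is only pre-periodic.
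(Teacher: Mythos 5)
Your reduction to $a_W \in R_U$ and the split $a_W = a_U$ versus $a_W \ne a_U$ parallel the paper's structure, but the proposal has a genuine gap in the main case ($a_W$ strictly inside $R_U$), which you yourself flag. The "orbit-bouncing" argument is set up but not carried through: you would need to control the full $\si^k$-itinerary of $a_W$ (and, on your alternative route, also of $b_W$, for which membership in $N(U)$ is not established), handle the possibility $b_W \ge b_U$, and treat the sub-case where $W$ is regular critical with $a_W$ only pre-periodic. The paper avoids the chase entirely with a single sharp estimate: since $|R_U| = |H(U)| - \tfrac13 = \tfrac{1}{3(3^m-1)}$ (where $m$ is the period of $M(U)$) and $\si^m$ acts on $R_U$ as multiplication by $3^m$ fixing $a_U$, every interior point $t$ of $R_U$ satisfies $\si^m(t)-t=(3^m-1)(t-a_U)\in(0,\tfrac13)$, so $\si^m(t)\in(t,t+\tfrac13)$. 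Taking $t=a_W$, this puts $\si^m(a_W)$ in $H(W)$ (whose length is $\ge\tfrac13$), contradicting Lemma~\ref{descru} in one step. That exact-length computation is the missing idea.

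A smaller issue: in the case $a_W=a_U=a$ with both majors periodic, you cite Lemma~\ref{bndcrit} for ``$a$ has period equal to the period of each major''. Lemma~\ref{bndcrit} only shows the endpoints of a period-$k$ major are $\si^k$-fixed; that the point-period equals the leaf-period needs a short extra argument (e.g.\ there is at most one hole of $U$ with initial endpoint $a$, so $\si^j(a)=a$ forces $\si^j(M(U))=M(U)$). The paper sidesteps this by applying the same one-step estimate to the other endpoint $b_W$ of $M(W)$, which then lies in the interior of $L_U$. Your reduction to $a_W\in R_U$, your exclusion of $a_W=b_U''$, and the mirror argument for $L_U\cap L_W$ are all sound.
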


\begin{proof}
If $U$, $W$ are of regular critical type, the claim follows. Let $U$
be of periodic type such that $M(U)$ is of period $m$. Consider a
point $t$ in the interior of $R_U$. Then the analysis of the
dynamics of $\si^m$ on $S_U$ (similar to that of the dynamics of
$\si_2$) implies that $\si^m(t)\in (t, t+\frac13)$
(similar to the statement that $\si_2(s)\in (s,s+\frac12)$ for
every $s\in (0,\frac12)$). Hence $t$ cannot
be the initial endpoint of a major of regular critical type or of
periodic type. Now consider the endpoints of  $R_U$.
Let $R_U=[a,b^*]$ and $H(U)=(a,b)$, where $b^*=b-\frac 13$.
Let us show that $a$ is not the initial point of the major of a quadratic invariant gap $W\ne U$.
Without loss of generality we may assume that $H(W)=(a,\widetilde b^*)$ and $\widetilde b^*\in (a^*,b)$, where $a^*=a+\frac 13$.
Then, as above, $\si^m(\widetilde b^*)\in (a,\widetilde b^*)$,
a contradiction. Also, neither $a$ nor $b^*$ can be the initial point
of a major of regular critical type because both points are initial
points of majors of extended caterpillar type. Since $a$ is
periodic, it is impossible that $[t,a]=R_W$ for some invariant
quadratic gap $W$. This exhausts all possibilities and shows that
$R_U\cap R_W=\0$ if $U\ne W$. Similarly, $L_U\cap L_W=\0$.
\end{proof}

Lemma~\ref{l:cofc} describes a generic type of intersection between two majors. The remaining case is when two majors meet at one
point and are oriented so that their holes are disjoint. In this
case majors must meet at their common endpoint, and pairs of such
majors are rather specific.

\begin{lem}\label{l:majface}
Let $M(U)=\ol{xy}$, $M(W)=\ol{zx}$ be majors of invariant quadratic gaps $U$, $W$.
Assume that $x\in [\frac12, 0]$. Then there are the following cases.
\begin{enumerate}
\item Both $M(U)$ and $M(W)$ coincide with $\di$ oriented in
opposite directions.
\item Both $U$, $W$ are of regular critical type, in which case
$\si(x)=\si(y)=\si(z)$, the forward
%$\om$-limit set $\om(x)$ of the
orbit of $\si(x)$ is in $[y, z]\cap \bd(\fg_a)$, and
the convex hull of the closure of this orbit is a Siegel gap.
\item Both $U$, $W$ are of periodic type, points $y\in (0, \frac16)$,  $z\in
(\frac13, \frac12)$ belong to the same periodic orbit $P\subset
\bd(\fg_a)$ on which the map acts as a rational rotation.
\end{enumerate}
\end{lem}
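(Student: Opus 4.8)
The plan is to combine bookkeeping of arc lengths with the structure theory of invariant quadratic gaps (Lemmas~\ref{bndcrit} and~\ref{descru}) and with Lemma~\ref{l:cofc}, which has already handled the ``transversal'' type of meeting of two majors. First I would dispose of the cases involving $\di$, then show $M(U)$ and $M(W)$ must be of the same type, and finally treat the two remaining types. Since $H(U)=(x,y)$ and $H(W)=(z,x)$ are disjoint open arcs meeting only at $x$, we have $|H(U)|+|H(W)|\le 1$, while by Lemma~\ref{bndcrit} each has length in $[\frac13,\frac12]$. Equality forces $|H(U)|=|H(W)|=\frac12$, hence by Lemma~\ref{bndcrit}(2) $M(U)=M(W)=\di$, and disjointness of the holes forces opposite orientations of $\di$: this is case (1) (equivalently $\{U,W\}=\{\fg_a,\fg_b\}$). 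Next I would rule out that exactly one major is $\di$: if $M(U)=\di$, the common endpoint $x\in\{0,\frac12\}$ is $\si$-fixed, and a direct estimate gives $\si(z)\in H(W)$ whenever $z$ lies strictly between $x-\frac13$ and $x$, contradicting Lemma~\ref{descru}; the only escape $z=x-\frac13$ makes $M(W)$ a critical leaf with the periodic endpoint $x$, so $W$ is an extended caterpillar, not one of the admissible types. So henceforth $|H(U)|,|H(W)|\in[\frac13,\frac12)$ and each major is critical (hole $\frac13$: regular critical type) or periodic of period $\ge 2$ (periodic type). Put $J=(y,z)$, the third open arc; then $|J|=1-|H(U)|-|H(W)|\le\frac13$, with equality iff both majors are critical, and by Lemma~\ref{descru} at $x$ we have $\orb(x)\subseteq\{x\}\cup\ol J$. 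If $M(U)$ were critical and $M(W)$ periodic (or vice versa), criticality gives $y=x\pm\frac13$ while periodicity of $M(W)$ makes $x$, hence $\si(x)$, periodic; comparing the position of $\si(x)$ with $\ol J$, using $x\in[\frac12,0]$ and the length formula $|H(W)|=3^{k'-1}/(3^{k'}-1)$ of Lemma~\ref{bndcrit}(2), contradicts $\orb(x)\subseteq\{x\}\cup\ol J$. Thus $M(U)$ and $M(W)$ have the same type.

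In the regular critical case $y=x+\frac13$, $z=x+\frac23$, and criticality gives $\si(x)=\si(y)=\si(z)=:p$, so $\{z,x,y\}=\si^{-1}(p)$. Regular critical majors have no periodic endpoints, so $x$ is not periodic, whence $p\ne x$; moreover the orbit of a regular critical leaf stays in its length-$\frac23$ arc $L(\cdot)$, so $\orb(p)=\orb(\si(x))\subseteq L(M(U))\cap L(M(W))=(y,x)\cap(x,z)=J$. In particular $p\in J$, which with $x\in[\frac12,0]$ and $p=3x\bmod 1$ forces $x\in(\frac23,\frac56)$, hence $\ol J=[x-\frac23,x-\frac13]\subseteq(0,\frac12)$. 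Then the whole orbit of $p$, and so $\om(x)=\om(p)$, lies in $[0,\frac12]$, i.e.\ $\om(x)\subseteq\ol J\cap\bd(\fg_a)$ ($\bd(\fg_a)\cap\uc$ being exactly the points whose orbit stays in $[0,\frac12]$, by Lemma~\ref{descru}). Finally $\om(p)$ is a compact infinite $\si$-invariant subset of the Cantor basis of $\fg_a$ (infinite because $p$ is not periodic); the only $\si$-identification it could carry comes from the single critical chord $\ol{yz}$, so $\ch(\om(p))$ is an invariant sub-gap of $\fg_a$ that is not of degree $\ge 2$. Transporting by $\psi_{\fg_a}$ to a $\si_2$-invariant set and invoking the standard description of invariant subsets and gaps of quadratic laminations (no wandering intervals; cf.\ \cite{thu85,blolev02a}), one concludes that the return dynamics is an irrational rotation, so $\ch(\om(x))$ is a Siegel gap: case (2).

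In the periodic type case $M(U)=\ol{xy}$ is $\si^k$-periodic and $M(W)=\ol{zx}$ is $\si^{k'}$-periodic, so by Lemma~\ref{bndcrit}(2) the points $x,y,z$ are periodic. From $\orb(x)\subseteq\{x\}\cup\ol J$, $|J|<\frac13$, and the length formulas of Lemma~\ref{bndcrit}(2) for $U$ and $W$, a finite computation (organised by $x\in[\frac12,0]$ and the single free parameter in each hole) should give $y\in(0,\frac16)$, $z\in(\frac13,\frac12)$, show $\orb(y),\orb(z)\subseteq[0,\frac12]$ so that $y,z\in\bd(\fg_a)$, and show $y$ and $z$ lie on one periodic orbit $P$; transporting $P$ by $\psi_{\fg_a}$ and using the classification of periodic orbits of quadratic laminations (a fixed point, or the vertex set of a rotational gap; cf.\ Lemma~\ref{l:lameqsemi} and~\cite{thu85}) then shows that $\si$ acts on $P$ as a nonzero rational rotation: case (3).

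The step I expect to be the main obstacle is the end of the regular critical case: showing $\ch(\om(x))$ is \emph{exactly} a Siegel gap. This needs ruling out that $\om(p)$ is finite (done here via ``$\si^{-1}(p)=\{x,y,z\}$ lies outside $J\supseteq\orb(p)$'', so $p$ cannot be periodic) and then recognising the return map on $\om(p)$ as an irrational rotation rather than some other minimal Cantor dynamics, which is exactly where the general theory of quadratic laminations is essential. By contrast, the length bookkeeping in the first and last paragraphs is routine given Lemma~\ref{bndcrit}, though it must be carried out with care to produce the precise arcs named in cases (2) and (3).
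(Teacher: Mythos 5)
Your overall plan matches the paper's: reduce to arc-length bookkeeping based on Lemmas~\ref{bndcrit} and \ref{descru}, note that the orbit of the common endpoint $x$ is confined to $\{x\}\cup\ol J$, and then identify the resulting invariant set either via rotational-subset theory or by transporting through $\psi_{\fg_a}$. However, the two steps you yourself flag as the crux are exactly where the argument is incomplete. For case (2), ``$\om(p)$ is infinite because $p$ is not periodic'' is a non sequitur: a non-periodic point can be eventually periodic, in which case $\om(p)$ is a finite cycle. You need to rule out eventual periodicity, and the argument is similar but not identical to the one you give: if $\si^m(p)$ first lands on a cycle $C\subseteq J$ with $m\ge 1$, then $\si^{m-1}(p)\in J$ is a $\si$-preimage of a point $q\in C$ different from the cyclic preimage $q'\in C$; but the two remaining preimages differ from $q'$ by $\pm\frac13$, hence cannot lie in $J$ (an arc of length $\frac13$ whose endpoints $y,z$ you have excluded from $\orb(p)$). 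The paper instead invokes \cite{bmmop06} directly and asserts irrationality; either route needs the ``no eventual landing'' observation you skipped.

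For case (3), deferring the entire argument to ``a finite computation (organised by $x\in[\frac12,0]$ and the single free parameter in each hole)'' based on the explicit lengths $3^{k-1}/(3^k-1)$ does not work: those lengths fix $|H(U)|,|H(W)|$ from the periods, but the location of $x$ is a genuinely continuous parameter (there are many $\si^k$-fixed points), so there is no finite case analysis. The paper's proof here is a dynamical argument: it shows $\si^k(y)\notin[\frac12,x]$ by producing an edge $\si^k(M(U))$ of $U$ that would separate two points of $U'$ (namely $\frac12$ and $y$), then deduces $\orb(y)\subseteq[0,\frac12]$, then $y\in[0,\frac16)$, then a second separation argument giving $\orb(y)\subseteq[y,y']$, and only then invokes \cite{bmmop06} and $\psi_{\fg_a}$. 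None of this is recovered by length arithmetic. As a smaller point, your disposal of mixed types is more roundabout than necessary: an endpoint of a periodic-type major is periodic, while an endpoint $x$ of a regular critical major $c$ cannot be (the orbit of $\si(c)$ stays in the open arc $L(c)$, so if $x$ were periodic $x=\si^{k-1}(\si(x))\in L(c)$, but $x$ is an endpoint of $L(c)$); this is immediate and avoids the $\ol J$ bookkeeping. The paper does not address mixed types explicitly at all, so credit for noticing they must be excluded, but the mechanism you propose is not the simplest.
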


\begin{proof}
First let $U$, $W$ be of regular critical type. Then the orbit of
$\si(x)$ is located in the circle arc $[y, z]$ of length $\frac13$.
By \cite{bmmop06}, the closure $T$ of the orbit of $x$ is such that by
collapsing arcs complementary  to $T$ we will semiconjugate $\si|_T$
to an irrational rotation. By the properties of majors,
$0\in (x, y)$ and $\frac 12\in (z, x)$. Hence the orbit of $\si(x)$
is contained in $(0, \frac 12)$. This completes case (2).

Now let $U$ and $W$ be of periodic type.
Observe that if $x=0$ or $x=\frac 12$, then case (1)
takes place. So we may assume that $x\in (\frac12, 0)$. Then $0\in
(x, y)$ and $\frac12\in (z, x)$. Suppose that $\si^k(y)\in [\frac12,
x]$ for some $k$. Then, since the orbit of $x$ never enters $[z,
x)\cup (x, y]$ and $\si^k(y)\ne y$, it follows that $\si^k(x)\in (y,
z)$. Hence $\si^k(M(U))$ separates $\frac12$ from $y$. Since the
orbit of $M(U)$ is on the boundary of $U$ and $\frac12\in \bd(U)$ by
Lemma~\ref{l:fx-maj} and Lemma~\ref{bndcrit}, this leads to a
contradiction and shows that the orbit of $y$ is contained in the
circle arc $[0, \frac12]$, and hence $y\in \bd(\fg_a)$. Moreover,
recall that the orbit of $y$ is contained in $[y, x]$; if $y\in
[\frac13, \frac12]$ then $\si(y)\in (x,y)$, a
contradiction. Hence $y\in [0, \frac16)$.

If $y=0$, then $x=\frac12$ and $z=0$ so that case (1) holds.
If $y\in (0, \frac16)$ then $y^*=y+\frac13\in (\frac13, \frac12)$, and $z\in (y, y^*)$.
Set $x^{**}=x+\frac23$; then $\ol{x^{**}y^*}$ is an edge of $U$. Clearly, $z\in (y, x^{**})$ because, by Lemma~\ref{bndcrit}, the length of the arc $(z, x)$ is between $\frac 13$ and $\frac12$.
Suppose that $\si^k(y)\in (y^*,\frac12)$ for some $k$.
Clearly, $\si^k(x)\notin (x, x^{**})$.
Hence $\si^k(x)\in (y^*, \si^k(y))\subset (z, x)$, a contradiction.
Thus, the orbit of $y$ is contained in $[y, y^*]$.
By \cite{bmmop06}, then the order among points of the orbit of $y$ is the same as the order of points in a periodic orbit under a rational circle rotation.
Since the orbit of $M(U)$ consists of edges of $U$, it follows that the same order is maintained among the points from the orbit of $x$.
Literally the same can be said about the orbit of $z$ and the orbit of $x$.
This implies that the rotation numbers associated with the orbits of $y$ and $z$ are the same.
Since both orbits are contained in $\fg_a$, and $\si|_{\fg_a}$ is semiconjugate to $\si_2$, well-known properties of $\si_2$ imply that $y$ and $z$ belong to the same periodic orbit. So, under the current assumption, case (3) holds.
\end{proof}

\begin{lem}\label{l:petype2}
Let $A$ be a non-degenerate class or an infinite gap of a cubic lamination $\sim$ that has a quadratic invariant gap $U$.
Suppose that $A$ never maps to $U$, $M(U)$ or $V(U)$.
Then $U$ is of periodic type, no image of $A$ intersects $M^*(U)$,
there exists a number $q\ge 0$ and an edge $e$ of $\si_3^q(A)$ such that either $e=M(U)$, or $e$ separates $U$ from $M^*(U)$.
Moreover, there is a leaf $\hat e$ $($possibly equal to $e)$ such that $\si_3(\hat e)=\si_3(e)$, and $\hat e$ separates $U$ from the second critical set $D\ne U$ of $\sim$.
If $A$ is periodic of period $m$, and $k$ is the period of $M(U)$, then $m>k$.
If we have $\si_3^n(A)=U$, and $A$ is located between $M(U)$ and $M^*(U)$, then $n>k$.
\end{lem}

\begin{proof}
Suppose that $U$ is of regular critical type. Then $A$ is a preimage
of $U$ or a preimage of $M(U)$, an edge of $A$ maps to $M(U)$, and
we can set $e=\hat e=M(U)$. The last claim of the lemma follows
because if $U$ is of regular critical type, $A$ can only be periodic
or critical if $A=U$ or $A=M(U)$.

Let $U$ be of periodic type with vassal $V.$ If $\si^l(A)=U$, or
$\si^l(A)=M(U)$, or $\si^l(A)=V$ (by Lemma~\ref{l:canlam2}, this is
true for a suitable choice of $l$ if $\sim$ is the canonical
lamination of $U$), then set $q=l, e=\hat e=M(U)$. Assume now that
$\sim$ is not the canonical lamination of $U$, and $A$ never maps to
$U$, $V$ or $M(U)$. Let us show that no image of $A$ intersects $M^*(U)$.
Indeed, if an image $T$ of $A$ contains an endpoint of $M^*(U)$ then the properties
of laminations imply that either $T=M^*$, or $T=V$, or $T$ is a gap with an edge $M^*(U)$ which maps
onto $U$ after $k$ more steps. Since this contradicts our assumptions, it remains
to consider the case when an image $T$ of $A$ will cross $M^*(U)$. However
then it is easy to see that $\si^K(T)$ will cross $M$, a contradiction.
So, no image of $A$ intersects $M^*(U)$.

Then an edge $\ell$ of $A$ connects two points $t_1$, $t_2$ of the
boundary of a gap $T$ of the canonical lamination $\sim_U$ of $U$
and passes through the interior of $T$. This includes the
possibility that $\ell$ crosses two edges of $T$ in $\disk$. We show
that $\si^q(A)$ separates $M(U)$ from $M^*(U)$ for some $q$. Indeed,
as long as $T$ maps onto its image one-to-one, the images of $\ell$
connect the corresponding images of $t_1, t_2$. By the properties of
$\sim_U$ there is the least $n$ with $\si^n(T)=U$ or $\si^n(T)=V$.
The leaf $\si^n(\ell)$ connects two points of $\bd(\si^n(T)))$ and
passes through the interior of $\si^n(T)$. Hence $\si^n(T)\ne U$,
the leaf $\si^n(\ell)$ connects two points of $\bd(V)$ and passes
through the interior of $V$. Let $\psi_V$ collapse all edges of $V$;
then $\psi_V$ semiconjugates $\si^k|_{\bd(V)}$ to $\si_2$ and maps
$\si^n(\ell)$ to a chord inside $\disk$. By properties of $\si_2$,
the chord $\si^q(\ell)=e$ separates $M(U)$ and $M^*(U)$ for some
$q\ge n$. The claim about $\hat e$ immediately follows (the critical
set $D\ne U$ of $\sim$ is located between $e$ and $\hat e$).

Let us prove the last claims of the lemma. Let $A$ be periodic of
period $m$ and $A\notin \{U, M(U), V(U)\}$. We claim that then for
all $i$ the set $\si_3^i(A)$ cannot have $M(U)$ as an edge. Indeed,
suppose otherwise. Then $\si_3^i(A)$ must be periodic of period $k$.
It follow from the definition of $V(U)$ that $A\subset V(U)$.
Applying the map $\psi_V$ defined above we see that $\psi_V(A)$ must
be the unique fixed point of $\si_2$. Hence $\si_3^i(A)=M(U)$, a
contradiction.

By the above there exists an image $B$ of $A$ and an edge $e$ of $B$
such that $e$ separates $M(U)$ from $M^*(U)$. Hence for $k$ steps
images of $B$ will be located ``behind'' the corresponding images of
$M(U)$ and $\si_3^k(B)\ne B$ as otherwise $\si_V(B)$ will have to be
the unique fixed point of $\si_2$, a contradiction. Therefore $k<m$
as desired.  A similar argument proves the very last claim of the
lemma.
\end{proof}

Now let us study which laminations with invariant quadratic gaps can share %a leaf.
a non-degenerate class or an infinite gap.

\begin{lem}\label{l:petype1}
Let  $\sim_i$ $(i=1,2)$ be two laminations.
Suppose that $A$ is a non-degenerate class or an infinite gap of both laminations, and that each $\sim_i$ has a quadratic invariant gap $U_i$, $i=1, 2$.
Then $U_1=U_2$ except for the case when $A$ is a gap or a class of the lamination that has both $\fg_a$ and $\fg_b$ as gaps;
in the latter case, $U_1$ and $U_2$ may coincide with $\fg_a$ and $\fg_b$ $($in any order$)$.
\end{lem}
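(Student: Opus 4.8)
\emph{Plan.} Suppose $U_1\ne U_2$; the goal is to show that then $\{U_1,U_2\}=\{\fg_a,\fg_b\}$ and that $A$ is a gap or class of the lamination having both of these as gaps. First I record the coexistence facts: since $A$ is a gap or non-degenerate class of $\sim_i$ and $U_i$ is a gap of $\sim_i$, the convex hulls of $A$ and of $U_i$ have disjoint relative interiors, so $A$ coexists with $U_i$; as $U_i$ is invariant and $\sim_i$ is forward invariant, every forward image $\si^n(A)$ likewise coexists with $U_i$, and this holds simultaneously for $i=1$ and $i=2$, so in particular no edge of any $\si^n(A)$ crosses an edge of $U_1$ or of $U_2$ inside $\disk$. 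Applying Lemma~\ref{l:petype2} to $(\sim_1,U_1)$ and to $(\sim_2,U_2)$ yields, for each $i$, an integer $q_i\ge 0$ and an edge $e_i$ of $B_i:=\si^{q_i}(A)$ with $e_i=M(U_i)$ or $e_i$ separating $U_i$ from $M''(U_i)$, together with the sibling leaf $\hat e_i$ (satisfying $\si(\hat e_i)=\si(e_i)$) separating $U_i$ from the second critical set $D_i\ne U_i$ of $\sim_i$.

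\emph{Confinement.} Next I would show that the basis of $B_i$ is contained in the closed arc $\ol{H(U_i)}$. Indeed, if $e_i=M(U_i)$ then $e_i$ is a common edge of $B_i$ and $U_i$, so $B_i$ lies on the side of $e_i$ away from $U_i$, i.e.\ over $\ol{H(U_i)}$; and if $e_i$ strictly separates $U_i$ from $M''(U_i)$ then, by coexistence of $B_i$ with $U_i$ and the description of the strip $S_{U_i}$, the set $B_i$ again lies on the side of $e_i$ away from $U_i$, and its basis sits inside an even shorter subarc of $\ol{H(U_i)}$. By Lemma~\ref{bndcrit}, $|H(U_i)|\in[\frac13,\frac12]$, with $|H(U_i)|=\frac12$ only when $M(U_i)=\di$, i.e.\ only when $U_i\in\{\fg_a,\fg_b\}$; and by Lemma~\ref{descru} the gap $U_i$ is recovered from $M(U_i)$ together with $H(U_i)$.

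\emph{Comparison of the majors.} Now I pass to a common moment: starting from $B_2$ and reapplying Lemma~\ref{l:petype2} to $(\sim_1,U_1)$, treating separately the two subcases for $U_1$ (a critical major collapses after one iterate, so care is needed here), I would produce a single forward image $B=\si^N(A)$ carrying edges near both $M(U_1)$ and $M(U_2)$; failing that, I use that $B_1$ and $B_2$ lie on one orbit $\{A,\si(A),\dots\}$ of pairwise non-crossing laminational sets of $\sim_1$, each confined to $\ol{H(U_1)}$ respectively $\ol{H(U_2)}$. Either way it follows that the chords $M(U_1)$ and $M(U_2)$ do not cross in $\disk$, and that when $q_1=q_2$ the arcs $\ol{H(U_1)}$ and $\ol{H(U_2)}$ are not disjoint. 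By Lemma~\ref{l:fx-maj} each of $\ol{H(U_1)},\ol{H(U_2)}$ contains a $\si$-fixed point (there are only two, $0$ and $\frac12$), so two non-crossing chords of this form must either coincide as unoriented chords or share exactly one endpoint. If $M(U_1)=M(U_2)$ as unoriented chords and this chord is not $\di$, then by Lemma~\ref{bndcrit}(2) the two major holes --- the two sides of the chord, each of length in $[\frac13,\frac12]$, summing to $1$ --- coincide, whence $H(U_1)=H(U_2)$ and $U_1'=U_2'$ by Lemma~\ref{descru}, contradicting $U_1\ne U_2$; and if this chord is $\di$, then $U_1,U_2\in\{\fg_a,\fg_b\}$ (in either order) and $A$, confined to the appropriate closed semicircle, is a gap or class of the lamination with both $\fg_a$ and $\fg_b$ as gaps. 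If instead $M(U_1)$ and $M(U_2)$ share exactly one endpoint, Lemma~\ref{l:majface} applies: its case (1) again yields $\{U_1,U_2\}=\{\fg_a,\fg_b\}$, while in cases (2) and (3) the shared endpoint has a rotational (Siegel, resp.\ rational-rotation) orbit inside $\bd(\fg_a)$, and I would exclude these by reinserting the confined edges $e_1,e_2$, which are pinched against both majors in the tiny arcs $R_{U_i},L_{U_i}$ near the shared endpoint --- impossible given Lemma~\ref{l:cofc}, which forbids $R_{U_1}\cap R_{U_2}$ and $L_{U_1}\cap L_{U_2}$ from meeting for gaps of distinct laminations. (The special situation in which some $\si^n(A)$ equals $U_1$ or $U_2$ is handled directly: by Lemma~\ref{descru} all edges of $U_j$ are preimages of $M(U_j)$, so the edge near $M(U_{3-j})$ forces, via the lengths and periods of Lemma~\ref{bndcrit}, either $U_1=U_2$ or both majors equal to $\di$.)

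\emph{Main obstacle.} The delicate part is the synchronization/non-crossing step and, inside it, the exclusion of cases (2)--(3) of Lemma~\ref{l:majface}: one must genuinely use that $A$ is shared by \emph{both} laminations --- not merely that $U_1$ and $U_2$ are majors meeting at a point --- by confining the \emph{same} image $\si^n(A)$ into both $\ol{H(U_1)}$ and $\ol{H(U_2)}$ at a common time. Everything else is routine bookkeeping with the structure of invariant quadratic gaps established above.
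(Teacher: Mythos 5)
Your overall skeleton (invoke Lemma~\ref{l:petype2} for each $(\sim_i,U_i)$, confine images of $A$ near the majors, then compare the majors) starts in the right direction, but two of your pivotal claims are unjustified, and they are exactly where the real work lies. First, ``it follows that the chords $M(U_1)$ and $M(U_2)$ do not cross in $\disk$'' does not follow: $M_1$ and $M_2$ are leaves of \emph{different} laminations, and the only constraint Lemma~\ref{l:petype2} gives is a linking constraint through the shared set $A$ --- namely that $M_2$ cannot be linked with \emph{both} $M_1$ and $M''_1$ (and symmetrically), since otherwise it would cross $\si^q(A)$. A chord $M_2$ with one endpoint in $L_{U_1}=[a_1+\frac13,b_1]$ crosses $M_1$ while avoiding the separating edge of $\si^{q}(A)$, so this configuration survives your argument; it is precisely the hard case in the paper, which is eliminated not by non-crossing but by Lemma~\ref{l:cofc}, the length bounds $|R_{U}|,|L_{U}|\le\frac16$, and the observation that the point $a_1+\frac23$ lies both in the basis of $U_1$ and in $\ol{H(M''_2)}$, so that the image of $A$ which (non-strictly) separates $M_2$ from $M''_2$ would have to cut $U_1$. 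Your subsequent trichotomy (coincide / share an endpoint via Lemma~\ref{l:majface} / excluded) therefore misses the crossing case altogether, and your appeal to Lemma~\ref{l:cofc} to kill cases (2)--(3) of Lemma~\ref{l:majface} (``pinched against both majors \dots impossible'') is not an argument --- that lemma only says $R_{U_1}\cap R_{U_2}=L_{U_1}\cap L_{U_2}=\0$ and does not by itself conflict with the configurations you describe.

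Second, your synchronization step is both unjustified and pointed the wrong way. There is no reason one can take $q_1=q_2$, nor that a single image $\si^N(A)$ can be confined in $\ol{H(U_1)}$ and $\ol{H(U_2)}$ simultaneously, and the conclusion you want from it --- that $\ol{H(U_1)}$ and $\ol{H(U_2)}$ are \emph{not} disjoint --- is the opposite of what actually happens: the paper proves $a_2\notin\ol{H(U_1)}$ and $a_1\notin\ol{H(U_2)}$, so the closed major holes \emph{are} disjoint (outside the $\fg_a,\fg_b$ exception), and then a further argument is still needed. In that disjoint configuration the edge $\ol{(a_1+\frac23)(b_1+\frac13)}$ of $U_1$ is separated from $M_1$ by $\ol{a_2(a_2+\frac13)}$ and $\ol{(b_2-\frac13)b_2}$, hence lies in $\ol{H(M''_2)}$; consequently the eventual image of $A$ separating $M_2$ from $M''_2$ also separates this edge of $U_1$ from $M_1$ and so cuts $U_1$, a contradiction. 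Your proposal contains no argument covering this disjoint-holes case (you dismiss it by the unproved ``not disjoint'' claim), so as written the proof has genuine gaps at both the linked-majors case and the disjoint-holes case, which together constitute essentially all of the paper's proof.
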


\begin{proof}
Suppose that $A$ is a non-degenerate class or an infinite gap of a lamination $\sim$ that has an invariant quadratic gap $U$.
We show how to recover $U$ knowing $A$, except in the case, where both $\fg_a$ and $\fg_b$ are gaps of $\sim$.
This will imply the statement of the lemma.

For every leaf or gap $T$ of $\sim$, let $||T||$ denote the length of the largest hole of $T$.
Set $\mu=\inf_{n\ge 0} ||\si^n(A)||$.
There may or may not be a leaf or gap $B$ in the forward orbit of $A$ with the property $||B||=\mu$.
If there is no such $B$, then there exists a leaf $\ell$ of $\sim$ such that $||\ell||=\mu$, and
there is a sequence of leaves or gaps in the forward orbit of $A$ accumulating on $\ell$.
In this latter case, we set $B=\ell$.
%Suppose that $||B||>\frac 12$,
Suppose that no image of $A$ has $\ol{0\frac12}$ as its edge, and let $H$ be the largest hole of $B$.
We claim that, in this case, $U'$ can be recovered as the set of all points in $H$, whose forward orbits stay in $H$.

To prove the claim, let $M$ denote the major of $U$ and, as before, let $M^*$ denote its sibling not in $U$.
Observe that, by our assumptions and by Lemma \ref{l:spec-GvsL}, both $M$ and $M^*$ are leaves of $\sim$.
Let us show that either $B=M$ or $B$ is in the strip between $M$ and $M^*$.
Indeed, by Lemma \ref{l:petype2}, images of $A$ are located in non-major holes of $U$, or in holes of $U\cup M^*(U)$, or separate $M(U)$ from $M^*(U)$.
If an image $T$ of $A$ is located in a non-major hole of $U$, then, by Lemma~\ref{l:bndcrit1}, we have $||T||>\frac56$.
If an image $T$ of $A$ is located in one of the three remaining holes of $U\cup \M^*(U)$ but not in the hole of $U\cup M^*(U)$ separated from $U$ by $M^*(U)$, then, again by Lemma~\ref{l:bndcrit1}, we have $||T||>\frac56$.
Finally, since, by Lemma \ref{l:petype2}, there are images of $A$ separating $M(U)$ from $M^*(U)$, it follows that $B$ cannot be separated from $U$ by $M^*(U)$.
We conclude that either $B=M$ or $B$ is in the strip between $M$ and $M^*$.
In either case it follows that $U'$ coincides with the set of points in the largest hole
$H$ of $B$, whose forward orbits stay in $H$.
Observe that even if $||B||=\frac12$, the set $U'$ is well-defined.
The remaining case when some image of $A$ equals $\ol{0\frac12}$ is immediate.
\end{proof}

%In the former case, the claim follows immediately.
%In the latter case, there is some critical chord $c$ separated by $B$ from $M$ and disjoint from $M$.
%Clearly, we have $U'=\Pi(c)$ in this case, and it follows that $U'$ coincides with the set of points in $H$, whose forward orbits stay in $H$.
%\end{proof}

\section{Invariant rotational sets}\label{s:rotgaps}

Fix an invariant rotational laminational set $G$. There are one or
two majors of $G$. We classify invariant rotational gaps by types.
This classification mimics Milnor's classification of
hyperbolic components in slices of cubic polynomials and quadratic
rational functions \cite{M, miln93}. Namely, we say that
\begin{itemize}
  \item the gap $G$ is of type A (from ``Adjacent'') if $G$ has only one
  major (whose length is at least $\frac 23$);
  \item the gap $G$ is of type B (from ``Bi-transitive'') if $G$
  has two majors that belong to the same periodic cycle;
  \item the gap $G$ is of type C (from ``Capture'') if it is not type B,
  and one major of $G$  eventually maps to  the other major of $G$;
  \item the gap $G$ is of type D (from ``Disjoint'') if there are two
  majors of $G$, whose orbits are disjoint.
\end{itemize}
Clearly,  it follows from the definitions that finite rotational
gaps cannot be of  type C (since then $\si|_{\bd(G)}$ is not
one-to-one). Also, if $G$ is of type B, then $\si|_{\bd(G)}$ is
one-to-one, and hence $G$ must be finite.

\subsection{Finite rotational sets}\label{s:finrotset}
A classification of finite rotational sets (under the name of fixed
point portraits) can be found in \cite{gm93}. We now give some examples
illustrating a part of this classification concerning the degree 3
case.

Let $G$ be a finite invariant rotational set (as we fix $G$ in this
section, we may omit using $G$ in the notation). By \cite{kiwi02},
there are at most two periodic orbits (of the same period denoted in
this section by $k$) forming the set of vertices of $G$. If vertices
of $G$ form two periodic orbits, points of these orbits alternate on
$\uc$.

\begin{example}\label{fingap1}
Consider the invariant rotational gap $G$ with vertices
$\frac7{26}$, $\frac4{13}$, $\frac{11}{26}$, $\frac{10}{13}$,
$\frac{21}{26}$ and $\frac{12}{13}$. This is a gap of type D.
The major leaf $M_1$ connects $\frac{12}{13}$ with $\frac{7}{26}$ and
the major leaf $M_2$ connects $\frac{11}{26}$ with $\frac{10}{13}$.
These major leaves belong to two distinct periodic orbits of edges of $G$. The major hole $H_G(M_1)$ contains $0$ and the major hole
$H_G(M_2)$ contains $\frac12$.
\end{example}

\begin{figure}[htp]
\includegraphics[width=6cm]{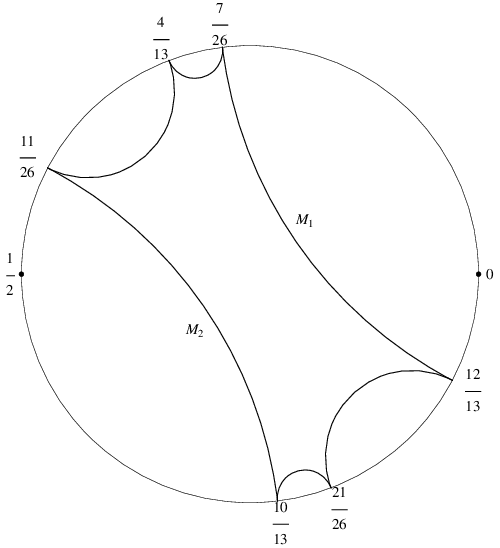}
\includegraphics[width=6cm]{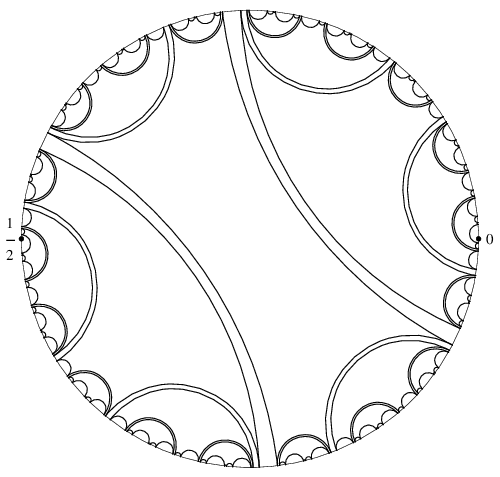}
\caption{The rotational gap described in Example \ref{fingap1} and
its canonical lamination.}
\end{figure}

The next example can be obtained by considering \emph{one} periodic orbit
of vertices in the boundary of the gap from Example~\ref{fingap1}

\begin{example}\label{fingap2}
Consider the finite gap $G$ with vertices $\frac7{26}$,
$\frac{11}{26}$ and $\frac{21}{26}$. This is a gap of type B. The
first major leaf $M_1$ connects $\frac{21}{26}$ with $\frac{7}{26}$
and the second major leaf $M_2$ connects $\frac{11}{26}$ with
$\frac{21}{26}$. The edges of $G$ form one periodic orbit to which
both $M_1$ and $M_2$ belong. The major hole $H_G(M_1)$ contains $0$
and the major hole $H_G(M_2)$ contains $\frac12$.
\end{example}

\begin{figure}[htp]
\includegraphics[width=6cm]{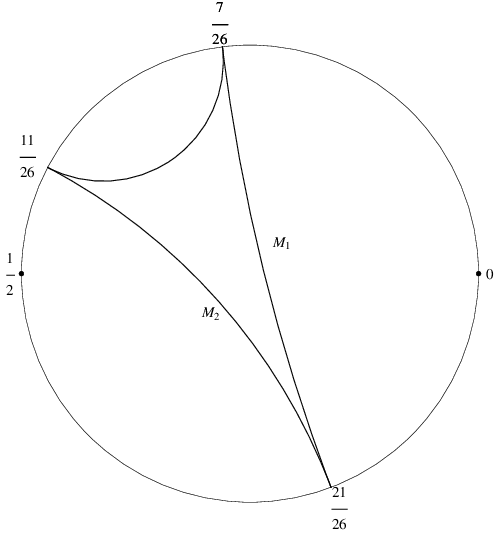}
\includegraphics[width=6cm]{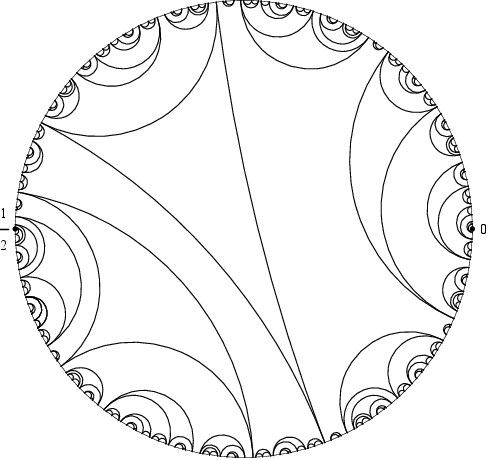}
\caption{The rotational gap described in Example \ref{fingap2} and
its canonical lamination.}
\end{figure}

\begin{example}\label{fingap3}
Consider the finite gap $G$ with vertices $\frac1{26}$,
$\frac{3}{26}$ and $\frac{9}{26}$.
This is a gap of type A.
The only major leaf $M=M_1=M_2$ connects $\frac{9}{26}$ with $\frac{1}{26}$.
The edges of $G$ form one periodic orbit to which $M$ belongs.
The major hole $H_G(M)$ contains $0$ and $\frac12$ and is longer than $\frac23$.
\end{example}

\begin{figure}[htp]
\includegraphics[width=6cm]{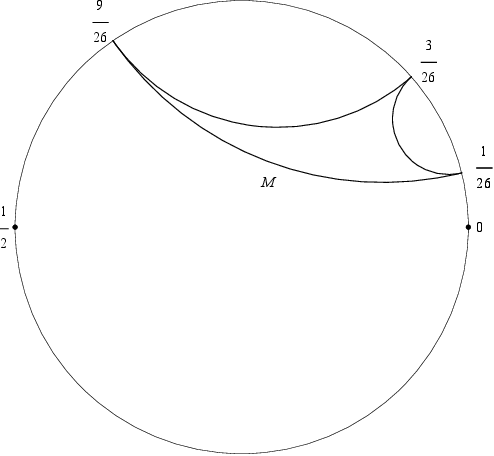}
\includegraphics[width=6cm]{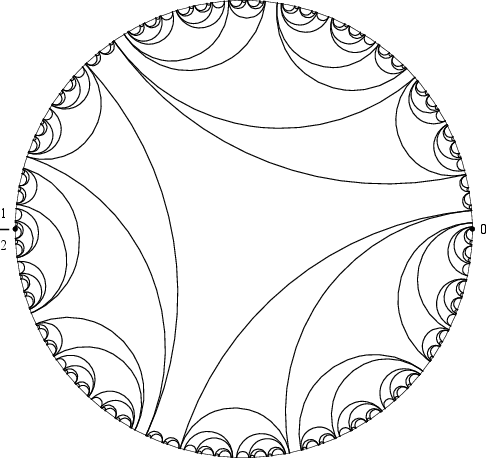}
\caption{The rotational gap described in Example \ref{fingap3} and
its canonical lamination.}
\end{figure}

Let $G=\ell=\ol{ab}$ be an invariant leaf. We can think of $G$ as a
gap with empty interior and two edges $\ol{ab}$ and $\ol{ba}$, and
deal with all finite %periodic
invariant %rotational
sets in a unified way.
Let us list all non-degenerate invariant leaves $\ol{ab}$.
Either points $a$, $b$ are fixed, or they form a two-periodic orbit.
In the first case, we have the leaf
$\ol{0\frac12}=\di$, in the second case, we have one of the leaves
$\ol{\frac18\frac38}$, $\ol{\frac14\frac34}$, $\ol{\frac58\frac78}$.
Informally, we regard $\di$ as an invariant rotational set of type D
(even though its rotation number is $0$).

Let $G$ be a finite invariant laminational set with $m$ edges
$\ell_0$, $\dots$, $\ell_{m-1}$. For each $i$, let $\fg_i$ be the
convex hull of all points $x\in \ol{H_G(\ell_i)}$ with $\si^j(x)\in
\ol{H_G(\si^j(\ell_i))}$ for every $j\ge 0$ (compare this to the
definition of a vassal in Section \ref{s:qgaps}). It is
straightforward to see that $\fg_i$ are infinite stand alone gaps
such that $\fg_i$ maps to $\fg_j$ if $\ell_j=\si(\ell_i)$. These
gaps are called the {\em canonical Fatou gaps attached to $G$}. The
gap $\fg_i$ is critical if and only if the corresponding edge
$\ell_i$ is a major.

\subsection{Canonical laminations of finite invariant rotational sets}
\label{s:canlamfin}

To every finite invariant rotational set $G$, we associate its
\emph{canonical} lamination $\sim_G$.

Suppose first that $G$ is of type B or D. Then by definition there
are two major edges of $G$, which are denoted by $M_1$ and $M_2$.
Let $H_1$ and $H_2$ be the corresponding holes. By Lemma
\ref{l:fx-maj}, we may assume that $0\in H_1$ and $\frac 12\in H_2$.
Since $M_1$ and $M_2$ are periodic, their lengths are strictly
greater than $\frac 13$. Let $U_1$ and $U_2$ be the canonical Fatou
gaps attached to $M_1$ and $M_2$, respectively. Thurston's pullback
construction \cite{thu85} yields an invariant lamination formed by
the pullbacks of $G$ disjoint from the interiors of $U_1$ and $U_2$.
More precisely, we can define a lamination $\sim_G$ as follows: two
points $a$ and $b$ on the unit circle are equivalent if there exists
$N\ge 0$ such that $\si^N(a)$ and $\si^N(b)$ are vertices of $G$,
and the chords $\ol{\si^i(a)\si^i(b)}$ are disjoint from $G$ and
from the interior of $U_1\cup U_2$ for $i=0$, $\dots$, $N-1$. It is
straightforward to check that $\sim_G$ is indeed an invariant
lamination. This lamination is called the \emph{canonical lamination
associated with $G$}.

Assume now that $G$ is of type A. Let $M$ be the major edge of $G$,
and $U$ the corresponding canonical Fatou gap attached to $G$ at
$M$. The canonical lamination $\sim_G$ of $G$ is defined similarly
to those for types B and D. Namely, two points $a$ and $b$ on the unit circle
are equivalent with respect to $\sim_G$ if there exists $N\ge 0$
such that $\si^N(a)$ and $\si^N(b)$ are vertices of $G$, and the
chord $\ol{\si^i(a)\si^i(b)}$ is disjoint from $G$ and from the
interior of $U$ for $i=0$, $\dots$, $N-1$.

Lemma~\ref{l:gapsuniq} is similar to Lemma~\ref{l:canlam1}. It is
based on Thurston's pullback construction of laminations.% with a
%full collection of critical gaps and leaves.

\begin{lem}\label{l:gapsuniq}
Suppose that $\sim$ has a finite invariant gap $G$ and all the
canonical Fatou gaps attached to $G$ are gaps of $\lam_\sim$. Then
$\sim$ coincides with the canonical lamination of $G$.
\end{lem}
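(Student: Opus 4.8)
The plan is to imitate the proof of Lemmas~\ref{l:canlam1} and~\ref{l:canlam1s}, establishing in turn that (a)~$\sim$ coexists with $\sim_G$, (b)~$\lam_\sim=\lam_{\sim_G}$, and (c)~hence $\sim=\sim_G$. For (a), the point is that $G$ and every canonical Fatou gap $\fg_i$ attached to $G$ are gaps of $\sim$, so $\sim$ coexists with each of them, and therefore no leaf of $\sim$ crosses an edge of $G$ or of any $\fg_i$ inside $\disk$. If a leaf $\ell$ of $\sim$ crossed a leaf $\ell_G$ of $\sim_G$, then $\ell_G$ would be neither an edge of $G$ nor an edge of any $\fg_i$, hence $\ell_G$ together with the endpoints of $\ell$ would lie in the closure of a single component of $\disk\sm(G\cup\bigcup_i\fg_i)$; since off the major holes of $G$ the map $\si$ is one-to-one while the major holes are occupied by the critical canonical Fatou gaps (which are gaps of $\sim$), the chords $\si^n(\ell)$ and $\si^n(\ell_G)$ would again cross for every $n\ge0$. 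But $\ell_G$ eventually maps onto an edge of $G$ by construction, so some $\si^n(\ell)$ would cross an edge of $G$, contradicting the coexistence of $\sim$ with $G$. This proves (a).

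For (b), first take a leaf $\ell$ of $\sim$ that is not a leaf of $\sim_G$. By (a) it lies strictly inside some gap of $\sim_G$; since $G$ and all the $\fg_i$ are gaps of $\sim$ (so no leaf of $\sim$ sits in their interiors), this gap is a proper preimage of $G$ or of a canonical Fatou gap arising in the pullback construction of $\sim_G$. Tracking it forward, some image $\si^n(\ell)$ comes to lie in the interior of $G$ or in the interior of one of the critical canonical Fatou gaps; as $\si^n(\ell)$ is a leaf of $\sim$ and these sets are gaps of $\sim$, this is impossible, so $\lam_\sim\subseteq\lam_{\sim_G}$. Conversely, suppose a leaf $m$ of $\sim_G$ is not a leaf of $\sim$; coexistence being symmetric, $m$ is then a diagonal of some gap $\Gamma$ of $\sim$. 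Choosing $N$ with $\si^N(m)$ an edge of $G$ and using that $\si^N$ carries the interior of $\Gamma$ into the interior of its image gap $\si^N(\Gamma)$, we find that the edge $\si^N(m)$ of $G$ lies in the interior of the $\sim$-gap $\si^N(\Gamma)$, which is impossible because $\si^N(m)\subset\bd(G)$, $G$ is a gap of $\sim$, and distinct gaps of $\sim$ have disjoint interiors. Hence $\lam_{\sim_G}\subseteq\lam_\sim$ as well, so $\lam_\sim=\lam_{\sim_G}$; and since a lamination is recovered from its geo-lamination (each non-degenerate class being the vertex set of the polygon bounded by the corresponding leaves), $\sim=\sim_G$, which is (c).

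The main obstacle is the bookkeeping in the two forward iterations above: one must check that the relevant holes of $G$, the components of $\disk\sm(G\cup\bigcup_i\fg_i)$, and the pullback gaps of $\sim_G$ are carried forward homeomorphically, so that a crossing remains a crossing and "strictly inside" remains "strictly inside", and that the iteration really does terminate inside $G$ or inside one of the critical canonical Fatou gaps (every edge-cycle of $G$ contains a major, so the orbit of every $\fg_i$ reaches a critical canonical Fatou gap). This is exactly where one uses the defining feature of the pullback construction of $\sim_G$---its intermediate chords stay off $G$ and off the interiors of the critical canonical Fatou gaps---together with the hypothesis that $G$ and all the $\fg_i$ are gaps of $\sim$. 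The only degenerate case to rule out is that some $\si^j(\Gamma)$, or some pullback gap in the orbit, is itself a critical gap of $\sim$; but such a gap would have to contain, and hence (having disjoint interior from all other $\sim$-gaps) coincide with, a critical canonical Fatou gap, which is simultaneously a gap of $\sim$ and of $\sim_G$, so no leaf of either lamination could sit inside it---contradiction, and in that branch the argument is already complete.
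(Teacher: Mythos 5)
Your proof follows the same route the paper indicates for this lemma (it only says the argument is ``similar to Lemma~\ref{l:canlam1}'' and rests on Thurston's pullback construction): establish coexistence with $\sim_G$, show by forward iteration that a leaf of $\sim$ not in $\sim_G$ would eventually enter $G$ or a $\fg_i$, and argue the reverse containment. Steps (a) and the forward half of (b) are sound.

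There is a gap in the converse half of (b), precisely at the point you flag as ``the only degenerate case''. You need the induction hypothesis ``$\si^j(m)$ lies strictly inside $\si^j(\Gamma)$'' to propagate, and for that you must rule out that $\si^j(\Gamma)$ is a critical gap of $\sim$, since a critical gap can fold an interior chord onto a boundary edge. Your stated reason --- that such a critical gap ``would have to contain, and hence coincide with, a critical canonical Fatou gap'' --- is unsupported: nothing forces a critical gap of $\sim$ to contain any $\fg_i$; disjointness of interiors in fact makes containment impossible unless they already coincide. The argument that does work is the cubic critical-weight count: the critical canonical Fatou gaps attached to $G$ already carry all of the criticality (one gap of degree $3$, or two of degree $2$), so $\sim$, having these as gaps, can have no further critical gaps or leaves; thus each $\si^j(\Gamma)$, not being $G$ or any $\fg_i$ (since $\si^j(m)$ avoids their interiors), is non-critical and maps interior to interior, and the induction closes. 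With that replacement the converse goes through; as written, the justification for excluding critical $\si^j(\Gamma)$ is not valid.
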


\begin{lem}
\label{l:spec-LvsG1}
Suppose that a cubic invariant lamination $\sim$ has a finite invariant gap $G$ of type D. If a canonical Fatou gap $U$ of $G$ is not a gap of $\sim$, then $\sim$ has a rotational gap or leaf in $U$.
Thus, if $\sim$ is not the canonical lamination of $G$, then $\sim$ has a rotational periodic gap or leaf in a canonical Fatou gap attached to $G$.
\end{lem}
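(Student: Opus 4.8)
The plan is to adapt the proof of Lemma~\ref{l:spec-GvsL} to the present setting: transport everything that $\sim$ does inside the canonical Fatou gap $U$ to the circle by the collapsing map $\psi_U$, and then apply Theorem~\ref{t:fxpt} in degree two. First I would record the structure of the objects involved. Since $G$ is of type~D it has exactly two majors $M_1,M_2$ whose orbits are disjoint; being edges of the rotational gap $G$, each $M_i$ is periodic of some period $k$ with $\si^k$-fixed endpoints, and the canonical Fatou gap $U_i$ attached to $G$ at $M_i$ is a periodic Fatou gap of period $k$ whose basis is a Cantor set and on whose boundary the return map $\si^k$ has degree two. As $G$ is a gap of $\sim$, every edge of $G$, in particular $M_1$ and $M_2$, is a leaf of $\sim$. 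Fix a canonical Fatou gap $U=U_i$ of $G$ and assume it is not a gap of $\sim$; the goal is to produce a rotational gap or leaf of $\sim$ contained in $U$.

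\textbf{Coexistence and a leaf inside $U$.} Next I would prove that $\sim$ coexists with the canonical lamination $\sim_G$, hence with $U$. This goes exactly as in Lemma~\ref{l:canlam1s} and the (omitted) proof of Lemma~\ref{l:gapsuniq}: if a leaf of $\sim$ crossed a leaf $\mu$ of $\sim_G$ in $\disk$, then pushing $\mu$ forward to an edge of $G$ --- which is possible while avoiding the interiors of the two critical gaps $U_1,U_2$, so that crossings are preserved --- would force a leaf of $\sim$ to cross an edge of $G$, contradicting that $G$ is a gap of $\sim$. Since $U$ is a gap of $\sim_G$ but not of $\sim$, there is a leaf $\ell$ of $\sim$ lying strictly inside $U$, and we may take $\ell$ to be non-(pre)critical for the return map $\si^k$ (there are only finitely many grand orbits of critical leaves, and a critical leaf with a periodic endpoint is impossible since $\sim$-classes are finite).

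\textbf{Reduction to degree two and the fixed-point theorem.} Since $\sim$ coexists with the periodic quadratic gap $U$, the construction of Subsection~\ref{ss:coetun} --- which, as noted there, applies also to periodic quadratic gaps --- produces a proper sibling $\si_2$-invariant geo-lamination $\psi_U(\lam_\sim)$ and, after cleaning, a $\si_2$-invariant lamination $\asymp=\psi_U(\sim)$; the leaf $\ell$ projects to a non-degenerate, non-(pre)critical leaf $\psi_U(\ell)$, which survives the cleaning, so $\asymp$ is non-empty. Apply Theorem~\ref{t:fxpt} to $\asymp$ with the empty family of boundary leaves, so that $X=\disk$: it yields either an invariant $\si_2$-gap of degree $>1$ or an invariant rotational set of $\asymp$. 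The first alternative cannot occur, since the basis of an invariant degree-two gap of a $\si_2$-lamination is closed and fully $\si_2$-invariant and hence, as backward orbits under $\si_2$ are dense, equals all of $\uc$, which would force $\asymp$ to be empty. Therefore $\asymp$ has an invariant rotational set $Q_0$. Pulling $Q_0$ back by $\psi_U$ and using standard facts about quadratic laminations and their invariant sets, we obtain a $\si^k$-periodic gap or leaf of $\sim$ inside $U$ whose rotation number under the return map equals the nonzero rotation number of $Q_0$; this is the desired rotational set. This proves the first assertion, and the second follows at once: if $\sim\neq\sim_G$, then by Lemma~\ref{l:gapsuniq} some canonical Fatou gap attached to $G$ is not a gap of $\sim$, and the first assertion, applied to it, completes the proof.

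\textbf{Expected obstacle.} I expect the main difficulty to lie in the two routine but delicate bookkeeping steps: the coexistence of $\sim$ with $\sim_G$ (the forward-iteration-of-crossings argument, where one must keep careful track of the critical gaps $U_1,U_2$), and the verification that the transported object $\psi_U(\sim)$ is genuinely a quadratic lamination and is non-empty --- in particular, that the leaf $\ell$ inside $U$ is not annihilated by the cleaning step, which rests on the fact that $U$ has a Cantor basis, so that $\sim$ must carry honest (non-(pre)critical) leaves inside $U$ and not merely the grand orbit of a critical chord. Once these are in place, the remainder is a faithful transcription of the proof of Lemma~\ref{l:spec-GvsL}.
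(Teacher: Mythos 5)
Your argument is sound in outline but takes a genuinely different route from the paper's. The paper's proof is a three-sentence reduction: by Lemma~\ref{l:posholes} a major $M$ of $G$ is the major of an invariant quadratic gap $W$ of periodic type, the canonical Fatou gap attached to $G$ at $M$ is precisely the vassal $V(W)$, and then Lemma~\ref{l:spec-GvsL} (whose proof already applies Theorem~\ref{t:fxpt} inside a thin strip near $M$ and its sibling $M''$) together with Lemma~\ref{l:gapsuniq} finishes the job. You instead rebuild the core dynamical step: project the restriction of $\sim$ to $U$ down to a $\si_2$-lamination $\asymp=\psi_U(\sim)$ via the collapsing map, apply Theorem~\ref{t:fxpt} to $\asymp$ with $X=\disk$, rule out the degree-two alternative by the full-backward-invariance observation, and pull the rotational set back into $U$. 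Both work; the paper's is shorter because it reuses Lemma~\ref{l:spec-GvsL} wholesale, while yours is more self-contained at the cost of repeating some of its machinery and the ``well-known facts'' about $\si_2$-rotational sets.

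One step you treat as automatic is not: the claim that, because $U$ is a gap of $\sim_G$ but not of $\sim$, there is a leaf of $\sim$ lying strictly inside $U$. A priori $U$ could fail to be a gap of $\sim$ by being properly contained in a larger gap $T$ of $\sim$ with $M$ on its boundary, in which case there are no leaves of $\sim$ inside $U$ at all and your reduction to $\asymp$ produces nothing. Ruling this out requires an argument: for instance, the Sibling Property forces $M''$ (the sibling of $M$ contained in $U$) to be a leaf of $\sim$; if $M''$ is not an edge of $T$ it lies strictly inside $T$, a contradiction, and if $M''$ is an edge of $T$ one must show $T=V(W)=U$ --- which is exactly the first clause of Lemma~\ref{l:spec-GvsL}, or can be argued from the period of $T$ and the disjointness of the iterates of $U$ around $G$. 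Without this step the coexistence of $\sim$ with $\sim_G$ alone does not yet yield a nonempty $\asymp$. Once it is supplied, the rest of your outline goes through, and the second assertion does indeed reduce to Lemma~\ref{l:gapsuniq} as you say.
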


\begin{proof}
Clearly, a major of a gap $G$ of type D satisfies the conditions of
Lemma~\ref{l:posholes} and can be viewed as the major of some
invariant quadratic gap $W$ of periodic type. Then the canonical
Fatou gap $U$ attached to $G$ coincides with the vassal gap $V(W)$
of $W$. Hence the rest of the lemma follows from
Lemmas~\ref{l:spec-GvsL} and \ref{l:gapsuniq}.
\end{proof}

\subsection{Irrational invariant gaps and their canonical laminations}\label{s:irrgaps}

The description of irrational gaps is close to that of finite
laminational sets. In Subsection~\ref{s:irrgaps} we fix an
irrational rotation number $\tau$.

Let $G$ be an invariant Siegel gap of rotation number $\tau$.
Then $G$ may have one or two critical
majors of length $\frac13$, or one critical major of length
$\frac23$. It is also possible that $G$ has a non-critical major.
However, a non-critical major eventually maps to the critical major
by Lemma \ref{l:maj} (in this case, $G$ is of type C). Thus an
infinite rotational gap $G$ can have type A, C or D.

We now construct the canonical lamination for a Siegel gap $G$ of
type D with critical edges $L$ and $M$. Consider well-defined
pullbacks of $G$ attached to $G$ at $L$ and $M$. Then apply
Thurston's pullback procedure to these gaps. As holes in the union
of bases of these gaps are shorter than $\frac13$, the pullbacks of
the gaps converge in diameter to $0$. Alternatively, we can define
$\sim_G$ as follows: two points $a$ and $b$ in the unit circle are
equivalent if there exists $N>0$ such that $\si^N(a)$ and $\si^N(b)$
lie on the same edge of $G$, and the chords $\ol{\si^i(a)\si^i(b)}$
are disjoint from $G$ for $i=0$, $\dots$, $N-1$.

We will not define canonical laminations of type A or C Siegel gaps.

\begin{lem}
 \label{l:type1Sie}
 Suppose that $G$ is a type D stand alone invariant gap of Siegel type, and $\sim$ is an invariant cubic lamination with gap $G$.
 Then $\sim$ coincides with $\sim_G$.
\end{lem}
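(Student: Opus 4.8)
The plan is to adapt the argument used for the canonical laminations of invariant quadratic gaps (Lemmas~\ref{l:canlam1}, \ref{l:canlam1s}) to the Siegel setting. First I would observe that since $\sim$ has $G$ as a gap, $\sim$ coexists with $G$ in the sense of Definition~\ref{d:co-exist-i}: every leaf of $\sim$ that meets an edge $\ell$ of $G$ inside $\disk$ must coincide with $\ell$ (otherwise it would cross an edge of the gap $G$). The two critical edges $L$ and $M$ of $G$ have length $\frac13$ each, so the two holes $H_G(L)$, $H_G(M)$ together with the remaining holes of $G$ have the property that every hole other than these two is mapped one-to-one by $\si$ onto its image, and in fact (since $|L|=|M|=\frac13$ and $G$ is Siegel of type D) the two critical holes are also mapped injectively onto their images — the non-injectivity of $\si$ on $\bd(G)$ comes only from the collapsing of $L$ and $M$ themselves, not from overlapping of holes. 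This is the analogue of the remark in the proof of Lemma~\ref{l:canlam1s} that ``every hole of $U$ maps one-to-one onto its image.''

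Next I would run the crossing argument. Suppose toward a contradiction that a leaf $\ell$ of $\sim$ is not a leaf of $\sim_G$. By the definition of $\sim_G$ (well-defined pullbacks of $G$ attached at $L$ and $M$, together with Thurston's pullback procedure, with all holes in the union of the bases shorter than $\frac13$ so that pullbacks shrink to diameter $0$), $\sim_G$ exhausts all of $\ol\disk$ except for $G$, its pullbacks, and the points of $\uc$; more precisely every leaf of $\sim$ in a pullback of $G$ must eventually map to an edge of $G$. Since $G$ is a gap of $\sim$ and $\ell$ is not a leaf of $\sim_G$, some forward image $\si^n(\ell)$ either crosses an edge of $G$ in $\disk$ — impossible since $G$ is a gap of $\sim$ — or lies strictly inside a pullback $T$ of $G$ while not being an edge of $T$. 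Following the images forward, $T$ maps one-to-one onto its successive images until $\si^m(T)=G$; since crossing configurations are preserved under injective restrictions of $\si$, we would get a leaf of $\sim$ crossing an edge of $G$ inside $\disk$, contradicting that $G$ is a gap of $\sim$. Hence every leaf of $\sim$ is a leaf of $\sim_G$, and conversely, since $\sim_G$ is the minimal invariant lamination with $G$ as a gap (its leaves are forced pullbacks of edges of $G$), the definition of a lamination forces all leaves of $\sim_G$ to be leaves of $\sim$. Therefore $\lam_\sim=\lam_{\sim_G}$ and $\sim=\sim_G$.

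The main obstacle I anticipate is making the ``pullbacks of $G$ shrink to a point'' step fully rigorous in a way that guarantees that the only leaves available to $\sim$ inside $\ol\disk\sm G$ are precisely those of $\sim_G$. One must check carefully that a leaf $\ell$ of $\sim$ disjoint from the interior of $G$ cannot wander — this is where Lemma~\ref{l:maj} (or the No Wandering Triangle theorem invoked for quadratic laminations, transported via a suitable collapsing map) does the work: any such $\ell$ lies in the complement of the orbit of the critical leaves, a region whose components are exactly the pullbacks of $G$ together with points of $\uc$, and the diameters of these components tend to $0$, so $\ell$ is degenerate unless it is an edge of some pullback of $G$, i.e. a leaf of $\sim_G$. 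Once this is established, the coexistence-plus-injectivity argument closes the proof exactly as in Lemma~\ref{l:canlam1s}, and the uniqueness half is immediate from the pullback definition of $\sim_G$.
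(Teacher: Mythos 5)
Your proof is correct and follows the same route the paper indicates for this lemma, namely an almost verbatim adaptation of Lemmas~\ref{l:canlam1} and~\ref{l:canlam1s}: first coexistence of $\sim$ with $\sim_G$ (using that every hole of $G$, including the two critical ones of length exactly $\frac13$, maps injectively under $\si$), then Thurston's pullback structure to force every leaf of $\sim$ to be a leaf of $\sim_G$. The worry in your last paragraph is unnecessary and the appeal to Lemma~\ref{l:maj} or shrinking diameters is not the right mechanism: once coexistence is in place, a non-degenerate leaf of $\sim$ that is not a leaf of $\sim_G$ would lie strictly inside some pullback $T$ of $G$, and following it forward hole-by-hole (injectively) to $\si^m(T)=G$ yields a chord of $\sim$ strictly inside $G$, directly contradicting that $G$ is a gap of $\sim$.
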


The proof goes almost verbatim as in Lemma \ref{l:canlam1}. It is
based on Thurston's pullback construction of laminations.

\section{The description of the Combinatorial Main
Cubioid}\label{s:descricu}

If $G$ is a finite $\si_2$-invariant rotational set, we define the
{\em canonical lamination} of $G$ as the only quadratic invariant
lamination with a cycle of Fatou gaps attached to edges of $G$ (it
represents a parabolic quadratic polynomial $z^2+c$, whose parameter
$c$ belongs to the Main Cardioid). Similarly, if $G$ is a stand
alone invariant Siegel gap with respect to $\si_2$, we define the
\emph{canonical lamination} of $G$ as the unique quadratic invariant
lamination that has $G$ as its gap.

\begin{prop}\label{p:coremin-qua}
A non-empty quadratic lamination $\sim$ with at most one periodic
$($hence fixed$)$ rotational set $G$ coincides with the canonical lamination of $G$. The Combinatorial Main Cardioid $\car^c$ consists of quadratic laminations with at most one periodic rotational set.
\end{prop}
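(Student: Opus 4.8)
The plan is to prove Proposition~\ref{p:coremin-qua} by reducing the quadratic situation to the already-developed machinery for invariant rotational sets, now applied to $\si_2$ rather than $\si_3$. First I would observe that the existence part of the statement is automatic: any non-empty quadratic lamination $\sim$ must contain some invariant gap or leaf with non-trivial dynamics. More precisely, applying Theorem~\ref{t:fxpt} to $\sim$ (with the empty collection of boundary leaves, so that $X=\disk$) yields either an invariant gap of degree $k>1$, which for $\si_2$ forces $k=2$ and then $\sim$ is empty — a contradiction — or an invariant rotational set $G$. So every non-empty quadratic lamination has at least one invariant rotational set, and by hypothesis it has exactly one, call it $G$. (Here ``rotational'' is understood in the extended sense used in Subsection~\ref{s:finrotset}, so that the invariant diameter $\di=\ol{0\frac12}$ counts as a type-D rotational set; this handles the lamination whose only non-degenerate leaf is $\di$.)

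Next I would pin down the structure of $\sim$ near $G$. The set $G$ is either a finite invariant rotational set (of type A, B, or D — type C is impossible for finite sets as noted after the type classification) or an invariant Siegel gap (of type A, C, or D). In the quadratic case, a finite invariant rotational set has exactly one major (since $\si_2|_{\bd(G)}$ cannot be onto the circle in more than a degree-one way once we pass to a half-circle picture, type D with two disjoint major orbits is impossible for $\si_2$, and type B would require two majors in one cycle which again is incompatible with $\si_2$ being degree two on the boundary); similarly a quadratic Siegel gap has a single critical major of length $\frac12$. In either case there is a canonical Fatou gap attached to $G$ at its major. The heart of the argument is then to show that this canonical Fatou gap $U$ is in fact a gap of $\sim$: if it were not, then by the arguments of Lemma~\ref{l:spec-GvsL} / Lemma~\ref{l:spec-LvsG1} (read now for $\si_2$, using Theorem~\ref{t:fxpt} exactly as in the proof of Lemma~\ref{l:spec-GvsL}) there would be a second rotational gap or leaf of $\sim$ inside $U$, contradicting uniqueness of $G$. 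Once $U$ is a gap of $\sim$, one invokes the uniqueness statements for canonical laminations — the quadratic analogues of Lemma~\ref{l:canlam1} and Lemma~\ref{l:gapsuniq}, established by Thurston's pullback construction — to conclude that $\sim$ coincides with the canonical lamination of $G$.

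For the second sentence of the Proposition, I would argue both inclusions. If $\sim\in\car^c$, then by the first sentence $\sim$ is the canonical lamination of a finite $\si_2$-invariant rotational set (or the lamination generated by $\di$, or the trivial lamination with no rotational set), and each of these has at most one periodic rotational set — indeed each such canonical lamination has exactly one invariant rotational set and no other periodic rotational sets, since a periodic rotational set of period $>1$ would, under iteration, force a second invariant rotational set (its image arrangement cannot close up consistently with a single major picture). Conversely, if $\sim$ is a quadratic lamination with at most one periodic rotational set, then by the first sentence it is the canonical lamination of a finite invariant rotational set $G$; such a canonical lamination has a cycle of Fatou gaps attached to the edges of $G$ and represents a parabolic (or the neutral-fixed-point) quadratic polynomial with parameter on the Main Cardioid, hence lies in $\car^c$.

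The main obstacle I anticipate is not the logical skeleton — which follows the template already laid down in Sections~\ref{s:qgaps} and~\ref{s:rotgaps} — but rather carefully justifying that in the quadratic setting a non-trivial invariant rotational set has a unique major and a single attached Fatou gap, and that this Fatou gap must belong to $\sim$ rather than being subdivided by extra leaves. That ``must belong to $\sim$'' step is exactly where uniqueness of the rotational set does the work, via a type-D/periodic-major analysis analogous to Lemma~\ref{l:spec-GvsL}: one produces inside the putative un-used Fatou gap, using Theorem~\ref{t:fxpt} applied to suitably chosen pullback leaves of $\sim$, a second invariant rotational object, contradicting the hypothesis. The remaining verifications (that the resulting lamination is the canonical one, that it lies in $\car^c$) are then routine appeals to Thurston's pullback uniqueness and to the classical identification of $\car^c$ with quadratic laminations carrying a single invariant rotational set.
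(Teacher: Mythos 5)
Your plan follows the paper's own proof in all essentials: apply Theorem~\ref{t:fxpt} to get an invariant rotational set $G$, handle the Siegel case by uniqueness of pullbacks, and in the finite case use the uniqueness hypothesis together with a second application of Theorem~\ref{t:fxpt} in the strip beyond the major to force the attached canonical Fatou gap to be a gap of $\sim$, then invoke canonical-lamination uniqueness. One small slip: your parenthetical remark about $\di=\ol{0\tfrac12}$ being a type-D rotational set is a $\si_3$ fact that does not transfer to $\si_2$ (under $\si_2$ the chord $\ol{0\tfrac12}$ is a \emph{critical} leaf collapsing to the fixed point $0$, not an invariant leaf), but this is tangential and does not affect your main argument; the $\si_2$-invariant leaf you want for the degenerate rotational case is $\ol{\tfrac13\tfrac23}$.
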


\begin{proof}
By Theorem~\ref{t:fxpt}, the lamination $\sim$ has an invariant rotational set $G_2$.
If $G_2$ is a Siegel gap, then $\sim$ is the canonical lamination
of $G_2$ because all pullbacks of $G_2$ are uniquely defined (cf.
Lemma \ref{l:canlam1}). Let $G_2$ be a finite gap or leaf.
By \cite{kiwi02}, the laminational set $G_2$ has a unique major $M_2$ separating $G_2$
from $0$, and the edges of $G_2$ form one cycle (of period $r$). Let
$V_2$ be the Fatou gap of the canonical lamination of $G_2$ which
has $M_2$ as one of its edges.
Let $M^*_2$ be the sibling of $M_2$;
then $M^*_2$ is an edge of $V_2$. By Theorem~\ref{t:fxpt} and
because $\sim$ has at most one periodic rotational gap or leaf, the
strip between $M_2$ and $M^*_2$ contains a $\si^r_2$-invariant Fatou
gap $U$ of $\sim$ of degree greater than 1. In fact, $r$ is the
period of $U$ as $U$ passes through every hole of $G_2$ before returning.
Hence $U\subset V_2$, which immediately implies that $U=V_2$, and $\sim$ is the canonical lamination of $G_2$.
\end{proof}

Let us go back to cubic laminations $\sim$ (recall that $\si_3$ is denoted by $\si$).
Gaps $U(c)$ for critical chords $c$ are defined right after
Lemma~\ref{l:perpic}.

\begin{lem}\label{l:lamcoex}
Suppose that $\sim$ is a lamination that coexists with two disjoint
critical chords, $c$ and $d$ such that $c$ has non-periodic
endpoints, no leaf of $\sim$ contains an endpoint of $d$, and $d$
intersects no edge of $U(c)$ in $\disk$. Then $\sim$ coexists with
the gap $U(c)$. Moreover, either $\sim$ tunes $U(c)$, or no edge of
$U(c)$ belongs to $\sim$.
\end{lem}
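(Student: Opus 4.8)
The plan is to prove the two assertions in turn: first that $\sim$ coexists with $U(c)$, and then the tuning dichotomy. Throughout I would write $M=M(U(c))$ and $H=H(M)$ for the major hole. Since $c$ has non-periodic endpoints, $U(c)$ is either of regular critical type (and then $M=c$) or of periodic type (and then $M$ is periodic of some period and $c\subset\ol H$), the caterpillar case being excluded. I would first record the relevant geometry: by Lemma~\ref{descru} every edge of $U(c)$ is an iterated $\si$-preimage of $M$; by Lemma~\ref{bndcrit} $\si$ is an expanding homeomorphism on every hole of $U(c)$ other than $H$; and, because $d$ is disjoint from $c$, crosses no edge of $U(c)$, and cannot fit inside a non-major hole of $U(c)$ (those being shorter than $\frac13$), the chord $d$ lies in $\ol H$ together with $c$ in the periodic case, and is a critical diagonal of $U(c)$ with $c=M$ an edge in the regular critical case.

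For coexistence with $U(c)$, suppose toward a contradiction that a leaf $\ell$ of $\sim$ crosses an edge $e$ of $U(c)$ in $\disk$. Since $\sim$ coexists with $c$, $\ell$ does not cross $c$, and since the hole of $U(c)$ behind $e$ is disjoint from $H(c)$ when $e\neq M$, both endpoints of $\ell$ lie on the $U(c)$-side of $c$. I would then push $\ell$ forward alongside $e$: by Lemmas~\ref{descru} and~\ref{bndcrit} the iterates $\si^i(e)$ run through edges of $U(c)$ toward $M$ and their holes are carried homeomorphically, and one must check that the crossing survives these iterations. The only failure mode is that the ``outer'' endpoint of $\si^i(\ell)$ slips into the hole behind $\si^i(e)$, and this is precisely where the second critical chord is decisive: such a slippage would force a forward image of $\ell$ to cross $c$ or $d$, or to have an endpoint lying on $d$, each excluded by hypothesis (recall $\sim$ coexists with $d$ and no leaf of $\sim$ carries an endpoint of $d$). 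Iterating until a leaf of $\sim$ crosses $M$ yields the contradiction: in the regular critical case $M=c$, contradicting coexistence with $c$ outright; in the periodic case one further application of $\si$ sends this leaf across $c$ or $d$ inside $\ol H$. I expect this push-forward bookkeeping — guaranteeing the crossing is not lost before $M$ is reached — to be the main obstacle of the whole lemma. When $U(c)$ is of periodic type, the same argument applied to the edges of $V(U(c))$ shows $\sim$ also coexists with $V(U(c))$, so that, if needed later, Lemma~\ref{l:canlam1s} or Lemma~\ref{l:canlam2} upgrades this to coexistence with the canonical lamination $\sim_{U(c)}$.

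For the dichotomy, suppose some edge $e_0$ of $U(c)$ is a leaf of $\sim$; I must show $\sim$ tunes $U(c)$, i.e.\ every edge of $U(c)$ is a leaf of $\sim$. Forward images of leaves of $\sim$ are leaves, and $\si^N(e_0)=M$ for some $N$ by Lemma~\ref{descru}, so $M\in\lam_\sim$. I would then argue by downward induction on the number of $\si$-steps needed to carry a given edge of $U(c)$ to $M$: it suffices to show that if $e$ is an edge of $U(c)$ with $\si(e)\in\lam_\sim$, then $e\in\lam_\sim$ (the image $\si(e)$ is non-degenerate, since in the regular critical case the only critical edge of $U(c)$ is $c=M$, which is already a leaf). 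Leaf invariance of $\lam_\sim$ (Definition~\ref{geolaminv}, sibling invariance) supplies three pairwise disjoint leaves of $\sim$ mapping onto $\si(e)$; since $\sim$ coexists with $U(c)$, a leaf of $\sim$ mapping onto the edge $\si(e)$ of $U(c)$ cannot lie in a non-major hole of $U(c)$ (it would map into that hole), and a short analysis — using the hypotheses on $d$ to rule out spurious leaves inside $U(c)$ or inside $\ol H$ — shows it must be one of the three natural $\si$-preimage chords of $\si(e)$ (two of which are edges of $U(c)$, one being $e$, the third lying in $\ol H$). As these three preimage chords are pairwise disjoint, the three leaves furnished by leaf invariance are exactly they, and in particular $e\in\lam_\sim$. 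This completes the induction, so $\sim$ tunes $U(c)$; the only remaining alternative is that no edge of $U(c)$ belongs to $\sim$.
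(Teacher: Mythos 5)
Your overall strategy matches the paper's: assume a leaf of $\sim$ crosses an edge of $U(c)$, push the crossing forward until an image of the edge reaches $M$, and derive a contradiction; then for the dichotomy, propagate $M$ backward through preimages. But the two places you flag as ``the main obstacle'' are exactly where your argument is incomplete or incorrect, and the paper handles them quite differently.

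First, the claim that the crossing survives. The paper's argument is not about an endpoint ``slipping into the hole behind $\si^i(e)$.'' Since $\ell$ and the edge $\bar j$ both avoid $c$ and $d$ in $\disk$, all four endpoints lie in the closure of a single component of $\ol{\disk}\sm(c\cup d)$, on whose boundary $\si$ is cyclic-order-preserving; so $\si(\ell)$ and $\si(\bar j)$ cross \emph{unless} $\ell$ and $\bar j$ contain distinct endpoints of $c$ or of $d$. You exclude the $d$ case by hypothesis, but you do not address the $c$ case. The paper shows that this case forces $c$ to be regular critical and $\bar j=c$ (so that the original crossing already violates coexistence with $c$). Your phrase ``such a slippage would force a forward image of $\ell$ to cross $c$ or $d$'' is, as it happens, what one ends up concluding in this case, but it is precisely the nontrivial step of the proof and you assert it without argument.

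Second, your endgame when $U(c)$ is of periodic type is not the paper's and does not appear to work as stated. You say a leaf that crosses $M$ is sent ``across $c$ or $d$ inside $\ol{H}$'' by one more application of $\si$; this is not justified (the image of the endpoint lying in $H(M)$ can land in many places, since $\si$ wraps $H(M)$ more than once around), and it also silently relies on both $c$ and $d$ lying in $\ol{H(M)}$, which is false in general --- $d$ could be a critical diagonal of $U(c)$ itself, a case your geometric preamble omits. The paper instead observes that once $c$ is known not to be regular critical, the orbit of $\bar j$ stays strictly on one side of $c$ (its vertices lie in $U'(c)\subset L(c)$), hence so does the orbit of $\ell$; therefore the endpoints of $\ell$ lie in $\Pi(c)=U'(c)$, so $\ell$ is a chord of $U(c)$ and cannot cross its edge $\bar j$. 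You should replace your endgame with this $\Pi(c)$-containment argument.

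Your proof of the tuning dichotomy is fine and essentially matches the paper (forward invariance plus Lemma~\ref{descru} for the ``no edge'' alternative; backward/sibling invariance to propagate $M$ to all edges for the ``tunes'' alternative), just spelled out in more detail.
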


\begin{proof}
We need to show that a leaf of $\sim$ does not intersect an edge of
$U(c)=U$ in $\disk$ unless it coincides with it. By way of
contradiction, suppose that a leaf $\ell$ of $\sim$
intersects an edge $\bj\ne \ell$ of $U$ in $\disk$. Let us show that
then $\si(\ell)$ intersects $\si(\bj)$ in $\disk$. Indeed, there
exists a component of $\ol{\disk}\sm (c\cup d)$ whose closure
contains both $\ell$ and $\bj$. Hence, $\si(\bj)\ne \si(\ell)$.
The only case when $\si(\ell)$ and $\si(\bj)$ do not intersect in
$\disk$ under the circumstances is as follows: $\ell$ and $\bj$
contain distinct endpoints of one of the critical chords, $c$ or
$d$. Let us show that this is impossible.

By the assumptions on $d$ the leaf $\ell$ does not contain an
endpoint of $d$. Suppose that each of the chords $\ell$ and $\bj$
contains an endpoint of $c$. Then an endpoint of $c$ belongs to
$U(c)$, and since $c$ has no periodic endpoints, $c$ is regular
critical. Hence the only edge of $U(c)$ containing an endpoint of
$c$ is $c$, and $\bj=c$. But then $c$ and $\ell$ do not intersect in
$\disk$ because $c$ and leaves of $\sim$ distinct from $c$ are
disjoint inside $\disk$ by the assumption. Thus $\si(\ell)$ and
$\si(\bj)\ne \si(\ell)$ intersect in $\disk$. By induction this
implies that for any $n\ge 0$, $\si^n(\bj)\ne \si^n(\ell)$ intersect
in $\disk$.

Since no image of $\ell$ can intersect $c$ while not coinciding with
$c$, this implies that $c$ cannot be regular critical. Then the
whole orbit of $\bj$ stays strictly on one side of $c$ which implies
that so does the whole orbit of $\ell$. Hence (see
Subsection~\ref{s:invquagap}) the endpoints of $\ell$ belong to
$U'(c)$, and $\ell\ne \bj$ cannot intersect $\bj$ in $\disk$, a
contradiction. Denote the major of $U(c)$ by $M$. The last claim of
the lemma follows from the fact that if $M$ is a leaf of $\sim$ then
$\sim$ tunes $U(c)$ (because $\sim$ is backward invariant and
coexists with $d$) while if $M$ is not a leaf of $\sim$ then no edge
of $U(c)$ can be a leaf of $\sim$ (because $\sim$ is forward
invariant and by Lemma~\ref{descru} which states that all edges of
$U(c)$ are preimages of $M$).
\end{proof}

We also need the following lemma. Observe that a cubic lamination
that has a critical set of degree two must have a second critical
set, also of degree two.

\begin{lem}\label{l:2fatou}
Let $\sim$ be a lamination from $\cu^c$ with a finite invariant gap
$G$ of type A or B such that a cycle $\F$ of Fatou gaps attached to
$G$ at each edge consists of Fatou gaps of degree two. Suppose that
the second critical set $W$ of $\sim$ is infinite.
Then the following holds.
\begin{enumerate}
\item The set $W$ is a periodic Fatou gap of degree 2, and the refixed edge
$\ell$ of $W$ separates the rest of $W$ from $G$.

\item The leaf $\ell$ is the major of a unique quadratic invariant
gap $U$; the lamination $\sim$ tunes the canonical lamination
$\sim_U$ according to a quadratic lamination $\asymp$ from $\car^c$
$($possibly empty$)$; and $W=V(U)$.

\item The gap $U$ is a unique quadratic invariant gap which coexists
with $\sim$ except for the case when $G=\di$ in which case either
$V=\fg_a$ and $U=\fg_b$, or $V=\fg_b$ and $U=\fg_a$.

\end{enumerate}

\end{lem}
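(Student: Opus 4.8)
The plan is to combine three ingredients: the classification of infinite (Fatou) critical gaps of cubic laminations together with a count of critical points; Theorem~\ref{t:fxpt} for locating invariant rotational sets; and the structure theory of invariant quadratic gaps of periodic type, their vassals and their canonical laminations (Lemmas~\ref{bndcrit}, \ref{descru}, \ref{vassal}, \ref{l:posholes}, \ref{l:canlam1}, \ref{l:canlam1s}, \ref{l:canlam2}, \ref{l:canlamtun}), together with Proposition~\ref{p:coremin-qua}. One preliminary observation: the assumption that the cycle $\F$ consists of degree-two Fatou gaps, combined with the existence of a second infinite critical set, forces $G$ to be of type A, since a type-B finite rotational gap has attached to it a Fatou cycle of first-return degree four, which already accounts for both critical points of $\sim$.

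For Claim~(1): since $W$ is an infinite critical set it is a Fatou gap, hence a gap of $\sim$, and by \cite{kiwi02} it is (pre)periodic. Because $\sim\in\cu^c$ has $G$ as its only rotational periodic set and $G$ is not Siegel, $\sim$ has no periodic Siegel gaps, so a pre-periodic critical Fatou gap would have to map into a periodic Fatou cycle of degree $>1$, i.e.\ a second critical cycle — impossible, since $\sim$ has total criticality $2$ and one unit is carried by $\F$. Hence $W$ is periodic, of period $m$, and the remaining critical point shows that $\si^m|_{\bd(W)}$, read through the edge-collapsing map $\psi_W$, is the doubling map $\si_2$; in particular $W$ has degree $2$. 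If $W$ were of regular critical type, Lemma~\ref{l:canlam1} would force $\sim=\sim_W$, a lamination with no finite rotational gap, contradicting the presence of $G$; so $W$ is of periodic type, and its unique major $M(W)=:\ell$ — a periodic leaf of period dividing $m$ with $\si^m$-fixed endpoints — is exactly the refixed edge of $W$. For the separation statement, by Lemma~\ref{bndcrit} the hole $H_W(\ell)$ has length $\ge\frac13$, so $W$ spans a circle arc of length $<\frac23$; since every hole of the finite type-A gap $G$ other than its major hole is shorter than $\frac13$, and $W$ (disjoint from $G$) lies in the closure of some hole of $G$, it must lie in the closure of the major hole of $G$. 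Let $e$ be the edge of $W$ such that $G$ lies in the component of $\ol{\disk}\setminus e$ disjoint from $W$. If $e\ne\ell$, apply Theorem~\ref{t:fxpt} to the component $X$ of $\disk$ bounded by $e$ and a major $M$ of $G$: these leaves satisfy hypothesis (3) of that theorem ($M$ maps away from $X$, and $e$, not being refixed, has $\si^m(e)$ on the side of $e$ containing $X$), so $X$ contains an invariant gap of degree $>1$ or an invariant rotational set; the first contradicts the criticality count, the second contradicts $\sim\in\cu^c$. Hence $e=\ell$.

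For Claim~(2): $\ell$ is a periodic leaf of $\sim$ of period, say, $k$ (equal to the period of $W$) with $\si^k$-fixed endpoints; the borderline case where these endpoints are the two $\si$-fixed angles, i.e.\ $\ell=\di$ and $W\in\{\fg_a,\fg_b\}$, is treated at the end. Let $Z$ be the component of $\ol{\disk}\setminus\orb_\si(\ell)$ containing $G$. First I would check the hypotheses of Lemma~\ref{l:posholes} for $Z$: its boundary consists of iterated images of $\ell$, which are pairwise disjoint or coincident (being edges of the Fatou gaps $\si^i(W)$), and $Z$ has a hole longer than $\frac13$, because $G\subset Z$ together with the arc complementary to $H_W(\ell)$, of length $1-|M(W)|_W\ge\frac12$. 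Lemma~\ref{l:posholes} then yields an invariant quadratic gap $U$ of periodic type whose major is an eventual image of $\ell$, hence $\ell$ itself; this $U$ is unique given $\ell$ (and its orientation) by the construction of Section~\ref{s:qgaps}. Matching the vassal construction of Lemma~\ref{vassal} — which produces the quadratic gap attached to $\ell$ on the side carrying $H(M(U))$, out of the orbits confined to the two short arcs flanking $\ell$ — against the description (Lemma~\ref{descru}) of $W$ as the degree-two Fatou gap of $\sim$ attached to $\ell$ on that same side, gives $W=V(U)$; in particular $V(U)$ is a gap of $\sim$. The coexistence argument of Lemmas~\ref{l:canlam1s} and \ref{l:canlam2} shows $\sim$ coexists with $U$, and then, exactly as in the proof of Lemma~\ref{l:canlam1} but using that $V(U)=W$ is a gap of $\sim$, every leaf of $\sim_U$ is a leaf of $\sim$, i.e.\ $\sim$ tunes $\sim_U$. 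All leaves of $\sim$ absent from $\sim_U$ lie inside $U$ (outside $U$ the laminations agree, and $W=V(U)$ has no interior leaves), so $\sim$ tunes $\sim_U$ according to $\asymp:=\psi_U(\sim)$. Finally $G\subset U$ (its vertices and their orbits lie on the $G$-side of $\ell$ and so avoid $H(M(U))$), so the rotational sets of $\asymp$ are the $\psi_U$-images of the rotational sets of $\sim$ inside $U$; $G$ is the only one, so $\asymp$ has at most one rotational periodic set and $\asymp\in\car^c$ by Proposition~\ref{p:coremin-qua}. In the set-aside case $\ell=\di$ one argues directly with $U\in\{\fg_a,\fg_b\}$; the same conclusions hold, with $\asymp$ possibly empty.

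For Claim~(3) and the main obstacle: if $U'$ is any invariant quadratic gap coexisting with $\sim$, then $M(U')$ and $M''(U')$ are leaves of the proper lamination $\lam_\sim\cup\lam_{\sim_{U'}}$, and applying Lemmas~\ref{l:cofc} and \ref{l:majface} to the majors $\ell=M(U)$ and $M(U')$ (both majors of invariant quadratic gaps coexisting with $\sim$, hence with each other), together with Lemma~\ref{l:petype1} applied to the common infinite gap $W$, forces $U'=U$ unless $\sim$ coexists with both $\fg_a$ and $\fg_b$ — which happens precisely when $\ell=\di$, in which case $W$ and $U$ are $\fg_a$ and $\fg_b$ in some order and uniqueness fails; this is the case labelled $G=\di$ in the statement. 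I expect the crux to be Claim~(2), and within it the identification $W=V(U)$: one must reconcile the intrinsic description of $W$ as a degree-two Fatou gap of $\sim$ with the extrinsic vassal construction and, simultaneously, verify that the component $Z$ is semi-laminational with a hole longer than $\frac13$ (rather than $\si$ changing orientation on one of its holes) so that Lemma~\ref{l:posholes} applies. Handling the degenerate case $\ell=\di$, where $U$, $W$, $\fg_a$, $\fg_b$ and $\di$ all interact, is the secondary difficulty and produces the exception in Claim~(3).
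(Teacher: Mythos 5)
Your proof diverges from the paper's most substantially in the part of Claim~(1) that identifies the separating edge $e$ of $W$ with the refixed edge $\ell$, and that step does not hold up. You apply Theorem~\ref{t:fxpt} to the region $X$ between $e$ and a major $M$ of $G$, asserting in the parenthesis that ``$M$ maps away from $X$'' and ``$\si^m(e)$ is on the side of $e$ containing $X$.'' Both claims point the wrong way. Hypothesis~(3) of Theorem~\ref{t:fxpt} requires each bounding leaf $e_i$ either to have $\si$-fixed endpoints or to satisfy that $\si(e_i)$ lies \emph{in} the component containing $X$ — the leaves must be ``repelling into'' $X$, not out of it. So ``$M$ maps away from $X$'' is precisely a failure of hypothesis~(3), not a verification of it (and $M$'s endpoints are not $\si$-fixed, only $\si^r$-fixed, while the theorem is stated for a single application of $\si_d$). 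Worse, the claim about $e$ is simply false: $\si^m(e)$ is another edge of $W=\si^m(W)$, and $W$ lies entirely on the side of $e$ \emph{opposite} $G$ and $X$; hence $\si^m(e)$ is on the non-$X$ side. So neither bounding leaf satisfies hypothesis~(3), and the fixed-point theorem gives you nothing here.

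The paper's argument for this step is genuinely different and is the crux of its proof: it chooses a critical chord $c$ of $W$ with strictly preperiodic endpoints, forms $U(c)$, pushes the picture down by $\psi_W$ to the $\si_2$ model, and uses the classification of $\si_2$-rotational sets on each side of $\psi_W(c)$ (together with the fact that one of those sets is the point $\{0\}$) to show that the images of the major $M(U(c))$ inside $W$ must collapse to a single refixed edge $\ell$. That argument cannot be replaced by the Theorem~\ref{t:fxpt} route you sketched, because the directions of the leaf images go the wrong way. Your use of Lemma~\ref{l:posholes} in Claim~(2) also quietly assumes, without verification, that $Z$ is the unique component of $\ol{\disk}\sm\orb_\si(\ell)$ with all iterates of $\ell$ as edges, and that each $\si^i(\ell)$ faces $G$; the paper sidesteps this entirely by obtaining $U=U(c)$ directly from the critical-chord argument and then reading off $W=V(U)$. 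The uniqueness argument in Claim~(3) via Lemmas~\ref{l:cofc}, \ref{l:majface}, \ref{l:petype1} is a plausible alternative to the paper's route (which reuses the critical-chord argument with $m$ in place of $M$), but as written both of your Claims~(1) and~(2) depend on the flawed fixed-point step.
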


\begin{proof}
Clearly, $W$ is a periodic Fatou gap of degree 2. We claim that the
orbit of $W$ is contained in $E\cup W$, where $E$ is the component of
$\ol{\disk}\sm W$ containing $G$. Indeed, otherwise there is $i$
with $\si^i(W)$ contained in the closure of a component $F\ne E$ of
$\disk\sm W$. If $\si^i(W)$ touches $W$ at a vertex, an edge of $W$
and an edge of $\si^i(W)$ must have a common vertex. This implies
that these two edges in fact are edges of a finite rotational set of
$\sim$ distinct from $G$, a contradiction with $\sim$ being from
$\cu^c$. If $\si^i(W)$ and $W$ share an edge, then this edge is
rotational, again a contradiction. Finally, if $\si^i(W)$ is
disjoint from $W$, then, by Theorem~\ref{t:fxpt}, the component $F$ contains a
$\si^i$-invariant rotational gap or leaf of $\sim$ or a
$\si^i$-invariant Fatou domain of $\sim$. The former is impossible
because $\sim\in \cu^c$, and the latter is impossible because the
only two cycles of Fatou domains of $\sim$ are $\F$ and the cycle of
$W$. This proves the claim that the orbit of $W$ is contained in $E\cup W$.

The gap $W$ has a unique edge $\ell$ that separates the rest of $W$
from $G$. Denote the sibling of $\ell$ in $W$ by $\hell$. We
claim that $\ell$ is the refixed edge of $W$ (being quadratic, $W$
has a unique refixed edge). Assume that $\ell$ is not refixed in
$W$. Choose a critical chord $c$ of $W$ with strictly preperiodic endpoints.
Then $c$ divides $W$ in two halves, $A$ and $B$, while $\psi_W(c)$
divides $\uc$ in two halves, $\psi_W(A)$ and $\psi_W(B)$. By the
properties of $\si_2$, each of the two sets $\psi_W(A)$, $\psi_W(B)$ contains an
invariant rotational gap, leaf, or point $\{0\}$, denoted by $T(A)$, $T(B)$,
and one of the sets $T(A)$, $T(B)$ is $\{0\}$.

Since $c$ has strictly preperiodic endpoints, the forward orbit of $c$
intersects both $A$ and $B$.
In particular, $c$ cannot be a regular critical major, and the gap $U(c)$ is
of periodic type. Let $M$ be the major of $U(c)$. Then $M$ separates
$c$ from $G$. Clearly, if $M$ is an edge of $W$, then $M=\ell$, and
if $M$ is not an edge of $W$, then $M$ separates $c$ from $\ell$.
In both cases, we have $M\subset W$.

Assume that $\ell\subset A$. Then by definition
$\psi^{-1}_W(T(A))\subset U(c)$.  Consider the images $M_0$,
$\dots$,  $M_k$ of $M$ in $W$, where $M_0=M$. Their endpoints are
all located in $A$. Hence, by the properties of $\si_2$, their
$\psi_W$-images form either the cycle of vertices of $T(A)$ or the
cycle of edges of $T(A)$. Thus, $M_0, \dots, M_k$ either form a
cycle of edges of $W$, or a cycle of chords of $W$ projected to
$T(A)$ under $\psi_W$. Since in the former case $M_0$ cannot
separate $c$ from $\ell$, either $M=M_0=\ell$ or $M_0$, $\dots$,
$M_{k-1}$ project onto edges of $T(A)$.
In the former case, $\ell$ is refixed as otherwise $M_0$ separates $M_1$ from $G$, and cannot be the major of a quadratic invariant gap of periodic type.
By way of contradiction, assume that $M_0$, $\dots$, $M_{k-1}$ project onto edges of $T(A)$.

The above arguments apply to all choices of $c$ with strictly preperiodic endpoints.
If $T(A)=\{0\}$, then $M$ is a refixed edge of $W$, which as before implies that $M=\ell$. If $T(A)\ne\{0\}$,
then there are at least two images of $M$ in $W$ that separate $G$
from $c$ (two coinciding leaves with different orientations are
considered as different). We see that one of the chords $M_0, \dots,
M_k$ separates $G$ from another one, contradicting properties of
majors of periodic type. This contradiction finally proves that
$\ell=M$ is the major of an invariant quadratic gap of periodic
type.

Now, we have already shown the existence of a quadratic invariant
gap $U=U(c)$ coexisting with $\sim$; moreover, we proved that the
periodic leaf $\ell$ is the major of $U$ and that $U$ is
tuned by $\sim$ (by Lemma~\ref{l:lamcoex}, since $\ell$ is a leaf of
$\sim$, all edges of $U$ are leaves of $\sim$). Clearly,
then $W$ is the vassal of $U$.
It remains to prove that $U$ is the unique quadratic invariant gap coexisting with $\sim$.
Suppose that $Q\ne U$ is a quadratic invariant gap with major
$m$ coexisting with $\sim$.

We will write $V$ for the critical Fatou gap in $\F$.
If $m$ is critical, then it is contained in $V$ or $W$ (since $Q$ and
$\sim$ coexist, $m$ cannot cross leaves of $\sim$). However this is
impossible because then the remap of $V$ (or of $W$) will push $m$ away
from the refixed edge of $V$ (or of $W$) and hence from $G$ resulting in
images of $m$ separated from the $\frac23$-arc created by $m$, a
contradiction with the dynamics of regular critical gaps.

Suppose that $m$ is periodic. Since there are only two cycles of
periodic Fatou gaps of $\sim$ (namely, the orbit of $W$ and the orbit of
$V$), the images of $m$ are contained in $W$ or in $V$. The argument
from above with $m$ instead of $M$ and a suitable choice of $c$
proves that either $m=\ell$ is the refixed edge of $W$,
or $m$ is the refixed edge of $V$ (and hence an edge of $G$).
Clearly, the latter is impossible (the images of $m$ must all be
pairwise disjoint) except when $G=\di$. Thus, if $G\ne \di$, then
$m=\ell$ is the refixed edge of $W$ and $Q=U$. The remaining easy
case $G=\di$ is left to the reader.
\end{proof}

Lemma~\ref{l:coex-quad} gives a preliminary description of
laminations from $\cu^c$. Recall that by Definition~\ref{d:cubioid}
a lamination from $\cu^c$ has at most one rotational set (which then
by necessity is fixed).

\begin{lem}
  \label{l:coex-quad}
  Let $\sim$ be a lamination from $\cu^c$.
  Then $\sim$ coexists with an invariant quadratic gap $U$,
  and if $G$ is the unique rotational set of $\sim$, then $G\subset U$.
  Moreover, either $\sim$ tunes $U$ or no edge of $U$ belongs to $\sim$.
  In the latter case, $U$ can be chosen to be of regular critical type,
  and if $\sim$ is not canonical, then $U$ must be of regular critical type.
  If
  %$\sim$ has an invariant rotational gap or
  %leaf
  $G$ is of type A or B, then:

  \begin{enumerate}

  \item if $\sim$ is canonical, then $U$ can be chosen to be
  regular critical and weakly tuned by $\sim$;

  \item if $\sim$ is not canonical and has two
  critical sets, a critical gap $V$ attached to $G$ and the second
  critical set $C$ not attached to $G$, then $U$ can be chosen as
  $U(c)$ where $c\subset C$ is \textbf{any} critical chord with non-periodic endpoints.

  \end{enumerate}

\end{lem}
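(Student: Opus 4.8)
The plan is a case analysis driven by Theorem~\ref{t:fxpt} applied to $X=\disk$, organized by whether $\sim$ has a periodic rotational set and, when it does, by the Milnor-type classification of that set; recall that by definition of $\cu^c$ the lamination $\sim$ has \emph{at most one} periodic rotational gap or leaf, and that one is then fixed. First suppose $\sim$ has no periodic rotational gap or leaf. Then Lemma~\ref{l:pc0} finishes immediately: either $\sim$ is empty, and then for $U$ one may take $G(c)$ with $c$ any fixed critical leaf, which is of regular critical type and has no edge among the (nonexistent) leaves of $\sim$; or $\sim$ equals the canonical lamination $\sim_U$ of an invariant quadratic gap $U$, in which case $U$ (and $V(U)$, if $U$ is of periodic type) is a gap of $\sim$, so $\sim$ tunes $U$, and there is no $G$ to locate. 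From now on let $G$ be the unique, hence fixed, rotational set of $\sim$; then $G$ is a gap or leaf of $\lam_\sim$, and by the discussions preceding Subsections~\ref{s:finrotset} and~\ref{s:irrgaps} its type is A, B or D when $G$ is finite (with $\di$ regarded as type D), and A, C or D when $G$ is an invariant Siegel gap, type C reducing to type D by Lemma~\ref{l:maj}.

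\emph{Type D (finite or Siegel).} A major $M$ of $G$ of type D satisfies the hypotheses of Lemma~\ref{l:posholes}, hence is the major of an invariant quadratic gap $W$ of periodic type whose vassal $V(W)$ is a canonical Fatou gap attached to $G$ at $M$; since $G$ lies on the $\uc\sm H(W)$ side of $M$ while $V(W)$ lies behind $M$, we get $G\subset W$, and we put $U=W$. Because $\sim$ has only the rotational set $G$ and every canonical Fatou gap of $G$ is disjoint from $G$, Lemma~\ref{l:spec-LvsG1} (finite case) or Lemma~\ref{l:type1Sie} (Siegel case), combined with Lemma~\ref{l:spec-GvsL}, rules out a second rotational set and forces $\sim$ to coexist with $W$; then Lemma~\ref{l:canlam2} together with the $\psi_W(\cdot)$ construction of Subsection~\ref{ss:coetun} shows that $\sim$ tunes the canonical lamination $\sim_W$ according to the quadratic lamination $\asymp=\psi_W(\sim)$, which lies in $\car^c$ by Proposition~\ref{p:coremin-qua} since its only rotational set is the image of $G$. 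This is the second alternative of the Main Theorem, with $V(U)$ a gap of $\sim$.

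\emph{Types A and B.} Here the canonical Fatou gaps attached to the edges of $G$ form a single cycle. When that cycle consists of gaps of degree two and the second critical set of $\sim$ is infinite, Lemma~\ref{l:2fatou} applies verbatim and supplies a quadratic invariant gap $U$ of periodic type with $V(U)$ a gap of $\sim$, with $\sim$ tuning $\sim_U$ according to a lamination $\asymp\in\car^c$, and with the uniqueness of $U$ modulo the $\di$, $\fg_a$, $\fg_b$ exception. In the remaining configurations one produces $U$ as a $U(c)$: if $\sim=\sim_G$ is canonical, choose a critical chord $c$ with non-periodic endpoints sitting inside a canonical Fatou gap attached to $G$ at a major, chosen so that $U=U(c)=G(c)$ is of regular critical type; since no leaf of $\sim$ meets that Fatou gap's interior, $\sim$ coexists with $U(c)$, no edge of $U(c)$ is a leaf of $\sim$, and $\sim$ weakly tunes $\sim_U$ on $U$ according to $\psi_U(\sim)\in\car^c$. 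If $\sim$ is not canonical and its second critical set $C$ is not attached to $G$, take any critical chord $c\subset C$ with non-periodic endpoints and a critical chord $d$ inside the critical gap $V$ attached to $G$, apply Lemma~\ref{l:lamcoex} to the pair $c$, $d$ to conclude that $\sim$ coexists with $U(c)$ and either tunes it or has no edge of it among its leaves, and show, using the forward and backward invariance of $\sim$ together with Lemma~\ref{descru}, that when $\sim$ is not canonical the second alternative must hold and $U(c)$ is of regular critical type. In every case $G\subset U$, because $G\subset\uc\sm H(U)$ while the edges of $U$ separate $H(U)$ from $G$.

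\emph{Main obstacle.} The delicate part is the ``coexists but does not tune'' branch: showing that $U$ can always be chosen of regular critical type and that $\psi_U(\sim)$ is a genuine quadratic lamination in $\car^c$ rather than merely a sibling geo-lamination, which forces us to combine Lemma~\ref{l:lamcoex} with the properness and cleaning machinery of Subsection~\ref{ss:coetun} (Theorem~\ref{t:nowander}, Lemma~\ref{l:periodic}) and with Proposition~\ref{p:coremin-qua}. Closely tied to this is checking that Lemma~\ref{l:2fatou}'s degree-two hypothesis genuinely holds in the situations routed to it and, when it does not, that the $U(c)$ construction covers those cases; and, throughout, keeping the $\di$ exception (equivalently $\fg_a$, $\fg_b$) straight, since there $G=\di$ has rotation number $0$ and both $\fg_a$ and $\fg_b$ are admissible choices of $U$.
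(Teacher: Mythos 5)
Your overall architecture is the paper's: Lemma~\ref{l:pc0} when there is no rotational set; type D via Lemmas~\ref{l:spec-LvsG1} and~\ref{l:type1Sie}; for finite $G$ of type A or B, Lemma~\ref{l:2fatou} when the second critical set is infinite, and otherwise Lemma~\ref{l:lamcoex} applied to a critical chord $c\subset C$ and a critical chord $d$ inside the attached gap $V$. However, there are two genuine gaps. First, your treatment of infinite rotational gaps is wrong where it is not missing: a Siegel gap of type C does not ``reduce to type D'' via Lemma~\ref{l:maj} (that lemma only says the non-critical major eventually maps to the critical one; such a gap still has a single critical major, and Lemma~\ref{l:type1Sie} does not apply to it), and Siegel gaps of type A are never addressed, since your ``Types A and B'' branch is phrased entirely in terms of the cycle of canonical Fatou gaps attached to a \emph{finite} $G$. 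The paper handles the non-D Siegel cases by a separate application of Lemma~\ref{l:lamcoex}, taking $d$ to be a critical edge of $G$ itself and $c$ a critical chord with non-periodic endpoints in the other critical set $C$; some argument of this kind is needed and is absent from your proposal.

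Second, the key claim that in the ``coexists but does not tune'' branch $U$ must be of regular critical type when $\sim$ is not canonical is not established by the tools you name: forward/backward invariance plus Lemma~\ref{descru} do not rule out a periodic-type major. The paper's argument is that if the major $M$ of $U$ were periodic and not a leaf of $\sim$, it would lie inside a periodic gap $H$ of $\sim$ which is at least a quadrilateral; if $H$ is finite its remap is not the identity (by \cite{kiwi02}), producing a forbidden second finite periodic gap of this kind, while if $H$ is an infinite periodic gap then Lemma~\ref{l:2fatou} places it in the orbit of $V$ and the analysis in that lemma forces $M$ to be an edge of $G$, contradicting that $M$ is not a leaf of $\sim$. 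Relatedly, in your canonical subcase the one-line justification of coexistence (``no leaf of $\sim$ meets that Fatou gap's interior'') is insufficient, since edges of $U(c)$ also lie outside the attached Fatou gap; the paper argues by noting that the endpoints of the relevant sibling edges lie in $U'$ and then pushing a hypothetical crossing forward until an edge of $U$ would have to cross the critical chord $L$ inside $V$, which is impossible.
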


\begin{proof}
By Lemma~\ref{l:pc0}, if $\sim$ has no rotational gap or leaf,
$\sim$ is empty or the canonical lamination of an invariant
quadratic gap as desired. Assume that $\sim$ has a rotational gap or
leaf $G$. By Lemmas ~\ref{l:spec-LvsG1} and ~\ref{l:type1Sie}, and
because $\sim$ comes from $\cu^c$ (and hence has at most one
rotational set), if $G$ is of type D, then it follows that
$\sim=\sim_G$ tunes an invariant quadratic gap $U$, whose major is
one of the two majors of $G$, and $G\subset U$. Hence from now on we
assume that $G$ is of type A, B or C.

First assume that $G$ is finite and has $n$ edges.
By Definition~\ref{d:cubioid}, there exists a cycle $\F$ of Fatou gaps attached to $G$.
Let $G$ be of type B.
Suppose first that $\F$ has two gaps, $V$ and $W$, on that the map $\si$ is two-to-one.
Let $M$ be the major of $G$ that is an edge of $V$, and $\si^k(M)$ be the major of $G$ that is an edge of $W$.
Denote by $N\ne M$ the edge of $V$ such that $\si(N)=\si(M)$, and by $T\ne \si^k(M)$ the edge of $W$ with $\si(T)=\si(\si^k(M))$. We want to find a regular critical
major separating $M$ from $N$. To do so,
consider the model map for $\si^n|_V$ which is $\si_4$ (as always,
the modeling map is the map collapsing all edges of gaps to points).
Clearly, we can find $\si_4$-critical diameters $\ell$, whose orbits are contained in the half-circle bounded by $\ell$ and containing $0$.
By definition, the critical chord $L$ inside $V$ corresponding to $\ell$ is a major of regular critical type of an invariant quadratic gap $U$.

Since the orbits of the endpoints of $\si^k(M)$ and $T$ are
contained in the circle arc of length $\frac23$, whose endpoints are
the endpoints of $L$, these endpoints belong to $U'$. Hence
the edges of $U$ are disjoint from the convex hull $Q$ of $\si^k(M)\cup
T$. This in turn implies that edges of $U$ and leaves of $\sim$ do
not intersect. Indeed, otherwise we can map such intersecting leaves
forward, and their images intersect too (because by the above the
intersecting leaves must be such that their endpoints belong to an
arc of length less than $\frac13$). In the end an edge of $U$ will
map to $L$, a contradiction since we know that $L$ is disjoint from
all leaves of $\sim$. Hence in this case we can always choose $U$ to
be of a regular critical type. Clearly, $G\subset U$.

Now, if $G$ is of type A, then there is only one gap $V$ of $\F$
which does not map forward one-to-one; $V$ is attached to the unique
major edge $M$ of $G$. If $V$ is cubic, then, similar to the above,
we can choose a regular critical chord $L$ inside $V$ so that $L$ is
a major of regular critical type of an invariant quadratic gap $U$.
To show that $\sim$ and $U$ coexist, consider the major $M$ of $G$.
Then there are two edges of $V$ having the same image as $M$. Let
$N$ be one of them chosen so that $L$ does not separate $M$ from $N$.
The existence of $N$ is derived from the following observation:
for a $\si$-critical chord $\ell$ of regular critical type such
that $0\in U'(\ell)$, we have $\frac 13\notin H_{U(\ell)}(\ell)$
or $\frac 23\notin H_{U(\ell)}(\ell)$.
As before, let $Q$ be the convex hull of $M\cup N$. Then
literally repeating the arguments from the previous case, we can
show that $U$ and $\sim$ coexist.

This completes our consideration of the canonical laminations in the
case when $G$ is finite. Thus from now on we may assume that $G$ is
either of type A or of type $B$, and there is a unique Fatou gap $V$
from $\F$ attached to some edge of $G$ such that $\si|_{\bd(V)}$ is
not one-to-one; moreover, in the remaining cases we may assume that
the remap on $V$ is two-to-one. Then clearly there exists a critical
leaf or gap $C$ which is not a gap from $\F$. If $C$ is infinite,
then all the claims follow from Lemma~\ref{l:2fatou}. Hence we may
assume that $C$ is finite; in particular, all vertices of $C$ are
non-periodic.

Choose a critical chord $c$ in $C$. Consider the arc $I$ of
length $\frac23$, one of the two arcs into which $c$ divides
$\uc$. The vertices of $G$ belong to $I$; the bases of $G$ and of
the gaps of $\F$ consist of points of $U'(c)$. We may take $d$ to be
a critical chord of $V$ whose endpoints are not the endpoints of any
leaf of $\sim$ (the basis of $V$ is a Cantor set, so we can choose
$d$ satisfying this property). Clearly $c$ and $d$ satisfy the
conditions of Lemma~\ref{l:lamcoex}, which implies the existence of a
quadratic invariant gap $U$ coexisting with $\sim$ and such
that either $\sim$ tunes $U$, or no edge of $U$ is a leaf of $\sim$.
Let us show that in the latter case $U$ \emph{must} be of regular
critical type.

Indeed, otherwise the major $M$ of $U$ is of periodic type. Since
$M$ is not a leaf of $\sim$, it is contained in a periodic gap
$H$, which is at least a quadrilateral. If $H$ is finite, then, by
\cite{kiwi02}, the remap on $H$ is not the identity map. Thus, $H$ is
the second finite periodic gap of $\sim$ on which the remap is not
the identity, a contradiction with the definition of $\cu^c$. This
implies that $H$ is a periodic Fatou gap. Moreover, by
Lemma~\ref{l:2fatou}, the gap $H$ comes from the orbit of $V$. As in the
proof of Lemma~\ref{l:2fatou}, this yields that $M$ must be an edge
of $G$, a contradiction with $M$ not being a leaf of $\sim$.

A similar argument holds in the case, where $G$ is a Siegel gap.
Let $d$ be a critical edge of $G$. There is some other
critical leaf or gap $C$. Let $c$ be a critical chord in $C$.  As
before, $c$ may be chosen to have non-periodic endpoints and
$G\subset U(c)$. Since no leaves of $\sim$ other than $d$ intersect
$d$, then $c$ and $d$ satisfy the conditions of
Lemma~\ref{l:lamcoex}.
\end{proof}

We are ready to prove the Main Theorem stated in the Introduction.
The following statement makes it more precise (we refer to the
notation introduced in the Main Theorem). Let $\sim$ be a lamination
from $\cu^c$. Then by Lemma~\ref{l:coex-quad} $\sim$ coexists with a
quadratic invariant gap $U$. The gap $U$ in the Main Theorem can be
chosen as in Lemma~\ref{l:coex-quad}.

\begin{thm}
\label{t:cormin-spec} Assume the conditions of
Lemma~\ref{l:coex-quad} and adopt the notation from its conclusion.
If the $\psi_U$-image of the major $M$ of $U$ does not eventually
map $($by $\si_2)$ to a periodic Fatou gap of $\asymp=\psi_U(\sim)$,
then case $(2)$ of
%Lemma~\ref{l:coex-quad}
the Main Theorem holds.
This is also the case, when $U$ is of periodic type and the lamination $\sim$ is not the canonical lamination of a finite invariant rotational gap or leaf of type A or B.
\end{thm}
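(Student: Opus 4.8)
The plan is to build on Lemma~\ref{l:coex-quad}, which already produces an invariant quadratic gap $U$ with $G\subset U$ (where $G$ denotes the unique rotational set of $\sim$, if $\sim$ has one) together with the dichotomy: either $\sim$ tunes $U$, or no edge of $U$ is a leaf of $\sim$ and $U$ is of regular critical type. The second alternative, combined with Lemma~\ref{l:canlam1s}, is exactly case $(1)$ of the Main Theorem, and I will show the first alternative gives case $(2)$. Thus it suffices to establish: (i) the first alternative always yields case $(2)$; (ii) under the second alternative $\psi_U(M(U))$ lies in the basis of a periodic Fatou gap of $\asymp$; (iii) if $U$ is of periodic type then only the first alternative occurs, with the stated exception. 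Contraposition of (i)--(ii) gives the first assertion of the theorem, and (i) together with (iii) gives the second.

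\emph{Proof of (i).} Suppose $\sim$ tunes $U$, so $M(U)\in\lam_\sim$. If $U$ is of regular critical type, Lemma~\ref{l:canlamtun}(1) gives that $\sim$ tunes $\sim_U$. If $U$ is of periodic type, then by Lemma~\ref{l:spec-GvsL} also $M''(U)\in\lam_\sim$, and the leaves $M(U)$, $M''(U)$ must bound a common gap of $\sim$: otherwise, by the same lemma, $\sim$ has a rotational set inside $V(U)$, which is distinct from $G$ (since $U\cap V(U)=M(U)$ is a periodic leaf with refixed endpoints, hence not rotational), contradicting $\sim\in\cu^c$. By Lemma~\ref{l:spec-GvsL} this common gap is $V(U)$, so $V(U)$ is a gap of $\sim$, and Lemma~\ref{l:canlamtun}(2) shows $\sim$ tunes $\sim_U$. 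In either subcase $\asymp:=\psi_U(\sim)$ is a quadratic lamination in $\car^c$: any periodic rotational set of $\asymp$ pulls back under $\psi_U$ to a periodic rotational set of $\sim$ of the same rotation number, so $\asymp$ has at most one, and Proposition~\ref{p:coremin-qua} applies. This is case $(2)$; and when $U$ is of periodic type we have additionally shown that $V(U)$ is a gap of $\sim$.

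\emph{Proof of (ii).} Here $U=U(c)$ is of regular critical type, $M(U)=c$ is a critical chord, no edge of $U$ is a leaf of $\sim$, and by Lemma~\ref{l:pc0} the lamination $\sim$ has a rotational set $G\subset U$ (otherwise $\sim=\sim_U$ tunes $\sim_U$, incompatible with case $(1)$); moreover $G$ is not of type D, since then $\sim=\sim_G$ would tune a quadratic gap of periodic type. As $c=M(U)$ is not a leaf of $\sim$, the critical chord $c$ is a diagonal of, or lies in the interior of, a critical set $D$ of $\sim$; tracing through the construction of $U$ in the proof of Lemma~\ref{l:coex-quad} one sees that $D\subset U$, so $\psi_U(D)$ is the critical set of $\asymp$ and $\psi_U(M(U))=\psi_U(c)$ is one of its vertices (or an interior point). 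Now $\psi_U(G)$ is a rotational set of $\asymp$ with the rotation number of $G$ --- nonzero, or irrational if $G$ is a Siegel gap --- so $\asymp$ is non-empty; being in $\car^c$, it is by Proposition~\ref{p:coremin-qua} the canonical lamination of $\psi_U(G)$, and in the canonical lamination of a rotational set the critical set is carried by a periodic Fatou gap (the critical Fatou gap attached to $\psi_U(G)$ in the finite case, an image of $\psi_U(G)$ in the Siegel case). Hence $\psi_U(M(U))$ lies in the basis of a periodic Fatou gap of $\asymp$, so the hypothesis of the theorem is satisfied.

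\emph{Proof of (iii) and the main difficulty.} If $U$ is of periodic type it is not of regular critical type, so the dichotomy of Lemma~\ref{l:coex-quad} forces $\sim$ to tune $U$ whenever $\sim$ is not a canonical lamination; and if $\sim$ is canonical but not the canonical lamination of a finite invariant rotational set of type A or B, the construction in Lemma~\ref{l:coex-quad} places $U$ in the first alternative anyway (for a lamination canonical of an invariant quadratic gap, $\sim$ tunes that gap; for $\sim=\sim_G$ with $G$ of type D, $\sim$ tunes a quadratic gap of periodic type). So $\sim$ tunes $U$ and (i) applies. The main obstacle is step (ii): one must verify, uniformly over the sub-cases appearing in the proof of Lemma~\ref{l:coex-quad} --- a finite rotational $G$ of type A, B or C, an infinite Siegel $G$, and the various shapes of the second critical set of $\sim$ --- that the critical chord $M(U)$ genuinely sits inside a critical set of $\sim$ contained in $U$ whose $\psi_U$-image is carried by a periodic Fatou gap of $\asymp$.
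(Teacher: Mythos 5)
Your overall architecture — establishing (i) tuning always gives case (2), (ii) the contrapositive that case (1) forces $\psi_U(M)$ into a periodic Fatou gap, (iii) periodic-type $U$ forces tuning — is a reasonable reorganization, and steps (i) and (iii) track the paper's proof closely and correctly. The paper, however, handles the first assertion of Theorem~\ref{t:cormin-spec} in the \emph{forward} direction, and that choice avoids exactly the difficulty you run into in step (ii).

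The gap is in (ii). You assert that ``tracing through the construction of $U$ in the proof of Lemma~\ref{l:coex-quad} one sees that $D\subset U$,'' and then conclude that $\psi_U(D)$ is the critical set of $\asymp$. This cannot be right: in case (1) of the Main Theorem, $M(U)=c$ is \emph{not} a leaf of $\sim$, so $c$ is a genuine diagonal of the gap $D$ of $\sim$ that contains it; since $c$ is also an edge of $U$, one side of $c$ is the hole $H(U)$, and $D$ necessarily has vertices in $H(U)$ — otherwise $c$ would be an edge of $D$, hence a leaf of $\sim$, contradicting case (1). So $D\not\subset U$. Moreover, in the sub-case where the second critical set $C$ of $\sim$ is a \emph{finite} gap (exactly the non-canonical situation with finite rotational $G$ of type A, B or C, or Siegel $G$), $\psi_U(D\cap U)$ is a finite gap of $\asymp$, while the critical set of a non-empty lamination in $\car^c$ is always an infinite Fatou gap. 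So the identification ``$\psi_U(D)$ is the critical set of $\asymp$'' fails outright, and your proof of (ii) does not go through as written. You acknowledge this as ``the main obstacle'' at the end, but the argument you give does not in fact resolve it. The paper sidesteps this entirely by arguing in the direction actually stated in the theorem: if $\psi_U(M)$ does \emph{not} eventually fall into a periodic Fatou gap of $\asymp$, then standard structure theory of $\car^c$-laminations gives a sequence of leaves of $\asymp$ separating $\psi_U(M)$ from the rest of the circle; these pull back under $\psi_U$ to leaves of $\sim$ accumulating on $M$, and since $\lam_\sim$ is closed, $M$ is a leaf of $\sim$, i.e., $\sim$ tunes $U$, giving case (2). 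This avoids any need to locate $\psi_U(M)$ in the Fatou structure of $\asymp$, which is precisely where your version breaks.
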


\begin{proof}[Proof of the Main Theorem and of Theorem \ref{t:cormin-spec}]
Suppose that $\sim$ is not a canonical lamination of an invariant
quadratic gap. Then, by Theorem~\ref{t:fxpt}, there exists an
invariant rotational gap or leaf $G$. Suppose that $G$ is of type D
(finite or Siegel). Then, by Lemma~\ref{l:spec-LvsG1} and
Lemma~\ref{l:type1Sie}, the lamination $\sim$ must be the canonical
lamination of $G$. Choose a major $M$ of $G$. It follows that the
same major $M$ defines also an invariant quadratic gap $U$. It is
easy to see that then $\sim$ tunes the canonical lamination $\sim_U$
according to an appropriate quadratic invariant lamination from
$\car^c$, which corresponds to case (2). From now on, we may assume
that $G$ is not of type D.

By Lemma~\ref{l:coex-quad}, the lamination $\sim$ coexists with a
quadratic invariant stand-alone gap $U$, either $\sim$ tunes $U$ or
no edge of $U$ belongs to $\sim$ and $U$ can be chosen to be of
regular critical type (moreover, if $\sim$ is not canonical, then $U$ must
be of regular critical type). Furthermore, $G\subset U$. Suppose
that the map $\psi_U$ projects the restriction of $\sim$ onto $U$ to
a quadratic invariant lamination $\asymp=\psi_U(\sim)$. By
Definition~\ref{d:wtunegap-i}, the lamination $\sim$ weakly tunes
$U$ according to the lamination $\asymp$.
Since $\sim$ has a unique rotational set, so does $\asymp$.
By Proposition~\ref{p:coremin-qua}, the lamination $\asymp$ comes from
the Combinatorial Main Cardioid $\car^c$.
%This proves that either
%(1) or (2) from the lemma takes place for $\sim$.

If $U$ is of periodic type and its canonical lamination is tuned by
$\sim$ then, if the gap $V(U)$ is of period greater than 1, it
cannot be tuned by $\sim$ as this would create a rotational set of
period greater than 1. Now, the only cases when $V(U)$ is of period
1 are when $U=\fg_a, V(U)=\fg_b$, or $U=\fg_b, V(U)=\fg_a$. In the
former case, if $\sim$ is not empty inside $U$, then again $V(U)$
cannot be non-trivially tuned by $\sim$ as this would create two
rotational sets of $\sim$. Suppose now that $\sim$ is empty inside
$U$. Then it may happen that $V(U)$ is non-trivially tuned by
$\sim$. Similarly to the above, this tuning must be according to
some quadratic lamination $\asymp$ from $\car^c$. In that case we
simply declare that $\widehat U=V(U)=\fg_b$ and $V(\widehat U)=
\fg_a$. Clearly, this is possible, and with this choice of the gap
tuned by $\sim$, the lemma holds. The case $U=\fg_b, V(U)=\fg_a$ is
similar.

Now we need to prove the remaining claims of the theorem. By
Lemma~\ref{l:coex-quad}, to see whether $\sim$ tunes $U$, we need to see
whether the major $M$ of $U$ belongs to $\sim$. Suppose that the point
$\psi_U(M)$ does not eventually map (by $\si_2$) to a periodic
infinite gap of $\asymp$. Then well-known properties of quadratic
laminations from the Combinatorial Main Cardioid $\car^c$ imply that
$\psi_U(M)$ is separated from the rest of the circle by a sequence
of leaves of $\asymp$. Hence $M$ is the limit of appropriately
chosen leaves of $\sim$, and so $M$ itself is a leaf of $\sim$.
Thus, if $\psi_U(M)$ does not eventually map (by $\si_2$) to a
periodic infinite gap of $\asymp$, then $\sim$ tunes $U$.

Finally, let us prove the last claim of the theorem. We need to
prove that if $U$ is of periodic type and $\sim$ is not canonical,
then case (2) must hold. However, this immediately follows from
Lemma~\ref{l:coex-quad} as in the case of weak tuning and
non-canonical lamination this lemma states that $U$ must be of
regular critical type. This completes the proof.
\end{proof}

The statement of the Main Theorem is somewhat involved.
However it leads to a more explicit description if one thinks of constructing
a non-empty lamination $\sim$ from $\cu^c$. Indeed, for definiteness
assume that $\sim$ has a finite rotational gap $G$. Observe that for
canonical laminations of type D the explanation as how $\sim$ fits
into the description from the Main Theorem is given in
the proof. Otherwise, just like in the arguments of some of our
theorems, consider both critical sets of $\sim$. One of them is
attached to $G$. The other one can be either (a) a vassal gap of
some invariant quadratic gap $U$ of periodic type, or (b) a critical
leaf which is a major of regular critical type of some quadratic invariant
gap $U$, or (c) the same as the first one (canonical lamination of
type A), or (d) an infinite gap-preimage of the first one (canonical
laminations of type B or C), or (e) a finite gap-preimage of $G$.

In cases (a) or (b) the lamination $\sim$ tunes the canonical
lamination of $U$ according to a lamination from the Main Cardioid.
In the other cases the basis of the second critical set contains
endpoints of a critical leaf which is itself a major of regular
critical type of some quadratic invariant gap $U$; moreover, all
other edges of $U$ are also present as diagonals (but not as edges)
of other gaps of $\sim$. The construction of such $\sim$ can be
viewed as a three step process: first, we take the canonical
lamination of an invariant quadratic gap $U$, then $U$ is tuned
according to a quadratic lamination from the Main Cardioid, and then
finally edges of $U$ and their preimages are erased giving rise to
$\sim$ (whether we get a lamination described in (c), (d) or (e)
above depends on the relation between the major of $U$, the gap $G$
and gaps of the canonical lamination of $G$ attached to $G$).

In conclusion, we prove Corollary~\ref{c:othercu-i} from the
Introduction, which allows for a shorter definition of laminations
from the Combinatorial Main Cubioid $\cu^c$.

\begin{proof}[Proof of Corollary \ref{c:othercu-i}]
The ``if" part of the claim follows immediately from definitions. To
prove the ``only if" part of this corollary one simply has to go over
different types of laminations listed in
Theorem~\ref{t:cormin-spec} (or in our explanation right after this
theorem). Indeed, first we observe that by definition if $\sim$
belongs to $\cu^c$ then it has at most one rotational periodic
(hence fixed) set. Now, consider the second part of the claim. It is
obvious for canonical laminations of quadratic invariant gaps or for
canonical laminations of finite gaps of type D. In the case when the
lamination $\sim$ is obtained as described in Case (1) of
Theorem~\ref{t:cormin-spec} --- or, equivalently, in cases (c), (d)
or (e) above --- the only periodic leaves of $\sim$ are edges of the
periodic rotational gap $G$ (and only in the case when $G$ is
finite), so the claim follows.
Finally, if Case (2) of Theorem~\ref{t:cormin-spec} applies, then, in addition to the edges of $G$, the lamination $\sim$ may also have periodic leaves $\ell$ that form the orbit of a major $M$ of periodic type generating an invariant quadratic gap $U$ from Theorem~\ref{t:cormin-spec}.
However, in that case, there must exist an infinite gap attached to each such leaf $\ell$ that itself belongs to the orbit of the vassal gap attached to $M$.
Thus the claim holds in this case too.
\end{proof}

\bibliographystyle{amsalpha}

\medskip
Received May 24, 2013; revised July 16, 2014; second revision October 20, 2014; third revision January 14, 2016.
\medskip

\end{document}